\documentclass[british]{article}
\usepackage{amsmath}
\usepackage{amsthm}
\usepackage{amssymb}
\usepackage{fontspec}
\usepackage{xcolor}
\usepackage{mathtools}
\usepackage{stmaryrd}
\usepackage[all]{xy}
\usepackage[pdfusetitle,
 bookmarks=true,bookmarksnumbered=false,bookmarksopen=false,
 breaklinks=true,pdfborder={0 0 0},pdfborderstyle={},backref=false,colorlinks=true]
 {hyperref}
\hypersetup{
 colorlinks=true,linkcolor=magenta,urlcolor=blue,citecolor=magenta}

\makeatletter
\theoremstyle{plain}
\newtheorem{thm}{\protect\theoremname}[section]
\theoremstyle{plain}
\newtheorem{question}[thm]{\protect\questionname}
\theoremstyle{definition}
\newtheorem{defn}[thm]{\protect\definitionname}
\theoremstyle{plain}
\newtheorem{lem}[thm]{\protect\lemmaname}
\theoremstyle{remark}
\newtheorem{rem}[thm]{\protect\remarkname}
\theoremstyle{plain}
\newtheorem{cor}[thm]{\protect\corollaryname}
\theoremstyle{remark}
\newtheorem{notation}[thm]{\protect\notationname}
\theoremstyle{plain}
\newtheorem{prop}[thm]{\protect\propositionname}
\theoremstyle{definition}
\newtheorem{example}[thm]{\protect\examplename}

\usepackage{fullpage}
\date{}
\AtBeginDocument{}

\usepackage{tikz-cd}
\usetikzlibrary{arrows}
\tikzset{commutative diagrams/.cd,arrow style=tikz,diagrams={>=latex'}}

\makeatother

\usepackage{polyglossia}
\setdefaultlanguage[variant=british]{english}
\usepackage[style=alphabetic]{biblatex}
\providecommand{\corollaryname}{Corollary}
\providecommand{\definitionname}{Definition}
\providecommand{\examplename}{Example}
\providecommand{\lemmaname}{Lemma}
\providecommand{\notationname}{Notation}
\providecommand{\propositionname}{Proposition}
\providecommand{\questionname}{Question}
\providecommand{\remarkname}{Remark}
\providecommand{\theoremname}{Theorem}

\begin{document}
\global\long\def\rg{R\llbracket G\rrbracket}%
\global\long\def\fp#1{\mathrm{FP}_{#1}}%
\global\long\def\Ker{\ker}%
\global\long\def\operatorname#1{\mathrm{#1}}%
\global\long\def\F{\mathbb{F}}%
\global\long\def\Z{\mathbb{Z}}%
\global\long\def\scarytimes{\boxtimes}%
\global\long\def\di{\dim}%
\global\long\def\dimf{\dim_{\mathbb{F}}}%
\global\long\def\d{\mathrm{d}}%
\global\long\def\llb{\llbracket}%
\global\long\def\rrb{\rrbracket}%
\global\long\def\Hom{\mathrm{Hom}}%
\global\long\def\rep#1{\mathcal{S}^{#1}}%
\global\long\def\supG#1{\sup_{\text{simple }#1\llbracket G\rrb\text{-modules }M}}%
\global\long\def\supRG{\supG R}%
\global\long\def\cb#1#2{\tau_{#1}(#2)}%
\global\long\def\cbf#1#2#3{\tau_{#1}(#2;#3)}%
\global\long\def\ct{\otimes}%
\global\long\def\t{\ct_{R}}%
\global\long\def\te{\ct_{\rg}}%
\global\long\def\li{\underset{\longleftarrow}{\lim}}%
\global\long\def\torg#1{\mathrm{Tor}^{\rg}_{#1}}%

\title{Virtual Surjection and the $n$-$(n+1)$-$(n+2)$ Theorem for Profinite
Groups}
\author{Tal Cohen and Mark Shusterman}
\maketitle
\begin{abstract}
We prove the Virtual Surjection Conjecture for profinite groups. Namely,
given a product of $n$ profinite $\mathrm{FP}_{k}$ groups, a subgroup
that virtually surjects onto $k$-tuples must be $\mathrm{FP}_{k}$
as well. We also prove the $n$-$(n+1)$-$(n+2)$ Conjecture for profinite
groups, as well as a few other $\mathrm{FP}_{n}$ permanence results
for fibre products. Our main tool is a numerical criterion for property
$\mathrm{FP}_{n}$ of modules of profinite groups. Our work suggests
a new finiteness property to investigate. 
\end{abstract}

\section{Introduction}

Finiteness properties of groups are a central theme in geometric and
homological group theory. For a discrete group $\Gamma$, one says
that $\Gamma$ is of type $\mathrm{F}_{n}$ if it admits a classifying
space (an Eilenberg–MacLane space $K(\Gamma,1)$) with finite $n$-skeleton.
Property $\mathrm{F}_{1}$ is equivalent to finite generation, and
$\mathrm{F}_{2}$ to finite presentability. Homologically, one says
that $\Gamma$ is of type $\mathrm{FP}_{n}$ (over $\mathbb{Z}$)
if the trivial $\mathbb{Z}\Gamma$-module $\mathbb{Z}$ admits a projective
resolution that is finitely generated in degrees $0$ through $n$.
The geometric condition $\mathrm{F}_{n}$ implies the homological
condition $\mathrm{FP}_{n}$; property $\mathrm{F}_{1}$ is equivalent
to property $\mathrm{FP}_{1}$, but it is a classical result of Bestvina
and Brady \cite{BestvinaBrady97} that this fails for $n\geqslant2$.
For finitely presented groups, however, property $\mathrm{FP}_{n}$
is equivalent to property $\mathrm{F}_{n}$. Therefore, establishing
property $\mathrm{F}_{n}$ often boils down to property $\mathrm{FP}_{n}$.

In the realm of profinite groups, the homological analogue $\mathrm{FP}_{n}$
takes centre stage. A profinite group $G$ is said to be of type $\mathrm{FP}_{n}$
over a commutative profinite ring $R$ if the trivial $R\llbracket G\rrbracket$-module
$R$ admits a projective resolution in the category of profinite modules
that is finitely generated in degrees $0$ through $n$ (see Section
\ref{sec:NumericalCriterion} for precise definitions). More generally,
one may speak of a profinite $R\llbracket G\rrbracket$-module $K$
being of type $\mathrm{FP}_{n}$, and property $\mathrm{FP}_{n}$
for the group $G$ is the special case $K=R$.

Property $\mathrm{FP}_{n}$ for profinite groups has attracted substantial
attention. The foundations were laid in the pro-$p$ setting by King
\cite{King99}, and the theory was developed for general profinite
groups by Corob Cook \cite{Coo16}, who established Bieri–Eckmann-type
criteria and proved closure under extensions, quotients by $\mathrm{FP}_{n}$
subgroups, proper amalgamated free products, and proper HNN extensions.
Probabilistic aspects were explored by Corob Cook and Vannacci \cite{CoVa}.
Property $\mathrm{FP}_{n}$ for profinite groups has been extensively
studied in the works of Weigel, Zalesskii, Kochloukova, Pinto and
others. For instance, property $\mathrm{FP}_{n}$ for profinite metabelian
and solvable groups has been investigated.

The relevance of profinite group theory to discrete group theory is
exhibited, among other things, in profinite rigidity: for certain
classes of residually finite groups, the profinite completion may
determine the group up to isomorphism. Property $\mathrm{FP}_{n}$
of discrete and profinite groups has played a role in the study of
profinite rigidity, for instance in \cites{KocZal}{Alex14}{Jai20}{GarJai}{JaiIsmael}.

The closer analogues of discrete groups in the profinite world are
pro-$p$ groups. Many fundamental results in discrete group theory
have pro-$p$ counterparts, but their proofs in the pro-$p$ setting
often require different arguments. Indeed, the flow of ideas has sometimes
gone from the pro-$p$ world to the discrete one: for instance, Serre
\cite{Serre65} proved in 1965 that a virtually free torsion-free
pro-$p$ group is actually free, and the analogous result for discrete
groups was established only later, by Stallings \cite{Stallings68}
and Swan \cite{Swan69}. As another example, free and surface groups
satisfy the virtual retraction property in both the discrete and the
pro-$p$ settings, as opposed to the profinite setting.

For general profinite groups, the landscape is considerably richer
than for pro-$p$ groups. For example, whereas subgroups of discrete
free groups and free pro-$p$ groups are always free, this is no longer
the case for free profinite groups. Moreover, Howson's theorem that
the intersection of finitely generated subgroups of free groups is
finitely generated, which is true both in the discrete and pro-$p$
settings, fails for free profinite groups \cites{JaikinMO}{ZaZa}.
As for property $\mathrm{FP}_{n}$, a pro-$p$ group is $\mathrm{FP}_{n}$
if and only if $H^{i}(G,\mathbb{F}_{p})$ is finite for every $i\leqslant n$
\cite[Theorem 1.6]{PLETCH198055}, but it was shown by Corob Cook
\cite[§5]{Coo16} that the same does not hold for profinite groups. 

The present paper sets the stage for systematically extending results
on property $\mathrm{FP}_{n}$ from pro-$p$ groups to general profinite
groups. We adopt the paradigm of proving theorems about groups by
establishing more general facts about modules. An example of this
approach is \cite{Alj}, providing an extension of the aforementioned
theorem by Serre to modules. Our principal tool is the following family
of numerical invariants.

Let $G$ be a profinite group, $R$ a commutative profinite ring,
and $K$ a profinite $R\llbracket G\rrbracket$-module. For each nonnegative
integer $n$, we define 
\[
\cbf nGK=\sup_{\text{simple }R\llbracket G\rrbracket\text{-modules }M}\frac{\dim_{\mathrm{End}_{G}(M)}\mathrm{Tor}^{R\llbracket G\rrbracket}_{n}(K,M)}{\dim_{\mathrm{End}_{G}(M)}M}.
\]
This formula takes a simpler form when working over a field with
the trivial module:
\[
\cbf nG{\mathbb{F}}=\supG{\mathbb{F}}\frac{\dim_{\mathbb{F}}H_{n}(G,M)}{\dim_{\mathbb{F}}M}.
\]
For discrete groups, one may similarly define $\cbf nG{\mathbb{F}}$
(taking the supremum over simple finite-dimensional $\mathbb{F}[G]$-modules).
If $G$ is a discrete $\mathrm{FP}_{n}$ group, then $\cbf iG{\mathbb{F}}<\infty$
for every $i\leqslant n$, but the converse does not hold. In the
profinite world, however, property $\mathrm{FP}_{n}$ is captured
completely by these numerical invariants.
\begin{thm}[Numerical criterion; Theorem \ref{thm:numericalFP} and Theorem \ref{thm:NumericalCriterion2}]
Let $G$ be a profinite group, $R$ a commutative profinite ring,
$K$ a profinite $R\llbracket G\rrbracket$-module, and $n$ a nonnegative
integer or $\infty$. Then $K$ is of type $\mathrm{FP}_{n}$ if and
only if $\cbf iGK<\infty$ for every $i\leqslant n$.
\end{thm}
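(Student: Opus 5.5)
We argue by induction on $n$, treating $n=\infty$ as the conjunction of all finite cases. Throughout, $\Lambda=\rg$, and every simple profinite $\Lambda$-module $M$ is finite, with $E=\mathrm{End}_G(M)$ a finite field. For the forward direction, let $P_\bullet\to K$ be a projective resolution with $P_i$ finitely generated for $i\leqslant n$, say $P_i$ a direct summand of $\Lambda^{d_i}$. Since $P_i\te M$ is a quotient of $M^{d_i}$, and $\mathrm{Tor}_i^{\Lambda}(K,M)$ is a subquotient of $P_i\te M$, we get $\dim_E \mathrm{Tor}_i(K,M)\leqslant d_i\dim_E M$. Hence $\cbf iGK\leqslant d_i<\infty$ for every $i\leqslant n$. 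This direction is routine.

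For the converse, we first handle $n=0$: we need that $\cbf 0GK<\infty$ forces $K$ to be finitely generated. The key identification is $K\te M\cong$ the $M$-part of the top of $K$. More precisely, write $\Lambda/J\cong\prod_S \mathrm{End}_E(S)$ as a product of matrix algebras indexed by the simple modules $S$ (with $J$ the Jacobson radical). Then $\dim_E(K\te M)/\dim_E M$ is the multiplicity of the simple module $M^{*}$ (the dual) in the head $K/JK$, computed via $\mathrm{Hom}$ into the dual $M^\vee$. So $\cbf 0GK=c<\infty$ says every simple module occurs in $K/JK$ with multiplicity at most $c$. Consequently $K/JK$ is a quotient of $(\Lambda/J)^{c}$, since $\Lambda/J$ contains each simple $S$ with multiplicity $\dim_E S\geqslant 1$. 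By the profinite Nakayama lemma, we can lift generators to a surjection $\Lambda^{c}\to K$, so $K$ is $\fp 0$. This is the step where profiniteness is used essentially. In the discrete case the analogue fails, because $J$ is no longer controlled by finite quotients.

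Inductive step: assume $\cbf iGK<\infty$ for all $i\leqslant n$. By the base case there is a short exact sequence $0\to L\to\Lambda^{c}\to K\to0$. The long exact Tor sequence gives $\mathrm{Tor}_{i}(L,M)\cong\mathrm{Tor}_{i+1}(K,M)$ for $i\geqslant1$. In degree $0$ we get an exact sequence $0\to\mathrm{Tor}_1(K,M)\to L\te M\to M^{c}\to\cdots$, so $\dim_E(L\te M)\leqslant \dim_E\mathrm{Tor}_1(K,M)+c\dim_E M$. Hence $\cbf 0GL\leqslant\cbf 1GK+c$ and $\cbf iGL=\cbf {i+1}GK$ for $1\leqslant i\leqslant n-1$. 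By induction $L$ is $\fp{n-1}$, and splicing the resolutions shows $K$ is $\fp n$. The case $n=\infty$ follows by applying the finite cases for every $n$.

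The main obstacle is the base case. We must verify carefully that the multiplicity of a simple module in the head of an arbitrary profinite module (possibly non-finitely-generated, where the head is a product of simples) is detected by $\dim_E(K\te M)$ with the correct normalisation by $\dim_E M$. We must also check that a bounded multiplicity really gives a uniform surjection from $\Lambda^{c}$. I would verify this by working with the finite quotients $K/U$ and passing to the inverse limit, using compactness to glue the compatible surjections.
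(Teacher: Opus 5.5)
Your proof is essentially the paper's: the forward bound is Theorem~\ref{thm:numericalFP}(2), and your dimension shift ($\cbf 0GL\leqslant\cbf 1GK+c$, $\cbf iGL=\cbf{i+1}GK$) is Lemma~\ref{lem:ExtendingResolutions}. Your base case (Wedderburn decomposition of $\Lambda/J$ plus profinite Nakayama for closed submodules) reproves, in the direction needed, the generator formula $\mathrm{d}_{\rg}(K)=\sup_M\lceil i_K(M)/\dim M\rceil$ that Lemma~\ref{lem:min-gen-rank-by-tau} takes from \cite[Theorem 5.6]{CoVa}.

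One normalisation slip needs fixing. The multiplicity of $M^{*}$ in the head is $\dim_E(K\te M)$ itself, not $\dim_E(K\te M)/\dim_E M$; test with $K=\Lambda$. So $\cbf 0GK\leqslant c$ bounds the multiplicity of each simple $S$ by $c\dim_{E_S}S$, not by $c$. That is exactly the multiplicity of $S$ in $(\Lambda/J)^{c}$, so your surjection $(\Lambda/J)^{c}\to K/JK$ still exists once you take $c=\lceil\cbf 0GK\rceil$.
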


This theorem may be viewed as an extended version of \cite[Corollary 5.7]{CoVa},
applicable to general profinite modules. A similar criterion also
appeared in \cite[Proposition 2.2]{Sym}. 

Many of the results in this paper are, at their core, quantitative
statements about the invariants $\tau_{n}$, from which the corresponding
results on property $\mathrm{FP}_{n}$ follow via this numerical criterion.
In fact, similar proofs will establish the same bounds on these invariants
for discrete groups as well. It seems likely that our methods can
be extended to the setting of pseudocompact algebras \cite{PseudoBRUMER}.
While this does not imply anything regarding property $\mathrm{FP}_{n}$
for discrete groups, it suggests investigating the finiteness of $\cb nG$
as a property of its own. \\

Alexander Lubotzky's paper \cite{Lubotzky01} studied profinite presentations
by analysing the interplay between the number of generators, the number
of relations, and the dimensions of cohomology groups with coefficients
in simple modules. Our numerical criterion can be viewed as a higher-dimensional
analogue of these foundational results. \cite{Lubotzky01} relates
the minimal number of generators and relators of a profinite presentation
to invariants that are similar to our $\tau_{1}(G)$ and $\tau_{2}(G)$.
Our criterion extends this approach to all degrees, capturing property
$\mathrm{FP}_{n}$ through the finiteness of $\tau_{0},\tau_{1},\ldots,\tau_{n}$. 

The case $n=1$ is already visible, in somewhat different language,
in Erika Damian's work on the augmentation ideal, where she gives
a formula for its minimal number of generators in terms of simple
modules \cite{Damian11}. Our framework unifies and extends this to
all degrees.

We record several further properties of the invariants $\tau_{n}$,
for context:
\begin{itemize}
\item \emph{Finite presentability:} If $G$ is a finitely generated profinite
group, then it is finitely presented if and only if $\tau_{2}(G)$
is finite \cites[Theorem 0.3]{Lubotzky01}, if and only if $G$ is
$\mathrm{FP}_{2}$ (Theorem \ref{thm:numericalFP}).
\item \emph{$\mathrm{FP}_{1}$ and finite generation:} If $G$ is a prosolvable
(profinite) group, then it is finitely generated if and only if it
is $\mathrm{FP}_{1}$ \cite[Proposition 3.4 and Remark 3.5]{Coo16},
if and only if $\tau_{1}(G)<\infty$ (Theorem \ref{thm:numericalFP}).
\item \emph{Nilpotent groups:} finitely generated virtually nilpotent profinite
groups are $\mathrm{FP}_{\infty}$ (Theorem \ref{thm:FGNilpotentFP}). 
\item \emph{Augmentation ideal}: A profinite group is $\mathrm{FP}_{1}$
if and only if its augmentation ideal is finitely generated (Corollary
\ref{cor:augmentation}).
\end{itemize}
For commutative rings, $\mathrm{FP}_{n}$ modules have been studied
under the name of $n$-pseudocoherent. The topological setting has
been considered in particular in \cites{kedlaya-liu-imperfect-period-rings}{scholze-condensed-mathematics}.
It is conceivable that a numerical criterion may play a role in that
context as well.

\subsection{Main results}

With the numerical criterion in hand, we develop a systematic theory
of $\mathrm{FP}_{n}$ for profinite groups, establishing quantitative
versions of a number of results. Our two main results resolve profinite
analogues of two open problems about discrete groups from geometric
group theory.

The first concerns subdirect products. Given groups $G_{1},\dots,G_{n}$
of type $\mathrm{FP}_{k}$ and a subgroup $P\leqslant G_{1}\times\cdots\times G_{n}$
that virtually surjects onto every product of $k$ factors, one asks
whether $P$ is necessarily of type $\mathrm{FP}_{k}$. The pro-$p$
case was proved in \cite{propnn1n2}. We prove it for profinite groups:
\begin{thm}[Virtual Surjection Theorem; Theorem \ref{thm:VirtualSurjection}]
Let $2\leqslant k\leqslant n$, let $G_{1},\ldots,G_{n}$ be profinite
groups of type $\mathrm{FP}_{k}$, and let $P\leqslant G_{1}\times\cdots\times G_{n}$
be a closed subgroup whose projection to any product of $k$ factors
is open. Then $P$ is of type $\mathrm{FP}_{k}$.
\end{thm}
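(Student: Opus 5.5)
We will follow the strategy of the pro-$p$ proof in \cite{propnn1n2} and of Kuckuck's discrete argument. The pro-$p$ argument relies on the characterization of $\mathrm{FP}_k$ by finiteness of $H^i(G,\mathbb{F}_p)$; we replace it throughout by the numerical criterion (Theorem \ref{thm:numericalFP}). So the goal is to show $\cbf iP{\mathbb{Z}_{\hat{}}}$-type bounds: for every simple $R\llbracket P\rrbracket$-module $M$ and every $i\leqslant k$, $\dim \mathrm{Tor}_i(R,M)\leqslant C\dim M$, with $C$ independent of $M$. Here $\mathrm{Tor}_i(R,M)$ is $H_i(P,M)$, and dimensions are taken over $\mathrm{End}(M)$.

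\textbf{Reductions.} First reduce to a cleaner situation. Since $\mathrm{FP}_k$ passes to and from open subgroups (one direction is Shapiro's lemma; the other is an index bound on $\tau_i$), we may replace each $G_j$ by the open subgroup $p_j(P)$ and pass to finite-index subgroups. This makes $P$ subdirect, with each projection $P\to G_{j_1}\times\cdots\times G_{j_k}$ open. Set $N_j=P\cap G_j$. Openness of the projections onto pairs (here $k\geqslant2$) gives that $G_j/N_j$ is virtually abelian, and more generally virtually nilpotent of bounded class. Passing to further open subgroups, $Q=G_1\times\cdots\times G_n/(N_1\times\cdots\times N_n)$ becomes a finitely generated nilpotent profinite group, which is $\mathrm{FP}_\infty$ by Theorem \ref{thm:FGNilpotentFP}. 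Then $P$ contains $N=N_1\times\cdots\times N_n$, and $P/N$ is a closed subgroup of $Q$, hence also finitely generated nilpotent and $\mathrm{FP}_\infty$.

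\textbf{Spectral sequence step.} Use the Lyndon–Hochschild–Serre spectral sequence for $N\to P\to P/N$ with coefficients in a simple $P$-module $M$. Restricted to $N$, $M$ decomposes into at most $[P:\mathrm{stab}]$ conjugates of simple $N$-modules, so it suffices to bound the homology $H_q(N,M)$ as a $P/N$-module. By the Künneth formula, $H_q(N,M)$ is built from the groups $H_{q_1}(N_1,-)\otimes\cdots\otimes H_{q_n}(N_n,-)$ with $\sum q_j=q\leqslant k$. Since $n>k$, in each such term some $q_j=0$; we also need terms with at most $k-1$ positive indices to be "controlled". The key intermediate claim is as follows: for a simple module $S$ of $G_j$, the modules $H_q(N_j,S)$ are finitely generated over $R\llbracket G_j/N_j\rrbracket$ for $q\leqslant k$, with generator count $\leqslant C\dim S$. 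This follows from $G_j$ being $\mathrm{FP}_k$, via the numerical criterion applied to $R\llbracket G_j/N_j\rrbracket\hat\otimes S$, together with Shapiro's lemma. Tensoring such modules and using the virtual-surjection hypothesis, the relevant $E^2_{pq}=H_p(P/N,H_q(N,M))$ are $H_p$ of a module finitely generated over the Noetherian-like completed group algebra of the nilpotent group $P/N$. Because $P/N$ is $\mathrm{FP}_\infty$ and modules over it are $\mathrm{FP}_\infty$ when finitely generated (proved like the nilpotent theorem), each $E^2_{pq}$ with $p+q\leqslant k$ has dimension linearly bounded in $\dim M$. Summing gives $\cbf iPR<\infty$ for $i\leqslant k$.

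\textbf{Main obstacle.} We expect the main difficulty to be this middle step: making the dimension counts uniform in $M$. In the discrete and pro-$p$ settings one uses Noetherianity of $\mathbb{Z}[Q]$ or $\mathbb{F}_p\llbracket Q\rrbracket$ and finite-dimensionality of cohomology. Here we must show, with constants independent of $M$, that a finitely generated module over $R\llbracket P/N\rrbracket$ for nilpotent $P/N$ has $\tau_i$ bounded, and track where the hypothesis $q\leqslant k$ forces a factor where the action is through an open image. The cleanest route is probably a quantitative version of the $\mathrm{FP}_k$ argument for fibre products (the $1$-$2$-$3$-type lemma) inducting on $n$, as in \cite{propnn1n2}. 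We would verify each inequality of that induction for $\tau_i$ in place of $\dim H^i(-,\mathbb{F}_p)$.
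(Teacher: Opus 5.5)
Your proposal has a genuine gap. It sits exactly at the step you flag as the main obstacle, and the way you propose to get past it is false for profinite groups.

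Because $N=N_1\times\cdots\times N_n$ has no finiteness property, $H_q(N,M)$ is an infinite profinite $P/N$-module. Your plan for bounding $\dim H_p(P/N,H_q(N,M))$ linearly in $\dim M$ is to show that $H_q(N,M)$ is finitely generated, indeed $\mathrm{FP}_p$, over $R\llbracket P/N\rrbracket$, with constants independent of $M$. You justify this by Noetherianity of $R\llbracket P/N\rrbracket$, and by the claim that finitely generated modules over a finitely generated nilpotent profinite group are $\mathrm{FP}_\infty$.

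Both fail already for $P/N\cong\hat{\mathbb{Z}}^2$, and this case does occur. Take three copies of a group $G$ with $\phi\colon G\twoheadrightarrow\hat{\mathbb{Z}}$, and let $P=\{(g_1,g_2,g_3):\phi(g_1)+\phi(g_2)+\phi(g_3)=0\}$.
\begin{itemize}
\item We have $A=\mathbb{F}_p\llbracket\hat{\mathbb{Z}}^2\rrbracket\cong\prod_\chi k_\chi\llbracket s,t\rrbracket$, where $\chi$ runs over the infinitely many Galois orbits of characters of the prime-to-$p$ part of $\hat{\mathbb{Z}}^2$, and the $k_\chi$ are finite fields.
\item Take the closed ideal $I=\prod_\chi(s,t)^{r_\chi}$ with $r_\chi$ unbounded. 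It is not finitely generated.
\item The cyclic module $A/I$ satisfies $\dim_{k_\chi}\mathrm{Tor}^A_1(A/I,k_\chi)=r_\chi+1$, so $\tau_1(\hat{\mathbb{Z}}^2;A/I)=\infty$.
\end{itemize}
Each component ring is Noetherian, but not uniformly in $\chi$, and that uniformity is exactly what you need.

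For the same reason your key intermediate claim is unsupported. The numerical criterion only controls $\mathrm{Tor}$ against finite modules, whereas $R\llbracket G_j/N_j\rrbracket\hat{\otimes}S$ is infinite. Shapiro's lemma only exhibits $H_q(N_j,S)$ as a subquotient of a finitely generated free $R\llbracket G_j/N_j\rrbracket$-module. Getting finite generation from that, let alone $\mathrm{FP}_{k-q}$, again needs the missing Noetherianity.

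The paper's argument is built so that no infinite coefficient module ever needs a finiteness property. Lemma~\ref{lem:VirSur} is proved by double induction on $k$ and $n$, writing $P=T\times_Q G_n$ with $T=\mathrm{pr}_{\{1,\dots,n-1\}}(P)$:
\begin{itemize}
\item $T$ is $\mathrm{FP}_k$ by induction on $n$.
\item $Q=G_n/(P\cap G_n)$ is finitely generated and virtually nilpotent, hence $\mathrm{FP}_\infty$; this is used only for the \emph{trivial} module.
\item The fibre kernel $P\cap\prod_{i<n}G_i$ virtually surjects onto $(k-1)$-tuples (Lemma~\ref{lem:KernelOfSubdirect}), so it is $\mathrm{FP}_{k-1}$ by induction on $k$. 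This is why the virtual nilpotence hypothesis is carried along: it handles $k=1$.
\end{itemize}

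Theorem~\ref{thm:n-n+1-n+2} then concludes. Since $N_1$ is $\mathrm{FP}_{k-1}$, the modules $H_q(N_1,M)$ with $q\leqslant k-1$ are finite, so the $\tau$'s of $G_2$ and $Q$ evaluated on finite modules suffice. The single infinite term $E^2_{0k}=(K\otimes_R H_k(N_1,M))_{G_2}$ is not bounded by a finite-generation argument. Instead:
\begin{itemize}
\item Clifford theory and Theorem~\ref{thm:DirectProduct} reduce to the case where $N_2$ acts trivially on $M$.
\item The term is then identified with the corresponding $E^2_{0k}$ term for $1\to N_1\to G_1\to Q\to1$.
\item That term is bounded by $\tau_k(G_1;K)$ plus the incoming differentials from the terms $E^2_{r,k+1-r}$, $r\geqslant2$, which again have finite coefficients.
\end{itemize}

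Your closing sentence points toward this fibre-product induction. Without the induction on $k$ for the kernel, and without this treatment of $E^2_{0k}$, it is not yet a proof.
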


The key ingredient in the proof of the Virtual Surjection Theorem
is a result on fibre products, which is a profinite analogue of the
$n$-$(n+1)$-$(n+2)$ Conjecture. Here too the pro-$p$ case was
solved in \cite{propnn1n2}. 
\begin{thm}[The $n$-$(n+1)$-$(n+2)$ Theorem; Theorem \ref{thm:n-n+1-n+2}]
\label{thm:intronnn}Let $n$ be a nonnegative integer or $\infty$,
let 
\[
1\to N_{1}\to G_{1}\to Q\to1,\qquad1\to N_{2}\to G_{2}\to Q\to1
\]
be short exact sequences of profinite groups, and let $K$ be an $R\llbracket Q\rrbracket$-module.
Then, for every nonnegative integer $k\leqslant n$,
\[
\cbf k{G_{1}\times_{Q}G_{2}}K\leqslant\cbf k{G_{1}}K+\sum^{k+1}_{r=2}\cbf rQK\cbf{k+1-r}{N_{1}}R+\sum_{p+q=k,p>0}\cbf p{G_{2}}K\cbf q{N_{1}}R.
\]
In particular, if $N_{1}$ is $\mathrm{FP}_{n-1}$, $G_{1}$ and $G_{2}$
are $\mathrm{FP}_{n}$, and $Q$ is $\mathrm{FP}_{n+1}$, then the
fibre product $G_{1}\times_{Q}G_{2}$ is $\mathrm{FP}_{n}$. 
\end{thm}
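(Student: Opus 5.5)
Write $P=G_1\times_Q G_2$. The plan is to bound $\mathrm{Tor}$ of $P$ through two spectral sequences. Both come from the fact that $N_1=\ker(P\to G_2)$: an element $(g_1,g_2)\in P$ with $g_2=1$ has $g_1\in N_1$. So there are two short exact sequences
\[
1\to N_1\to P\to G_2\to 1,\qquad 1\to N_2\to P\to G_1\to 1 ,
\]
and also $1\to N_1\times N_2\to P\to Q\to 1$. Fix a simple $R\llbracket P\rrbracket$-module $M$. We must bound $\dim_{E}\mathrm{Tor}_k^{R\llbracket P\rrbracket}(K,M)$ by $\dim_E M$ times the right-hand side, where $E=\mathrm{End}_P(M)$.

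\textbf{Step 1: Clifford reduction.} Since $N_1$ is normal in $P$, the restriction $M|_{N_1}$ is semisimple, as a finite sum of $P$-conjugates of one simple module. The key special case is when $N_1$ acts trivially on $M$, so that $M$ is a $G_2$-module. In that case the LHS spectral sequence
\[
E^2_{pq}=\mathrm{Tor}_p^{R\llbracket G_2\rrbracket}\bigl(K\otimes H_q(N_1,R),M\bigr)\Rightarrow\mathrm{Tor}_{p+q}^{R\llbracket P\rrbracket}(K,M)
\]
applies. Here $K$ is inflated from $Q$, so $N_1$ acts trivially on it and Künneth gives $H_q(N_1,K)=K\otimes H_q(N_1,R)$. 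Using a minimal resolution of $R$ over $N_1$, the quantity $\cbf q{N_1}R$ controls how many copies of $R\llbracket N_1\rrbracket$ appear. Hence $E^2_{pq}$ is bounded by $\cbf p{G_2}K\,\cbf q{N_1}R\,\dim M$.

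\textbf{Step 2: the $p=0$ column.} The terms with $p>0$ give the last sum in the statement. The column $p=0$ must be handled separately, because $\tau_0(G_2;K)\tau_k(N_1;R)$ does not appear in the bound. For this I would use the other sequence $P\to G_1$ with kernel $N_2$. The coinvariant terms $\mathrm{Tor}_0^{G_2}(K\otimes H_q(N_1),M)$ should be compared through $Q$: one has $H_q(N_1\times N_2)\to Q$, and the LHS spectral sequence for $G_1\to Q$ with kernel $N_1$ bounds $\mathrm{Tor}_k^{G_1}$ from below. Its edge terms $E^2_{0,k}$ and the differentials $d_r$ landing there involve $E^2_{r,k+1-r}=\mathrm{Tor}_r^Q(K\otimes H_{k+1-r}(N_1),M)$. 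The column-$0$ contribution to $P$ maps onto $E^\infty_{0,k}$ of the $G_1$-sequence, which is at most $\tau_k(G_1;K)$. The kernel of $E^2_{0,k}\to E^\infty_{0,k}$ is covered by the images of the $d_r$ for $r=2,\dots,k+1$, and these give the middle sum $\sum_r\cbf rQK\,\cbf{k+1-r}{N_1}R$. This is the standard mechanism in the pro-$p$ proof of the $n$-$(n+1)$-$(n+2)$ result, and it is also why $Q$ needs $\mathrm{FP}_{n+1}$.

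\textbf{Step 3: general $M$, and the main obstacle.} When $N_1$ acts nontrivially on $M$, I would pass to the stabiliser $P'$ of a simple $N_1$-summand. By Shapiro's lemma the problem reduces to an induced module over an open subgroup. The harder route is to show that the $\mathrm{Tor}$ terms vanish unless the $N_1$-action is trivial on the relevant piece. For that I would first try to prove that $H_q(N_1,M)$ only sees the $N_1$-trivial isotypic part, after normalising by $\dim M$ and $\mathrm{End}$. Making the endomorphism-ring ratios consistent across the three groups is the main obstacle I expect, and the same issue arises in Step 2's comparison through $Q$. Once the inequality is established, the ``in particular'' clause follows from the numerical criterion (Theorem \ref{thm:numericalFP}), since every term on the right is finite for $k\le n$.
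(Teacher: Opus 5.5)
There is a genuine gap: your case split is on the wrong kernel. The spectral sequence $E^2_{pq}=\mathrm{Tor}^{R\llbracket G_2\rrbracket}_p\bigl(K,\mathrm{Tor}^{R\llbracket N_1\rrbracket}_q(R,M)\bigr)\Rightarrow\mathrm{Tor}^{R\llbracket P\rrbracket}_{p+q}(K,M)$ of Lemma \ref{lem:another-ss} exists for \emph{every} simple $M$. The bound $\tau_p(G_2;K)\,\tau_q(N_1;R)\dim M$ for $p>0$ needs no hypothesis on how $N_1$ acts, since the finite module $H_q(N_1,M)$ is handled through a composition series, as in the remark after the definition of $\tau$. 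So there is no reason to single out $N_1$ acting trivially. The only delicate term is $E^2_{0k}=\bigl(K\otimes_R H_k(N_1,M)\bigr)_{G_2}$. Your Step 2 compares it with $\bar E^2_{0k}=\bigl(K\otimes_R H_k(N_1,M)\bigr)_Q$ for $1\to N_1\to G_1\to Q\to1$, and that only makes sense when $M$ is a $G_1$-module, i.e.\ when $N_2$ acts trivially on $M$. In that case $N_2$ acts trivially on $K\otimes_R H_k(N_1,M)$, because it centralises $N_1$. Hence the $G_2$-coinvariants equal the $Q$-coinvariants, giving the identification $E^2_{0k}=\bar E^2_{0k}$. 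Note that you need this identification, not merely a map onto $\bar E^\infty_{0k}$, which would bound the wrong side. Your differential-counting then yields $\tau_k(G_1;K)+\sum_{r=2}^{k+1}\tau_r(Q;K)\tau_{k+1-r}(N_1;R)$, whatever $N_1$ does to $M$.

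The missing idea is the complementary case, where $N_2$ acts nontrivially. Clifford theory for $N_1\times N_2\trianglelefteq P$ together with Theorem \ref{thm:DirectProduct} gives $M=\bigoplus_i M_{i,1}\otimes_{\mathbb{F}_0}M_{i,2}$. Every $M_{i,2}$ is a nontrivial simple $N_2$-module, since the summands are $P$-conjugate and $N_2\trianglelefteq P$. Hence $H_k(N_1,M)_{N_2}\cong\bigoplus_i H_k(N_1,M_{i,1})\otimes_{\mathbb{F}_0}(M_{i,2})_{N_2}=0$, and since $N_2$ acts trivially on $K$, this forces $E^2_{0k}=0$.

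Your Step 3 cannot replace this argument. The proposed vanishing is false. If $N_1$ is free profinite of rank $d\geq2$ and $S$ is a nontrivial simple $\mathbb{F}_p\llbracket N_1\rrbracket$-module, then $\dim H_1(N_1,S)=(d-1)\dim S\neq0$. The Shapiro reduction does not help either. The stabiliser $P'$ of an $N_1$-isotypic component still contains $N_1\times N_2$, and $N_1$ still acts nontrivially on the inducing module, so you are back where you started. Passing to the corresponding open subgroups of $G_1,G_2,Q$ only multiplies the constants by indices (Lemma \ref{lem:FPforFinInd}), so the stated inequality would not come out. Finally, the endomorphism-field bookkeeping you flag is not the real obstacle, since all dimensions can be measured by $\log|\cdot|$.
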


These questions were originally studied for discrete groups and property
$\mathrm{F}_{k}$ as well as $\mathrm{FP}_{k}$. Bridson, Howie, Miller
and Short \cite{BHMS} established the “Virtual Surjection to Pairs”
theorem (with property $\mathrm{F}_{2}$). Kuckuck \cite{Kuc14} formulated
the $n$-$(n+1)$-$(n+2)$ Conjecture and showed that it implies the
Virtual Surjection Conjecture with property $\mathrm{F}_{k}$ (and
a variant of $\mathrm{FP}_{k}$). Then Kochloukova–Lima \cite{KocLim}
studied these questions for discrete groups and property $\mathrm{FP}_{k}$,
establishing the Virtual Surjection Conjecture in the case $k=2$
(for property $\mathrm{FP}_{k}$). The general case, for both property
$\mathrm{FP}_{k}$ and $\mathrm{F}_{k}$, remains an open question
for discrete groups. We remark that the $n$-$(n+1)$-$(n+2)$ Conjecture
for property $\mathrm{FP}_{k}$ implies the $n$-$(n+1)$-$(n+2)$
Conjecture for property $\mathrm{F}_{k}$, since the $0$-$1$-$2$
case is known, and, for $\mathrm{F}_{2}$ groups, property $\mathrm{FP}_{k}$
is equivalent to property $\mathrm{F}_{k}$.

We follow Kuckuck's strategy \cite{Kuc14}, and deduce the Virtual
Surjection Theorem from the $n$-$(n+1)$-$(n+2)$ Theorem. 

We remark that Theorem \ref{thm:intronnn} implies an $n$-$(n+1)$-$(n+2)$
result for modules of profinite groups: if $N_{1}$ is $\mathrm{FP}_{n-1}$,
$K$ is $\mathrm{FP}_{n}$ as an $R\llbracket G_{1}\rrbracket$-module,
$K$ is $\mathrm{FP}_{n}$ as an $R\llbracket G_{2}\rrbracket$-module,
and $K$ is $\mathrm{FP}_{n+1}$ as an $R\llbracket Q\rrbracket$-module,
then $K$ is $\mathrm{FP}_{n}$ as an $R\llbracket G_{1}\times_{Q}G_{2}\rrbracket$-module.

The $n$-$\left(n+1\right)$-$\left(n+2\right)$ conjecture has also
been considered in other settings, for example in \cite{Koc19} for
Lie algebras.\\

We observe that, in both formulations of the numerical criterion (Theorem
\ref{thm:numericalFP} and \ref{thm:NumericalCriterion2}), only the
value of $\lceil\cbf nGK\rceil$ matters. This raises the question
of what further information, if any, is carried by the precise value
of $\cbf nGK$.
\begin{question}
Can $\tau_{n}(G)$ be irrational? More generally, for which profinite
groups $G$ and modules $K$ is $\cbf nGK$ guaranteed to be rational? 
\end{question}

\subsection{Organisation}

The paper is organised as follows. In Section \ref{sec:NumericalCriterion},
we define the invariants $\cbf nGK$ and establish the numerical criterion
for property $\mathrm{FP}_{n}$, Theorem \ref{thm:numericalFP} (see
also Theorem \ref{thm:NumericalCriterion2}). In Section \ref{sec:BasicOps},
we use the numerical criterion to prove standard closure properties:
behaviour under extensions, open subgroups, quotients, amalgamated
free products, and HNN extensions. In particular, we recover (with
explicit bounds) the results of Corob Cook \cite{Coo16} and extend
them to general modules. We also weaken the requirement on the kernel
in the passage of property $\mathrm{FP}_{n}$ to quotients; see Lemma
\ref{lem:FPforQuotients}. We extend Serre's definition of goodness
of discrete groups to modules, and show that the profinite completion
of a good $\mathrm{FP}_{n}$ module is $\mathrm{FP}_{n}$; see Section
\ref{subsec:Good} and Proposition \ref{prop:Good} for more details.
In Section \ref{sec:FibreProducts}, we prove a few other fibre-product
permanence results concerning property $\mathrm{FP}_{n}$. Section
\ref{sec:Nilpotent} treats nilpotent and polyprocyclic groups. Section
\ref{sec:DirectProducts} studies irreducible representations of direct
products. Section \ref{sec:nn1n2} contains the proof of the $n$-$(n+1)$-$(n+2)$
Theorem. Lastly, in Section \ref{sec:VirtualSurjection} we prove
the Virtual Surjection Theorem

\bigskip

\paragraph{Conventions.}

Unless otherwise stated, all modules we consider are left profinite
modules. Given a profinite ring $\Lambda$, we denote by $\mathrm{PMOD}(\Lambda)$
the abelian category of left profinite $\Lambda$-modules. When $V$
is a vector space over some field $\mathbb{F}$, we denote by $V^{*}$
the linear dual of $V$ over $\mathbb{F}$. We denote by $\otimes$
the completed tensor product.

\section{Numerical Criterion for $\mathrm{FP}_{n}$ \label{sec:NumericalCriterion}}
\begin{defn}
Let $R$ be a commutative profinite ring, $G$ a profinite group,
$n$ a nonnegative integer. We say that an $\rg$-module $K$ is of
type $\mathrm{FP}_{n}$ if there exists an exact sequence 
\[
P_{n}\to\cdots\to P_{1}\to P_{0}\to K\to0,
\]
where $P_{0},\dots,P_{n}$ are finitely generated projective $\rg$-modules.
We say that $K$ is of type $\mathrm{FL}_{n}$ if there exists such
a sequence with $P_{0},\dots,P_{n}$ free. We say that $K$ is of
type $\mathrm{FP}_{\infty}$ (respectively, $\mathrm{FL}_{\infty}$)
if $K$ is of type $\mathrm{FP}_{n}$ (respectively, $\mathrm{FL}_{n}$)
for every $n\geqslant0$.

We say that $G$ is $\mathrm{FP}_{n}$ (respectively $\mathrm{FL}_{n}$)
over $R$ if the trivial module $R$ is $\mathrm{FP}_{n}$ (respectively
$\mathrm{FL}_{n}$). Observe that, if $G$ is $\mathrm{FP}_{n}$ (respectively
$\mathrm{FL}_{n}$) over $\hat{\mathbb{Z}}$, then it is $\mathrm{FP}_{n}$
(respectively $\mathrm{FL}_{n}$) over every commutative profinite
ring $R$, as one can see by tensoring the resolution with $R$.
\end{defn}

Recall that, when $M$ is a simple $G$-module, $\mathrm{End}_{G}(M)$
is a finite field and $M$ itself is a finite-dimensional vector space
over $\mathrm{End}_{G}(M)$. 

\begin{defn}
For a profinite group $G$, a commutative profinite ring $R$, an
$\rg$-module $K$ and a nonnegative $n$, we set
\[
\cbf nGK=\supRG\frac{\dim_{\mathrm{End}_{G}(M)}\mathrm{Tor}^{\rg}_{n}(K,M)}{\dim_{\mathrm{End}_{G}(M)}M}.
\]
When $R$ is clear from the context, we simply write $\cb nG$ for
$\cbf nGR$ (where $R$ is the trivial $G$-module). For negative
$n$, we set $\cbf n{\cdot}{\cdot}=0$,
\end{defn}

Observe that 
\[
\frac{\log\left|\mathrm{Tor}^{\rg}_{n}(K,M)\right|}{\log\left|M\right|}\leqslant\cbf nGK
\]
for every finite $\rg$-module $M$ (by induction on the length).

For intuition, we record the following special case of Lemma \ref{lem:TorSpec}.
\begin{lem}
\label{lem:flat-modules}When $K$ is flat as an $R$-module, we have
\[
\cbf nGK=\supRG\frac{\dim_{\mathrm{End}_{G}(M)}H_{n}(G,K\otimes_{R}M)}{\dim_{\mathrm{End}_{G}(M)}M}
\]
\end{lem}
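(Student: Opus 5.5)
It suffices to identify, for each simple $\rg$-module $M$ and each $n$, the groups
\[
\mathrm{Tor}^{\rg}_{n}(K,M)\cong H_{n}(G,K\otimes_{R}M)
\]
as $\mathrm{End}_{G}(M)$-vector spaces. Here $G$ acts diagonally on $K\otimes_{R}M$. Once this is done, the dimensions agree term by term in the supremum, and the stated formula for $\cbf nGK$ follows immediately from the definition.

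To prove the isomorphism, fix a projective resolution $P_{\bullet}\to R$ of the trivial module in $\mathrm{PMOD}(\rg)$. For instance, take the bar resolution, whose terms are of the form $R\llbracket G^{m+1}\rrbracket$. Then
\[
H_{n}(G,K\otimes_{R}M)=H_{n}\bigl(P_{\bullet}\te(K\otimes_{R}M)\bigr).
\]
I will use the standard adjunction isomorphism
\[
P\te(K\otimes_{R}M)\cong(P\otimes_{R}K)\te M,
\]
with diagonal action on $P\otimes_{R}K$. This is natural in $P$, and it is checked on $P=\rg$ and then extended by inverse limits and finite sums. Consequently
\[
H_{n}(G,K\otimes_{R}M)\cong H_{n}\bigl((P_{\bullet}\otimes_{R}K)\te M\bigr).
\]
It therefore remains to show that $P_{\bullet}\otimes_{R}K\to R\otimes_{R}K=K$ is a projective resolution of $K$.

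\textbf{Exactness.} The resolution $P_{\bullet}\to R$ is split exact as a complex of $R$-modules: the bar resolution has an $R$-linear contracting homotopy. Hence it remains exact after applying $-\otimes_{R}K$. I note that this step does not even use flatness.

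\textbf{Projectivity.} Each $P_{m}\otimes_{R}K$ must be a projective $\rg$-module. For $P=\rg$, the shearing map $g\otimes k\mapsto g\otimes g^{-1}k$ gives an isomorphism $\rg\otimes_{R}K\cong\rg\otimes_{R}K_{\mathrm{triv}}$ from the diagonal action to the left-regular action. The latter is $\rg\hat{\otimes}_{R}K$, which is projective over $\rg$ whenever $K$ is projective over $R$. This is where the hypothesis enters, and it is the main obstacle, because $K$ is only assumed flat over $R$.

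I would handle this in one of two ways. The first is to use that, for profinite modules over a profinite ring, flat is equivalent to projective: this holds for pseudocompact modules, and I would invoke Brumer's results. The second, more robustly, is to avoid projectivity altogether. Since $M$ is finite, $\mathrm{Tor}$ may be computed by a flat resolution, and $\rg\otimes_{R}K$ is flat over $\rg$ whenever $K$ is $R$-flat, because $(\rg\otimes_{R}K)\te M\cong K\otimes_{R}M$ as $R$-modules, and this is exact in $M$ by flatness of $K$. Either route gives
\[
H_{n}\bigl((P_{\bullet}\otimes_{R}K)\te M\bigr)=\mathrm{Tor}^{\rg}_{n}(K,M).
\]

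Finally, all the isomorphisms above are natural in $M$, so they commute with $\mathrm{End}_{G}(M)$. The dimensions over $\mathrm{End}_{G}(M)$ therefore coincide, which completes the argument.
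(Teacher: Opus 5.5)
Your argument is correct, but it follows a different route from the paper. The paper obtains the lemma as the case $N=\{1\}$ of Lemma~\ref{lem:TorSpec}. There the $E^{2}$-page is $H_{p}(G,\mathrm{Tor}^{R}_{q}(K,M))$, flatness of $K$ kills every row with $q>0$, and the spectral sequence collapses to $\mathrm{Tor}^{R\llbracket G\rrbracket}_{p}(K,M)\cong H_{p}(G,K\otimes_{R}M)$.

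Unwound, the paper resolves $K$ by projective $R\llbracket G\rrbracket$-modules $P_{\bullet}$ and considers $P_{\bullet}\otimes_{R}M$. Its terms are $H_{*}(G,-)$-acyclic for free, by the shearing isomorphism and Shapiro's lemma. Flatness is spent on making $P_{\bullet}\otimes_{R}M\to K\otimes_{R}M$ exact.

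You do the dual thing. You resolve the trivial module $R$ and tensor with $K$. Exactness comes for free from the $R$-linear contracting homotopy of the bar resolution, and flatness is spent on the acyclicity of the terms $P_{m}\otimes_{R}K$.

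Your version is self-contained and avoids the Grothendieck spectral sequence. The price is that you need an outside input: either Brumer's theorem that $\otimes$-flat profinite modules are projective, or balancedness of profinite $\mathrm{Tor}$ so that flat resolutions compute it. Either suffices.

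One small completion: you check the shearing only for $R\llbracket G\rrbracket$. For the bar terms, write $P_{m}\cong R\llbracket G\rrbracket\otimes_{R}R\llbracket G^{m}\rrbracket$ and apply your argument with $X=R\llbracket G^{m}\rrbracket\otimes_{R}K$, which is still $R$-flat.

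The paper's route costs nothing extra, since Lemma~\ref{lem:TorSpec} is needed anyway. It also exhibits the statement as the trivial case of the spectral sequence for arbitrary closed normal $N$, which drives the later extension and fibre-product estimates.
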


\begin{rem}
If, moreover, $K=R=\mathbb{F}$ is a finite field (which we consider
as a one-dimensional trivial $G$-module), we have:
\[
\cbf nG{\mathbb{F}}=\supG{\mathbb{F}}\frac{\dim_{\mathbb{F}}H_{n}(G,M)}{\dim_{\mathbb{F}}M}=\supG{\mathbb{F}}\frac{\dim_{\mathbb{F}}H^{n}(G,M)}{\dim_{\mathbb{F}}M}.
\]
\end{rem}

If $A$ is an $R\llbracket G\rrbracket$-module, we denote by $\d_{R\llbracket G\rrbracket}A$
the minimal number of generators of $A$ (as an $R\llbracket G\rrbracket$-module).
Given an $\rg$-module $N$, we denote by $\mathrm{Rad}(N)$ the \emph{radical
}of $N$, i.e., the intersection of all maximal open submodules of
$N$. The quotient $N/\mathrm{Rad}(N)$ is a (possibly infinite) direct
product of simple $\rg$-modules. Given a simple $\rg$-module $M$,
we denote by $i_{N}(M)$ the ‘number of times’ $M$ appears
in the direct product $N/\mathrm{Rad}(N)$ (i.e., the largest cardinality
$\kappa$ such that $N/\mathrm{Rad}(N)$ surjects onto $M^{\kappa}=\prod_{\kappa}M$).

\begin{thm}[{\cite[Theorem 5.6]{CoVa}}]
\label{prop:no-of-gen}Let $G$ be a profinite group, $R$ a commutative
profinite ring, $N$ an $\rg$-module. Then 
\[
\d_{\rg}(N)=\supG R\left\lceil \frac{i_{N}(M)}{\dim_{\mathrm{End}_{G}(M)}M}\right\rceil .
\]
\end{thm}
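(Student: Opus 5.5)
The plan is to reduce the count of generators to a count in the semisimple quotient $N/\mathrm{Rad}(N)$, and then handle one isotypic component at a time.

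\emph{Step 1: generation can be checked modulo the radical.} First I will establish a topological Nakayama lemma: a set generates $N$ topologically if and only if its image generates $N/\mathrm{Rad}(N)$. Suppose a closed submodule $L$ satisfies $L+\mathrm{Rad}(N)=N$ but $L\neq N$. Since $N$ is profinite, there is an open submodule $U$ with $L+U\neq N$. The quotient $N/(L+U)$ is then a nonzero finite module, so it has a maximal submodule. That gives a maximal open submodule of $N$ containing $L$, and this submodule also contains $\mathrm{Rad}(N)$, which is a contradiction. Hence $\d_{\rg}(N)=\d_{\rg}(N/\mathrm{Rad}(N))$.

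\emph{Step 2: reduce to isotypic components.} The quotient $N/\mathrm{Rad}(N)$ is a product $\prod_M M^{i_N(M)}$ over the distinct simple modules $M$. Because distinct simples have no common nonzero quotients, a closed submodule of such a product surjects onto the whole product if and only if it surjects onto each isotypic factor. Passing to finite subproducts and using compactness, a $d$-tuple generates the product if and only if it generates each factor. Here the projections along distinct simples are independent by a Goursat/Chinese-remainder type argument. So it suffices to prove that $\d(M^{\kappa})=\lceil\kappa/\dim_{D}M\rceil$, where $D=\mathrm{End}_G(M)$. When $\kappa$ is infinite, both sides are infinite, because a $d$-generated module has only finitely many maximal open submodules of each type. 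To see this, count homomorphisms: $|\Hom(N,M)|\le|M|^d$.

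\emph{Step 3: the single-simple computation.} Next I will show that $M^{\kappa}$ is generated by $d$ elements if and only if $\kappa\le d\cdot\dim_D M$. Write $E=\rg/\mathrm{Ann}(M)$. By Jacobson density, $E$ is isomorphic to $\mathrm{End}_D(M)\cong M_m(D^{\mathrm{op}})$, where $m=\dim_D M$. As a left module, $E\cong M^m$. Therefore a quotient of $\rg^d$ that is annihilated by $\mathrm{Ann}(M)$ is a quotient of $E^d\cong M^{dm}$, which gives $\kappa\le dm$. Conversely, $M^{dm}\cong E^d$ is $d$-generated, so every quotient $M^\kappa$ with $\kappa\le dm$ is $d$-generated as well. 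Taking the supremum over all $M$ of the minimal such $d$, namely $\lceil\kappa/m\rceil$, gives the formula.

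\emph{Expected main obstacle.} I expect the main obstacle to be Step 2 in the profinite (infinite product) setting. There one must check that a finite tuple which generates each isotypic factor still generates the whole product. The route I would try first is this: a closed proper submodule of the product is contained in a maximal open submodule. That maximal open submodule is the kernel of a map onto a single simple $M$, and this map factors through the $M$-isotypic component. Hence the tuple would already fail to generate that component, contradicting the assumption.
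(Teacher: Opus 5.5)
Your proof is correct. The paper does not prove this statement; it quotes it from Corob Cook--Vannacci \cite[Theorem 5.6]{CoVa}, so there is no in-paper argument to compare with. Your three steps give a complete, self-contained proof:
\begin{itemize}
\item topological Nakayama modulo $\mathrm{Rad}(N)$;
\item reduction to isotypic components;
\item $\mathrm{d}(M^{\kappa})=\lceil\kappa/m\rceil$, using $E=R\llbracket G\rrbracket/\mathrm{Ann}(M)\cong\mathrm{End}_D(M)\cong M^{m}$ as a left module.
\end{itemize}
Your handling of the ``main obstacle'' is the right one. A proper closed submodule lies in a maximal open submodule. A continuous map $\prod_{M'}M'^{\kappa_{M'}}\to M$ has open kernel, so it factors through a finite subproduct and kills every component with $M'\not\cong M$. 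Combined with building a $d$-tuple componentwise (padding the generating tuples of each component with zeros), this gives $\mathrm{d}(N/\mathrm{Rad}(N))=\sup_M\mathrm{d}(M^{i_N(M)})$. You should say this padding step explicitly.

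Two facts you use without comment deserve a line each. First, $N/\mathrm{Rad}(N)$ is a product of simples whose $M$-multiplicity is exactly $i_N(M)$. The paper also takes this for granted; it follows from Pontryagin duality, since the dual of $N/\mathrm{Rad}(N)$ is the socle of the discrete dual of $N$, which is a direct sum of simples. Second, $D=\mathrm{End}_G(M)$ is a field, by Wedderburn's little theorem.

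Step 3 can also be done without Jacobson density, in the same dual spirit as the paper's proof of Lemma~\ref{lem:min-gen-rank-by-tau}. A map $f\colon R\llbracket G\rrbracket^{d}\to M^{\kappa}$ is onto iff no nonzero $\sum_j a_j\pi_j$ (with $a_j\in D$) kills its image. Equivalently, the $\kappa$ coordinates of $f$ are $D$-linearly independent in $\mathrm{Hom}_G(R\llbracket G\rrbracket^{d},M)\cong M^{d}$, a $D$-space of dimension $dm$. This yields $\kappa\le dm$ and its converse at once.
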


\begin{lem}
\label{lem:min-gen-rank-by-tau}Let $G$ be a profinite group, $R$
a commutative profinite ring, $K$ an $\rg$-module. Then 
\[
\mathrm{d}_{\rg}(K)=\left\lceil \cbf 0GK\right\rceil .
\]
\end{lem}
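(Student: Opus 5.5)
The plan is to reduce the lemma to Theorem \ref{prop:no-of-gen}. The key step is to identify, for every simple $\rg$-module $M$, the quantity
\[
\dim_{\mathrm{End}_{G}(M)}\mathrm{Tor}^{\rg}_{0}(K,M)=\dim_{\mathrm{End}_{G}(M)}(K\te M)
\]
with $i_{K}(M^{*})$ or with $i_{K}(M)$ up to the appropriate duality. Here $K\te M$ requires $M$ to be a right module, so one regards $M$ as a right module via the antipode $g\mapsto g^{-1}$. Once this identification is made, the supremum in the definition of $\cbf 0GK$ becomes the supremum in Theorem \ref{prop:no-of-gen}, reindexed by the involution $M\mapsto M^{*}$ on simple modules. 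The dimensions are unchanged under this involution, since $\dim_{\mathrm{End}_G(M^*)}M^{*}=\dim_{\mathrm{End}_G(M)}M$ and the endomorphism fields are identified via $\mathrm{End}_G(M^*)\cong\mathrm{End}_G(M)^{op}$, which is commutative because it is a finite field.

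To establish the identification, I would use Pontryagin or linear duality over the finite field. Set $E=\mathrm{End}_{G}(M)$. For a profinite $K$ and a finite $M$ one has
\[
(K\te M)^{*}\cong\Hom_{\rg}(K,M^{*}),
\]
with duals taken over $E$ (or $\F_p$) and continuous homomorphisms. The module $M^{*}$ is simple and finite, so every continuous homomorphism $K\to M^{*}$ factors through $K/\mathrm{Rad}(K)$. This quotient is a product of simple modules, so $\Hom_{\rg}(K,M^{*})$ is isomorphic to $\Hom(\prod_{\kappa}M^{*},M^{*})$ for $\kappa=i_{K}(M^{*})$. Only continuous homomorphisms count, so this is $\bigoplus_{\kappa}\mathrm{End}_{G}(M^{*})$, of $E$-dimension exactly $\kappa$ (with infinite cardinalities matching as well). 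Hence $\dim_{E}(K\te M)=i_{K}(M^{*})$, and therefore
\[
\frac{\dim_E \mathrm{Tor}_0(K,M)}{\dim_E M}=\frac{i_K(M^*)}{\dim M^*}.
\]

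It remains to pass the ceiling through the supremum. Indeed $\lceil\sup_M a_M\rceil=\sup_M\lceil a_M\rceil$ holds for any family of nonnegative reals $a_M$ (including the infinite case), because the ceiling is monotone and takes integer values. Combining this with Theorem \ref{prop:no-of-gen} gives $\d_{\rg}(K)=\lceil\cbf 0GK\rceil$.

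I expect the main obstacle to be the duality step. It requires checking the adjunction between the completed tensor product and continuous Hom for a profinite $K$, for instance by writing $K=\varprojlim K_i$ with finite $K_i$ and using that both $\te M$ and $\Hom(-,M^*)$ convert the inverse limit appropriately. It also requires tracking the $E$-linear structure carefully enough that dimensions over $E$, and not merely over $\F_p$, agree.
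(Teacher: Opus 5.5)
Your proposal is correct. It differs from the paper's proof in how the two inequalities are obtained.

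\textbf{The paper's route.} The paper never proves an identity for individual simple modules. For $\lceil\tau_{0}(G;K)\rceil\leqslant\mathrm{d}_{\rg}(K)$ it tensors a surjection $\rg^{d}\to K$ with $M$, which gives $\dim(K\te M)\leqslant d\dim M$ directly, without Theorem \ref{prop:no-of-gen}. Only the reverse inequality uses that theorem. There the paper needs just the one-sided bound $i_{K}(M)\leqslant\dim_{\mathbb{F}}(M^{*}\te K)$. It gets this from right exactness applied to $K\to K/\mathrm{Rad}(K)\to M^{\kappa}$, together with the finite computation $(M^{*}\te M)^{*}\cong\mathrm{End}_{G}(M)$.

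\textbf{Your route.} You prove the exact identity $\dim_{E}(K\te M)=i_{K}(M^{*})$. This uses the adjunction $(K\te M)^{*}\cong\mathrm{Hom}_{\rg}(K,M^{*})$ and a description of continuous maps out of the semisimple head $K/\mathrm{Rad}(K)$. You then read off both inequalities from Theorem \ref{prop:no-of-gen}.

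\textbf{What each approach gives.} Your version is sharper: it shows that $\tau_{0}(G;K)$ equals $\sup_{N}i_{K}(N)/\dim N$ exactly, before any ceiling is applied. It also treats infinite multiplicities uniformly through duality. The cost is the following extra input:
\begin{itemize}
\item the tensor--Hom adjunction for an arbitrary profinite $K$;
\item the product structure of $K/\mathrm{Rad}(K)$, where by Schur only the $M^{*}$-isotypic factors contribute (you wrote $\prod_{\kappa}M^{*}$ directly);
\item the fact that continuous maps from a product to a finite module factor through a finite subproduct;
\item the Corob Cook--Vannacci theorem even for the easy direction, which the paper does in two lines.
\end{itemize}

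\textbf{The ceiling step.} Monotonicity and integrality alone do not justify it, since the floor function has both properties and the analogous claim fails for it. What you actually use is $x\leqslant\lceil x\rceil$. This gives $\tau_{0}(G;K)\leqslant\sup_{M}\lceil a_{M}\rceil$, which is an integer or $\infty$, and hence $\lceil\tau_{0}(G;K)\rceil\leqslant\sup_{M}\lceil a_{M}\rceil$. The other inequality follows from monotonicity.
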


\begin{proof}
First suppose that $K$ can be generated by $d$ elements, so that
we have a surjection $\rg^{d}\longrightarrow K$. Since tensoring
is right exact, this supplies us, for every simple $\rg$-module $M$,
with a surjection 
\[
\rg^{d}\te M=M^{d}\longrightarrow K\te M,
\]
so that 
\[
\dim_{\mathrm{End}_{G}(M)}(K\te M)\leqslant d\dim_{\mathrm{End}_{G}(M)}M.
\]
Since $M$ was arbitrary, it follows that $\left\lceil \cbf 0GK\right\rceil \leqslant\mathrm{d}_{\rg}K$. 

For the other direction, let $M$ again be a simple $\rg$-module
and set $\mathbb{F}=\mathrm{End}_{G}(M)$. Denote $\kappa=i_{K}(M)$,
so that $K/\mathrm{Rad}(K)$ surjects onto $M^{\kappa}$. Since tensoring
is right exact, we get surjections 
\[
M^{*}\otimes_{\llbracket RG\rrbracket}K\longrightarrow M^{*}\te\left(K/\mathrm{Rad}(K)\right)\longrightarrow M^{*}\te M^{\kappa},
\]
where we denote by $(\cdot)^{*}$ the linear dual over $\mathbb{F}$.
Observe that
\begin{align*}
\left(M^{*}\otimes_{R\llbracket G\rrbracket}M\right)^{*} & =\left(\left(M^{*}\otimes_{R}M\right)_{G}\right)^{*}=\left(\left(M^{*}\otimes_{R}M\right)^{*}\right)^{G}=\mathrm{Hom}_{\mathbb{F}}(M^{*}\otimes_{R}M,\mathbb{F})^{G}\\
 & =\mathrm{Hom}_{R}(M,\mathrm{Hom}_{\mathbb{F}}(M^{*},\mathbb{F}))^{G}=\mathrm{Hom}_{R}(M,M^{**})^{G}\\
 & =\mathrm{Hom}_{R}(M,M)^{G}=\mathrm{End}_{G}(M),
\end{align*}
so
\[
\kappa\leqslant\dim_{\mathbb{F}}(M^{*}\te K).
\]
Therefore
\[
\d_{\rg}(K)=\supG R\left\lceil \frac{\kappa}{\dim_{\mathbb{F}}M}\right\rceil \leqslant\supG R\left\lceil \frac{\dim_{\mathbb{F}}(M^{*}\te K)}{\dim_{\mathbb{F}}M}\right\rceil \leqslant\left\lceil \cbf 0GK\right\rceil .\qedhere
\]
\end{proof}

\subsection{Free and Projective Resolutions}
\begin{thm}
\label{thm:numericalFP}Let $G$ be a profinite group, $R$ a commutative
profinite ring, $K$ an $\rg$-module, $n$ a nonnegative integer
or $\infty$. Then $K$ is $\mathrm{FL}_{n}$ if and only if $K$
is $\mathrm{FP}_{n}$, if and only if $\cbf kGK<\infty$ for every
nonnegative integer $k\leqslant n$. Numerically:
\begin{enumerate}
\item \label{enu:Numeric}If $\cbf kGK$ is finite for every $k\leqslant n$,
then we have an exact sequence
\[
\cdots\to L_{1}\to L_{0}\to K\to0,
\]
where $L_{k}$ is a free $\rg$-module of rank at most $\sum^{k}_{i=0}\left\lceil \cbf iGK\right\rceil $
for every $k\leqslant n$.
\item \label{enu:Standard}If we have an exact sequence 
\[
\cdots\to P_{1}\to P_{0}\to K\to0,
\]
where $P_{k}$ is a projective $\rg$-module for every $k\leqslant n$,
then $\cbf kGK\leqslant\d_{R\llbracket G\rrbracket}P_{k}$ for every
$k\leqslant n$.
\end{enumerate}
\end{thm}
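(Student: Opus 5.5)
We prove the two numerical statements; the equivalences then follow. Free modules are projective, so $\mathrm{FL}_n$ implies $\mathrm{FP}_n$. Part \ref{enu:Standard} shows that $\mathrm{FP}_n$ forces $\cbf kGK<\infty$ for $k\leqslant n$. Part \ref{enu:Numeric} closes the cycle by producing a free resolution of finite rank up to degree $n$. For $n=\infty$ we apply the finite statements to every $n$.

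\emph{Part \ref{enu:Standard}.} Fix a simple $\rg$-module $M$ and write $\mathbb{F}=\mathrm{End}_G(M)$. Since the $P_k$ are projective, they are flat in $\mathrm{PMOD}$. Hence $\mathrm{Tor}^{\rg}_k(K,M)$ is the $k$-th homology of the complex $P_\bullet\te M$, and so it is a subquotient of $P_k\te M$. If $P_k$ is generated by $d$ elements, it is a quotient of $\rg^d$. Tensoring is right exact, so $P_k\te M$ is a quotient of $M^d$. This gives $\dim_{\mathbb F}\mathrm{Tor}_k\leqslant d\dim_{\mathbb F}M$, and taking the supremum over $M$ yields $\cbf kGK\leqslant \d_{\rg}P_k$. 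This is exactly the argument of the first half of Lemma \ref{lem:min-gen-rank-by-tau}. If $P_k$ is not finitely generated, the bound is vacuous.

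\emph{Part \ref{enu:Numeric}.} We build the resolution inductively using dimension shifting and Lemma \ref{lem:min-gen-rank-by-tau}. Set $K_0=K$. By that lemma, $K_0$ is generated by $\lceil\cbf 0GK\rceil$ elements, which gives a surjection $L_0\to K_0$ from a free module of that rank. Let $K_1$ be its kernel. Suppose inductively that we have an exact sequence $L_{k-1}\to\cdots\to L_0\to K\to 0$ with kernel $K_k=\ker(L_{k-1}\to L_{k-2})$, and with $\mathrm{rank}\,L_j\leqslant\sum_{i\leqslant j}\lceil\cbf iGK\rceil$. We want to bound $\cbf 0G{K_k}$, since Lemma \ref{lem:min-gen-rank-by-tau} then supplies $L_k$.

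For a simple module $M$, the short exact sequence $0\to K_k\to L_{k-1}\to K_{k-1}\to0$ and the vanishing of $\mathrm{Tor}_{\geqslant1}(L_{k-1},M)$ give an exact sequence
\[
0\to\mathrm{Tor}_1(K_{k-1},M)\to K_k\te M\to L_{k-1}\te M .
\]
Also $\mathrm{Tor}_1(K_{k-1},M)\cong\mathrm{Tor}_k(K,M)$, by dimension shifting through the free modules $L_0,\dots,L_{k-2}$. Dividing dimensions by $\dim M$ therefore gives
\[
\cbf 0G{K_k}\leqslant\cbf kGK+\mathrm{rank}\,L_{k-1}\leqslant \cbf kGK+\sum_{i<k}\lceil\cbf iGK\rceil .
\]
Taking the ceiling bounds the number of generators of $K_k$ by $\sum_{i\leqslant k}\lceil\cbf iGK\rceil$. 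We choose $L_k$ free of that rank mapping onto $K_k$, and the induction continues.

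\emph{Obstacle.} The main point needing care is homological algebra in the profinite category. We need that finitely generated free profinite modules are flat for the completed tensor product, so that Tor is computed by projective resolutions in $\mathrm{PMOD}(\rg)$ and dimension shifting is valid there. We also need that the kernels $K_k$ are closed, hence profinite, so that Lemma \ref{lem:min-gen-rank-by-tau} applies to them. Both facts are standard: $\mathrm{PMOD}$ has enough projectives and Tor is defined by them, and kernels of continuous maps between profinite modules are closed. With these in place the argument above goes through.
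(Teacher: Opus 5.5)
Your proposal is correct and follows the paper's proof essentially step for step: Part 2 is the same bound ($\mathrm{Tor}_k$ is a subquotient of $P_k\otimes M$, which is a quotient of $M^{d}$), and Part 1 is the same recursive construction using Lemma~\ref{lem:min-gen-rank-by-tau}. The only difference is packaging: you get $\mathrm{Tor}_1(K_{k-1},M)\cong\mathrm{Tor}_k(K,M)$ by direct dimension shifting, whereas the paper runs that induction inside Lemma~\ref{lem:ExtendingResolutions}; both give the same bound $\tau_{0}(G;K_k)\leqslant\tau_{k}(G;K)+\mathrm{rank}\,L_{k-1}$.
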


The main ingredient of the proof is the following lemma. 
\begin{lem}
\label{lem:ExtendingResolutions}Let $G$ be a profinite group, $R$
a commutative profinite ring, $K$ an $\rg$-module and $i$ a nonnegative
integer. Suppose $\cbf{i+1}GK$ is finite and that
\[
P_{i}\to\cdots\to P_{0}\to K\to0
\]
is an exact sequence, where $P_{0},\dots,P_{i}$ are projective $\rg$-modules.
Set $N=\ker(P_{i}\to P_{i-1})$ (or $N=\ker(P_{0}\to K)$ if $i=0$).
Then
\[
\cbf 0GN\leqslant\cbf{i+1}GK+\mathrm{d}_{R\llbracket G\rrbracket}P_{i}.
\]
\end{lem}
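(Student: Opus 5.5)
Fix a simple $\rg$-module $M$ and write $\mathbb{F}=\mathrm{End}_{G}(M)$. The aim is the pointwise bound
\[
\dim_{\mathbb{F}}(N\te M)\leqslant\dim_{\mathbb{F}}\torg{i+1}(K,M)+\mathrm{d}_{\rg}(P_{i})\dim_{\mathbb{F}}M .
\]
Dividing by $\dim_{\mathbb{F}}M$ and taking the supremum over $M$ then gives the claim. (Since $\tau_0$ involves $\torg{0}(N,M)=N\te M$, this is the right quantity to control.)

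First split off the short exact sequence $0\to N\to P_{i}\to C\to0$, where $C=\mathrm{im}(P_{i}\to P_{i-1})$, or $C=K$ when $i=0$. By exactness of the given complex, $C$ has the partial projective resolution $P_{i-1}\to\cdots\to P_{0}\to K\to 0$ spliced in. Dimension shifting through the projectives $P_{0},\dots,P_{i-1}$ then gives $\torg{1}(C,M)\cong\torg{i+1}(K,M)$. The shift is justified by the long exact Tor sequences of the short exact pieces, since Tor in positive degrees vanishes on projective profinite modules. When $i=0$ this step is empty.

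Next apply the long exact sequence for $-\te M$ to $0\to N\to P_{i}\to C\to0$. It gives an exact sequence
\[
\torg{1}(C,M)\to N\te M\to P_{i}\te M .
\]
Hence $\dim_{\mathbb{F}}(N\te M)\leqslant\dim_{\mathbb{F}}\torg{1}(C,M)+\dim_{\mathbb{F}}(P_{i}\te M)$. For the last term, let $d=\mathrm{d}_{\rg}(P_{i})$ and take a surjection $\rg^{d}\to P_{i}$. By right exactness this yields $M^{d}\twoheadrightarrow P_{i}\te M$, exactly as in the proof of Lemma \ref{lem:min-gen-rank-by-tau}, so $\dim_{\mathbb{F}}(P_{i}\te M)\leqslant d\dim_{\mathbb{F}}M$. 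If $d$ is infinite the statement is vacuous.

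Combining these gives the pointwise bound. The only delicate point is bookkeeping: the Tor groups here are finite, or at least profinite, $\mathbb{F}$-vector spaces, and the maps in the long exact sequences are $\mathbb{F}$-linear. This holds because $\mathrm{End}_{G}(M)$ acts on $M$ and hence functorially on $\torg{*}(-,M)$, so the dimension count is legitimate. Everything else is the standard dimension-shifting argument in $\mathrm{PMOD}(\rg)$, which has enough projectives and exact Tor sequences.
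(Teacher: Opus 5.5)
Your proof is correct and is essentially the paper's argument. In both, the key step is the long exact $\mathrm{Tor}$ sequence of $0\to N\to P_{i}\to C\to0$, which gives $\mathrm{Tor}_{1}(C,M)\to N\otimes M\to P_{i}\otimes M$; this is combined with the dimension shift $\mathrm{Tor}_{1}(C,M)\cong\mathrm{Tor}_{i+1}(K,M)$ and the bound $\dim_{\mathbb{F}}(P_{i}\otimes M)\leqslant\mathrm{d}_{R\llbracket G\rrbracket}P_{i}\cdot\dim_{\mathbb{F}}M$. The difference is only organisational: the paper inducts on $i$, peeling off $N_{0}=\ker(P_{0}\to K)$ and using $\mathrm{Tor}_{i+1}(K,M)\cong\mathrm{Tor}_{i}(N_{0},M)$, whereas you do the whole dimension shift at once.
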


\begin{proof}
We prove this by induction on $i$. Consider first the case $i=0$.
We have an exact sequence
\[
0\to N\to P_{0}\to K\to0.
\]
Let $M$ be a simple $\rg$-module, which is a finite-dimensional
vector space over the finite field $\mathbb{F}\coloneqq\mathrm{End}_{G}(M)$.
The long exact sequence of $\mathrm{Tor}$ gives us an exact sequence
\begin{align*}
\mathrm{Tor}^{\rg}_{1}(K,M)\to N\te M\to P_{0}\te M,
\end{align*}
which means 
\begin{align*}
\dim_{\mathbb{F}}N\te M & \leqslant\dim_{\mathbb{F}}\mathrm{Tor}^{\rg}_{1}(K,M)+\dim_{\mathbb{F}}P_{0}\te M,
\end{align*}
so that
\[
\frac{\dim_{\mathbb{F}}N\te M}{\dim_{\mathbb{F}}M}\leqslant\cbf 1GK+\mathrm{d}_{R\llbracket G\rrbracket}P_{0}.
\]
Since $M$ was arbitrary, the claim follows. Now, assume the claim
is true for $i-1$, and let us prove it for $i$. Set $N_{0}=\ker(P_{0}\to K)$,
so that
\[
P_{i}\to\cdots\to P_{1}\to N_{0}\to0
\]
is exact. By the induction hypothesis, we have 
\[
\cbf 0GN\leqslant\cbf iG{N_{0}}+\mathrm{d}_{R\llbracket G\rrbracket}P_{i}.
\]
On the other hand, the exact sequence 
\[
0\to N_{0}\to P_{0}\to K\to0
\]
again supplies us with, for every simple $R\llbracket G\rrbracket$-module
$M$, the exact sequence 
\begin{align*}
0 & \to\mathrm{Tor}^{\rg}_{i+1}(K,M)\to\mathrm{Tor}^{\rg}_{i}(N_{0},M)\to0\to\cdots\\
\cdots & \to\mathrm{Tor}^{\rg}_{2}(K,M)\to\mathrm{Tor}^{\rg}_{1}(N_{0},M)\to0\to\mathrm{Tor}^{\rg}_{1}(K,M)\to N_{0}\te M\to P_{0}\te M\to K\te M\to0.
\end{align*}
Since $i>0$, this implies $\mathrm{Tor}^{R\llbracket G\rrbracket}_{i+1}(K,M)\cong\mathrm{Tor}^{R\llbracket G\rrbracket}_{i}(N_{0},M)$,
which means $\cbf iG{N_{0}}=\cbf{i+1}GK$, so we are done.
\end{proof}

\begin{proof}[Proof of Theorem \ref{thm:numericalFP}]
First, assume $\cbf iGK<\infty$ for every $i\leqslant n$. We construct
the resolution by recursion on $i=0,\dots,n-1$. The case $i=0$ is
Lemma \ref{lem:min-gen-rank-by-tau}; assume by induction that we
have an exact sequence
\[
L_{i}\to\cdots\to L_{0}\to K\to0,
\]
where $i<n$ and where $L_{i}$ is a free $\rg$-module of rank at
most $\sum^{i}_{j=0}\left\lceil \cbf jGK\right\rceil $. By Lemma
\ref{lem:ExtendingResolutions}, $N\coloneqq\ker(L_{i}\to L_{i-1})$
can be generated by $\sum^{i}_{j=0}\left\lceil \cbf jGK\right\rceil +\left\lceil \cbf{i+1}GK\right\rceil $
elements, so we may construct a map $\rg{}^{\sum^{i+1}_{j=0}\left\lceil \cbf jGK\right\rceil }\longrightarrow L_{i}$
so that 
\[
\rg^{\sum^{i+1}_{j=0}\left\lceil \cbf jGK\right\rceil }\to L_{i}\to\cdots\to L_{0}\to K\to0
\]
is an exact sequence of free $\rg$-modules, as needed.

Now, suppose we have an exact sequence
\[
\cdots\to P_{1}\to P_{0}\to K\to0
\]
where each $P_{i}$ is a projective $\rg$-module. Let $M$ be a simple
$\rg$-module. For $k\leqslant n$, $\torg k(K,M)$ is the $k^{\text{th}}$
homology group of
\[
\cdots\to P_{1}\te M\to P_{0}\te M\to0,
\]
so that 
\[
\dim_{\mathrm{End}_{G}(M)}\torg k(K,M)\leqslant\dim_{\mathrm{End}_{G}(M)}P_{k}\te M.
\]
We have a surjection $\rg^{\d_{\rg}P_{k}}\longrightarrow P_{k}$,
and hence a surjection 
\[
\rg^{\d_{\rg}P_{k}}\te M=M^{\d_{\rg}P_{k}}\longrightarrow P_{k}\te M
\]
(since tensoring with $M$ is right exact). Putting these together,
we get
\[
\dim_{\mathrm{End}_{G}(M)}\torg k(K,M)\leqslant\dim_{\mathrm{End}_{G}(M)}P_{k}\te M\leqslant\mathrm{d}_{\rg}P_{k}\cdot\dim_{\mathrm{End}_{G}(M)}M,
\]
and we are done.
\end{proof}

We record a corollary which is well-known:
\begin{cor}
\label{cor:augmentation}If $G$ is a profinite group, then $G$ is
$\mathrm{FP}_{1}$ if and only if its augmentation ideal is finitely
generated.
\end{cor}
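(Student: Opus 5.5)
We derive Corollary \ref{cor:augmentation} from Lemma \ref{lem:ExtendingResolutions} and Lemma \ref{lem:min-gen-rank-by-tau} (equivalently, from Theorem \ref{thm:numericalFP}), applied to the augmentation sequence. Write $I$ for the augmentation ideal of $\rg$, i.e.\ the kernel of the augmentation map $\varepsilon\colon\rg\to R$. This gives a short exact sequence
\[
0\to I\to\rg\to R\to0,
\]
in which $P_{0}=\rg$ is free of rank one.

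For the forward direction, assume $G$ is $\mathrm{FP}_{1}$. By Theorem \ref{thm:numericalFP}, $\cbf 1GR<\infty$. Apply Lemma \ref{lem:ExtendingResolutions} with $i=0$, $P_{0}=\rg$ and $N=I$. This yields
\[
\cbf 0GI\leqslant\cbf 1GR+1<\infty .
\]
Lemma \ref{lem:min-gen-rank-by-tau} then gives $\d_{\rg}(I)=\lceil\cbf 0GI\rceil<\infty$, so $I$ is finitely generated. Alternatively, one could use Schanuel's lemma to compare $I$ with the kernel of a finite presentation, but the numerical route avoids this.

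For the converse, assume $I$ is finitely generated, say by $d$ elements. Choose a surjection $\rg^{d}\to I$. Composing with the inclusion $I\hookrightarrow\rg$ gives an exact sequence
\[
\rg^{d}\to\rg\to R\to0
\]
of finitely generated free modules, which shows directly that $R$ is $\mathrm{FL}_{1}$, hence $\mathrm{FP}_{1}$. Numerically, Theorem \ref{thm:numericalFP}(\ref{enu:Standard}) bounds $\cbf 0GR\leqslant1$ and $\cbf 1GR\leqslant d$.

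I expect no real obstacle here. The only points to check are that $I$ is a closed submodule, being the kernel of a continuous map, and that the surjection from $\rg^{d}$ exists in the profinite category, which holds by the definition of $\d_{\rg}$.
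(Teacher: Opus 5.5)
Your proposal is correct and follows the paper's proof essentially verbatim. The converse is the direct construction of the exact sequence $\rg^{d}\to\rg\to R\to0$. The forward direction combines finiteness of $\cbf 1GR$ with Lemma \ref{lem:ExtendingResolutions} at $i=0$, and then with Lemma \ref{lem:min-gen-rank-by-tau}, which the paper uses implicitly and you make explicit, to conclude that the augmentation ideal is finitely generated.
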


\begin{proof}
If the augmentation ideal $I_{R}(G)=\ker(R\llbracket G\rrbracket\to R)$
can be generated by $d$ elements, we can construct an exact sequence
\[
\rg^{d}\to R\llbracket G\rrbracket\to R\to0,
\]
so that $G$ is $\mathrm{FP}_{1}$. Conversely, if $G$ is $\mathrm{FP}_{1}$
then $\cbf 1GR$ is finite, so the fact 
\[
R\llbracket G\rrbracket\to R\to0
\]
is exact and Lemma \ref{lem:ExtendingResolutions} imply $\ker(R\llbracket G\rrbracket\to R)$
is finitely generated.
\end{proof}

In \cite{Lubotzky01}, it was proved that a finitely generated profinite
group is finitely presented if and only if $\cb 2G$ is finite. We
therefore get:
\begin{cor}
Let $G$ be a finitely generated profinite group. Then $G$ is finitely
presented if and only if it is $\mathrm{FP}_{2}$, if and only if
$\tau_{2}(G)$ is finite.
\end{cor}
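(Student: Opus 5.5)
The corollary follows by chaining two equivalences, each already available: one from Lubotzky and one from the numerical criterion. Throughout I take $R=\hat{\mathbb{Z}}$, as is implicit when one speaks of $\mathrm{FP}_2$ of a group without specifying the ring; this matches Lubotzky's invariant, which ranges over all simple (finite) $\hat{\mathbb{Z}}\llbracket G\rrbracket$-modules.

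\emph{Finite presentability and $\tau_2$.} By \cite[Theorem 0.3]{Lubotzky01}, a finitely generated profinite group $G$ is finitely presented if and only if $\cb 2G<\infty$. The only point to verify is that Lubotzky's invariant has the same finiteness as ours. His invariant is phrased via $\dim H^2(G,M)/\dim M$ over simple modules $M$, while ours uses $\mathrm{Tor}_2(\hat{\mathbb{Z}},M)=H_2(G,M)$. By Pontryagin/linear duality over $\mathrm{End}_G(M)$, $H_2(G,M)^*\cong H^2(G,M^*)$, and $M\mapsto M^*$ is a dimension-preserving bijection on simple modules. The two suprema therefore coincide, as the earlier Remark already notes in the field case. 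I would present this identification in one line.

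\emph{$\mathrm{FP}_2$ and the $\tau_k$.} By Theorem \ref{thm:numericalFP} with $n=2$, $G$ is $\mathrm{FP}_2$ if and only if $\cb 0G,\cb 1G,\cb 2G$ are all finite. We have $\cb 0G\leq 1$ trivially. Since $G$ is finitely generated, say by $d$ elements, $\cb 1G$ is finite. To see this, take a free resolution with $P_0=\hat{\mathbb{Z}}\llbracket G\rrbracket$; its augmentation ideal is generated by the $g_j-1$, so $P_1$ can be taken free of rank $d$. Hence $G$ is $\mathrm{FP}_1$, and Theorem \ref{thm:numericalFP}(\ref{enu:Standard}) gives $\cb 1G\le d$. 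Alternatively, bound $\dim H_1(G,M)$ directly by $d\dim M$ via the cocycle description. Under the finite-generation hypothesis, $\mathrm{FP}_2$ is therefore equivalent to $\cb 2G<\infty$.

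Combining the two steps yields the three-way equivalence. The only step needing any care is the duality comparison with Lubotzky's cohomological formulation, and it is routine.
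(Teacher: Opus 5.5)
Your proposal is correct and follows the paper's argument. The paper likewise combines Lubotzky's theorem (finitely presented if and only if $\tau_2(G)<\infty$) with the numerical criterion, Theorem \ref{thm:numericalFP}, with finite generation handling $\tau_0$ and $\tau_1$ (cf.\ Lemma \ref{lem:Finitely-generated-is-fp1}); you additionally make explicit the duality $H_2(G,M)^*\cong H^2(G,M^*)$ that matches Lubotzky's cohomological invariant with $\tau_2$, and the bounds $\tau_0(G)\le 1$, $\tau_1(G)\le \mathrm{d}(G)$, which the paper leaves tacit.
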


\subsection{Resolutions by Projective Covers}

In this subsection we consider a particular projective resolution
of an $R\llbracket G\rrbracket$-module $K$, for which $P_{k}$ is
finitely generated if and only if $\cbf kGK<\infty$.

Let $R$ be a commutative profinite ring, $G$ a profinite group,
$K$ an $R\llbracket G\rrbracket$-module. A \emph{projective cover
}is a surjective map $f:P\to K$ from a projective profinite $R\llbracket G\rrbracket$-module
$P$ such that the following holds: if $H\subseteq P$ is a submodule
for which $f(H)=K$, then $H=P$. Equivalently, one says that $\ker f$
is \emph{superfluous}, in the following sense: if $H\subseteq P$
is a submodule such that $H+\ker f=P$, then $H=P$. Projective covers
exist and are unique up to isomorphism (see \cite[Remark 3.4.3(i)]{ProjCov}).
Therefore, we may construct a canonical resolution of $K$, by first
taking a projective cover $\bar{P}_{0}\longrightarrow K$, and then
taking a projective cover $\bar{P}_{n+1}\longrightarrow K_{n}$ of
the kernel $K_{n}\coloneqq\ker(\bar{P}_{n}\longrightarrow\bar{P}_{n-1})$
for every $n\geqslant0$ (setting $\bar{P}_{-1}=K$).
\begin{lem}
\label{lem:tensor-of-projective-covers}Let $G$ be a profinite group,
$R$ a commutative profinite ring, $K$ an $R\llbracket G\rrbracket$-module,
$P$ a projective profinite $R\llbracket G\rrbracket$-module, and
$p:P\to K$ a projective cover. If $M$ is a finite simple $R\llbracket G\rrbracket$-module,
then 
\[
\mathrm{Hom}_{G}(P,M)\cong\mathrm{Hom}_{G}(K,M)
\]
and 
\[
P\te M\cong K\te M.
\]
\end{lem}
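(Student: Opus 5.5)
The plan is to prove the Hom statement directly from superfluity of $\ker p$, and then deduce the tensor statement by duality, using the same linear-dual calculation as in the proof of Lemma \ref{lem:min-gen-rank-by-tau}.

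Write $N=\ker p$, so we have an exact sequence $0\to N\to P\to K\to0$. Applying $\mathrm{Hom}_G(-,M)$ gives an exact sequence
\[
0\to\mathrm{Hom}_G(K,M)\to\mathrm{Hom}_G(P,M)\to\mathrm{Hom}_G(N,M).
\]
The first map is injective, so it suffices to show that every continuous $G$-map $\varphi:P\to M$ vanishes on $N$.

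Suppose $\varphi(N)\neq0$. Since $M$ is simple, $\varphi(N)=M$. Put $H=\ker\varphi$. For any $x\in P$ there is $y\in N$ with $\varphi(y)=\varphi(x)$, so $x-y\in H$. Hence $H+N=P$. Superfluity of $N$ then forces $H=P$, that is, $\varphi=0$, contradicting $\varphi(N)\neq0$. Therefore $\varphi(N)=0$, $\varphi$ factors through $K$, and $\mathrm{Hom}_G(P,M)\cong\mathrm{Hom}_G(K,M)$. We only need that $H$ is a closed submodule, which holds because $M$ is finite and discrete.

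For the tensor statement, set $\mathbb{F}=\mathrm{End}_G(M)$. Right exactness gives a surjection $P\te M\to K\te M$, with kernel equal to the image of $N\te M$. I will show that the $\mathbb{F}$-linear duals agree. I expect this adjunction step to be the main obstacle, mostly in bookkeeping of sides and of completed tensors.

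Adapting the computation in Lemma \ref{lem:min-gen-rank-by-tau}, for any profinite module $X$ we have
\[
(X\te M)^{*}\cong\mathrm{Hom}_G(X,M^{*})
\]
(with the appropriate left/right conventions). Here $M^{*}$ is the dual right module made into a left module, and it is again simple: its submodules correspond to annihilators of submodules of $M$. Applying the first part to the simple module $M^{*}$, restriction gives $\mathrm{Hom}_G(K,M^{*})\cong\mathrm{Hom}_G(P,M^{*})$. Hence the dual of the surjection $P\te M\to K\te M$ is an isomorphism. These spaces are finite dimensional, since they are quotients of $M^{\d}$-type modules or, in general, finite/profinite $\mathbb{F}$-spaces where duality still applies via Pontryagin duality. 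Therefore the surjection is itself an isomorphism.

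If finite dimensionality fails, I would instead argue directly. The image of $N\te M$ in $P\te M$ is killed by every continuous functional, because each functional corresponds to a map $P\to M^{*}$ vanishing on $N$. A profinite $\mathbb{F}$-space on which all continuous functionals vanish is $0$, so the kernel is $0$.
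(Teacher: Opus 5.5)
Your proposal is correct and follows essentially the same route as the paper. You get the Hom isomorphism from superfluity of $\ker p$ together with simplicity of $M$. You then get the tensor isomorphism by dualising via $(X\te M)^{*}\cong\mathrm{Hom}_{G}(X,M^{*})$ and applying the first part to the simple module $M^{*}$; you additionally check that the natural surjection $P\te M\to K\te M$ is injective (continuous functionals separate points of a profinite $\mathbb{F}$-space), where the paper only compares the two double duals abstractly.
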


\begin{proof}
We have a natural map $\mathrm{Hom}_{G}(K,M)\to\mathrm{Hom}_{G}(P,M)$,
which is injective since $p$ is surjective. Since $p$ is a projective
cover, this map is moreover surjective: given any map $f:P\to M$,
we have $f(\ker p)=0$ (otherwise we would have $f(\ker p)=M$, by
simplicity of $M$, which implies $\ker p+\ker f=P$, which implies
$\ker f=P$ since $\ker p$ is superfluous, which is a contradiction).
This means $f$ induces a map $\bar{f}:K\to M$, which means $f$
is in the image of $\mathrm{Hom}_{G}(K,M)\to\mathrm{Hom}_{G}(P,M)$.

Let $\mathbb{F}=\mathrm{End}_{G}(M)$, which is a finite field, and
denote by $(-)^{\vee}=\mathrm{Hom}_{\mathbb{F}}(-,\mathbb{F})$ the
continuous linear dual over $\mathbb{F}$. We have $(P\te M)^{\vee}\cong\mathrm{Hom}_{G}(P,M^{\vee})$
and $(K\te M)^{\vee}\cong\mathrm{Hom}_{G}(K,M^{\vee})$. By the previous
paragraph, we have $\mathrm{Hom}_{G}(P,M^{\vee})\cong\mathrm{Hom}_{G}(K,M^{\vee})$
(since $M^{\vee}=M^{*}$ is also simple). Therefore, $(P\te M)^{\vee}\cong(K\te M)^{\vee}$,
so 
\[
P\te M=(P\te M)^{\vee\vee}\cong(K\te M)^{\vee\vee}=K\te M.\qedhere
\]
\end{proof}

\begin{lem}
\label{lem:tau-of-projective-cover}Let $G$ be a profinite group,
$R$ a commutative profinite ring, $K$ an $R\llbracket G\rrbracket$-module,
$k$ a nonnegative integer. Let 
\[
P_{k}\to\cdots\to P_{0}\to K\to0
\]
be a resolution of $K$ by projective covers, and let $M$ be a simple
$R\llbracket G\rrbracket$-module. Set $N=\ker(P_{k}\to P_{k-1})$
(or $N=\ker(P_{0}\to K)$ in the case $k=0$). Then 
\[
\mathrm{Tor}^{\rg}_{r}(N,M)\cong\mathrm{Tor}^{\rg}_{r+k+1}(K,M)
\]
for every nonnegative integer $r$.
\end{lem}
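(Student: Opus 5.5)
We argue by dimension shifting along the short exact sequences of the resolution, and use Lemma \ref{lem:tensor-of-projective-covers} to dispose of the low-degree terms. Set $K_{-1}=K$ and $K_{j}=\ker(P_{j}\to P_{j-1})$ for $0\leqslant j\leqslant k$ (with $P_{-1}=K$), so that $N=K_{k}$. For each $j$ we have a short exact sequence
\[
0\to K_{j}\to P_{j}\to K_{j-1}\to0,
\]
in which $P_{j}\to K_{j-1}$ is a projective cover by construction. It suffices to prove the one-step statement
\[
\mathrm{Tor}^{\rg}_{r}(K_{j},M)\cong\mathrm{Tor}^{\rg}_{r+1}(K_{j-1},M)\quad\text{for all }r\geqslant0,
\]
since iterating it $k+1$ times gives the claim.

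For $r\geqslant1$ this is the standard consequence of the long exact $\mathrm{Tor}$ sequence. Since $P_{j}$ is projective, $\mathrm{Tor}_{s}(P_{j},M)=0$ for $s\geqslant1$, and the connecting maps $\mathrm{Tor}_{r+1}(K_{j-1},M)\to\mathrm{Tor}_{r}(K_{j},M)$ are therefore isomorphisms. This is exactly the computation already carried out in the proof of Lemma \ref{lem:ExtendingResolutions}.

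The only real content is the case $r=0$. There the long exact sequence reads
\[
0\to\mathrm{Tor}_{1}(K_{j-1},M)\to K_{j}\te M\to P_{j}\te M\to K_{j-1}\te M\to0,
\]
so we need the map $K_{j}\te M\to P_{j}\te M$ to vanish. Equivalently, we need $P_{j}\te M\to K_{j-1}\te M$ to be injective. By Lemma \ref{lem:tensor-of-projective-covers}, $P_{j}\te M\cong K_{j-1}\te M$. This is a statement about finite-dimensional $\mathrm{End}_{G}(M)$-spaces, and a surjection between spaces of equal finite dimension is an isomorphism.

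One subtlety remains: the lemma gives an abstract isomorphism, not necessarily one induced by $p$. Finiteness of dimensions, with the surjection in hand, fixes this whenever $\dim(P_{j}\te M)$ is finite. To avoid any finiteness issue, I would instead rerun the lemma's argument at the level of the map itself. Dualising, the map induced by $p$ corresponds to $\mathrm{Hom}_{G}(K_{j-1},M^{\vee})\to\mathrm{Hom}_{G}(P_{j},M^{\vee})$, which the proof of the lemma shows is a bijection. Hence the induced map $P_{j}\te M\to K_{j-1}\te M$ is itself an isomorphism, by Pontryagin or linear duality of profinite modules. Checking that the identification used in the lemma is natural, i.e. that it is induced by $p$, is the step I expect to need the most care; everything else is formal.
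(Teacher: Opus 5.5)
Your proof is correct and is essentially the paper's argument: dimension shifting along $0\to K_{j}\to P_{j}\to K_{j-1}\to0$, with Lemma \ref{lem:tensor-of-projective-covers} handling the degree-zero term. The paper phrases it as induction on $k$, first peeling off $0\to N_{0}\to P_{0}\to K\to0$, which is the same iteration; your extra check that the isomorphism $P_{j}\otimes_{R\llbracket G\rrbracket}M\to K_{j-1}\otimes_{R\llbracket G\rrbracket}M$ is the one induced by the cover map (via naturality of $(X\otimes_{R\llbracket G\rrbracket}M)^{\vee}\cong\mathrm{Hom}_{G}(X,M^{\vee})$) is worthwhile, since $P_{j}\otimes_{R\llbracket G\rrbracket}M$ need not be finite-dimensional and the paper simply asserts that the induced map is an isomorphism.
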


\begin{proof}
We prove this by induction on $k$, starting with the base case $k=0$.
We have an exact sequence
\[
0\to N\to P_{0}\to K\to0,
\]
which supplies us with an exact sequence
\begin{align*}
0 & \to\mathrm{Tor}^{\rg}_{r+1}(K,M)\to\mathrm{Tor}^{\rg}_{r}(N,M)\to0\to\cdots\\
\cdots & \to\mathrm{Tor}^{\rg}_{2}(K,M)\to\mathrm{Tor}^{\rg}_{1}(N,M)\to0\to\mathrm{Tor}^{\rg}_{1}(K,M)\to N\te M\to P_{0}\te M\to K\te M\to0
\end{align*}
(since $\mathrm{Tor}^{\rg}_{1}(P_{0},M)=0$, since $P_{0}$ is projective).
This means $\mathrm{Tor}^{\rg}_{r+1}(K,M)\cong\mathrm{Tor}^{\rg}_{r}(N,M)$
for $r>0$. Moreover, by Lemma \ref{lem:tensor-of-projective-covers},
$P_{0}\te M\to K\te M$ is an isomorphism, which means $\mathrm{Tor}^{\rg}_{1}(K,M)\cong N\te M$.

Now, for the inductive step, set $N_{0}=\ker(P_{0}\to K)$, so that
we have an exact sequence
\[
P_{k}\to\cdots\to P_{1}\to N_{0}\to0.
\]
By the previous case, $\mathrm{Tor}^{\rg}_{r+1}(K,M)\cong\mathrm{Tor}^{\rg}_{r}(N_{0},M)$
for every $r\geqslant0$. The claim now follows by the induction hypothesis.
\end{proof}

\begin{lem}
\label{lem:ProjectiveCoverResolution}Let $G$ be a profinite group,
$R$ a commutative profinite ring, $K$ an $\rg$-module and $k$
a nonnegative integer. Suppose 
\[
P_{k}\to\cdots\to P_{0}\to K\to0
\]
is a resolution by projective covers. Set $N=\ker(P_{k}\to P_{k-1})$
(or $N=\ker(P_{0}\to K)$ in the case $k=0$). Then $\d_{R\llbracket G\rrbracket}N=\left\lceil \cbf{k+1}GK\right\rceil $.
\end{lem}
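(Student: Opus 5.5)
The plan is to combine the two preceding lemmas. By Lemma \ref{lem:min-gen-rank-by-tau} applied to the module $N$, we have $\d_{\rg}N=\left\lceil \cbf 0GN\right\rceil$. So the task reduces to showing that $\cbf 0GN=\cbf{k+1}GK$.

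For this we use Lemma \ref{lem:tau-of-projective-cover} with $r=0$. It gives, for every simple $\rg$-module $M$, an isomorphism
\[
N\te M=\mathrm{Tor}^{\rg}_{0}(N,M)\cong\mathrm{Tor}^{\rg}_{k+1}(K,M).
\]
We must check that this is an isomorphism of $\mathrm{End}_{G}(M)$-vector spaces, not merely of abelian groups; otherwise the dimensions could be counted incorrectly. Every map in the proof of Lemma \ref{lem:tau-of-projective-cover} is built from connecting homomorphisms of long exact Tor sequences in the second variable $M$. These are natural in $M$, so $\mathrm{End}_{G}(M)$ acts compatibly. This also applies to the isomorphism $\mathrm{Tor}_{1}(K,M)\cong N\te M$ coming from the fact that $P_{0}\te M\to K\te M$ is an isomorphism (Lemma \ref{lem:tensor-of-projective-covers}).

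Alternatively, since $\mathrm{End}_{G}(M)$ is a finite field, it suffices to compare cardinalities. Equal orders of finite vector spaces over the same finite field give equal dimensions, and the isomorphism already gives equal orders. This route avoids the linearity check altogether. It does require finiteness, so we handle the infinite case separately: if $\mathrm{Tor}_{k+1}(K,M)$ is infinite, then both sides are infinite profinite spaces, and both dimensions (or suprema) are infinite.

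Dividing by $\dim_{\mathrm{End}_{G}(M)}M$ and taking the supremum over all simple $M$ gives $\cbf 0GN=\cbf{k+1}GK$, with both sides possibly infinite. Taking ceilings then gives the claim. When both sides are infinite, the statement reads as both $N$ being infinitely generated and $\cbf{k+1}GK=\infty$. This is consistent, since Lemma \ref{lem:min-gen-rank-by-tau} holds with value $\infty$.

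The only point needing care is the compatibility of the isomorphism with the $\mathrm{End}_{G}(M)$-structure, and the cardinality argument above disposes of it.
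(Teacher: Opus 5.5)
Your proposal is correct and is exactly the paper's argument: Lemma \ref{lem:tau-of-projective-cover} with $r=0$ identifies $N\te M$ with $\mathrm{Tor}^{\rg}_{k+1}(K,M)$ for every simple $M$, so $\cbf 0GN=\cbf{k+1}GK$, and Lemma \ref{lem:min-gen-rank-by-tau} converts this into $\d_{\rg}N=\lceil\cbf{k+1}GK\rceil$. Your extra care about $\mathrm{End}_{G}(M)$-linearity and the infinite case is sound; the paper leaves both implicit. One small correction to your phrasing: the long exact sequences come from short exact sequences in the first variable, and what you actually use is that they are natural in the second variable $M$.
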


\begin{proof}
By Lemma \ref{lem:tau-of-projective-cover},
\[
\mathrm{Tor}^{\rg}_{0}(N,M)\cong\mathrm{Tor}^{\rg}_{k+1}(K,M),
\]
so the claim follows from Lemma \ref{lem:min-gen-rank-by-tau}.
\end{proof}

\begin{thm}
\label{thm:NumericalCriterion2}Let $G$ be a profinite group, $R$
a commutative profinite ring, $K$ an $\rg$-module and $k$ a nonnegative
integer. Let
\[
\cdots\to P_{1}\to P_{0}\to K\to0
\]
be a resolution by projective covers. Then $\d_{\rg}P_{k}=\left\lceil \cbf kGK\right\rceil $.
In particular, $\cbf kGK$ is finite if and only if there exists a
projective resolution 
\[
P_{k}\to\cdots\to P_{0}\to K\to0
\]
with $P_{k}$ finitely generated.
\end{thm}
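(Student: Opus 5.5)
The plan is to reduce the statement to Lemma \ref{lem:ProjectiveCoverResolution} applied one step lower, together with Lemma \ref{lem:tensor-of-projective-covers} and Lemma \ref{lem:min-gen-rank-by-tau}.

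\textbf{Step 1: identify $\d_{\rg}P_k$ with the generator count of its image.} Set $K_{-1}=K$, and for $k\geqslant1$ let $K_{k-1}=\ker(P_{k-1}\to P_{k-2})$ (with $P_{-1}=K$). By construction, $P_k\to K_{k-1}$ is a projective cover. By Lemma \ref{lem:tensor-of-projective-covers}, for every simple $\rg$-module $M$ we have $P_k\te M\cong K_{k-1}\te M$. Hence $\cbf 0G{P_k}=\cbf 0G{K_{k-1}}$, and Lemma \ref{lem:min-gen-rank-by-tau} gives
\[
\d_{\rg}P_k=\left\lceil \cbf 0G{K_{k-1}}\right\rceil .
\]

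\textbf{Step 2: compute the right-hand side.} For $k=0$ we have $K_{-1}=K$, so the right-hand side is $\lceil\cbf 0GK\rceil$ and we are done. For $k\geqslant1$, apply Lemma \ref{lem:ProjectiveCoverResolution} (equivalently Lemma \ref{lem:tau-of-projective-cover} with $r=0$) to the truncated resolution $P_{k-1}\to\cdots\to P_0\to K\to0$, whose kernel at the top is $N=K_{k-1}$. This gives $\d_{\rg}K_{k-1}=\lceil\cbf kGK\rceil$. Since $\d_{\rg}K_{k-1}=\lceil\cbf 0G{K_{k-1}}\rceil$ by Lemma \ref{lem:min-gen-rank-by-tau}, combining with Step 1 yields $\d_{\rg}P_k=\lceil\cbf kGK\rceil$.

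\textbf{Step 3: the ``in particular'' clause.} If $\cbf kGK<\infty$, the projective cover resolution itself provides a resolution with $P_k$ finitely generated. Conversely, suppose $P'_k\to\cdots\to P'_0\to K\to0$ is exact with each $P'_i$ projective and $P'_k$ finitely generated. The bound in Theorem \ref{thm:numericalFP}(\ref{enu:Standard}) only uses degree $k$ of the resolution; since $\torg k(K,M)$ is a subquotient of $P'_k\te M$, we get $\dim\torg k(K,M)\leqslant\d_{\rg}P'_k\cdot\dim M$. The one subtlety is that the given resolution may stop at degree $k$. To handle this, extend it by any projective module (for example a free one) surjecting onto $\ker(P'_k\to P'_{k-1})$. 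This changes nothing in degrees $\leqslant k$, so the bound still applies and gives $\cbf kGK\leqslant\d_{\rg}P'_k<\infty$.

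The only point needing care is the index bookkeeping between the kernel $K_{k-1}$ and $P_k$ in Steps 1 and 2, including the separate treatment of the base case $k=0$. Everything else is a direct application of the stated lemmas.
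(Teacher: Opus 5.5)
Your proposal is correct and follows the paper's proof: both identify $\mathrm{d}_{R\llbracket G\rrbracket}P_k$ with the minimal number of generators of $\ker(P_{k-1}\to P_{k-2})$ (or of $K$ when $k=0$), then invoke Lemma \ref{lem:ProjectiveCoverResolution} for $k>0$ and Lemma \ref{lem:min-gen-rank-by-tau} for $k=0$. The differences are cosmetic. You justify $\mathrm{d}(P_k)=\mathrm{d}(K_{k-1})$ via Lemma \ref{lem:tensor-of-projective-covers} and Lemma \ref{lem:min-gen-rank-by-tau}, where the paper takes it directly from the projective-cover property, and you spell out the converse of the ``in particular'' clause via Theorem \ref{thm:numericalFP}(\ref{enu:Standard}), which the paper leaves implicit.
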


\begin{proof}
Set $N=\ker(P_{k-1}\to P_{k-2})$ (or $N=K$ if $k=0$). Since $P_{k}\longrightarrow N$
is a projective cover, we have $\mathrm{d}_{\rg}P_{k}=\mathrm{d}_{\rg}N$.
We also have $\mathrm{d}_{\rg}N=\left\lceil \cbf kGK\right\rceil $
(by Lemma \ref{lem:ProjectiveCoverResolution} for $k>0$ and Lemma
\ref{lem:min-gen-rank-by-tau} for $k=0$), so we are done. 
\end{proof}

\begin{rem}
It follows in particular from the numerical criterion that the projective
dimension of a module $K$ is equal to the maximal $n$ such that
$\cbf nGK\neq0$. 
\end{rem}

\section{Basic Operations\label{sec:BasicOps}}

\begin{notation}
All the lemmas and propositions in Sections \ref{subsec:Some-Homological-Algebra}–\ref{subsec:Operations-on-Modules}
are quantified as follows: $G$ is some profinite group, $R$ is some
commutative profinite ring, $K$ is some profinite $\rg$-module,
$n$ is some nonnegative integer or $\infty$, and $k$ a nonnegative
integer such that $k\leqslant n$. For $n=\infty$, terms such as
$n+1,n-1$ are understood as $\infty$ as well; property $\mathrm{FP}_{r}$
is vacuously true for $r<0$.
\end{notation}

We begin with the following well-known result. As usual, we say a
profinite group is \emph{finitely generated }if it is topologically
finitely generated (i.e., contains a dense subgroup that is finitely
generated in the abstract sense).
\begin{lem}[Finite-generation]
\label{lem:Finitely-generated-is-fp1}We have $\cbf 1GR\leqslant\mathrm{d}(G)$.
In particular, if $G$ is finitely generated, then it is $\mathrm{FP}_{1}$.
\end{lem}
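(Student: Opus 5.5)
Let $d=\mathrm{d}(G)$ and fix topological generators $g_{1},\dots,g_{d}$ of $G$. The plan is to build the start of a free resolution of the trivial module $R$ and then apply Theorem \ref{thm:numericalFP}(\ref{enu:Standard}), or equivalently to bound $\mathrm{Tor}_{1}$ directly through the augmentation ideal. Write $I=I_{R}(G)=\ker(\rg\to R)$. The short exact sequence $0\to I\to\rg\to R\to0$ gives, for each simple $\rg$-module $M$, the exact sequence
\[
0\to\mathrm{Tor}^{\rg}_{1}(R,M)\to I\te M\to\rg\te M\to R\te M\to0,
\]
since $\mathrm{Tor}_{1}(\rg,M)=0$. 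So $\dim_{\mathbb{F}}\mathrm{Tor}_{1}(R,M)\leqslant\dim_{\mathbb{F}}(I\te M)$ with $\mathbb{F}=\mathrm{End}_{G}(M)$. It is therefore enough to show that $I$ is generated as an $\rg$-module by the $d$ elements $g_{j}-1$. Then $\rg^{d}\to I$ is surjective, and right exactness of $-\te M$ gives $\dim_{\mathbb{F}}(I\te M)\leqslant d\dim_{\mathbb{F}}M$. Taking the supremum over $M$ yields $\cbf 1GR\leqslant d$, and the ``in particular'' follows from Theorem \ref{thm:numericalFP}.

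The main step is the generation claim. Let $J\subseteq I$ be the closed submodule generated by $g_{1}-1,\dots,g_{d}-1$. Using the identities $gh-1=g(h-1)+(g-1)$ and $g^{-1}-1=-g^{-1}(g-1)$, induction on word length shows that $g-1\in J$ for every $g$ in the abstract subgroup $\Gamma=\langle g_{1},\dots,g_{d}\rangle$. The map $g\mapsto g-1$ from $G$ into $\rg$ is continuous, $J$ is closed, and $\Gamma$ is dense in $G$, so $g-1\in J$ for all $g\in G$.

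It remains to check that the elements $g-1$ for $g\in G$ topologically generate $I$ as an $R$-module, hence a fortiori as an $\rg$-module. I would argue this at finite levels. Write $\rg=\varprojlim R_{i}[G/U]$ over open ideals $R_{i}$ and open normal subgroups $U$. At each finite level the augmentation ideal of $R_{i}[G/U]$ is spanned over $R_{i}$ by the elements $gU-1$, since $\sum a_{g}gU$ with $\sum a_{g}=0$ equals $\sum a_{g}(gU-1)$. The image of $J$ in each finite quotient therefore equals the image of $I$ there. Because $J$ is closed, it is the inverse limit of these images, and $I$ is also the limit of its images, so $J=I$.

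I expect the main technical care to be in this limit argument, specifically in checking that $I$ maps onto the augmentation ideal at each finite level. This is routine surjectivity of inverse systems of compact modules, but it is the one place where topology enters. An alternative shortcut is to compute directly: $\mathrm{Tor}_{1}(R,M)=H_{1}(G,M)$ is a quotient of $I\te M$, and any derivation or crossed homomorphism is determined by its values on the generators.
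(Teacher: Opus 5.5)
Your proof is correct. It is the homological dual of the paper's argument rather than the same argument.

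The paper works cohomologically. It reads $\mathrm{Tor}^{R\llbracket G\rrbracket}_{1}(R,M)$ as $H_{1}(G,M)$ and passes by duality to $H^{1}$. It then bounds $\dim_{\mathbb{F}}Z^{1}(G,M)$ by $|S|\cdot\dim_{\mathbb{F}}M$, because a continuous crossed homomorphism is determined by its values on a topological generating set $S$.

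You stay with the definition of $\tau_{1}$ via $\mathrm{Tor}$, use $0\to I\to R\llbracket G\rrbracket\to R\to0$, and prove the module-theoretic statement that $I_{R}(G)$ is generated by the $d$ elements $g_{j}-1$. The two arguments share a core. Under $(I\otimes_{R\llbracket G\rrbracket}M)^{\vee}\cong\mathrm{Hom}_{G}(I,M^{\vee})\cong Z^{1}(G,M^{\vee})$, the paper's injection $Z^{1}(G,M^{\vee})\hookrightarrow(M^{\vee})^{d}$ is exactly the dual of your surjection $M^{d}\to I\otimes_{R\llbracket G\rrbracket}M$.

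What each route buys:
\begin{itemize}
\item The paper's version is shorter and needs no inverse-limit argument. It does rely on identifying $\mathrm{Tor}$ with group (co)homology over a general $R$, and on duality.
\item Yours avoids both and proves more. The exact sequence $R\llbracket G\rrbracket^{d}\to R\llbracket G\rrbracket\to R\to0$ shows directly that $G$ is $\mathrm{FL}_{1}$, without Theorem~\ref{thm:numericalFP}. It also gives one direction of Corollary~\ref{cor:augmentation} with the explicit bound $\mathrm{d}_{R\llbracket G\rrbracket}(I_{R}(G))\leqslant\mathrm{d}(G)$.
\end{itemize}

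The step you flagged as the delicate one is in fact automatic. At each finite level the chain of inclusions
\begin{itemize}
\item image of $J$ $\subseteq$ image of $I$,
\item image of $I$ $\subseteq$ augmentation ideal of $(R/\mathfrak{a})[G/U]$,
\item that augmentation ideal, being spanned by the $gU-1$, $\subseteq$ image of $J$,
\end{itemize}
closes up, so all three coincide. Then $J=I$, since a closed subset of a profinite module is the inverse limit of its images.
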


\begin{proof}
Let $M$ be a simple $\rg$-module, so that it is finite-dimensional
over $\mathbb{F}\coloneqq\mathrm{End}_{G}(M$). By duality, we need
to show 
\[
\frac{\dim_{\mathbb{F}}H_{1}(G,M)}{\dim_{\mathbb{F}}M}=\frac{\dim_{\mathbb{F}}H^{1}(G,M)}{\dim_{\mathbb{F}}M}\leqslant\d(G).
\]
Let $S\subseteq G$ be a generating set. Recall that $H^{1}(G,M)=Z^{1}(G,M)/B^{1}(G,M)$.
Thus, it is certainly enough to bound $\frac{\dim_{\mathbb{F}}Z^{1}(G,M)}{\dim_{\mathbb{F}}M}$.
But
\[
Z^{1}(G,M)=\left\{ f:G\to M\middle|f(gh)=f(g)+gf(h)\right\} .
\]
If $f_{1},f_{2}\in Z^{1}(G,M)$ agree on $S$, they must be equal;
therefore, $\dimf Z^{1}(G,M)$ is at most $\left|S\right|\cdot\dimf M$,
as needed.
\end{proof}

\subsection{\label{subsec:Some-Homological-Algebra}Some Homological Algebra}
\begin{lem}
\label{lem:TorSpec}Let $M$ be an $R\llbracket G\rrbracket$-module
and $N\trianglelefteqslant G$ a closed normal subgroup. There is
a convergent first quadrant homology spectral sequence
\[
E^{2}_{pq}=\mathrm{Tor}^{R\llbracket G/N\rrbracket}_{p}(R,\mathrm{Tor}^{R\llbracket N\rrbracket}_{q}(K,M))\implies\mathrm{Tor}^{R\llbracket G\rrbracket}_{p+q}(K,M).
\]
\end{lem}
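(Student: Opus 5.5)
This is a Grothendieck spectral sequence for a composite of right exact functors. Write $\Lambda=\rg$, $\Lambda_N=R\llbracket N\rrbracket$, $\bar\Lambda=R\llbracket G/N\rrbracket$. The plan is to factor
\[
K\te M\;\cong\;R\otimes_{\bar\Lambda}\bigl(K\otimes_{\Lambda_N}M\bigr),
\]
where $G$ acts diagonally on $K\otimes_{\Lambda_N}M$. Here $K$ is regarded as a right module via the inversion on $G$, as is implicit in the definition of $\mathrm{Tor}$ in this paper. The $N$-action becomes trivial on $K\otimes_{\Lambda_N}M$, so it is a profinite $\bar\Lambda$-module. The isomorphism holds because $K\te M=(K\t M)_G=((K\t M)_N)_{G/N}$ and $K\otimes_{\Lambda_N}M=(K\t M)_N$.

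Everything is then phrased as functors out of $\mathrm{PMOD}(\Lambda)$ in the variable $M$ with $K$ fixed. Set
\[
F=K\otimes_{\Lambda_N}-:\mathrm{PMOD}(\Lambda)\to\mathrm{PMOD}(\bar\Lambda),\qquad H=R\otimes_{\bar\Lambda}-.
\]
Then $H\circ F=K\te-$. Both functors are right exact and commute with inverse limits, so they have left derived functors computed by projective resolutions in the profinite categories. The category $\mathrm{PMOD}(\Lambda)$ has enough projectives, namely the $\Lambda\llbracket X\rrbracket$ free on profinite spaces $X$.

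For the derived functors of $F$, we use that a projective $\Lambda$-module restricts to a projective $\Lambda_N$-module, since $\Lambda\llbracket X\rrbracket$ is free over $\Lambda_N$ on a profinite space of coset representatives (via a continuous section $G/N\to G$). Hence $L_qF(M)=\mathrm{Tor}^{\Lambda_N}_q(K,M)$, equipped with its natural $G/N$-action.

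To apply the Grothendieck composite-functor theorem, the key hypothesis is that $F$ sends projectives to $H$-acyclic objects. Take $P=\Lambda\llbracket X\rrbracket$, which we may identify with $\Lambda\t R\llbracket X\rrbracket$ with diagonal action. Then
\[
F(P)=K\otimes_{\Lambda_N}\bigl(\Lambda\widehat\otimes_R R\llbracket X\rrbracket\bigr)\cong\bar\Lambda\,\widehat\otimes_R\,(K\t R\llbracket X\rrbracket)
\]
via the shearing isomorphism $\Lambda\otimes_R V\cong\Lambda\otimes_R V_{\mathrm{triv}}$. This is an induced $\bar\Lambda$-module, hence $H$-acyclic by Shapiro's lemma: $\mathrm{Tor}^{\bar\Lambda}_p(R,\bar\Lambda\otimes_R W)=\mathrm{Tor}^R_p(R,W)=0$ for $p>0$.

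I expect this acyclicity step to be the main obstacle. The cleaner route is to avoid the shearing altogether and use the double-complex argument directly. Take a projective resolution $P_\bullet\to K$ over $\Lambda$; its terms are also $\Lambda_N$-projective. Take a projective resolution $Q_\bullet\to R$ over $\bar\Lambda$. Form the double complex
\[
Q_p\otimes_{\bar\Lambda}\bigl(P_q\otimes_{\Lambda_N}M\bigr).
\]
Filtering by $q$ first, each column $q$ computes $\mathrm{Tor}^{\bar\Lambda}_p(R,P_q\otimes_{\Lambda_N}M)$. Since $P_q\otimes_{\Lambda_N}M\cong\bar\Lambda\otimes_{\Lambda}(P_q\otimes_R M)$ is a projective $\bar\Lambda$-module when $P_q$ is $\Lambda$-projective (as $P_q\t M$ is then $\Lambda$-projective), this homology vanishes for $p>0$. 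The first spectral sequence therefore collapses to the homology of $R\otimes_{\bar\Lambda}(P_\bullet\otimes_{\Lambda_N}M)=P_\bullet\te M$, which is $\mathrm{Tor}^{\Lambda}_\bullet(K,M)$. The other filtration has $E^1_{pq}=Q_p\otimes_{\bar\Lambda}\mathrm{Tor}_q^{\Lambda_N}(K,M)$, using exactness of $Q_p\otimes_{\bar\Lambda}-$ for projective $Q_p$. Taking homology in $p$ gives the stated $E^2$.

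Convergence follows because the double complex lies in the first quadrant. Exactness of the inverse limits involved, and Pontryagin duality with discrete modules, handles any topological subtleties. Thus the only genuinely delicate point is the projectivity claim for $P\t M$ with diagonal action in the profinite category, which follows from the shearing isomorphism above.
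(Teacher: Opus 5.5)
Your first route is correct. Up to swapping the roles of $K$ and $M$, it is the paper's proof: factor through $N$-coinvariants followed by $G/N$-coinvariants, and get acyclicity from the shearing isomorphism together with Shapiro's lemma for induced modules $R\llbracket G/N\rrbracket\hat{\otimes}_R W$. The paper derives in the $K$-variable: $-\hat{\otimes}_{R\llbracket N\rrbracket}M$ applied to a free $L=R\llbracket G\rrbracket\hat{\otimes}_R R\llbracket T\rrbracket$ gives $\mathrm{Ind}^{G/N}_1(R\llbracket T\rrbracket\hat{\otimes}_R M)$. That choice means it never needs balancedness of profinite $\mathrm{Tor}$. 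Your version derives in $M$, and so silently uses balance to identify $L_q(K\hat{\otimes}_{\Lambda_N}-)(M)$ and the abutment with $\mathrm{Tor}$.

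The route you call cleaner, and your closing sentence, rest on a false claim. $P_q\hat{\otimes}_R M$ with the diagonal action is not $\Lambda$-projective in general, and so $P_q\hat{\otimes}_{\Lambda_N}M$ is not $\bar\Lambda$-projective. Shearing only gives $P_q\hat{\otimes}_R M\cong\Lambda\hat{\otimes}_R(R\llbracket X\rrbracket\hat{\otimes}_R M)$. Applying $R\hat{\otimes}_\Lambda-$ shows this is projective only if $R\llbracket X\rrbracket\hat{\otimes}_R M$ is $R$-projective. For example, take $R=\hat{\mathbb{Z}}$, $P_q=\Lambda$ and $M=\mathbb{F}_p$ trivial. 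Then $P_q\hat{\otimes}_{\Lambda_N}M=\mathbb{F}_p\llbracket G/N\rrbracket$, which is $p$-torsion and hence not a summand of the torsion-free module $\hat{\mathbb{Z}}\llbracket (G/N)\times Y\rrbracket$.

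What the column collapse actually needs is only that $\mathrm{Tor}^{\bar\Lambda}_p(R,P_q\hat{\otimes}_{\Lambda_N}M)=0$ for $p>0$, and this does hold. For $P_q=\Lambda\llbracket X\rrbracket$ free, $P_q\hat{\otimes}_{\Lambda_N}M\cong\bar\Lambda\hat{\otimes}_R(R\llbracket X\rrbracket\hat{\otimes}_R M)$ is induced. Shapiro then gives $\mathrm{Tor}^R_p(R,R\llbracket X\rrbracket\hat{\otimes}_R M)=0$, and the vanishing passes to direct summands. These modules $P_q\hat{\otimes}_{R\llbracket N\rrbracket}M$ are exactly the ones the paper's acyclicity computation treats. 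With that substitution, your double complex $Q_p\hat{\otimes}_{\bar\Lambda}(P_q\hat{\otimes}_{\Lambda_N}M)$ argument is valid. It is a nice hands-on substitute for citing the Grothendieck spectral sequence, with a projective resolution of $R$ over $\bar\Lambda$ standing in for a Cartan--Eilenberg resolution. But it does not avoid the shearing: shearing plus Shapiro, not projectivity, is the essential input in both of your routes.
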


\begin{proof}
We use the Grothendieck spectral sequence (see \cites[Corollary 5.8.4]{Weibel}).
Let $F$ be the functor from $\mathrm{PMOD}(\rg)$ to $\mathrm{PMOD}(R)$
defined by $F=-\te M$, so that $L_{i}F(K)=\torg i(K,M)$ for every
nonnegative integer $i$. $F$ can be written as a composition, $F=G_{2}\circ G_{1}$,
where $G_{1}$ is the functor from $\mathrm{PMOD}(\rg)$ to $\mathrm{PMOD}(R\llbracket G/N\rrbracket)$
given by $G_{1}=-\hat{\otimes}_{R\llbracket N\rrbracket}M$, and
$G_{2}$ is the functor from $\mathrm{PMOD}(R\llbracket G/N\rrbracket)$
to $\mathrm{PMOD}(R)$ given by $G_{2}=-\hat{\otimes}_{R\llbracket G/N\rrbracket}R=(-)_{G/N}$.
We have (for every nonnegative integer $i$) $L_{i}G_{1}(K)=\mathrm{Tor}^{R\llbracket N\rrbracket}_{i}(K,M)$
(when the latter is considered as an $R\llbracket G/N\rrbracket$-module),
and $L_{i}G_{2}=H_{i}(G/N,\cdot)$, so this composition gives us exactly
the spectral sequence that we need: 
\[
H_{p}(G/N,\mathrm{Tor}^{R\llbracket N\rrbracket}_{q}(K,M))=L_{p}(G_{2})L_{q}(G_{1})(K)\implies L_{p+q}(F)(K)=\torg{p+q}(K,M).
\]
Thus, we just need to show $G_{1}$ sends projective modules to $G_{2}$-acyclic
ones. In other words, we need to show that
\[
H_{i}(G/N,P\hat{\otimes}_{R\llbracket N\rrbracket}M)=0
\]
for every projective $\rg$-module $P$. If $P$ is projective, then
there is some $P'$ such that $L\coloneqq P\oplus P'$ is free, and
\[
H_{i}(G/N,L\hat{\otimes}_{R\llbracket N\rrbracket}M)=H_{i}(G/N,P\hat{\otimes}_{R\llbracket N\rrbracket}M)\oplus H_{i}(G/N,P'\hat{\otimes}_{R\llbracket N\rrbracket}M).
\]
Therefore, it is enough to show 
\[
H_{i}(G/N,L\hat{\otimes}_{R\llbracket N\rrbracket}M)=0
\]
for every free $\rg$-module $L$. Let $L$ be a free $\rg$-module,
so that $L=R\llbracket G\times T\rrbracket\cong R\llbracket G\rrbracket\t R\llbracket T\rrbracket$
for some profinite space $T$, where the $G$-action is from the
left. Recall that $L\ct_{R\llbracket N\rrbracket}M=(L\t M)_{N}$,
where the $G$-action (and hence the $N$-action) on $L\t M$ is the
diagonal one (i.e., $g.(\ell\otimes m)=g\ell\otimes gm$). Recall
that $L\t M$ endowed with the diagonal action is isomorphic to $L\t M$
endowed with the left $G$-action (\cite[Corollary 5.8.2]{RB10}),
so
\[
(L\t M)_{N}=(R\llbracket G\rrbracket\t R\llbracket T\rrbracket\t M)_{N}\cong R\llbracket G/N\rrbracket\t R\llbracket T\rrbracket\t M
\]
endowed with the left $G$-action (since $\rg_{N}=R\llbracket G/N\rrbracket$).
Now, recall that $R\llbracket G/N\rrbracket\t R\llbracket T\rrbracket\t M$
is nothing but $\mathrm{Ind}^{G/N}_{1}(R\llbracket T\rrbracket\t M)$.
Putting it all together, we get that
\[
H_{i}(G/N,G_{1}(L))=H_{i}(G/N,\mathrm{Ind}^{G/N}_{1}(R\llbracket T\rrbracket\t M)).
\]
Therefore, by Shapiro's lemma, we have (for every positive integer
$i$):
\[
H_{i}(G/N,G_{1}(L))=H_{i}(\left\{ 1\right\} ,R\llbracket T\rrbracket\t M)=0.\qedhere
\]
\end{proof}

\begin{lem}
\label{lem:another-ss}Let $N\trianglelefteqslant G$ be a closed
normal subgroup, and assume $N$ acts trivially on $K$, so that $K$
is also an $R\llbracket G/N\rrbracket$-module. Let $M$ be an $R\llbracket G\rrbracket$-module.
Then there is a convergent first quadrant homology spectral sequence
\[
E^{2}_{p,q}=\operatorname{Tor}^{R\llbracket G/N\rrbracket}_{p}\left(K,\operatorname{Tor}^{R\llbracket N\rrbracket}_{q}(R,M)\right)\Longrightarrow\operatorname{Tor}^{R\llbracket G\rrbracket}_{p+q}(K,M).
\]
\end{lem}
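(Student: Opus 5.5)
Following the proof of Lemma \ref{lem:TorSpec}, we will again use the Grothendieck spectral sequence for a composite of functors, this time factoring through the coinvariants of $M$ rather than of $K$. Let $F=K\te-$ be the functor from $\mathrm{PMOD}(\rg)$ to $\mathrm{PMOD}(R)$, so that $L_{i}F(M)=\torg i(K,M)$. (We view Tor as a functor in the second variable; we may compute it by resolving either variable.) Factor $F=G_{2}\circ G_{1}$, where
\[
G_{1}=R\hat{\otimes}_{R\llbracket N\rrbracket}-=(-)_{N}:\mathrm{PMOD}(\rg)\to\mathrm{PMOD}(R\llbracket G/N\rrbracket)
\]
and
\[
G_{2}=K\hat{\otimes}_{R\llbracket G/N\rrbracket}-:\mathrm{PMOD}(R\llbracket G/N\rrbracket)\to\mathrm{PMOD}(R).
\]

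First we check that $F\cong G_{2}\circ G_{1}$. Since $N$ acts trivially on $K$, we have
\[
K\te M\cong K\hat{\otimes}_{R\llbracket G/N\rrbracket}(R\llbracket G/N\rrbracket\te M)\cong K\hat{\otimes}_{R\llbracket G/N\rrbracket}M_{N}.
\]
This follows from associativity of completed tensor products together with $R\llbracket G/N\rrbracket\te M\cong M_{N}$. Next we identify the derived functors:
\[
L_{q}G_{1}(M)=H_{q}(N,M)=\operatorname{Tor}^{R\llbracket N\rrbracket}_{q}(R,M),
\]
equipped with its natural $G/N$-action, and
\[
L_{p}G_{2}=\operatorname{Tor}^{R\llbracket G/N\rrbracket}_{p}(K,-).
\]
For the first identification, note that a projective $\rg$-resolution of $M$ restricts to a projective $R\llbracket N\rrbracket$-resolution, because $\rg$ is free as an $R\llbracket N\rrbracket$-module (a standard fact, cf.\ \cite{RB10}).

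The main step is to verify that $G_{1}$ sends projectives to $G_{2}$-acyclic objects. As in the proof of Lemma \ref{lem:TorSpec}, it suffices to treat a free module $L=R\llbracket G\times T\rrbracket\cong\rg\t R\llbracket T\rrbracket$, since projectives are direct summands of free modules and Tor commutes with finite direct sums. Then
\[
G_{1}(L)=L_{N}\cong R\llbracket G/N\rrbracket\t R\llbracket T\rrbracket=R\llbracket G/N\times T\rrbracket,
\]
which is a free $R\llbracket G/N\rrbracket$-module. Hence $\operatorname{Tor}^{R\llbracket G/N\rrbracket}_{p}(K,G_{1}(L))=0$ for $p>0$. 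Notice that this case is actually easier than Lemma \ref{lem:TorSpec}: no diagonal-action untwisting or Shapiro's lemma is needed, since the coinvariants of a free module are directly free.

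Finally, both functors are right exact and additive, and $\mathrm{PMOD}$ has enough projectives. The Grothendieck spectral sequence \cite[Corollary 5.8.4]{Weibel} therefore yields a convergent first quadrant spectral sequence
\[
E^{2}_{p,q}=L_{p}G_{2}(L_{q}G_{1}(M))\Longrightarrow L_{p+q}F(M),
\]
which is the claimed one. The only delicate point I anticipate is bookkeeping. One must confirm that Tor over $\rg$ is balanced, so that deriving in the $M$-variable agrees with the definition used for $\tau$, and that the isomorphism $K\te M\cong K\hat{\otimes}_{R\llbracket G/N\rrbracket}M_{N}$ is natural in $M$. Both are standard for profinite modules.
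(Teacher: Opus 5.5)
Your proof is correct and is essentially the paper's argument: the same factorisation $X\mapsto X_N\cong R\llbracket G/N\rrbracket\hat{\otimes}_{R\llbracket G\rrbracket}X$ followed by $K\hat{\otimes}_{R\llbracket G/N\rrbracket}-$, the same acyclicity check (coinvariants of a free module are free over $R\llbracket G/N\rrbracket$), and the Grothendieck spectral sequence. The only small difference is how you identify the inner derived functors: you restrict a projective $R\llbracket G\rrbracket$-resolution of $M$ to $N$, whereas the paper writes them as $\mathrm{Tor}^{R\llbracket G\rrbracket}_q(R\llbracket G/N\rrbracket,M)$ and computes this by inducing an $R\llbracket N\rrbracket$-resolution of $R$ up to $G$; both are standard and valid.
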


\begin{proof}
Consider the functor 
\begin{align*}
F\colon\operatorname{PMOD}(R\llbracket G\rrbracket) & \longrightarrow\operatorname{PMOD}(R)\\
X & \longmapsto K\otimes_{R\llbracket G\rrbracket}X=(K\otimes_{R}X)_{G}.
\end{align*}
Since $K$ is inflated from $G/N$, this functor factors as 
\[
\operatorname{PMOD}(R\llbracket G\rrbracket)\xrightarrow{\;S\;}\operatorname{PMOD}(R\llbracket G/N\rrbracket)\xrightarrow{\;T\;}\operatorname{PMOD}(R),
\]
where 
\[
S(X)=R\llbracket G/N\rrbracket\otimes_{R\llbracket G\rrbracket}X
\]
and 
\[
T(Y)=K\otimes_{R\llbracket G/N\rrbracket}Y.
\]
Indeed, for every $R\llbracket G\rrbracket$-module $X$, there is
a natural isomorphism 
\[
T(S(X))=K\otimes_{R\llbracket G/N\rrbracket}\left(R\llbracket G/N\rrbracket\otimes_{R\llbracket G\rrbracket}X\right)\cong K\otimes_{R\llbracket G\rrbracket}X=F(X).
\]

We claim that $S$ sends projective $R\llbracket G\rrbracket$-modules
to $T$-acyclic $R\llbracket G/N\rrbracket$-modules. Since projective
$R\llbracket G\rrbracket$-module are direct summands of free ones,
it is enough to check this for free modules. If $L$ is a free profinite
$R\llbracket G\rrbracket$-module, then $L\cong R\llbracket G\rrbracket\otimes_{R}R\llbracket T\rrbracket$
for some profinite space $T$ (with the $G$ action from the left).
Thus, 
\[
S(L)\cong R\llbracket G/N\rrbracket\otimes_{R\llbracket G\rrbracket}\left(R\llbracket G\rrbracket\otimes_{R}R\llbracket T\rrbracket\right)\cong R\llbracket G/N\rrbracket\otimes_{R}R\llbracket T\rrbracket,
\]
which is free as an $R\llbracket G/N\rrbracket$-module, and hence
$T$-acyclic.

The Grothendieck spectral sequence for the composition $F=T\circ S$
now gives 
\[
E^{2}_{p,q}=\operatorname{Tor}^{R\llbracket G/N\rrbracket}_{p}\left(K,\operatorname{Tor}^{R\llbracket G\rrbracket}_{q}(R\llbracket G/N\rrbracket,M)\right)\Longrightarrow\operatorname{Tor}^{R\llbracket G\rrbracket}_{p+q}(K,M).
\]

Now, let $P_{\bullet}\to R$ be a free resolution of $R$ in $\operatorname{PMOD}(R\llbracket N\rrbracket)$.
We get a projective resolution 
\[
R\llbracket G\rrbracket\otimes_{R\llbracket N\rrbracket}P_{\bullet}\longrightarrow R\llbracket G\rrbracket\otimes_{R\llbracket N\rrbracket}R\cong R\llbracket G/N\rrbracket
\]
of $R\llbracket G/N\rrbracket$ in $\operatorname{PMOD}(R\llbracket G\rrbracket)$.
Therefore 
\[
\begin{aligned}\operatorname{Tor}^{R\llbracket G\rrbracket}_{q}(R\llbracket G/N\rrbracket,M) & \cong H_{q}\left(\left(R\llbracket G\rrbracket\otimes_{R\llbracket N\rrbracket}P_{\bullet}\right)\otimes_{R\llbracket G\rrbracket}M\right)\\
 & \cong H_{q}\left(P_{\bullet}\otimes_{R\llbracket N\rrbracket}R\llbracket G\rrbracket\otimes_{R\llbracket G\rrbracket}M\right)\\
 & \cong H_{q}\left(P_{\bullet}\otimes_{R\llbracket N\rrbracket}M\right)=\operatorname{Tor}^{R\llbracket N\rrbracket}_{q}(R,M).
\end{aligned}
\]
 Substituting this identification into the previous spectral sequence
gives 
\[
E^{2}_{p,q}=\operatorname{Tor}^{R\llbracket G/N\rrbracket}_{p}\left(K,\operatorname{Tor}^{R\llbracket N\rrbracket}_{q}(R,M)\right)\Longrightarrow\operatorname{Tor}^{R\llbracket G\rrbracket}_{p+q}(K,M).\qedhere
\]
as required. 
\end{proof}

\textcolor{violet}{}

\begin{lem}[Functoriality of $\mathrm{Tor}$]
\label{lem:TorFunctoriality}Let $H$ be a profinite group, let $f:G\to H$
be a continuous homomorphism, let $K',M$ be $R\llbracket H\rrbracket$-modules
(also regarded as $R\llbracket G\rrbracket$-modules via $f$), and
let $\varphi:K\to K'$ be an $\rg$-linear map. Then there is a natural
map
\[
\mathrm{Tor}^{R\llbracket G\rrbracket}_{k}(K,M)\longrightarrow\mathrm{Tor}^{R\llbracket H\rrbracket}_{k}(K',M).
\]
\end{lem}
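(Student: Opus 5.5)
The plan is to construct the map at the level of projective resolutions and then pass to homology. First, choose a projective resolution $P_{\bullet}\to K$ in $\mathrm{PMOD}(\rg)$ and a projective resolution $Q_{\bullet}\to K'$ in $\mathrm{PMOD}(R\llbracket H\rrbracket)$. The homomorphism $f$ induces a continuous ring homomorphism $\rg\to R\llbracket H\rrbracket$. Through it, every $R\llbracket H\rrbracket$-module becomes an $\rg$-module, and so $Q_{\bullet}\to K'$ becomes an exact complex of $\rg$-modules. Its terms need not be projective over $\rg$, but that does not matter: for the comparison theorem we only need the source complex to consist of projectives and the target complex to be exact. So the comparison theorem, applied in the abelian category $\mathrm{PMOD}(\rg)$, lifts $\varphi$ to a chain map $\varphi_{\bullet}:P_{\bullet}\to Q_{\bullet}$ of $\rg$-modules. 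This lift is unique up to chain homotopy.

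Next, tensor with $M$. There is a natural surjection
\[
Q_{k}\te M=(Q_{k}\t M)_{G}\longrightarrow(Q_{k}\t M)_{H}=Q_{k}\hat{\otimes}_{R\llbracket H\rrbracket}M.
\]
It exists because the $G$-coinvariants of $Q_k\t M$ are taken with respect to the relations coming from $f(G)\subseteq H$, and these are among the relations defining the $H$-coinvariants. Equivalently, it is induced by the bimodule map $\rg\to R\llbracket H\rrbracket$. This map commutes with the differentials. Composing, we obtain a chain map
\[
P_{\bullet}\te M\xrightarrow{\varphi_{\bullet}\otimes1}Q_{\bullet}\te M\longrightarrow Q_{\bullet}\hat{\otimes}_{R\llbracket H\rrbracket}M.
\]
Taking $k$-th homology gives the desired map $\mathrm{Tor}^{\rg}_{k}(K,M)\to\mathrm{Tor}^{R\llbracket H\rrbracket}_{k}(K',M)$.

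It remains to check that this map is well defined and natural. A chain homotopy between two lifts of $\varphi$ survives both tensoring with $M$ and the passage to $H$-coinvariants, so the induced map on homology does not depend on the choice of lift. Independence from the choice of resolutions follows from the usual argument: comparison maps between two resolutions of the same module are homotopy equivalences, and they are compatible with the construction above. Naturality in $\varphi$ (and in $M$) follows in the same way from the uniqueness of lifts up to homotopy.

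I expect the main obstacle to be the step in the first paragraph. One has to make sure that the comparison theorem holds in $\mathrm{PMOD}(\rg)$ when the target resolution is only exact and not projective, and that restriction of scalars along a continuous ring map preserves profiniteness and exactness. Both are standard: $\mathrm{PMOD}(\rg)$ is abelian, and projective objects there have the lifting property against surjections. So I would cite these facts rather than reprove them.
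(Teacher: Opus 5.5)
Your proposal is correct and follows essentially the same route as the paper. You lift $\varphi$ via the comparison theorem from a projective $R\llbracket G\rrbracket$-resolution of $K$ into an $R\llbracket H\rrbracket$-resolution of $K'$, viewed as an exact complex of $R\llbracket G\rrbracket$-modules; you then tensor with $M$, compose with the natural surjection $(Q_\bullet\otimes_R M)_G\to(Q_\bullet\otimes_R M)_H$, and take homology, which is exactly the paper's argument, and your remarks on independence of choices via chain homotopies are a sound supplement that the paper leaves implicit.
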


\begin{proof}
Let $P_{\bullet}$ be a projective resolution of $K$ as an $R\llbracket G\rrbracket$-module,
and let $Q_{\bullet}$ be a free resolution of $K'$ as an $R\llbracket H\rrbracket$-module.
Observe that $Q_{\bullet}$ is a resolution of $K'$ also as an $R\llbracket G\rrbracket$-module,
but it might not be a projective resolution. However, since $P$ is
a projective resolution, the map $\varphi:K\to K'$ lifts to a map
of resolutions $P_{\bullet}\longrightarrow Q_{\bullet}$ by the comparison
theorem \cites[Theorem 2.2.6]{Weibel}. Tensoring this with $M$,
we get a map $P_{\bullet}\t M\longrightarrow Q_{\bullet}\t M$. We
have a natural map $Q_{\bullet}\t M\longrightarrow(Q_{\bullet}\t M)_{H}$,
so we get a map $P_{\bullet}\t M\longrightarrow(Q_{\bullet}\t M)_{H}$,
and hence a map $(P_{\bullet}\t M)_{G}\longrightarrow(Q_{\bullet}\t M)_{H}$.
Taking the homology groups of both complexes, we get a map
\[
\mathrm{Tor}^{R\llbracket G\rrbracket}_{k}(K,M)\longrightarrow\mathrm{Tor}^{R\llbracket H\rrbracket}_{k}(K',M).\qedhere
\]
\end{proof}

\begin{cor}
Suppose $N\trianglelefteqslant G$ is a closed normal subgroup and
that $M$ is an $R\llbracket G/N\rrbracket$-module. Then there is
a map (natural in $K$ and $M$)
\[
\mathrm{Tor}^{R\llbracket G\rrbracket}_{k}(K,M)\longrightarrow\mathrm{Tor}^{R\llbracket G/N\rrbracket}_{k}(K_{N},M).
\]
\end{cor}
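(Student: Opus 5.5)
This is a direct application of Lemma \ref{lem:TorFunctoriality}, with $H=G/N$, $f\colon G\to G/N$ the quotient map, $K'=K_{N}$, and $\varphi\colon K\to K_{N}$ the canonical projection.

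First we check that the hypotheses of the lemma are met. The coinvariants $K_{N}=K/\overline{I_N K}$ (equivalently $R\llbracket G/N\rrbracket\hat{\otimes}_{R\llbracket G\rrbracket}K$) carry a natural $R\llbracket G/N\rrbracket$-module structure, since $N$ is normal: the closed submodule generated by the elements $nx-x$ is $G$-stable. Through $f$, $K_{N}$ is regarded as an $\rg$-module, and the projection $\varphi$ is $\rg$-linear. The module $M$ is an $R\llbracket G/N\rrbracket$-module by hypothesis, and is viewed as an $\rg$-module by inflation. Lemma \ref{lem:TorFunctoriality} therefore yields a map
\[
\mathrm{Tor}^{R\llbracket G\rrbracket}_{k}(K,M)\longrightarrow\mathrm{Tor}^{R\llbracket G/N\rrbracket}_{k}(K_{N},M).
\]

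The remaining work, and the only step needing care, is naturality, since the construction involves choices of resolutions and comparison maps. Given $\alpha\colon K_{1}\to K_{2}$ and $\beta\colon M_{1}\to M_{2}$, we choose projective resolutions $P^{(i)}_{\bullet}\to K_{i}$ over $\rg$ and $Q^{(i)}_{\bullet}\to (K_{i})_{N}$ over $R\llbracket G/N\rrbracket$. The comparison theorem provides chain maps $P^{(1)}_{\bullet}\to P^{(2)}_{\bullet}$ over $\alpha$ and $Q^{(1)}_{\bullet}\to Q^{(2)}_{\bullet}$ over $\alpha_{N}$. The two composites $P^{(1)}_{\bullet}\to Q^{(2)}_{\bullet}$ both lift the same map $K_{1}\to (K_{2})_{N}$. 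Since $P^{(1)}_{\bullet}$ is projective and $Q^{(2)}_{\bullet}$ is exact (as a complex of $\rg$-modules), the uniqueness part of the comparison theorem shows these composites are chain homotopic.

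Applying $-\hat{\otimes}_{R}M$ followed by coinvariants preserves chain homotopies, and so does the map $\beta$ in the second variable. Hence the square of maps on $\mathrm{Tor}_{k}$ commutes. The same argument shows that the map is independent of the choices of resolutions and lifts. This gives the required map, natural in both $K$ and $M$.
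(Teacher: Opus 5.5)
Your proposal is correct and follows the paper's route: the paper states this corollary without proof, as the direct specialisation of Lemma \ref{lem:TorFunctoriality} to the quotient map $G\to G/N$, with $K'=K_{N}$ and $\varphi$ the projection $K\to K_{N}$. Your comparison-theorem argument for naturality and for independence of the choices is sound; it fills in a step the paper leaves implicit.
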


\subsection{Operations on Groups}

\begin{lem}[Extensions]
\label{lem:FPExt}Suppose $N\trianglelefteqslant G$ is a closed
normal subgroup. Let $K_{1}$ be an $\rg$-module, and let $K_{2}$
be an $R\llbracket G/N\rrbracket$-module which is flat as an $R$-module.
Consider $K_{1}\t K_{2}$ as an $\rg$-module via the diagonal action
of $G$. Then 
\[
\cbf kG{K_{1}\otimes_{R}K_{2}}\leqslant\sum_{p+q=k}\cbf pN{K_{1}}\cbf q{G/N}{K_{2}}.
\]
In particular, if $N$ and $G/N$ are $\mathrm{FP}_{n}$, then $G$
is $\mathrm{FP}_{n}$.
\end{lem}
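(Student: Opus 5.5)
We will bound, for each simple $\rg$-module $M$, the dimension of $\torg k(K_{1}\t K_{2},M)$ over $\mathbb{F}=\mathrm{End}_{G}(M)$ using the spectral sequence of Lemma \ref{lem:TorSpec}, applied to the module $K_{1}\t K_{2}$ and the normal subgroup $N$. Its $E^{2}$-page is
\[
E^{2}_{pq}=H_{p}\bigl(G/N,\mathrm{Tor}^{R\llbracket N\rrbracket}_{q}(K_{1}\t K_{2},M)\bigr),
\]
and $\dim_{\mathbb{F}}\torg k\leqslant\sum_{p+q=k}\dim_{\mathbb{F}}E^{2}_{pq}$. Since $N$ acts trivially on $K_{2}$, which is $R$-flat, we plan to identify, as $R\llbracket G/N\rrbracket$-modules,
\[
\mathrm{Tor}^{R\llbracket N\rrbracket}_{q}(K_{1}\t K_{2},M)\cong\mathrm{Tor}^{R\llbracket N\rrbracket}_{q}(K_{1},K_{2}\t M).
\]
One way to see this is to take a projective resolution $P_{\bullet}\to K_{1}$ over $R\llbracket N\rrbracket$; then $P_{\bullet}\t K_{2}$ resolves $K_{1}\t K_{2}$ by $R$-flatness, and it is projective with the diagonal action, by the untwisting isomorphism \cite[Corollary 5.8.2]{RB10} already used in Lemma \ref{lem:TorSpec}. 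Then $(P\t K_{2})\ct_{R\llbracket N\rrbracket}M\cong P\ct_{R\llbracket N\rrbracket}(K_{2}\t M)$ because $N$ acts trivially on $K_{2}$. Equivalently, we can write $(P\t K_{2}\t M)_{N}=K_{2}\t(P\t M)_{N}$, again using flatness of $K_{2}$, which gives $\mathrm{Tor}^{R\llbracket N\rrbracket}_{q}(K_{1},M)\t K_{2}$ with the diagonal $G/N$-action. We will use this second form.

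Next we need a \emph{filtration estimate}. Set $T_{q}=\mathrm{Tor}^{R\llbracket N\rrbracket}_{q}(K_{1},M)$, an $R\llbracket G/N\rrbracket$-module (the $G$-action being the natural conjugation action). Restricted to $N$, $M$ is semisimple by Clifford theory: it is a sum of $G$-conjugates of a simple $N$-module $S$, and conjugates have the same $\tau$-ratio. Using this and the observation after the definition of $\tau$ (additivity over composition series), we get
\[
\dim_{\mathbb{F}}T_{q}\leqslant\cbf qN{K_{1}}\dim_{\mathbb{F}}M.
\]
The dimensions over $\mathrm{End}$ fields must be converted carefully; the logarithmic form $\log|\cdot|$ handles this uniformly. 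Now take a composition series of the finite $R\llbracket G/N\rrbracket$-module $T_{q}$ with simple factors $S_{j}$. By half-exactness of $H_{p}(G/N,(-)\t K_{2})$ along short exact sequences (tensoring with the $R$-flat module $K_{2}$ is exact), we get
\[
\log|E^{2}_{pq}|\leqslant\sum_{j}\log\bigl|\mathrm{Tor}^{R\llbracket G/N\rrbracket}_{p}(K_{2},S_{j}^{\vee})\bigr|,
\]
where we rewrite $H_{p}(G/N,K_{2}\t S)$ as $\mathrm{Tor}_{p}(K_{2},S)$, as in Lemma \ref{lem:flat-modules}. Each term is at most $\cbf p{G/N}{K_{2}}\log|S_{j}|$. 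Summing over $j$ gives
\[
\log|E^{2}_{pq}|\leqslant\cbf p{G/N}{K_{2}}\log|T_{q}|\leqslant\cbf p{G/N}{K_{2}}\,\cbf qN{K_{1}}\log|M|,
\]
and dividing by $\log|M|$ and summing over $p+q=k$ yields the bound.

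The main obstacle is the bookkeeping in the second step. The $\tau$ invariants are normalised with respect to $\mathrm{End}_{G}(M)$ for simple modules, but $M|_{N}$ need not be simple and $T_{q}$ need not be simple over $G/N$. The plan is to work throughout with $\log|\cdot|$, which is additive along composition series and insensitive to the choice of base field. Only at the end do we compare with $\dim_{\mathbb{F}}$, via $\log|X|=\dim_{\mathbb{F}}X\cdot\log|\mathbb{F}|$. The final claim then follows: if $N$ and $G/N$ are $\mathrm{FP}_{n}$, take $K_{1}=K_{2}=R$. All the terms are finite by Theorem \ref{thm:numericalFP}, so $\cbf kGR<\infty$ for $k\leqslant n$, and $G$ is $\mathrm{FP}_{n}$ by the same theorem.
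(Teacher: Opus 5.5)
Your proposal is correct and is essentially the paper's proof. It uses the same spectral sequence from Lemma \ref{lem:TorSpec} and the same identification $\mathrm{Tor}^{R\llbracket N\rrbracket}_{q}(K_{1}\otimes_{R}K_{2},M)\cong K_{2}\otimes_{R}\mathrm{Tor}^{R\llbracket N\rrbracket}_{q}(K_{1},M)$ (from $N$ acting trivially on the $R$-flat module $K_{2}$), with the same two-step bound via $\tau_{p}(G/N;K_{2})$ and $\tau_{q}(N;K_{1})$; your $\log|\cdot|$ bookkeeping just spells out the remark the paper makes right after defining $\tau$, which already covers arbitrary finite modules, so the Clifford-theory step for $M|_{N}$ is unnecessary.
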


\begin{proof}
Let $M$ be a simple $\rg$-module, so that $M$ is a finite-dimensional
vector space over the finite field $\mathbb{F}\coloneqq\mathrm{End}_{G}(M)$.
Consider the spectral sequence given by Lemma \ref{lem:TorSpec} (associated
with $N\trianglelefteqslant G$),
\[
E^{2}_{pq}=H_{p}(G/N,\mathrm{Tor}^{R\llbracket N\rrbracket}_{q}(K_{1}\t K_{2},M))\implies\torg k(K_{1}\t K_{2},M).
\]
Therefore,
\begin{align*}
\dimf\torg k(K_{1}\t K_{2},M) & =\sum_{p+q=k}\dimf E^{\infty}_{pq}\leqslant\sum_{p+q=k}\dimf E^{2}_{pq}\\
 & =\sum_{p+q=k}\dimf H_{p}(G/N,\mathrm{Tor}^{R\llbracket N\rrbracket}_{q}(K_{1}\t K_{2},M)).
\end{align*}
Since $N$ acts trivially on $K_{2}$ and $K_{2}$ is flat as $R$-module,
we get that $\mathrm{Tor}^{R\llbracket N\rrbracket}_{q}(K_{1}\t K_{2},M)=K_{2}\t\mathrm{Tor}^{R\llbracket N\rrbracket}_{q}(K_{1},M)$.
Therefore, since $K_{2}$ is flat as an $R$-module, Lemma \ref{lem:flat-modules}
implies that
\[
\dimf H_{p}(G/N,K_{2}\t\mathrm{Tor}^{R\llbracket N\rrbracket}_{q}(K_{1},M))\leqslant\cbf p{G/N}{K_{2}}\dimf\mathrm{Tor}^{R\llbracket N\rrbracket}_{q}(K_{1},M)).
\]
We thus get
\begin{align*}
\dimf\torg k(K_{1}\t K_{2},M) & \leqslant\sum_{p+q=k}\cbf p{G/N}{K_{2}}\dimf\mathrm{Tor}^{R\llbracket N\rrbracket}_{q}(K_{1},M))\\
 & \leqslant\sum_{p+q=k}\cbf p{G/N}{K_{2}}\cbf qN{K_{1}}\dimf M,
\end{align*}
as needed.
\end{proof}

\begin{rem}
The assumption $K_{2}$ is flat can be weakened to the assumption
either $K_{1}$ or $K_{2}$ is flat. More generally, see Lemma \ref{lem:tensors}.
\end{rem}

\begin{lem}
\label{lem:FPforFinInd}Let $H\leqslant G$ be an open subgroup. Then
\[
\cbf kH{\mathrm{Res}^{G}_{H}(K)}\leqslant[G:H]\cdot\cbf kGK
\]
and
\[
\cbf kGK\leqslant\sum_{p+q=k}\left(([G:H]-1)!\cdot([G:H]!)^{q}\cdot\cbf pH{\mathrm{Res}^{G}_{H}(K)}\right).
\]
In particular, $H$ is $\mathrm{FP}_{n}$ if and only if $G$ is $\mathrm{FP}_{n}$.
\end{lem}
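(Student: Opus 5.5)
The plan is to prove the two inequalities separately, and to measure everything through the quantity $\log|\cdot|/\log|M|$. The observation after the definition of $\cbf nGK$ gives $\log|\mathrm{Tor}^{\rg}_{n}(K,M)|\leqslant\cbf nGK\log|M|$ for every finite module $M$. Working with orders of finite modules lets us ignore which endomorphism field we are measuring dimensions over. The final statement about $\mathrm{FP}_{n}$ then follows from the two inequalities together with Theorem \ref{thm:numericalFP}.

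For the first inequality, let $M$ be a simple $R\llbracket H\rrbracket$-module and $\mathbb{F}=\mathrm{End}_{H}(M)$.
\begin{itemize}
\item Since $H$ is open, $R\llbracket G\rrbracket$ is a finitely generated free $R\llbracket H\rrbracket$-module. Hence a projective resolution $P_{\bullet}\to K$ over $R\llbracket G\rrbracket$ restricts to a projective resolution over $R\llbracket H\rrbracket$.
\item The identity $P\ct_{R\llbracket H\rrbracket}M\cong P\te\mathrm{Ind}^{G}_{H}M$, where $\mathrm{Ind}^{G}_{H}M=R\llbracket G\rrbracket\ct_{R\llbracket H\rrbracket}M$, then gives Shapiro's lemma:
\[
\mathrm{Tor}^{R\llbracket H\rrbracket}_{k}(\mathrm{Res}^{G}_{H}K,M)\cong\mathrm{Tor}^{\rg}_{k}(K,\mathrm{Ind}^{G}_{H}M).
\]
\item The module $\mathrm{Ind}^{G}_{H}M$ is finite of order $|M|^{[G:H]}$.
\item The observation above, applied to it, yields
\[
\log|\mathrm{Tor}^{R\llbracket H\rrbracket}_{k}(K,M)|\leqslant\cbf kGK\,[G:H]\log|M|.
\]
Dividing by $\log|\mathbb{F}|$ gives the claimed bound on dimensions.
\end{itemize}

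For the second inequality, set $d=[G:H]$ and let $N$ be the normal core of $H$. Then $N$ is open and normal in $G$, $Q\coloneqq G/N$ has order at most $d!$, and $[H:N]\leqslant(d-1)!$, since $H/N$ embeds in the stabiliser of a point of $G/H$. Fix a simple $\rg$-module $M$ and apply Lemma \ref{lem:TorSpec} with $N\trianglelefteqslant G$. Its $E^{2}$ page is $H_{p}(Q,\mathrm{Tor}^{R\llbracket N\rrbracket}_{q}(K,M))$, so
\[
\log|\mathrm{Tor}^{\rg}_{k}(K,M)|\leqslant\sum_{p+q=k}\log|H_{p}(Q,\mathrm{Tor}^{R\llbracket N\rrbracket}_{q}(K,M))|.
\]
The two factors are bounded as follows.
\begin{itemize}
\item For the finite group $Q$, the bar resolution has free modules of rank $|Q|^{p}$ in degree $p$. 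Hence $\log|H_{p}(Q,X)|\leqslant|Q|^{p}\log|X|\leqslant(d!)^{p}\log|X|$ for any finite $X$.
\item The $R\llbracket N\rrbracket$-module $M$ is finite but not necessarily simple. The observation gives $\log|\mathrm{Tor}^{R\llbracket N\rrbracket}_{q}(K,M)|\leqslant\cbf qN{\mathrm{Res}K}\log|M|$.
\item The first inequality, applied to $N\leqslant H$, gives $\cbf qN{\mathrm{Res}K}\leqslant(d-1)!\,\cbf qH{\mathrm{Res}^{G}_{H}K}$.
\end{itemize}
Combining these bounds gives $\sum_{p+q=k}(d-1)!\,(d!)^{p}\,\cbf qH{\mathrm{Res}^{G}_{H}K}\log|M|$. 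After renaming $p\leftrightarrow q$, this is exactly the stated bound, and dividing by $\log|\mathrm{End}_{G}(M)|$ finishes the argument.

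The main obstacle is the Shapiro identification in the profinite setting. We need $R\llbracket G\rrbracket\ct_{R\llbracket H\rrbracket}M$ to be the correct induced module, and restrictions of projectives to remain projective. Both hold because $H$ is open, so $R\llbracket G\rrbracket$ is a finite free $R\llbracket H\rrbracket$-module on coset representatives, but this needs to be checked. The remaining steps are routine bookkeeping with orders of finite modules.
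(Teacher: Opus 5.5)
Your proof is correct and follows essentially the same route as the paper. For the first inequality you use Shapiro's lemma; for the second you pass to the normal core $N$, apply the spectral sequence of Lemma \ref{lem:TorSpec} (which the paper packages as Lemma \ref{lem:FPExt} with $K_{2}=R$), bound the homology of $G/N$ via the bar resolution, and apply the first inequality to $N\leqslant H$. Your $\log|\cdot|$ bookkeeping is just a more explicit handling of the non-simple modules $\mathrm{Ind}^{G}_{H}M$ and $\mathrm{Res}^{G}_{N}M$, which the paper treats implicitly via the observation following the definition of $\tau$.
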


\begin{proof}
We first prove the first inequality. Let $M$ be a simple $R\llbracket H\rrbracket$-module,
so that it is finite-dimensional over a finite field $\mathbb{F}$.
By Shapiro's lemma ,
\[
\mathrm{Tor}^{R\llbracket H\rrbracket}_{k}(\mathrm{Res}^{G}_{H}K,M)\cong\torg k(K,\mathrm{Ind}^{G}_{H}M),
\]
so
\[
\dim\mathrm{Tor}^{R\llbracket H\rrbracket}_{k}(\mathrm{Res}^{G}_{H}K,M)=\dim\torg k(K,\mathrm{Ind}^{G}_{H}M)\leqslant\cbf kGK\dim\mathrm{Ind}^{G}_{H}M=\cbf kGK\cdot[G:H]\cdot\dim M,
\]
as needed.

For the second inequality, recall that $N\coloneqq\bigcap_{g\in G}gHg^{-1}$
is an open normal subgroup of $G$, and $[G:N]\leqslant[G:H]!$. By
the previous case, 
\[
\cbf pN{\mathrm{Res}^{G}_{N}(K)}\leqslant[H:N]\cdot\cbf pH{\mathrm{Res}^{G}_{H}(K)}\leqslant([G:H]-1)!\cdot\cbf pH{\mathrm{Res}^{G}_{H}(K)}
\]
for every nonnegative integer $p$. Since $G/N$ is a finite group,
we have 
\[
\cbf q{G/N}R\leqslant[G:N]^{q}\leqslant([G:H]!)^{q}
\]
for every nonnegative integer $q$ (this may be computed directly,
using the bar resolution. Thus, by Lemma \ref{lem:FPExt} (taking
$K_{1}=K$, $K_{2}=R$), we get
\[
\cbf kGK\leqslant\sum_{p+q=k}\cbf pN{\mathrm{Res}^{G}_{N}(K)}\cbf q{G/N}R\leqslant\sum_{p+q=k}([G:H]-1)!\cdot([G:H]!)^{q}\cdot\cbf pH{\mathrm{Res}^{G}_{H}(K)},
\]
as needed.
\end{proof}

\begin{lem}
\label{lem:FP1Quotients}Let $N\trianglelefteqslant G$ be a closed
normal subgroup. Then 
\begin{align*}
\cbf 0{G/N}{K_{N}} & \leqslant\cbf 0GK,\\
\cbf 1{G/N}{K_{N}} & \leqslant\cbf 1GK.
\end{align*}
In particular, if $G$ is $\mathrm{FP}_{1}$, then $G/N$ is $\mathrm{FP}_{1}$.
\end{lem}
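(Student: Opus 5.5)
The plan is to compare Tor over $G$ and over $G/N$ for simple $R\llbracket G/N\rrbracket$-modules $M$, using the spectral sequence of Lemma \ref{lem:another-ss}, or equivalently the change-of-rings spectral sequence of Lemma \ref{lem:TorSpec}.

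\textbf{Setting up.} Fix a simple $R\llbracket G/N\rrbracket$-module $M$. Inflating along $G\to G/N$ makes it a simple $R\llbracket G\rrbracket$-module, and $\mathrm{End}_{G}(M)=\mathrm{End}_{G/N}(M)$. So it suffices to show
\[
\dim\mathrm{Tor}^{R\llbracket G/N\rrbracket}_{k}(K_{N},M)\leqslant\dim\mathrm{Tor}^{R\llbracket G\rrbracket}_{k}(K,M)\quad(k=0,1),
\]
since the supremum defining $\cbf k{G/N}{K_N}$ runs over a subset of the modules in the supremum for $\cbf kGK$.

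\textbf{Degree $0$.} This is immediate:
\[
K_{N}\otimes_{R\llbracket G/N\rrbracket}M\cong(K\otimes_{R}M)_{G}=K\otimes_{R\llbracket G\rrbracket}M,
\]
because $N$ acts trivially on $M$, so taking $N$-coinvariants first and then $G/N$-coinvariants gives the $G$-coinvariants.

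\textbf{Degree $1$.} Apply Lemma \ref{lem:TorSpec} with the roles arranged as
\[
E^{2}_{pq}=\mathrm{Tor}^{R\llbracket G/N\rrbracket}_{p}\big(\mathrm{Tor}^{R\llbracket N\rrbracket}_{q}(K,R),M\big)\Rightarrow\mathrm{Tor}^{R\llbracket G\rrbracket}_{p+q}(K,M).
\]
Concretely, use the factorisation $K\mapsto K_{N}\mapsto K_{N}\otimes_{R\llbracket G/N\rrbracket}M$. Since $M$ is trivial on $N$, we have $K\otimes_{R\llbracket G\rrbracket}M=K_{N}\otimes_{R\llbracket G/N\rrbracket}M$. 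The functor $(-)_{N}$ sends projective $R\llbracket G\rrbracket$-modules to projective $R\llbracket G/N\rrbracket$-modules, because $R\llbracket G\rrbracket_{N}=R\llbracket G/N\rrbracket$. Hence the Grothendieck spectral sequence applies, with $E^{2}_{p,0}=\mathrm{Tor}^{R\llbracket G/N\rrbracket}_{p}(K_{N},M)$.

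For a first-quadrant homology spectral sequence, the five-term exact sequence reads
\[
\mathrm{Tor}^{G}_{2}\to E^{2}_{2,0}\to E^{2}_{0,1}\to\mathrm{Tor}^{G}_{1}(K,M)\to E^{2}_{1,0}\to 0.
\]
The edge map $\mathrm{Tor}^{R\llbracket G\rrbracket}_{1}(K,M)\to E^{2}_{1,0}=\mathrm{Tor}^{R\llbracket G/N\rrbracket}_{1}(K_{N},M)$ is therefore surjective: $E^{2}_{1,0}=E^{\infty}_{1,0}$, since nothing hits it and $d^{2}$ out of it lands in negative degree. This gives the dimension inequality for $k=1$.

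\textbf{Conclusion and main obstacle.} The "in particular" follows by taking $K=R$, so that $K_{N}=R$, and applying Theorem \ref{thm:numericalFP}.

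The only delicate point is justifying this spectral sequence for non-trivial $K$, which is not literally Lemma \ref{lem:TorSpec} or Lemma \ref{lem:another-ss}. Two things need checking:
\begin{itemize}
\item $(-)_{N}$ preserves projectives. For a free module $R\llbracket G\times T\rrbracket$, its $N$-coinvariants are $R\llbracket G/N\times T\rrbracket$, which is free, and direct summands pass along.
\item $L_{0}$ of the first functor is $K_{N}$, which is immediate.
\end{itemize}
If one prefers to avoid the spectral sequence, a direct alternative exists. Take a projective presentation $P_{1}\to P_{0}\to K\to 0$ with kernel $Z=\ker(P_{0}\to K)$. Applying $(-)_{N}$ gives a right exact sequence with $(P_{0})_{N}$ projective. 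The image of $Z_{N}$ in $(P_{0})_{N}$ then surjects onto $\ker((P_{0})_{N}\to K_{N})$. Tensoring with $M$ and chasing shows that $\mathrm{Tor}_{1}$ over $G/N$ is a quotient of $\ker(Z\otimes M\to P_{0}\otimes M)=\mathrm{Tor}^{G}_{1}(K,M)$.
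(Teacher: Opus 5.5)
Your proof is correct and is essentially the paper's. Degree $0$ comes from the identification $K_N\otimes_{R\llbracket G/N\rrbracket}M\cong K\otimes_{R\llbracket G\rrbracket}M$, and degree $1$ from the surjective edge map $\mathrm{Tor}^{R\llbracket G\rrbracket}_1(K,M)\to E^2_{1,0}$ in the five-term exact sequence. The spectral sequence you construct by hand is not Lemma \ref{lem:TorSpec}; it is exactly Lemma \ref{lem:another-ss} applied with $M$ as the module inflated from $G/N$ and the two $\mathrm{Tor}$ arguments swapped by symmetry, which is how the paper obtains it.
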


\begin{proof}
Let $M$ is a simple $R\llbracket G/N\rrbracket$-module, so that
it is finite-dimensional over a finite field $\mathbb{F}$, and we
may consider it as an $R\llbracket G\rrbracket$-module by letting
$G$ act via the quotient map. We have $K\otimes_{R\llbracket N\rrbracket}M=K_{N}\t M$,
and $K_{N}\otimes_{R\llbracket G/N\rrbracket}M=K\otimes_{R\llbracket G\rrbracket}M$,
so
\[
\frac{\dim_{\mathbb{F}}(K_{N}\otimes_{R\llbracket G/N\rrbracket}M)}{\dim_{\mathbb{F}}M}=\frac{\dim_{\mathbb{F}}(K\otimes_{R\llbracket G\rrbracket}M)}{\dim_{\mathbb{F}}M}\leqslant\cbf 0GK.
\]
Since $M$ was arbitrary, it follows $\cbf 0{G/N}{K_{N}}\leqslant\cbf 0GK$.

Consider the spectral sequence associated with $N\trianglelefteqslant G$
from Lemma \ref{lem:another-ss}, but observe that now $M$ is the
$R\llbracket G/N\rrbracket$-module and $K$ is the $R\llbracket G\rrbracket$-module.
We have 
\[
E_{2}=\mathrm{Tor}^{R\llbracket G/N\rrbracket}_{p}(M,\mathrm{Tor}^{R\llbracket N\rrbracket}_{q}(R,K))\implies\mathrm{Tor}^{R\llbracket G\rrbracket}_{p+q}(M,K).
\]
The end of the $5$ term exact sequence is 
\[
\mathrm{Tor}^{R\llbracket G\rrbracket}_{1}(M,K)\longrightarrow\mathrm{Tor}^{R\llbracket G/N\rrbracket}_{1}(M,K_{N})\longrightarrow0,
\]
so 
\[
\dimf\mathrm{Tor}^{R\llbracket G/N\rrbracket}_{1}(M,K_{N})\leqslant\dimf\mathrm{Tor}^{R\llbracket G\rrbracket}_{1}(M,K).
\]
Since $M$ was arbitrary (and $\mathrm{Tor}$ is symmetric), we get
\[
\cbf 1{G/N}{K_{N}}\leqslant\cbf 1GK.\qedhere
\]
\end{proof}

\begin{lem}
\label{lem:FPforQuotients}Let $N\trianglelefteqslant G$ be a closed
normal subgroup, and assume $N$ acts trivially on $K$ (so that it
is an $R\llbracket G/N\rrbracket$-module). Then we have the following
recursive formula:
\[
\cbf k{G/N}K\leqslant\cbf kGK+\sum^{k}_{r=2}\cbf{k-r}{G/N}K\cbf{r-1}NR.
\]
In particular, if $N$ is $\mathrm{FP}_{n-1}$ and $G$ is $\mathrm{FP}_{n}$,
then $G/N$ is $\mathrm{FP}_{n}$.
\end{lem}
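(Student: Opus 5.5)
The plan is to use the spectral sequence of Lemma \ref{lem:another-ss} for $N\trianglelefteqslant G$, applied to the $R\llbracket G/N\rrbracket$-module $K$ and a simple $R\llbracket G/N\rrbracket$-module $M$. We regard $M$ as an $\rg$-module by inflation, and set $\mathbb{F}=\mathrm{End}_{G/N}(M)=\mathrm{End}_{G}(M)$. This gives
\[
E^{2}_{p,q}=\mathrm{Tor}^{R\llbracket G/N\rrbracket}_{p}\bigl(K,\mathrm{Tor}^{R\llbracket N\rrbracket}_{q}(R,M)\bigr)\Longrightarrow\torg{p+q}(K,M).
\]
The bottom row is $E^{2}_{k,0}=\mathrm{Tor}^{R\llbracket G/N\rrbracket}_{k}(K,M)$, because $\mathrm{Tor}^{R\llbracket N\rrbracket}_{0}(R,M)=M_{N}=M$. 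This is the quantity we want to bound.

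The differentials leaving the bottom row are $d^{r}\colon E^{r}_{k,0}\to E^{r}_{k-r,r-1}$ for $r\geqslant2$, and they vanish once $r>k$. Hence $E^{\infty}_{k,0}$ is obtained from $E^{2}_{k,0}$ by successively taking kernels. This gives
\[
|E^{2}_{k,0}|\leqslant|E^{\infty}_{k,0}|\cdot\prod_{r=2}^{k}|E^{2}_{k-r,r-1}|.
\]
Moreover $E^{\infty}_{k,0}$ is a quotient of $\torg k(K,M)$. I will work with logarithms of cardinalities rather than $\mathbb{F}$-dimensions, since the middle terms need not be $\mathbb{F}$-vector spaces in a useful way. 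Note that $\log|M|=\dimf M\cdot\log|\mathbb{F}|$. The term $\log|E^{\infty}_{k,0}|$ is then at most $\cbf kGK\log|M|$ by the definition of $\cbf kGK$. If some $\tau$ on the right-hand side is infinite, there is nothing to prove, so we may assume all of them are finite.

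The key step is bounding $\log|E^{2}_{k-r,r-1}|$. I will use the remark after the definition of $\tau$: for every finite module $X$, $\log|\mathrm{Tor}_{n}(K,X)|\leqslant\tau_{n}\log|X|$, proved by induction on the composition length of $X$. First apply this over $N$ with the trivial module $R$ and $X=M$ (a finite $R\llbracket N\rrbracket$-module on which $N$ acts trivially). This gives
\[
\log|\mathrm{Tor}^{R\llbracket N\rrbracket}_{r-1}(R,M)|\leqslant\cbf{r-1}NR\log|M|,
\]
and in particular this Tor group is finite. Then apply the same remark over $G/N$ with the module $K$ and the finite $R\llbracket G/N\rrbracket$-module $X=\mathrm{Tor}^{R\llbracket N\rrbracket}_{r-1}(R,M)$. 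This gives
\[
\log|E^{2}_{k-r,r-1}|\leqslant\cbf{k-r}{G/N}K\,\cbf{r-1}NR\log|M|.
\]
Adding these bounds, dividing by $\log|M|$, and using $\dimf E^{2}_{k,0}\cdot\log|\mathbb{F}|=\log|E^{2}_{k,0}|$ yields the displayed recursive inequality, since $M$ was arbitrary. The main thing I expect to need care with is exactly this passage from dimensions to cardinalities. It requires checking that the length-induction remark holds for arbitrary finite modules, not only simple ones; this follows from the long exact Tor sequence, which is subadditive in $\log|\cdot|$.

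For the consequence, assume $N$ is $\mathrm{FP}_{n-1}$ and $G$ is $\mathrm{FP}_{n}$. By Theorem \ref{thm:numericalFP}, $\cbf jNR<\infty$ for $j\leqslant n-1$ and $\cbf jGK<\infty$ for $j\leqslant n$. We then argue by induction on $k\leqslant n$. The recursion only involves $\cbf{k-r}{G/N}K$ with $k-r<k$ and $\cbf{r-1}NR$ with $r-1\leqslant k-1\leqslant n-1$, so every term on the right-hand side is finite. Hence $\cbf k{G/N}K<\infty$ for all $k\leqslant n$, and Theorem \ref{thm:numericalFP} shows that $G/N$ is $\mathrm{FP}_{n}$.
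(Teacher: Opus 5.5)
Your proposal is correct and follows the paper's proof. Both use the spectral sequence of Lemma \ref{lem:another-ss}, reduce $E^{2}_{k,0}$ to $E^{\infty}_{k,0}$ through the differentials $E^{r}_{k,0}\to E^{r}_{k-r,r-1}$, and bound each $E^{2}_{k-r,r-1}$ by $\tau_{k-r}(G/N;K)\,\tau_{r-1}(N;R)$; the only difference is that you measure with $\log|\cdot|$ rather than $\dim_{\mathbb{F}}$, which makes explicit the length-induction remark the paper uses implicitly for the non-simple coefficient module $\mathrm{Tor}^{R\llbracket N\rrbracket}_{r-1}(R,M)$.
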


\begin{proof}
Let $M$ be a simple $R\llbracket G/N\rrbracket$-module, so that
it is finite-dimensional over a finite field $\mathbb{F}$. We let
$G$ act on $M$ via the quotient map. Consider the spectral sequence
associated with $1\to N\to G\to G/N\to1$ from Lemma \ref{lem:another-ss},
\[
E_{2}=\mathrm{Tor}^{R\llbracket G/N\rrbracket}_{p}(K,\mathrm{Tor}^{R\llbracket N\rrbracket}_{q}(R,M))\implies\mathrm{Tor}^{R\llbracket G\rrbracket}_{p+q}(K,M).
\]
We have 
\[
E^{2}_{k0}=\mathrm{Tor}^{R\llbracket G/N\rrbracket}_{k}(K,M),
\]
We need to bound $\dim E^{2}_{k0}$. For $r\geqslant2$, we have an
exact sequence
\[
E^{r+1}_{k0}\longrightarrow E^{r}_{k0}\longrightarrow E^{r}_{k-r,r-1},
\]
so that 
\[
\dimf E^{r}_{k0}\leqslant\dimf E^{r+1}_{k0}+\dimf E^{r}_{k-r,r-1}.
\]
It follows that 
\begin{align*}
\dimf E^{2}_{k0} & \leqslant\dimf E^{\infty}_{k0}+\sum^{k}_{r=2}\dimf E^{r}_{k-r,r-1}\leqslant\dimf E^{\infty}_{k0}+\sum^{k}_{r=2}\dimf E^{2}_{k-r,r-1}\\
 & \leqslant\dimf\mathrm{Tor}^{R\llbracket G\rrbracket}_{k}(K,M)+\sum^{k}_{r=2}\dimf\mathrm{Tor}^{R\llbracket G/N\rrbracket}_{k-r}(K,\mathrm{Tor}^{R\llbracket N\rrbracket}_{r-1}(R,M)).\\
 & \leqslant\left(\cbf kGK+\sum^{k}_{r=2}\cbf{k-r}{G/N}K\cbf{r-1}NR\right)\dimf M.
\end{align*}
Since $M$ was arbitrary, the claim follows.
\end{proof}

\begin{lem}
Suppose $G=N\rtimes Q$ is a semidirect product of some profinite
groups $N,Q$, and assume $N$ acts trivially on $K$. Then $\cbf kQK\leqslant\cbf kGK$.
In particular, if $G$ is $\mathrm{FP}_{n}$, then so is $Q$.
\end{lem}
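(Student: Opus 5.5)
Since $G=N\rtimes Q$, the composite $Q\hookrightarrow G\twoheadrightarrow G/N\cong Q$ is the identity. I will use this to show that $\mathrm{Tor}^{R\llbracket Q\rrbracket}_{k}(K,M)$ is a direct summand of $\mathrm{Tor}^{R\llbracket G\rrbracket}_{k}(K,M)$ for every simple $R\llbracket Q\rrbracket$-module $M$. Here $M$ is viewed as a $G$-module by inflation, and it is still simple as an $R\llbracket G\rrbracket$-module. Its endomorphism field is unchanged: $N$ acts trivially, so $\mathrm{End}_{G}(M)=\mathrm{End}_{Q}(M)=:\mathbb{F}$.

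First, apply Lemma \ref{lem:TorFunctoriality} to the inclusion $\iota\colon Q\to G$, with $\varphi=\mathrm{id}_{K}$, where $K$ and $M$ are regarded as $Q$-modules via $\iota$. This gives a natural map
\[
\alpha\colon\mathrm{Tor}^{R\llbracket Q\rrbracket}_{k}(K,M)\to\mathrm{Tor}^{R\llbracket G\rrbracket}_{k}(K,M).
\]
Second, apply the same lemma to the projection $\pi\colon G\to G/N=Q$, again with $\varphi=\mathrm{id}_{K}$; this is the corollary following Lemma \ref{lem:TorFunctoriality}, using $K_{N}=K$ because $N$ acts trivially. This gives
\[
\beta\colon\mathrm{Tor}^{R\llbracket G\rrbracket}_{k}(K,M)\to\mathrm{Tor}^{R\llbracket Q\rrbracket}_{k}(K,M).
\]
The maps constructed in that lemma are compatible with composition of group homomorphisms: a composite of comparison maps is again a comparison map, and comparison maps are unique up to homotopy. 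Hence $\beta\circ\alpha$ is the map induced by $\pi\circ\iota=\mathrm{id}_{Q}$ and $\mathrm{id}_{K}$, which is the identity on $\mathrm{Tor}^{R\llbracket Q\rrbracket}_{k}(K,M)$.

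It follows that $\alpha$ is injective, and that $\alpha$ and $\beta$ are $\mathbb{F}$-linear by naturality in $M$. Therefore
\[
\dim_{\mathbb{F}}\mathrm{Tor}^{R\llbracket Q\rrbracket}_{k}(K,M)\leqslant\dim_{\mathbb{F}}\mathrm{Tor}^{R\llbracket G\rrbracket}_{k}(K,M)\leqslant\cbf kGK\dim_{\mathbb{F}}M.
\]
Taking the supremum over simple $R\llbracket Q\rrbracket$-modules $M$ gives $\cbf kQK\leqslant\cbf kGK$. The $\mathrm{FP}_{n}$ statement then follows from Theorem \ref{thm:numericalFP}, taking $K=R$.

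The main obstacle is justifying functoriality, namely $\beta\circ\alpha=\mathrm{id}$. I would do this at chain level. Take an $R\llbracket Q\rrbracket$-projective resolution $P_{\bullet}$ of $K$, a $G$-projective resolution $P'_{\bullet}$, and a $Q$-projective resolution $P''_{\bullet}$. Choose a $Q$-chain map $P_{\bullet}\to P'_{\bullet}$ lifting $\mathrm{id}_{K}$, and a $G$-chain map $P'_{\bullet}\to P''_{\bullet}$ lifting $\mathrm{id}_{K}$. The latter is also a $Q$-map via $\iota$, since it is $G$-equivariant and $Q$ acts on $P''_{\bullet}$ through $\pi\circ\iota=\mathrm{id}$. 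So the composite is a $Q$-chain map $P_{\bullet}\to P''_{\bullet}$ lifting $\mathrm{id}_{K}$ between $Q$-projective resolutions, hence homotopic to a chain isomorphism. On coinvariants, the composite $(P\otimes_{R}M)_{Q}\to(P'\otimes_{R}M)_{G}\to(P''\otimes_{R}M)_{Q}$ is the map induced by this composite chain map, and therefore induces the canonical identification on homology.

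If this bookkeeping turns out to be awkward, there is a fallback: use the edge maps of the spectral sequence of Lemma \ref{lem:another-ss} for $N\trianglelefteqslant G$, namely $\mathrm{Tor}^{R\llbracket G\rrbracket}_{k}(K,M)\to E^{2}_{k,0}=\mathrm{Tor}^{R\llbracket Q\rrbracket}_{k}(K,M)$, and show that the splitting forces this edge map to be surjective.
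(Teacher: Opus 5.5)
Your proposal is correct and is essentially the paper's proof: the paper also applies Lemma \ref{lem:TorFunctoriality} to the section $s\colon Q\to G$ and the quotient $\pi\colon G\to Q$, obtaining $\pi_{*}\circ s_{*}=\mathrm{id}$ on $\mathrm{Tor}^{R\llbracket Q\rrbracket}_{k}(K,M)$, and concludes that $\pi_{*}$ is onto, which gives the same dimension bound as your injectivity of $\alpha$. Your chain-level check that the composite of comparison maps induces the identity supplies a step the paper takes for granted; one small correction there is that the composite is a homotopy equivalence rather than something homotopic to a chain isomorphism, and taking $P''_{\bullet}=P_{\bullet}$ makes it homotopic to the identity.
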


\begin{proof}
We have a quotient map $\pi:G\to Q$ and a section $s:Q\to G$, so
that $\pi\circ s=\mathrm{id}_{Q}$. Let $M$ be a simple $R\llbracket Q\rrbracket$-module,
also considered as an $\rg$-module via $\pi$. Observe $M$ is finite-dimensional
over the finite field $\mathbb{F}\coloneqq\mathrm{End}_{G}(M)=\mathrm{End}_{Q}(M)$.
Functoriality of $\mathrm{Tor}$ (Lemma \ref{lem:TorFunctoriality})
gives us maps $\pi_{*}:\mathrm{Tor}^{R\llbracket G\rrbracket}_{k}(K,M)\to\mathrm{Tor}^{R\llbracket Q\rrbracket}_{k}(K,M)$
and $s_{*}:\mathrm{Tor}^{R\llbracket Q\rrbracket}_{k}(K,M)\to\mathrm{Tor}^{R\llbracket G\rrbracket}_{k}(K,M)$
such that $\pi_{*}\circ s_{*}=\mathrm{id}_{\mathrm{Tor}^{R\llbracket Q\rrbracket}_{k}(K,M)}$,
so that $\pi_{*}$ is onto, and hence $\dimf\mathrm{Tor}^{R\llbracket Q\rrbracket}_{k}(K,M)\leqslant\dimf\mathrm{Tor}^{R\llbracket G\rrbracket}_{k}(K,M)$.
\end{proof}

\subsubsection{Inverse Limits}
\begin{lem}
\label{lem:CoinvInvlim}Suppose $G=\li_{i\in I}G/N_{i}$ is an inverse
limit of profinite groups. Then $K=\li_{i\in I}K_{N_{i}}$ (where
$K_{N_{i}}$ are the $N_{i}$-coinvariants of $K$).
\end{lem}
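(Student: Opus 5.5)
The plan is to build the natural comparison map and show it is bijective. We assume, as the notation suggests, that the $N_i$ are open normal subgroups forming a directed system with $\bigcap_i N_i=1$.

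\emph{Step 1: the maps.} For $N_j\subseteq N_i$ the quotient $K\to K_{N_i}$ factors through $K_{N_j}$. Here $K_{N_i}=K/\overline{I_{N_i}K}$, where $I_{N_i}$ is the augmentation ideal of $R\llbracket N_i\rrbracket$. This gives a continuous $\rg$-linear map
\[
\phi\colon K\to\li_{i\in I}K_{N_i}.
\]

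\emph{Step 2: surjectivity.} Each $K\to K_{N_i}$ is a surjection of profinite modules, hence of compact Hausdorff spaces. Its fibres over a compatible family form an inverse system of nonempty compact sets. Their inverse limit is therefore nonempty, so $\phi$ is onto. Equivalently, the image of $\phi$ is compact and dense, hence everything.

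\emph{Step 3: injectivity.} This is the main obstacle. We must show $\bigcap_i\overline{I_{N_i}K}=0$. Let $U\leqslant K$ be an open submodule; these form a basis of neighbourhoods of $0$ by the standard structure of profinite modules. Then $K/U$ is a finite $\rg$-module, so the kernel of the action of $G$ on $K/U$ is an open normal subgroup of $G$. Since the $N_i$ form a fundamental system of neighbourhoods of $1$, this kernel contains some $N_i$.

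For $n\in N_i$ and $x\in K$ we then have $(n-1)x\in U$. As $U$ is a closed submodule, it follows that $\overline{I_{N_i}K}\subseteq U$. Hence
\[
\bigcap_i\overline{I_{N_i}K}\subseteq\bigcap_U U=0,
\]
and $\phi$ is injective.

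\emph{Step 4: conclusion.} A continuous bijection between compact Hausdorff spaces is a homeomorphism, so $\phi$ is an isomorphism of profinite $\rg$-modules. The only point needing care is that $\overline{I_{N_i}K}$ is genuinely the kernel of $K\to K_{N_i}$ in the profinite category. This holds because coinvariants are defined as $R\llbracket G/N_i\rrbracket\te K$, whose kernel is the closed submodule generated by the elements $(n-1)x$.
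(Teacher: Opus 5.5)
Your argument is the paper's argument. The paper reduces the lemma to $\bigcap_i\overline{I_{N_i}K}=0$ by quoting a standard result (Wilson, Proposition 1.2.2), which packages your Steps 1, 2 and 4. It then proves this vanishing exactly as in your Step 3.

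The one point to repair is your extra assumption that the $N_i$ are open. The lemma concerns an inverse limit of \emph{profinite} groups $G/N_i$, so the $N_i$ are only closed normal subgroups forming a directed family with $\bigcap_i N_i=1$. The paper actually uses this generality. Through Lemma~\ref{lem:NFPnForInvLim}, it applies the lemma to $\prod_{i\in I}G_i$, viewed as the limit of the finite subproducts $\prod_{i\in S}G_i$, and the kernels $\prod_{i\notin S}G_i$ are not open.

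For such $N_i$ your justification ``the $N_i$ form a fundamental system of neighbourhoods of $1$'' is false. The fact you need still holds by compactness. Let $N$ be the open kernel of the action on $K/U$. Then the sets $N_i\setminus N$ are closed and have empty intersection, so finitely many of them already do. Directedness then gives a single $N_{i_0}\subseteq N$. This is what the paper's brief ``since $N$ is open and closed'' stands for.

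With this substitution your proof is complete. Steps 2 and 4 are unaffected, since $K/\overline{I_{N_i}K}$ is still a profinite module.
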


\begin{proof}
Denote $C_{i}=\overline{\left\{ nx-x\middle|n\in N_{i},x\in K\right\} }$,
so that $K_{N_{i}}=K/C_{i}$. All we need to prove is that $\bigcap_{i}C_{i}=\left\{ 0\right\} $
(e.g., by \cite[Proposition 1.2.2]{Wil98}). Observe that, for any
submodule $U\subseteq K$, we have $C_{i}\subseteq U$ if and only
if $N_{i}$ acts trivially on $K/U$. Now, let $U$ be any open submodule
of $K$. Then $K/U$ is finite, which means that there is an open
normal subgroup $N\trianglelefteqslant G$ such that $N$ acts trivially
on $K/U$. Since $N$ is open and closed, there is some $i_{0}$ such
that $N_{i_{0}}\subseteq N$. Since $N$ acts trivially on $K/U$,
so does $N_{i_{0}}$, which means $C_{i_{0}}\subseteq U$; in particular,
$\bigcap_{i}C_{i}\subseteq U$. Since $U$ was arbitrary, we get that
$\bigcap_{i}C_{i}\subseteq\bigcap_{U\subseteq K\text{ open}}U=\left\{ 0\right\} $,
as needed.
\end{proof}

\begin{lem}
\label{lem:TorInvLim}Suppose $\Lambda=\li_{i\in I}\Lambda_{i}$ is
an inverse limit of profinite $R$-algebras, that $N=\li_{i\in I}N_{i}$
is a right $\Lambda$-module which is an inverse limit of right $\Lambda_{i}$-modules
$N_{i}$, and that $M=\li_{i\in I}$ is a left $\Lambda$-module which
is an inverse limit of left $\Lambda_{i}$-modules $M_{i}$. Then
\[
\mathrm{Tor}^{\Lambda}_{k}(N,M)\cong\li_{i\in I}\mathrm{Tor}^{\Lambda_{i}}_{k}(N_{i},M_{i}).
\]
\end{lem}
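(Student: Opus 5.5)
We prove that Tor commutes with inverse limits by building compatible projective resolutions and exchanging homology with $\li$. Two facts about profinite objects drive the argument. First, inverse limits of profinite modules over a directed index set are exact, by compactness; equivalently, $\li$ is exact on the category of profinite abelian groups. Second, the completed tensor product commutes with inverse limits: for profinite modules one has $X\ct_{\Lambda}Y\cong\li_{i}X_{i}\ct_{\Lambda_{i}}Y_{i}$ whenever $X=\li X_i$ and $Y=\li Y_i$ compatibly.

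The key step is to realise a projective resolution of $N$ over $\Lambda$ as an inverse limit of projective resolutions of $N_i$ over $\Lambda_i$. Start with the free profinite module $F_0=\Lambda\llbracket T_0\rrbracket$ on the profinite space $T_0=N$ (or on any profinite generating space), mapping onto $N$. Set $F_{0,i}=\Lambda_i\llbracket T_{0,i}\rrbracket$, where $T_{0,i}$ is the image of $T_0$ in $N_i$, or $T_0$ itself.

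One caveat applies: the $N_i$ need not be quotients of $N$. To handle this, first reduce to the case where the transition maps are surjective and $N\to N_i$ is onto, by replacing $N_i$ with the image of $N$ in $N_i$, and likewise for $M_i$ and $\Lambda_i$. Doing so does not change the limits.

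With surjective transitions in place, $\Lambda\llbracket T\rrbracket=\li\Lambda_i\llbracket T_i\rrbracket$ holds for $T=\li T_i$. The kernels $Z_{0,i}=\ker(F_{0,i}\to N_i)$ then form an inverse system, and by exactness of $\li$ their limit is $\ker(F_0\to N)$. Iterating with generating spaces $T_1=Z_0$, $T_{1,i}=Z_{0,i}$, and so on, produces an inverse system of free resolutions $F_{\bullet,i}\to N_i$ whose limit is a free resolution $F_\bullet\to N$. The limit is exact, again by exactness of $\li$.

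Next, tensor with $M$. Termwise we get $F_k\ct_\Lambda M\cong\li_i F_{k,i}\ct_{\Lambda_i}M_i$. For a free module this is simply $R\llbracket T_k\rrbracket\ct_R M=\li R\llbracket T_{k,i}\rrbracket\ct_R M_i$, and the identification is compatible with the differentials. Since homology commutes with exact inverse limits of complexes of profinite modules, we obtain
\[
\mathrm{Tor}^\Lambda_k(N,M)=H_k(\li_i F_{\bullet,i}\ct_{\Lambda_i}M_i)\cong\li_i H_k(F_{\bullet,i}\ct_{\Lambda_i}M_i)=\li_i\mathrm{Tor}^{\Lambda_i}_k(N_i,M_i).
\]

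We expect the main obstacle to be the construction of compatible resolutions, together with verifying that the completed tensor product of the limit free modules equals the limit of the tensors. The first thing we would try is to use free modules on profinite spaces and the identity $\Lambda\llbracket\li T_i\rrbracket=\li\Lambda_i\llbracket T_i\rrbracket$, which follows from the universal property. The reduction to surjective systems must also be checked to leave the inverse limits unchanged. That holds because, in the profinite category, the image system has the same limit.
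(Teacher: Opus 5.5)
Your strategy is the paper's. You build the canonical free resolutions $\llbracket N_{m,i}\rrbracket\Lambda_i\to N_{m,i}$ on the underlying spaces of $N_i$ and of the successive kernels, and note that they form an inverse system. Exactness of $\li$ together with $\li_i\llbracket T_i\rrbracket\Lambda_i\cong\llbracket\li_i T_i\rrbracket\Lambda$ then gives a free resolution of $N$, and finally you commute $-\otimes_{\Lambda}M$ and homology with $\li$.

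The one step that fails as written is your preliminary reduction to surjective systems. Replacing $\Lambda_i,N_i,M_i$ by the images $\Lambda_i',N_i',M_i'$ of $\Lambda,N,M$ leaves $\Lambda,N,M$ unchanged, but it changes the right-hand side. In general $\mathrm{Tor}^{\Lambda_i'}_k(N_i',M_i')$ differs from $\mathrm{Tor}^{\Lambda_i}_k(N_i,M_i)$. Showing that the two inverse limits of these $\mathrm{Tor}$ groups coincide needs an argument of the same kind as the lemma itself. Your remark that ``the image system has the same limit'' only concerns the modules, not the $\mathrm{Tor}$ groups.

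The remedy is to drop the reduction, as the paper does. The construction $P_{0,i}=\llbracket N_{0,i}\rrbracket\Lambda_i\to N_{0,i}$, $N_{1,i}=\ker$, and so on, is functorial for arbitrary transition maps. Both exactness of $\li$ and the identification $\li_i\llbracket T_i\rrbracket\Lambda_i\cong\llbracket\li_i T_i\rrbracket\Lambda$ hold for arbitrary directed systems of profinite objects, with no surjectivity needed.

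A smaller point concerns compatibility with the differentials. Your basis-dependent identification $F_k\otimes_{\Lambda}M\cong R\llbracket T_k\rrbracket\otimes_R M$ does not by itself give this, since the differentials are not induced by maps $T_{k+1}\to T_k$. What you should use is that the canonical comparison map $F_k\otimes_\Lambda M\to\li_i F_{k,i}\otimes_{\Lambda_i}M_i$ is natural. It remains to show this map is bijective. The paper does so by presenting $P\otimes_\Lambda M$ as the cokernel of $d\colon P\otimes_R\Lambda\otimes_R M\to P\otimes_R M$, a presentation that is natural in $P$ and commutes with $\li$.
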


\begin{proof}
Set $N_{0,i}=N_{i}$ and $M_{0,i}=M_{i}$. For $i\in I$, we construct
a free resolution $P_{\bullet,i}\longrightarrow N_{0,i}$ as follows.
Set 
\[
P_{0,i}=\llbracket N_{0,i}\rrbracket\Lambda_{i},
\]
the free profinite right $\Lambda_{i}$-module on the underlying profinite
space of $N_{0,i}$ (ignoring the algebraic structure of $N_{0,i}$).
We let 
\[
\varphi_{0,i}:P_{0,i}\longrightarrow N_{0,i}
\]
be the morphism induced by the identity map $N_{0,i}\longrightarrow N_{0,i}$,
which is onto. Set $N_{1,i}=\ker\varphi_{0,i}$ and $P_{1,i}=\llbracket N_{1,i}\rrbracket\Lambda_{i}$,
and let 
\[
\varphi_{1,i}:P_{1,i}\longrightarrow P_{0,i}
\]
be the map induced by the inclusion $N_{1,i}=\ker\varphi_{0,i}\hookrightarrow P_{0,i}$.
Continue this inductively to define a free resolution 
\[
\cdots\overset{\varphi_{3,i}}{\longrightarrow}P_{2,i}\overset{\varphi_{2,i}}{\longrightarrow}P_{1,i}\overset{\varphi_{1,i}}{\longrightarrow}P_{0,i}\overset{\varphi_{0,i}}{\longrightarrow}N_{0,i}\longrightarrow0
\]
with 
\[
N_{\ell,i}=\ker\varphi_{\ell-1,i},\quad P_{\ell,i}=\llbracket N_{\ell,i}\rrbracket\Lambda_{i}.
\]
Given $i\geqslant j$ in $I$, the map $N_{0,i}\longrightarrow N_{0,j}$
induces a morphism $P_{0,i}\longrightarrow P_{0,j}$. We have a commutative
diagram
\[
\xymatrix{P_{0,i}\ar[r]\ar[d] & N_{0,i}\ar[d]\\
P_{0,j}\ar[r] & N_{0,j}
}
\]
By the commutativity of the diagram, we get a map $N_{1,i}\longrightarrow N_{1,j}$
and hence a morphism $P_{1,i}\longrightarrow P_{1,j}$. Continuing
this recursively, we get a map $N_{\ell,i}\longrightarrow N_{\ell,j}$
for every $\ell\geqslant0$ which induces a morphism $P_{\ell,i}\longrightarrow P_{\ell,j}$.
Taking inverse limits, we define 
\[
N_{(m)}=\li_{i}N_{m,i},\qquad P_{(m)}=\li_{i}P_{m,i}.
\]
By definition, $N_{(0)}=N$. Moreover, we have \cite[Exercise 5.2.3]{RB10}
\[
P_{(m)}\cong\li_{i}\llbracket N_{m,i}\rrbracket\Lambda_{i}\cong\llbracket N_{(m)}\rrbracket\Lambda.
\]
Thus, $P_{(m)}$ is a free profinite right $\Lambda$-module. Since
inverse limits are exact in the category of profinite modules, 
\[
\cdots\longrightarrow P_{(2)}\longrightarrow P_{(1)}\longrightarrow P_{(0)}\longrightarrow N\longrightarrow0
\]
is a free exact resolution of $N$.  We have an exact sequence
\[
P_{(m)}\t\Lambda\t M\overset{d}{\longrightarrow}P_{(m)}\t M\longrightarrow P_{(m)}\otimes_{\Lambda}M\longrightarrow0,
\]
where $d$ satisfies $\alpha\otimes\lambda\otimes\beta\mapsto\alpha\lambda\otimes\beta-\alpha\otimes\lambda\beta$
for $\alpha\in P_{(m)}$, $\beta\in M$, $\lambda\in\Lambda$. Similarly,
for each $i\in I$, we have an exact sequence
\[
P_{m,i}\t\Lambda_{i}\t M_{i}\overset{d_{i}}{\longrightarrow}P_{m,i}\t M_{i}\longrightarrow P_{m,i}\otimes_{\Lambda_{i}}M_{i}\longrightarrow0,
\]
where $d_{i}$ satisfies $\alpha\otimes\lambda\otimes\beta\mapsto\alpha\lambda\otimes\beta-\alpha\otimes\lambda\beta$
for $\alpha\in P_{m,i}$, $\beta\in M_{i}$, $\lambda\in\Lambda_{i}$.
Since taking inverse limits is exact and $d=\li_{i\in I}d_{i}$ (and
completed tensor products commute with inverse limits), we get that
\[
P_{(m)}\otimes_{\Lambda}M\cong\li_{i\in I}P_{m,i}\otimes_{\Lambda_{i}}M_{i}.
\]
This isomorphism is natural, so it is compatible with the differentials
of $P_{(\bullet)}$, and hence 
\[
P_{(\bullet)}\otimes_{\Lambda}M\cong\li_{i}\bigl(P_{\bullet,i}\otimes_{\Lambda_{i}}M_{i}\bigr)
\]
as complexes of profinite $R$-modules. Again using exactness of inverse
limits in the category of profinite modules, homology commutes with
this inverse limit. Therefore 
\[
\begin{aligned}\operatorname{Tor}^{\Lambda}_{n}(N,M) & \cong H_{n}\bigl(P_{(\bullet)}\otimes_{\Lambda}M\bigr)\\
 & \cong\li_{i}H_{n}\bigl(P_{\bullet,i}\otimes_{\Lambda_{i}}M_{i}\bigr)\\
 & \cong\li_{i}\operatorname{Tor}^{\Lambda_{i}}_{n}(N_{i},M_{i}).
\end{aligned}
\]
This is the desired natural isomorphism. 
\end{proof}

\begin{lem}
\label{lem:NFPnForInvLim}Suppose $G=\underset{\longleftarrow}{\lim}_{i\in I}\,(G/N_{i})$
is an inverse limit of profinite groups. Then
\[
\cbf kGK\leqslant\sup_{i\in I}\cbf k{G/N_{i}}{K_{N_{i}}}.
\]
\end{lem}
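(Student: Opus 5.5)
Fix a simple $\rg$-module $M$ and set $\mathbb{F}=\mathrm{End}_{G}(M)$. The plan is to bound $\dimf\torg k(K,M)$ by the corresponding quantity for a suitable finite quotient $G/N_{i}$, and then apply the definition of $\cbf k{G/N_{i}}{K_{N_{i}}}$.

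Since $M$ is finite, some open normal subgroup of $G$ acts trivially on it. Because the $N_{i}$ form a cofinal system of normal subgroups with $\bigcap N_{i}=1$ (as in the proof of Lemma \ref{lem:CoinvInvlim}), there is an $i_{0}$ such that $N_{i}$ acts trivially on $M$ for all $i\geqslant i_{0}$. For such $i$, $M$ is a simple $R\llbracket G/N_{i}\rrbracket$-module. Its endomorphism field is still $\mathbb{F}$, since $G$ acts through $G/N_{i}$.

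Next, write $\rg=\li_{i\geqslant i_{0}}R\llbracket G/N_{i}\rrbracket$. By Lemma \ref{lem:CoinvInvlim}, $K=\li_{i}K_{N_{i}}$, and $M=\li_{i}M$ is a constant system. Lemma \ref{lem:TorInvLim} (in its left/right version, or via Tor symmetry) then gives
\[
\torg k(K,M)\cong\li_{i\geqslant i_{0}}\mathrm{Tor}^{R\llbracket G/N_{i}\rrbracket}_{k}(K_{N_{i}},M).
\]
Each term of this system is an $\mathbb{F}$-vector space of dimension at most
\[
\cbf k{G/N_{i}}{K_{N_{i}}}\dimf M\leqslant S\cdot\dimf M,\qquad S:=\sup_{i\in I}\cbf k{G/N_{i}}{K_{N_{i}}}.
\]
The transition maps are $\mathbb{F}$-linear. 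This should be checked: they are induced by the natural maps of Lemma \ref{lem:TorFunctoriality} and commute with the $G$-endomorphisms of $M$.

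The remaining step is a dimension bound for an inverse limit. An inverse limit over a directed set of vector spaces each of dimension at most $D$ has dimension at most $D$. Indeed, if $v_{1},\dots,v_{D+1}$ were linearly independent elements of the limit, then already some finite stage would separate them. The reason is that the kernel of the projection from the span of the $v_{j}$ to stage $i$ is a decreasing family of subspaces of a finite-dimensional space with trivial intersection, so it stabilises at $0$. That would give $D+1$ independent elements in a space of dimension at most $D$, a contradiction. Hence $\dimf\torg k(K,M)\leqslant S\dimf M$, and taking the supremum over $M$ gives the claim. If $S=\infty$ there is nothing to prove.

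The main obstacle is applying Lemma \ref{lem:TorInvLim} correctly. It requires $K$ and $M$ to be presented compatibly as inverse limits of $R\llbracket G/N_{i}\rrbracket$-modules over the same index set, and the index set must be restricted to $i\geqslant i_{0}$ so that $M$ is genuinely a module over every $\Lambda_{i}$. The paper states that lemma with $N$ a right module, so this also needs the usual left/right conversion via the antipode $g\mapsto g^{-1}$.
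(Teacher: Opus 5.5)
Your proposal is correct and is essentially the paper's proof. You choose $i_{0}$ so that $M$ is inflated from $G/N_{i_{0}}$, use Lemma \ref{lem:CoinvInvlim} and Lemma \ref{lem:TorInvLim} (with $K$ treated as a right module) to identify $\mathrm{Tor}^{R\llbracket G\rrbracket}_{k}(K,M)$ with $\varprojlim_{i\geqslant i_{0}}\mathrm{Tor}^{R\llbracket G/N_{i}\rrbracket}_{k}(K_{N_{i}},M)$, and bound each term using the definition of $\tau_{k}$. The one addition is that you prove a directed inverse limit of $\mathbb{F}$-spaces of dimension at most $D$ has dimension at most $D$, and you flag the $\mathbb{F}$-linearity this needs (it follows from naturality of Lemma \ref{lem:TorInvLim} in $M$); the paper leaves both implicit.
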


\begin{proof}
By Lemma \ref{lem:CoinvInvlim}, we have $K=\li_{i}K_{N_{i}}$.
Let $M$ be a finite $\rg$-module, so that it is finite-dimensional
over $\mathbb{F}\coloneqq\mathrm{End}_{G}(M)$. Since $M$ is finite,
there is some $i_{0}\in I$ such that $N_{i_{0}}$ acts trivially
on $M$, and hence $M$ is a $G/N_{i_{0}}$-module. By Lemma \ref{lem:TorInvLim}
(treating $K$ and $K_{N_{i}}$ as right modules in the usual way),
we get 
\[
\torg k(K,M)\cong\li_{i\geqslant i_{0}}\mathrm{Tor}^{R\llbracket G/N_{i}\rrbracket}_{k}(K_{N_{i}},M).
\]
For ease of notation, we write again $I=\left\{ i\in I\middle|i\geqslant i_{0}\right\} $.
We have
\[
\dimf\mathrm{Tor}^{R\llbracket G/N_{i}\rrbracket}_{k}(K_{i},M)\leqslant\sup_{i\in I}\cbf k{G/N_{i}}{K_{N_{i}}}\cdot\dimf M,
\]
and therefore 
\[
\dimf\torg k(K,M)=\dimf\left(\li_{i}\mathrm{Tor}^{R\llbracket G/N_{i}\rrbracket}_{k}(K_{i},M)\right)\leqslant\sup_{i\in I}\cbf k{G/N_{i}}{K_{N_{i}}}\cdot\dimf M.
\]
Since $M$ was arbitrary, we are done.
\end{proof}

\subsubsection{Amalgams and HNN Extensions}

\begin{lem}
Let $G=G_{1}\coprod_{H}G_{2}$ be a proper amalgamated free profinite
product of profinite groups. Then
\begin{align*}
\cbf kGK & \leqslant\cbf k{G_{1}}K+\cbf k{G_{2}}K+\cbf{k-1}HK,
\end{align*}
In particular, if $G_{1}$ and $G_{2}$ are $\mathrm{FP}_{n}$ and
$H$ is $\mathrm{FP}_{n-1}$, then $G$ is $\mathrm{FP}_{n}$.
\end{lem}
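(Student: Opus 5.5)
We will use the Mayer–Vietoris sequence for a proper amalgamated free profinite product. Properness means $G_1,G_2,H$ embed in $G$ and the amalgam acts on its profinite tree with vertex stabilisers conjugate to $G_1,G_2$ and edge stabilisers conjugate to $H$. This gives a short exact sequence of $R\llbracket G\rrbracket$-modules
\[
0\to R\llbracket G/H\rrbracket\to R\llbracket G/G_{1}\rrbracket\oplus R\llbracket G/G_{2}\rrbracket\to R\to0
\]
(cf.\ Ribes' book on profinite graphs). All three terms are $R$-free profinite modules.

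\textbf{Step 1: get a short exact sequence with $K$ as coefficient.} Tensor the sequence over $R$ with $K$, with $G$ acting diagonally. Since the modules are $R$-free, the sequence stays exact:
\[
0\to R\llbracket G/H\rrbracket\otimes_{R}K\to \bigoplus_{j=1,2}R\llbracket G/G_{j}\rrbracket\otimes_{R}K\to K\to0.
\]
As in the proof of Lemma \ref{lem:TorSpec} (via \cite[Corollary 5.8.2]{RB10}), $R\llbracket G/L\rrbracket\otimes_{R}K$ with diagonal action is isomorphic to $\mathrm{Ind}^{G}_{L}\mathrm{Res}^{G}_{L}K=R\llbracket G\rrbracket\otimes_{R\llbracket L\rrbracket}K$ for closed $L\leqslant G$.

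\textbf{Step 2: Shapiro's lemma.} For closed $L\leqslant G$, we need $\mathrm{Tor}^{R\llbracket G\rrbracket}_{k}(\mathrm{Ind}^{G}_{L}K,M)\cong\mathrm{Tor}^{R\llbracket L\rrbracket}_{k}(K,M)$ for every $R\llbracket G\rrbracket$-module $M$. To prove it, take a projective $R\llbracket L\rrbracket$-resolution $P_\bullet\to K$. Then $R\llbracket G\rrbracket\otimes_{R\llbracket L\rrbracket}P_\bullet$ is a projective resolution of the induced module, because $R\llbracket G\rrbracket$ is a free profinite $R\llbracket L\rrbracket$-module (using a continuous section $G/L\to G$). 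Tensoring it with $M$ over $R\llbracket G\rrbracket$ gives back $P_\bullet\otimes_{R\llbracket L\rrbracket}M$.

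\textbf{Step 3: the long exact Tor sequence.} Fix a simple $R\llbracket G\rrbracket$-module $M$ and set $\mathbb{F}=\mathrm{End}_{G}(M)$. The long exact sequence gives
\[
\mathrm{Tor}^{R\llbracket G_1\rrbracket}_{k}(K,M)\oplus\mathrm{Tor}^{R\llbracket G_2\rrbracket}_{k}(K,M)\to\mathrm{Tor}^{R\llbracket G\rrbracket}_{k}(K,M)\to\mathrm{Tor}^{R\llbracket H\rrbracket}_{k-1}(K,M).
\]
Hence $\dim_{\mathbb{F}}\mathrm{Tor}^{R\llbracket G\rrbracket}_{k}(K,M)$ is at most the sum of the $\mathbb{F}$-dimensions of the three outer groups.

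\textbf{Step 4: pass from $G$-simple to $L$-simple modules.} The module $M$ is simple as a $G$-module but usually not as a $G_j$- or $H$-module. We handle this by taking a composition series of $\mathrm{Res}^{G}_{L}M$ over $R\llbracket L\rrbracket$; it is finite because $M$ is finite. For each factor $S$, with $\mathbb{F}_S=\mathrm{End}_L(S)$, we have $|\mathrm{Tor}^{R\llbracket L\rrbracket}_k(K,S)|\leqslant|S|^{\tau_k(L;K)}$. By the long exact sequence, $\log|\mathrm{Tor}_k|$ is subadditive along the composition series. This gives $\log|\mathrm{Tor}^{R\llbracket L\rrbracket}_{k}(K,M)|\leqslant\cbf kLK\log|M|$, which is the log-form remark made after the definition of $\tau$. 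Since all the Tor groups here are $\mathbb{F}$-vector spaces, dividing by $\log|\mathbb{F}|$ converts this into $\mathbb{F}$-dimensions. Dividing by $\dim_{\mathbb{F}}M$ and taking the supremum over $M$ then yields the stated inequality.

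\textbf{Finiteness statement.} If $G_1,G_2$ are $\mathrm{FP}_n$ and $H$ is $\mathrm{FP}_{n-1}$, then for $K=R$ the right-hand side is finite for all $k\leqslant n$, and Theorem \ref{thm:numericalFP} gives that $G$ is $\mathrm{FP}_n$.

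The main obstacle is Step 1: one has to justify the Mayer–Vietoris short exact sequence for proper profinite amalgams, i.e.\ that the standard profinite tree is a tree so that its chain complex is exact. We cite this from the theory of profinite graphs rather than reprove it.
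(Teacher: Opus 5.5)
Your proof is correct. It ends with the same Mayer--Vietoris long exact sequence for $\mathrm{Tor}(K,M)$ and the same dimension count as the paper, but you produce that sequence differently.

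The paper takes a free resolution $P_\bullet\to K$ over $R\llbracket G\rrbracket$. It feeds the coefficient modules $P_m\otimes_R M$, which are acyclic for every closed subgroup, into Corob Cook's homological Mayer--Vietoris sequence. This gives short exact sequences $0\to P_m\otimes_{R\llbracket H\rrbracket}M\to\cdots\to P_m\otimes_{R\llbracket G\rrbracket}M\to 0$, which it assembles into a short exact sequence of chain complexes using naturality in the coefficients.

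You instead build a single short exact sequence of $R\llbracket G\rrbracket$-modules from the standard profinite tree, namely $0\to\mathrm{Ind}^G_H K\to\mathrm{Ind}^G_{G_1}K\oplus\mathrm{Ind}^G_{G_2}K\to K\to 0$. This fits the paper's module-theoretic approach well. Once that sequence exists, the estimate is exactly Lemma \ref{lem:SES-lemma}(2) combined with Lemmas \ref{lem:direct-sum} and \ref{lem:induction}, with no chain-level naturality bookkeeping.

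Your route has two costs. You must import the exactness of the permutation-module sequence for proper amalgams, which the paper keeps hidden inside Corob Cook's proposition. You also need the tensor identity $R\llbracket G/L\rrbracket\otimes_R K\cong R\llbracket G\rrbracket\otimes_{R\llbracket L\rrbracket}K$ (diagonal action on the left) for closed, not necessarily open, $L$. The paper only uses the free case of that identity, so you should state or prove the permutation version, e.g. via $g\otimes x\mapsto gL\otimes gx$ with inverse $gL\otimes x\mapsto g\otimes g^{-1}x$.

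Your Step 4 makes explicit something the paper passes over silently. The paper's final inequality bounds $\mathrm{Tor}$ over $G_1$, $G_2$ and $H$ with coefficients in $M$, which is simple only over $G$. That step relies on the length-induction remark after the definition of $\tau$, and you spell it out.
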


\begin{proof}
Let $M$ be a simple $\rg$-module. Let $P_{\bullet}\to K$ be a resolution
of $K$ by free$\rg$-modules. We may also consider it as a resolution
by free $R\llbracket L\rrbracket$-modules for $L=G_{1},G_{2},H$,
since a free $\rg$-module is also a free $R\llbracket L\rrbracket$-module
for every closed subgroup $L\leqslant G$ \cite[Corollary 5.7.2]{RB10}.
For $m\geqslant0$, set 
\[
B_{m}=P_{m}\t M
\]
with the diagonal $G$-action. Observe that $H_{p}(L,B_{m})=0$ for
every $m\geqslant0$, $p>0$ and $L\leqslant G$. Indeed, $B_{m}$
is isomorphic to $P_{m}\t M$ endowed with the left $G$-action, ignoring
the $G$-action on $M$ \cite[Corollary 5.8.2]{RB10}. Writing $P_{m}=R\llbracket L\times T_{m,L}\rrbracket$
for some profinite space $T_{m,L}$, we get that $P_{m}\t M\cong\mathrm{Ind}^{L}_{1}(R\llbracket T_{m,L}\rrbracket\t M)$.
Therefore, by Shapiro's lemma, $H_{p}(L,B_{m})=0$. 

We now apply the profinite Mayer–Vietoris sequence in homology for
the proper amalgamated free profinite product $G=G_{1}\amalg_{H}G_{2}$,
with coefficients in $B_{m}$ \cite[Proposition 3.9]{Coo16}. Since
all positive homology groups of $H,G_{1},G_{2},G$ with coefficients
in $B_{m}$ vanish, we get a short exact sequence 
\[
0\longrightarrow P_{m}\otimes_{R\llbracket H\rrbracket}M\longrightarrow\bigl(P_{m}\otimes_{R\llbracket G_{1}\rrbracket}M\bigr)\oplus\bigl(P_{m}\otimes_{R\llbracket G_{2}\rrbracket}M\bigr)\longrightarrow P_{m}\te M\longrightarrow0.
\]
This sequence is natural in the coefficient module (see \cite[Proposition 3.9]{Coo16}),
so these short exact sequences are compatible with the differentials
of $P_{\bullet}$. We thus obtain a short exact sequence of chain
complexes 
\[
0\longrightarrow P_{\bullet}\otimes_{R\llbracket H\rrbracket}M\longrightarrow\bigl(P_{\bullet}\otimes_{R\llbracket G_{1}\rrbracket}M\bigr)\oplus\bigl(P_{\bullet}\otimes_{R\llbracket G_{2}\rrbracket}M\bigr)\longrightarrow P_{\bullet}\te M\longrightarrow0.
\]
This short exact sequence of chain complexes gives us the following
long exact sequence of homology groups
\[
\cdots\longrightarrow\operatorname{Tor}^{R\llbracket H\rrbracket}_{k}(K,M)\longrightarrow\operatorname{Tor}^{R\llbracket G_{1}\rrbracket}_{k}(K,M)\oplus\operatorname{Tor}^{R\llbracket G_{2}\rrbracket}_{k}(K,M)\longrightarrow\operatorname{Tor}^{R\llbracket G\rrbracket}_{k}(K,M)\overset{}{\longrightarrow}\operatorname{Tor}^{R\llbracket H\rrbracket}_{k-1}(K,M)\longrightarrow\cdots
\]
(where we set $\mathrm{Tor}_{-1}=0$). Taking $\mathbb{F}=\mathrm{End}_{G}(M)$
(so that $M$ is a finite-dimensional vector space over $\mathbb{F}$),
it follows that
\begin{align*}
\dimf\operatorname{Tor}^{R\llbracket G\rrbracket}_{k}(K,M) & \leqslant\dimf\operatorname{Tor}^{R\llbracket H\rrbracket}_{k-1}(K,M)+\dimf\operatorname{Tor}^{R\llbracket G_{1}\rrbracket}_{k}(K,M)+\dimf\operatorname{Tor}^{R\llbracket G_{2}\rrbracket}_{k}(K,M)\\
 & \leqslant\left(\cbf{k-1}HK+\cbf k{G_{1}}K+\cbf k{G_{2}}K\right)\dimf M.
\end{align*}
Since $M$ was arbitrary, we are done.
\end{proof}

\begin{lem}
Suppose $G=\mathrm{HNN}(H,A,f)$ is a proper profinite $\mathrm{HNN}$-extension
of profinite groups. Then
\begin{align*}
\cbf kGK & \leqslant\cbf kHK+\cbf{k-1}AK,
\end{align*}
In particular, if $H$ is $\mathrm{FP}_{n}$ and $A$ is $\mathrm{FP}_{n-1}$,
then $G$ is $\mathrm{FP}_{n}$.
\end{lem}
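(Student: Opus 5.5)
The plan mirrors the amalgam case just proved, with the profinite Mayer–Vietoris sequence replaced by its HNN analogue. Fix a simple $\rg$-module $M$ with $\mathbb{F}=\mathrm{End}_{G}(M)$, and take a free resolution $P_{\bullet}\to K$ by $\rg$-modules. Since $H$ and $A$ are closed subgroups of $G$, \cite[Corollary 5.7.2]{RB10} shows that $P_{\bullet}$ is also a free resolution of $K$ over $R\llbracket H\rrbracket$ and over $R\llbracket A\rrbracket$.

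Set $B_{m}=P_{m}\t M$ with the diagonal action. As in the amalgam proof, $B_{m}$ is induced from the trivial group for each of $G,H,A$, so by Shapiro's lemma $H_{p}(L,B_{m})=0$ for $p>0$ and $L\in\{G,H,A\}$. Next I apply the profinite Mayer–Vietoris sequence for the proper HNN-extension $G=\mathrm{HNN}(H,A,f)$ \cite[Coo16]{Coo16}:
\[
\cdots\to H_{p}(A,B)\to H_{p}(H,B)\to H_{p}(G,B)\to H_{p-1}(A,B)\to\cdots.
\]
This sequence comes from the short exact sequence of permutation modules $0\to R\llbracket G/A\rrbracket\to R\llbracket G/H\rrbracket\to R\to0$ associated with the action on the profinite tree. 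Vanishing of the higher homology leaves a short exact sequence
\[
0\to P_{m}\otimes_{R\llbracket A\rrbracket}M\to P_{m}\otimes_{R\llbracket H\rrbracket}M\to P_{m}\te M\to0.
\]

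The next step is naturality. These sequences are natural in the coefficient module, so they assemble into a short exact sequence of chain complexes. Its long exact homology sequence is
\[
\cdots\to\mathrm{Tor}^{R\llbracket H\rrbracket}_{k}(K,M)\to\mathrm{Tor}^{R\llbracket G\rrbracket}_{k}(K,M)\to\mathrm{Tor}^{R\llbracket A\rrbracket}_{k-1}(K,M)\to\cdots,
\]
which gives
\[
\dimf\torg k(K,M)\leqslant\bigl(\cbf kHK+\cbf{k-1}AK\bigr)\dimf M.
\]
Taking the supremum over $M$ yields the inequality, and the statement about $\mathrm{FP}_{n}$ then follows from Theorem \ref{thm:numericalFP}.

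I expect the main obstacle to be the Mayer–Vietoris step, specifically the first map. For it to be defined and natural, the map $P_{m}\otimes_{R\llbracket A\rrbracket}M\to P_{m}\otimes_{R\llbracket H\rrbracket}M$ must be the difference of the map induced by inclusion and the map induced by $f$ twisted by the stable letter $t$, i.e. $x\otimes m\mapsto x\otimes m-xt^{-1}\otimes tm$, checked over both $A$ and $f(A)$. I would handle this by deriving the sequence directly from the permutation-module exact sequence for the standard profinite tree of the HNN-extension, which properness guarantees. Tensoring that sequence with $B_{m}$ (diagonally) and using Shapiro's lemma makes exactness and naturality in $m$ transparent.
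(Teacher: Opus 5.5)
Your proposal is correct and matches the paper's proof essentially step for step. You restrict the free resolution to $H$ and $A$, use Shapiro's lemma to kill the higher homology of $P_m\otimes_R M$, collapse the natural Mayer--Vietoris sequence of the proper HNN-extension to short exact sequences, and read off the bound from the long exact $\mathrm{Tor}$ sequence. The only difference is that you derive that Mayer--Vietoris sequence and its naturality from the permutation-module sequence $0\to R\llbracket G/A\rrbracket\to R\llbracket G/H\rrbracket\to R\to 0$ of the standard profinite tree, whereas the paper simply cites Corob Cook's Proposition 3.13.
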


\begin{proof}
The proof is almost identical to the proof of the previous lemma,
except we use the Mayer–Vietoris sequence of HNN extensions.

Let $M$ be a simple $\rg$-module. Let $P_{\bullet}\to K$ be a resolution
of $K$ by free$\rg$-modules, which is again also a resolution by
free $R\llbracket L\rrbracket$-modules for every closed $L\leqslant G$.
For $m\geqslant0$, set $B_{m}=P_{m}\t M$ with the diagonal $G$-action,
so that $H_{p}(L,B_{m})=0$ for every $m\geqslant0$, $p>0$ and $L\leqslant G$.
We now apply the profinite Mayer–Vietoris sequence in homology for
the proper profinite $\mathrm{HNN}$-extension $G=\mathrm{HNN}(H,A,f)$,
with coefficients in $B_{m}$, which is natural in the coefficient
module \cite[Proposition 3.13]{Coo16}. We get again a short exact
sequence, since all relevant positive homology groups with coefficients
in $B_{m}$ vanish. 
\[
0\longrightarrow P_{m}\otimes_{R\llbracket A\rrbracket}M\longrightarrow P_{m}\otimes_{R\llbracket H\rrbracket}M\longrightarrow P_{m}\te M\longrightarrow0.
\]
Again we get a short exact sequence of chain complexes 
\[
0\longrightarrow P_{\bullet}\otimes_{R\llbracket A\rrbracket}M\longrightarrow P_{\bullet}\otimes_{R\llbracket H\rrbracket}M\longrightarrow P_{\bullet}\te M\longrightarrow0,
\]
which implies a long exact sequence of the corresponding homology
groups
\[
\cdots\longrightarrow\operatorname{Tor}^{R\llbracket A\rrbracket}_{k}(K,M)\longrightarrow\operatorname{Tor}^{R\llbracket H\rrbracket}_{k}(K,M)\longrightarrow\operatorname{Tor}^{R\llbracket G\rrbracket}_{k}(K,M)\overset{}{\longrightarrow}\operatorname{Tor}^{R\llbracket A\rrbracket}_{k-1}(K,M)\longrightarrow\cdots
\]
(where $\mathrm{Tor}_{-1}=0$). Taking $\mathbb{F}=\mathrm{End}_{G}(M)$,
it follows that
\begin{align*}
\dimf\operatorname{Tor}^{R\llbracket G\rrbracket}_{k}(K,M) & \leqslant\dimf\operatorname{Tor}^{R\llbracket H\rrbracket}_{k}(K,M)+\dimf\operatorname{Tor}^{R\llbracket A\rrbracket}_{k-1}(K,M)\\
 & \leqslant\left(\cbf kHK+\cbf{k-1}AK\right)\dimf M.
\end{align*}
Since $M$ was arbitrary, we are done.
\end{proof}

\subsection{\label{subsec:Operations-on-Modules}Operations on Modules}
\begin{lem}[Short-exact-sequences]
\label{lem:SES-lemma}Suppose $0\to K_{1}\to K_{2}\to K_{3}\to0$
is a short exact sequence of profinite $R\llbracket G\rrbracket$-modules.
Then:
\begin{enumerate}
\item If $K_{1}$ and $K_{3}$ are $\mathrm{FP}_{n}$, then so is $K_{2}$.
Numerically, $\cbf kG{K_{2}}\leqslant\cbf kG{K_{1}}+\cbf kG{K_{3}}$.
\item If $K_{1}$ is $\mathrm{FP}_{n-1}$ and $K_{2}$ is $\mathrm{FP}_{n}$,
then $K_{3}$ is $\mathrm{FP}_{n}$. Numerically, $\cbf kG{K_{3}}\leqslant\cbf{k-1}G{K_{1}}+\cbf kG{K_{2}}$.
\item If $K_{2}$ is $\mathrm{FP}_{n}$ and $K_{3}$ is $\mathrm{FP}_{n+1}$,
then $K_{1}$ is $\mathrm{FP}_{n}$. Numerically, $\cbf kG{K_{1}}\leqslant\cbf kG{K_{2}}+\cbf{k+1}G{K_{3}}$.
\end{enumerate}
\end{lem}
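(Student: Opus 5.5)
The plan is to fix a simple $\rg$-module $M$, set $\mathbb{F}=\mathrm{End}_{G}(M)$, and apply the long exact sequence of $\mathrm{Tor}^{\rg}_{\bullet}(-,M)$ to $0\to K_{1}\to K_{2}\to K_{3}\to0$:
\[
\cdots\to\torg{k+1}(K_{3},M)\to\torg k(K_{1},M)\to\torg k(K_{2},M)\to\torg k(K_{3},M)\to\torg{k-1}(K_{1},M)\to\cdots
\]
This sequence is available in $\mathrm{PMOD}(\rg)$ because $\mathrm{Tor}$ is the derived functor of $-\te M$ there, and there are enough projectives. All terms are finite-dimensional $\mathbb{F}$-vector spaces whenever the relevant $\tau$'s are finite. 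In any case, exactness at a term $B$ in $A\to B\to C$ gives $\dimf B\leqslant\dimf A+\dimf C$, interpreted in $[0,\infty]$.

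Each of the three numerical inequalities comes from one three-term piece of this sequence.
\begin{enumerate}
\item Exactness at $\torg k(K_{2},M)$ gives $\dimf\torg k(K_{2},M)\leqslant\dimf\torg k(K_{1},M)+\dimf\torg k(K_{3},M)$.
\item Exactness at $\torg k(K_{3},M)$ gives $\dimf\torg k(K_{3},M)\leqslant\dimf\torg k(K_{2},M)+\dimf\torg{k-1}(K_{1},M)$.
\item Exactness at $\torg k(K_{1},M)$ gives $\dimf\torg k(K_{1},M)\leqslant\dimf\torg{k+1}(K_{3},M)+\dimf\torg k(K_{2},M)$.
\end{enumerate}
In each case I then bound every right-hand term by the corresponding $\tau\cdot\dimf M$, divide by $\dimf M$, and take the supremum over $M$. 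For $k=0$ in part 2, the term $\torg{-1}$ vanishes, which matches the convention $\tau_{-1}=0$.

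The $\mathrm{FP}_{n}$ statements then follow from the numerical criterion, Theorem \ref{thm:numericalFP}. In part 1, $\tau_{k}(K_{1})$ and $\tau_{k}(K_{3})$ are finite for $k\leqslant n$, so $\tau_{k}(K_{2})$ is finite. In part 2, $k-1\leqslant n-1$ and $k\leqslant n$ give the finiteness needed. In part 3, $k+1\leqslant n+1$ handles $K_{3}$. For $n=\infty$ the same argument works degree by degree.

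There is no real obstacle. The only point needing care is that the long exact $\mathrm{Tor}$ sequence exists in the profinite setting with the completed tensor product; the paper already uses it freely, for instance in Lemma \ref{lem:ExtendingResolutions}.
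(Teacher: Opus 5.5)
Your proposal is correct and follows the paper's proof. The paper likewise fixes a simple module $M$, reads the three dimension inequalities off the long exact $\mathrm{Tor}$ sequence, divides by $\dim M$ and takes the supremum, then deduces the $\mathrm{FP}_n$ statements from the numerical criterion (Theorem \ref{thm:numericalFP}).
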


\begin{proof}
Let $M$ be a simple $R\llbracket G\rrbracket$-module. Since $\mathrm{Tor}^{R\llbracket G\rrbracket}_{\bullet}(-,M)$
is the derived functor of $-\te M$, we have a long exact sequence
\[
\cdots\to\mathrm{Tor}^{R\llbracket G\rrbracket}_{i+1}(K_{3},M)\to\mathrm{Tor}^{R\llbracket G\rrbracket}_{i}(K_{1},M)\to\mathrm{Tor}^{R\llbracket G\rrbracket}_{i}(K_{2},M)\to\mathrm{Tor}^{R\llbracket G\rrbracket}_{i}(K_{3},M)\to\mathrm{Tor}^{R\llbracket G\rrbracket}_{i-1}(K_{1},M)\to\cdots
\]
(setting $\mathrm{Tor}_{-1}(-,-)=0$). We therefore get the following
inequalities, where $\mathbb{F}=\mathrm{End}_{G}(M)$:
\begin{align*}
\dim_{\mathbb{F}}\mathrm{Tor}^{R\llbracket G\rrbracket}_{i}(K_{2},M) & \leqslant\dim_{\mathbb{F}}\mathrm{Tor}^{R\llbracket G\rrbracket}_{i}(K_{1},M)+\dim_{\mathbb{F}}\mathrm{Tor}^{R\llbracket G\rrbracket}_{i}(K_{3},M),\\
\dim_{\mathbb{F}}\mathrm{Tor}^{R\llbracket G\rrbracket}_{i}(K_{3},M) & \leqslant\dim_{\mathbb{F}}\mathrm{Tor}^{R\llbracket G\rrbracket}_{i}(K_{2},M)+\dim_{\mathbb{F}}\mathrm{Tor}^{R\llbracket G\rrbracket}_{i-1}(K_{1},M),\\
\dim_{\mathbb{F}}\mathrm{Tor}^{R\llbracket G\rrbracket}_{i}(K_{1},M) & \leqslant\dim_{\mathbb{F}}\mathrm{Tor}^{R\llbracket G\rrbracket}_{i+1}(K_{3},M)+\dim_{\mathbb{F}}\mathrm{Tor}^{R\llbracket G\rrbracket}_{i}(K_{2},M).
\end{align*}
The desired numerical statements follow from these inequalities by
definition, and the statements on property $\mathrm{FP}_{n}$ then
follow from Theorem \ref{thm:numericalFP}.
\end{proof}

\begin{lem}
\label{lem:tensors}Suppose $N\trianglelefteqslant G$ is a closed
normal subgroup, and let $K'$ be an $R\llbracket G/N\rrbracket$-module.
Consider $K\t K'$ as an $\rg$-module via the diagonal action of
$G$. Then 
\[
\cbf kG{K\otimes_{R}K'}\leqslant\sum_{p+q=k}\cbf pNK\cbf q{G/N}{K'}+\sum^{k-2}_{\ell=0}\cbf{\ell}G{\mathrm{Tor}^{R}_{k-1-\ell}(K,K')}.
\]
In particular, if $K$ is $\mathrm{FP}_{n}$ as an $R\llbracket N\rrbracket$-module,
$K'$ is $\mathrm{FP}_{n}$ as an $R\llbracket G/N\rrbracket$-module,
and either $K$ or $K'$ is flat as $R$-module, then $K\t K'$ is
$\mathrm{FP}_{n}$ as an $\rg$-module.
\end{lem}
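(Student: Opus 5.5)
Let $M$ be a simple $\rg$-module and set $\mathbb{F}=\mathrm{End}_{G}(M)$. The plan is to follow the proof of Lemma \ref{lem:FPExt}, but without assuming flatness, so that we must account for the failure of the Künneth-type identification $\mathrm{Tor}^{R\llbracket N\rrbracket}_{q}(K\t K',M)\cong K'\t\mathrm{Tor}^{R\llbracket N\rrbracket}_{q}(K,M)$. Since tensoring over $R$ is not exact here, I would replace $K\t K'$ by the derived tensor product. Concretely, I would take a projective resolution $P_{\bullet}\to K$ over $\rg$ and a projective resolution $Q_{\bullet}\to K'$ over $R\llbracket G/N\rrbracket$. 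I would then form the total complex $T_{\bullet}=\mathrm{Tot}(P_{\bullet}\t Q_{\bullet})$ with the diagonal $G$-action. Each $P_{i}\t Q_{j}$ is a projective $\rg$-module; for free modules this follows from \cite[Corollary 5.8.2]{RB10}, as in the proof of Lemma \ref{lem:TorSpec}. The complex $T_{\bullet}$ is $R$-projective, and its homology is $H_{m}(T_{\bullet})=\mathrm{Tor}^{R}_{m}(K,K')$, with $H_{0}=K\t K'$.

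\textbf{Step 1 (the flat part).} Tensoring with $M$ over $\rg$ computes a hyper-Tor $\mathbb{T}_{k}:=H_{k}(T_{\bullet}\te M)$. I would bound it by the argument of Lemma \ref{lem:FPExt}: use the Lyndon--Hochschild--Serre-type spectral sequence of Lemma \ref{lem:TorSpec}, now in its hyperhomology form. Here $N$ acts trivially on $Q_{\bullet}$, and $Q_{\bullet}$ is $R$-flat, so $\mathrm{Tor}^{R\llbracket N\rrbracket}$ of $P_{\bullet}\t Q_{j}$ against $M$ is $Q_{j}\t\mathrm{Tor}^{R\llbracket N\rrbracket}_{q}(K,M)$. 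The outer homology over $G/N$ is then computed by $Q_{\bullet}$, and Lemma \ref{lem:flat-modules}-style reasoning gives
\[
\dimf\mathbb{T}_{k}\leqslant\sum_{p+q=k}\cbf q{G/N}{K'}\cbf pNK\dimf M .
\]
Equivalently, I would run the double-complex spectral sequence with $P$ and $Q$ filtrations and reduce to the proof of Lemma \ref{lem:FPExt} with $K_{2}$ replaced by each $Q_{j}$.

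\textbf{Step 2 (comparing $\mathbb{T}_{k}$ with $\mathrm{Tor}^{\rg}_{k}(K\t K',M)$).} I would use the hyperhomology spectral sequence
\[
E^{2}_{\ell,m}=\torg{\ell}(H_{m}(T_{\bullet}),M)=\torg{\ell}(\mathrm{Tor}^{R}_{m}(K,K'),M)\Longrightarrow\mathbb{T}_{\ell+m}.
\]
The term $E^{2}_{k,0}=\torg k(K\t K',M)$ is the quantity to bound. As in the proof of Lemma \ref{lem:FPforQuotients}, $E^{2}_{k,0}$ can only be killed by differentials $d^{r}:E^{r}_{k,0}\to E^{r}_{k-r,r-1}$ for $r\geqslant2$, so
\[
\dimf E^{2}_{k,0}\leqslant\dimf\mathbb{T}_{k}+\sum_{r=2}^{k}\dimf\torg{k-r}(\mathrm{Tor}^{R}_{r-1}(K,K'),M).
\]
Reindexing with $\ell=k-r$ turns the sum into $\sum_{\ell=0}^{k-2}\cbf{\ell}G{\mathrm{Tor}^{R}_{k-1-\ell}(K,K')}\dimf M$. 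Combining this with Step 1, dividing by $\dimf M$ and taking the supremum over $M$ gives the claimed inequality.

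\textbf{The "in particular" clause.} If $K$ or $K'$ is $R$-flat, all the higher $\mathrm{Tor}^{R}$ terms vanish, and the result follows from Theorem \ref{thm:numericalFP}.

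\textbf{Main obstacle.} I expect the main difficulty to lie in Step 1 and in the setup: making the hyperhomology version of Lemma \ref{lem:TorSpec} rigorous in $\mathrm{PMOD}$, and checking that $P_{i}\t Q_{j}$ with the diagonal action is $\rg$-projective. Both the convergence and the identification of the $E^{2}$ page for the $N$-then-$G/N$ filtration require care. My first approach would be a double-complex argument: first filter by the $Q$-degree, which makes each column $P_{\bullet}\t Q_{j}$ with $Q_{j}$ flat, so Lemma \ref{lem:FPExt} applies verbatim to each column.
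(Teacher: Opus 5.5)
Your proof is correct, but it is organised differently from the paper's.

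The paper argues by induction on $k$:
\begin{itemize}
\item It takes a projective cover $0\to C\to P\to K'\to0$ over $R\llbracket G/N\rrbracket$ and uses $R$-flatness of $P$ to get $0\to\mathrm{Tor}^{R}_{1}(K,K')\to K\otimes_{R}C\to K\otimes_{R}P\to K\otimes_{R}K'\to0$.
\item It splits this into two short exact sequences and applies Lemma \ref{lem:SES-lemma} twice.
\item It applies Lemma \ref{lem:FPExt} to $K\otimes_R P$ and the induction hypothesis to $K\otimes_R C$.
\item It shifts indices via $\tau_{q}(G/N;C)\leqslant\tau_{q+1}(G/N;K')$ (Lemma \ref{lem:tau-of-projective-cover}) and $\mathrm{Tor}^{R}_{m}(K,C)\cong\mathrm{Tor}^{R}_{m+1}(K,K')$.
\end{itemize}

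You instead model $K\otimes^{L}_{R}K'$ by $T_{\bullet}$ and get both sums in one pass. The first sum bounds the hyper-Tor $\mathbb{T}_k$. The second sum consists exactly of the targets $E^{r}_{k-r,r-1}$ of the differentials leaving the edge term $E^{r}_{k,0}=\mathrm{Tor}^{R\llbracket G\rrbracket}_{k}(K\otimes_R K',M)$. The paper's auxiliary modules $J$ and $\mathrm{Tor}^{R}_{1}(K,K')$ are in effect a hand-unrolled version of these differentials.

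Your route is non-inductive, needs no projective covers, and makes clear where the correction terms come from. The paper's route stays inside long exact $\mathrm{Tor}$ sequences. It thereby avoids setting up Cartan--Eilenberg resolutions, the hyperhomology spectral sequence, and projectivity of $P_i\otimes_R Q_j$ with the diagonal action in $\mathrm{PMOD}(R\llbracket G\rrbracket)$. All of these do hold here, since the category is abelian with enough projectives and your double complexes are first-quadrant.

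One caution on Step 1: your two formulations are not interchangeable as written.
\begin{itemize}
\item If you bound the $E^{1}$ page of the $Q$-degree filtration column by column with Lemma \ref{lem:FPExt}, you get $\sum_{j}\tau_{k-j}(N;K)\,\tau_{0}(G/N;Q_{j})$. This is in general larger than the desired sum. It agrees with it only when $Q_{\bullet}$ is the resolution by projective covers, since then $\tau_{0}(G/N;Q_{j})=\tau_{j}(G/N;K')$ by Lemmas \ref{lem:tensor-of-projective-covers} and \ref{lem:tau-of-projective-cover}.
\item For an arbitrary $Q_{\bullet}$ you must go to the $E^{2}$ page $\mathrm{Tor}^{R\llbracket G/N\rrbracket}_{p}(K',\mathrm{Tor}^{R\llbracket N\rrbracket}_{q}(K,M))$, as your first formulation does.
\end{itemize}
Also, as in the paper, estimates with non-simple coefficients go through the length (log-cardinality) bound rather than directly through the definition of $\tau$. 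Examples are $\mathrm{Tor}^{R\llbracket N\rrbracket}_{q}(K,M)$ viewed as a $G/N$-module, and $M$ viewed as an $N$-module.
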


\begin{proof}
We prove this by induction on $k$. For $k=0$, the sum $\sum^{k-1}_{\ell=1}\cbf{k-j-1}G{\mathrm{Tor}^{R}_{j}(K,K')}$
is zero. Let
\[
0\longrightarrow C\longrightarrow P\longrightarrow K'\longrightarrow0
\]
be a short exact sequence of $R\llbracket G/N\rrbracket$-module,
with $P\longrightarrow K'$ a projective cover of $K'$. Since tensoring
with $K$ is right exact, we get a surjection
\[
K\t P\longrightarrow K\t K',
\]
so 
\[
\cbf 0G{K\t K'}\leqslant\cbf 0G{K\t P}.
\]
Now, $P$ is flat as an $R$-module, so we may apply Lemma \ref{lem:FPExt}
and get that
\[
\cbf 0G{K\t P}\leqslant\cbf 0NK\cbf 0{G/N}P.
\]
By Lemma \ref{lem:tensor-of-projective-covers}, $\cbf 0{G/N}P=\cbf 0{G/N}{K'}$,
so we get 
\[
\cbf 0G{K\t K'}\leqslant\cbf 0NK\cbf 0{G/N}{K'},
\]
as needed.

Now, assume the lemma is true for $k-1$ and any $R\llbracket G/N\rrbracket$-module
$K'$. Again take a short exact sequence 
\[
0\longrightarrow C\longrightarrow P\longrightarrow K'\longrightarrow0
\]
of $R\llbracket G/N\rrbracket$-modules, with $P\longrightarrow K'$
a projective cover of $K'$. Since $P$ is $R$-flat, tensoring with
$K$ over $R$ gives us the exact sequence
\[
0\longrightarrow\mathrm{Tor}^{R}_{1}(K,K')\longrightarrow K\t C\longrightarrow K\t P\longrightarrow K\t K'\longrightarrow0.
\]
Take $J=\ker(K\t P\longrightarrow K\t K')$, so that both
\[
0\longrightarrow J\longrightarrow K\t P\longrightarrow K\t K'\longrightarrow0
\]
and
\[
0\longrightarrow\mathrm{Tor}^{R}_{1}(K,K')\longrightarrow K\t C\longrightarrow J\longrightarrow0
\]
are exact. We apply the short exact sequence lemma (Lemma \ref{lem:SES-lemma})
to both of them, and get
\[
\cbf kG{K\t K'}\leqslant\cbf{k-1}GJ+\cbf kG{K\t P}
\]
and 
\[
\cbf{k-1}GJ\leqslant\cbf{k-2}G{\mathrm{Tor}^{R}_{1}(K,K')}+\cbf{k-1}G{K\t C}
\]
(recall that $\cbf k{\cdot}{\cdot}=0$ for negative $k$). Therefore,
\[
\cbf kG{K\t K'}\leqslant\cbf kG{K\t P}+\cbf{k-1}G{K\t C}+\cbf{k-2}G{\mathrm{Tor}^{R}_{1}(K,K')}.
\]
Since $P$ is $R$-flat and since $\cbf{\ell}{G/N}P=0$ for $\ell>0$,
we get (by Lemma \ref{lem:FPExt} and Lemma \ref{lem:tensor-of-projective-covers})
\[
\cbf kG{K\t P}\leqslant\cbf kNK\cbf 0{G/N}{K'}.
\]
As for $\cbf{k-1}G{K\t C}$, we get by the induction hypothesis that
\[
\cbf{k-1}G{K\t C}\leqslant\sum_{p+q=k-1}\cbf pNK\cbf q{G/N}C+\sum^{k-3}_{\ell=0}\cbf{\ell}G{\mathrm{Tor}^{R}_{k-2-\ell}(K,C)}.
\]
By Lemma \ref{lem:tau-of-projective-cover}, we have 
\[
\cbf q{G/N}C\leqslant\cbf{q+1}{G/N}{K'},
\]
so 
\[
\cbf kG{K\t K'}\leqslant\sum_{p+q=k}\cbf pNK\cbf q{G/N}{K'}+\sum^{k-3}_{\ell=0}\cbf{\ell}G{\mathrm{Tor}^{R}_{k-2-\ell}(K,C)}+\cbf{k-2}G{\mathrm{Tor}^{R}_{1}(K,K')}.
\]
We now examine
\[
\cbf{\ell}G{\mathrm{Tor}^{R}_{k-2-\ell}(K,C)}.
\]
Applying $\mathrm{Tor}^{R}$ to $0\to C\to P\to K'\to0$ gives an
exact sequence 
\[
\mathrm{Tor}^{R}_{m+1}(K,P)\longrightarrow\mathrm{Tor}^{R}_{m+1}(K,K')\longrightarrow\mathrm{Tor}^{R}_{m}(K,C)\longrightarrow\mathrm{Tor}^{R}_{m}(K,P).
\]
Since $P$ is flat over $R$, taking $m=k-2-\ell>0$ (for $\ell=0,\dots,k-3$)
implies 
\[
\mathrm{Tor}^{R}_{k-2-\ell}(K,C)\cong\mathrm{Tor}^{R}_{k-2-\ell+1}(K,K').
\]
Thus,
\[
\sum^{k-3}_{\ell=0}\cbf{\ell}G{\mathrm{Tor}^{R}_{k-2-\ell}(K,C)}+\cbf{k-2}G{\mathrm{Tor}^{R}_{1}(K,K')}=\sum^{k-2}_{\ell=0}\cbf{\ell}G{\mathrm{Tor}^{R}_{k-1-\ell}(K,K')}.
\]
Putting it all together, we get
\[
\cbf kG{K\t K'}\leqslant\sum_{p+q=k}\cbf pNK\cbf q{G/N}{K'}+\sum^{k-2}_{\ell=0}\cbf{\ell}G{\mathrm{Tor}^{R}_{k-1-\ell}(K,K')},
\]
as needed.
\end{proof}

\begin{lem}
Assume $R$ is a finite field and that $A$ is a nonzero finite-dimensional
$R\llbracket G\rrbracket$-module. Then $\cbf kG{K\otimes_{R}A}\leqslant\cbf kGK\cdot\dim_{R}A$
and $\cbf kGK\leq\sum_{p+q=k}\frac{(|A|!)^{q+1}}{\dim_{R}A}\,\cbf pG{K\t A}$.
In particular, $K$ is $\mathrm{FP}_{n}$ if and only if $K\otimes_{R}A$
is.
\end{lem}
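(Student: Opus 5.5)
We prove the two inequalities separately. The first comes from moving $A$ across the tensor product. The second comes from passing to the open normal subgroup that acts trivially on $A$.

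\emph{First inequality.} Since $R$ is a field, $A$ is $R$-flat. If $P_\bullet\to K$ is a projective resolution of $K$, then $P_\bullet\otimes_R A\to K\otimes_R A$ (diagonal action) is again a projective resolution. Indeed, $\rg\otimes_R A$ with the diagonal action is isomorphic to $\rg\otimes_R A$ with the left action (\cite[Corollary 5.8.2]{RB10}), which is free. For any $\rg$-module $M$ we have the natural adjunction isomorphism
\[
(P\otimes_R A)\te M\cong P\te(A\otimes_R M),
\]
where both sides carry diagonal actions. This isomorphism is compatible with the differentials, so
\[
\torg k(K\otimes_R A,M)\cong\torg k(K,A\otimes_R M).
\]
Now fix a simple module $M$ with $\mathbb{F}=\mathrm{End}_G(M)$, and work with $R$-dimensions. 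For each composition factor $S$ of a finite module, the definition of $\tau$ gives $\dim_R\torg k(K,S)\leqslant\cbf kGK\dim_R S$, because both $\mathbb{F}_S$-dimensions scale by the same factor $[\mathrm{End}_G(S):R]$. Induction on length via the long exact Tor sequence (as in the remark after the definition of $\tau$) then gives
\[
\dim_R\torg k(K,A\otimes_R M)\leqslant\cbf kGK\cdot\dim_R A\cdot\dim_R M.
\]
Dividing by $[\mathbb{F}:R]$ yields the first inequality.

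\emph{Second inequality.} Let $N$ be the kernel of the action of $G$ on $A$. It is open and normal, and $G/N$ embeds in the symmetric group on $A$, so $[G:N]\leqslant|A|!$. Since $N$ acts trivially on $A$, the restriction of $K\otimes_R A$ to $N$ is isomorphic to $K^{\dim_R A}$. Tor commutes with finite direct sums, so
\[
\cbf pN{K\otimes_R A}=\dim_R A\cdot\cbf pNK.
\]
The first inequality of Lemma \ref{lem:FPforFinInd} then gives
\[
\cbf pNK\leqslant\frac{[G:N]}{\dim_R A}\,\cbf pG{K\otimes_R A}.
\]
Next apply Lemma \ref{lem:FPExt} with $K_1=K$ and $K_2=R$, which is flat. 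Using the bound $\cbf q{G/N}R\leqslant[G:N]^q$ for the finite group $G/N$ (noted in the proof of Lemma \ref{lem:FPforFinInd}), we get
\[
\cbf kGK\leqslant\sum_{p+q=k}[G:N]^q\,\cbf pNK\leqslant\sum_{p+q=k}\frac{(|A|!)^{q+1}}{\dim_R A}\,\cbf pG{K\otimes_R A}.
\]
The statement about $\mathrm{FP}_n$ then follows from Theorem \ref{thm:numericalFP}.

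\emph{Main obstacle.} The delicate step is the first one. It requires checking that $P_\bullet\otimes_R A$ is really a projective resolution in the profinite category, and that the adjunction isomorphism holds for completed tensor products. It also requires care in the dimension bookkeeping, since the fields $\mathrm{End}_G(S)$ vary over the composition factors $S$ of $A\otimes_R M$. Passing to $R$-dimensions handles the varying fields uniformly.
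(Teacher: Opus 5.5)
Your proof is correct and follows essentially the same route as the paper. For the first inequality, the paper likewise identifies $\mathrm{Tor}_k(K\otimes_R A,M)$ with $H_k(G,K\otimes_R A\otimes_R M)$, i.e.\ with $\mathrm{Tor}_k(K,A\otimes_R M)$, and bounds it by $\tau_k(G;K)\dim_R(A\otimes_R M)$ using the length-induction remark. For the second, it passes to an open normal $N$ of index at most $|A|!$ acting trivially on $A$, uses $\tau_p(N;K\otimes_R A)=\dim_R A\cdot\tau_p(N;K)$, and then applies Lemma \ref{lem:FPforFinInd} and Lemma \ref{lem:FPExt} with $K_2=R$ exactly as you do. Your extra care with the resolution $P_\bullet\otimes_R A$ and with $R$-dimensions versus $\mathrm{End}_G(S)$-dimensions only makes explicit what the paper leaves implicit.
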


\begin{proof}
We begin with the first inequality. Since we are working over a field,
\[
\cbf kG{K\otimes_{R}A}=\sup_{\text{simple }R\llbracket G\rrbracket\text{-modules }M}\frac{\dim_{R}H_{k}(G,K\otimes_{R}A\otimes_{R}M)}{\dim_{R}M}.
\]
For every simple $R\llbracket G\rrbracket$-module $M$, we have 
\[
\dim_{R}H_{k}(G,K\otimes_{R}A\otimes_{R}M)\leqslant\cbf kGK\cdot\dim_{R}(A\otimes_{R}M)=\cbf kGK\cdot\dim_{R}A\cdot\dim_{R}M,
\]
so that $\cbf kG{K\otimes_{R}A}\leqslant\cbf kGK\cdot\dim_{R}A$,
as needed.

We now prove the second inequality. Observe that, since $|A|<\infty$,
there is an open normal subgroup $N\trianglelefteqslant G$ of index
at most $|A|!$ such that $N$ acts trivially on $A$. By Lemma~\ref{lem:FPforFinInd},
we have 
\[
\cbf pN{K\t A}\leqslant|A|!\cdot\cbf pG{K\t A}
\]
for every nonnegative integer $p$. Since $N$ acts trivially on $A$,
we have 
\[
H_{p}(N,K\otimes_{R}A\otimes_{R}M)=A\otimes_{R}H_{p}(N,K\otimes_{R}M)
\]
for every $R\llbracket N\rrbracket$-module $M$. Therefore 
\begin{align*}
\cbf pN{K\t A} & =\sup_{\text{simple }R\llbracket N\rrbracket\text{-modules }M}\frac{\dim_{R}H_{p}(N,K\otimes_{R}A\otimes_{R}M)}{\dim_{R}M}=\sup_{\text{simple }R\llbracket N\rrbracket\text{-modules }M}\frac{\dim_{R}\bigl(A\otimes_{R}H_{p}(N,K\otimes_{R}M)\bigr)}{\dim_{R}M}\\
 & =\sup_{\text{simple }R\llbracket N\rrbracket\text{-modules }M}\frac{\dim_{R}A\cdot\dim_{R}\bigl(H_{p}(N,K\otimes_{R}M)\bigr)}{\dim_{R}M}=\dim_{R}A\cdot\cbf pNK
\end{align*}
In other words, $\cbf pNK=\frac{\cbf pN{K\t A}}{\dim_{R}A}\leqslant\frac{|A|!\cdot\cbf pG{K\t A}}{\dim_{R}A}$.
Thus, by Lemma~\ref{lem:FPExt} (taking $K_{1}=K$ and $K_{2}=R$)
we get 
\begin{align*}
\cbf kGK & \leqslant\sum_{p+q=k}\cbf pNK\cbf q{G/N}R\leqslant\sum_{p+q=k}\frac{|A|!\cdot\cbf pG{K\t A}}{\dim_{R}A}\,(|A|!)^{q}\\
 & =\sum_{p+q=k}\frac{(|A|!)^{q+1}}{\dim_{R}A}\cbf pG{K\t A}
\end{align*}
as needed. 
\end{proof}

\begin{lem}
\label{lem:direct-sum}Assume $K=K_{1}\oplus K_{2}$, a direct sum
of $R\llbracket G\rrbracket$-modules. Then $K$ is $\mathrm{FP}_{n}$
if and only if both $K_{1}$ and $K_{2}$ are $\mathrm{FP}_{n}$.
Numerically, 
\[
\cbf kGK\leqslant\cbf kG{K_{1}}+\cbf kG{K_{2}}
\]
\end{lem}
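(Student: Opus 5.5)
The plan is to use the fact that $\mathrm{Tor}$ commutes with finite direct sums in the first variable. For every simple $\rg$-module $M$ we have a natural isomorphism
\[
\torg k(K_{1}\oplus K_{2},M)\cong\torg k(K_{1},M)\oplus\torg k(K_{2},M).
\]
This follows by taking projective resolutions $P_{\bullet}\to K_{1}$ and $Q_{\bullet}\to K_{2}$, noting that $P_{\bullet}\oplus Q_{\bullet}\to K$ is a projective resolution, and using that the completed tensor product $-\te M$ commutes with finite direct sums, as does homology. Alternatively, one can apply the long exact sequence from the proof of Lemma \ref{lem:SES-lemma} to the split sequence $0\to K_{1}\to K\to K_{2}\to0$. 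Its connecting maps vanish, because the splitting makes each map $\torg i(K,M)\to\torg i(K_{2},M)$ surjective.

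With $\mathbb{F}=\mathrm{End}_{G}(M)$, this gives
\[
\dimf\torg k(K,M)=\dimf\torg k(K_{1},M)+\dimf\torg k(K_{2},M),
\]
and therefore
\[
\max\bigl(\cbf kG{K_{1}},\cbf kG{K_{2}}\bigr)\leqslant\cbf kGK\leqslant\cbf kG{K_{1}}+\cbf kG{K_{2}}.
\]
To get these bounds, divide by $\dimf M$ and take suprema. Each summand is bounded termwise by the total, and the supremum of a sum is at most the sum of the suprema. The upper bound is also just Lemma \ref{lem:SES-lemma}(1) applied to the split sequence.

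The equivalence then follows from Theorem \ref{thm:numericalFP}. $K$ is $\mathrm{FP}_{n}$ iff $\cbf kGK<\infty$ for all $k\leqslant n$. By the two-sided bound, this holds iff $\cbf kG{K_{1}}<\infty$ and $\cbf kG{K_{2}}<\infty$ for all $k\leqslant n$, which is exactly the condition that both summands are $\mathrm{FP}_{n}$.

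The only step needing any care is the additivity of $\mathrm{Tor}$ in the profinite category. It is routine once one observes that finite direct sums of profinite projective modules are projective, and that the completed tensor product is additive. I do not expect any genuine obstacle.
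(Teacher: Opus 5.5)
Your proposal is correct and follows the same route as the paper, which just invokes additivity of $\mathrm{Tor}^{R\llbracket G\rrbracket}_{k}(-,M)$ to split $\mathrm{Tor}_{k}(K,M)$ into the two summands. Your explicit lower bound $\max\bigl(\tau_{k}(G;K_{1}),\tau_{k}(G;K_{2})\bigr)\leqslant\tau_{k}(G;K)$ usefully spells out the ``only if'' direction, which the paper leaves implicit.
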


\begin{proof}
For every $R\llbracket G\rrbracket$-module $M$, the functor $\mathrm{Tor}^{R\llbracket G\rrbracket}_{k}(-,M)$
is additive, so $\mathrm{Tor}^{R\llbracket G\rrbracket}_{k}(K,M)\cong\mathrm{Tor}^{R\llbracket G\rrbracket}_{k}(K_{1},M)\oplus\mathrm{Tor}^{R\llbracket G\rrbracket}_{k}(K_{2},M)$.
\end{proof}

\begin{lem}
\label{lem:induction}Let $H\leqslant G$ be a closed subgroup and
let $L$ be an $R\llbracket H\rrbracket$-module. Then 
\[
\cbf kG{\mathrm{Ind}^{G}_{H}L}\leqslant\cbf kHL.
\]
In particular, if $L$ is $\mathrm{FP}_{n}$ as an $R\llbracket H\rrbracket$-module,
then $\mathrm{Ind}^{G}_{H}L$ is $\mathrm{FP}_{n}$ as an $R\llbracket G\rrbracket$-module.
\end{lem}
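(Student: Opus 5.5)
The plan is to use Shapiro's lemma for $\mathrm{Tor}$ in the form
\[
\mathrm{Tor}^{\rg}_{k}(\mathrm{Ind}^{G}_{H}L,M)\cong\mathrm{Tor}^{R\llbracket H\rrbracket}_{k}(L,\mathrm{Res}^{G}_{H}M)
\]
for every $\rg$-module $M$. We then bound the right-hand side by $\cbf kHL$ times the dimension of $M$, obtained by filtering $\mathrm{Res}^{G}_{H}M$ by simple $R\llbracket H\rrbracket$-modules.

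\textbf{Establishing the isomorphism.} Write $\mathrm{Ind}^{G}_{H}L=\rg\hat{\otimes}_{R\llbracket H\rrbracket}L$. Take a projective resolution $P_{\bullet}\to L$ over $R\llbracket H\rrbracket$. Since $\rg$ is a free (hence flat) profinite right $R\llbracket H\rrbracket$-module \cite[Corollary 5.7.2]{RB10}, the complex $\rg\hat{\otimes}_{R\llbracket H\rrbracket}P_{\bullet}$ is a projective resolution of $\mathrm{Ind}^{G}_{H}L$. Tensoring with $M$ and using the associativity $(\rg\hat{\otimes}_{R\llbracket H\rrbracket}P_{m})\te M\cong P_{m}\hat{\otimes}_{R\llbracket H\rrbracket}M$, exactly as in the proof of Lemma \ref{lem:another-ss}, gives the isomorphism of complexes and hence of homology. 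Exactness of induction is the one point that needs justification. If I want to avoid flatness arguments, I can reduce to free $P_{m}=R\llbracket H\times T\rrbracket$, for which $\rg\hat{\otimes}_{R\llbracket H\rrbracket}P_{m}\cong R\llbracket G\times T\rrbracket$ is free, and argue exactness via the decomposition of $\rg$ as a free right $R\llbracket H\rrbracket$-module on a profinite transversal.

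\textbf{Dimension bound.} Fix a simple $\rg$-module $M$ and set $\mathbb{F}=\mathrm{End}_{G}(M)$. The restriction $\mathrm{Res}^{G}_{H}M$ is a finite $R\llbracket H\rrbracket$-module, so it has a composition series with simple factors $S_{1},\dots,S_{t}$. Applying the long exact $\mathrm{Tor}$ sequence along this series gives the subadditive estimate
\[
\log\bigl|\mathrm{Tor}^{R\llbracket H\rrbracket}_{k}(L,M)\bigr|\leqslant\sum_{j}\log\bigl|\mathrm{Tor}^{R\llbracket H\rrbracket}_{k}(L,S_{j})\bigr|\leqslant\cbf kHL\sum_{j}\log|S_{j}|=\cbf kHL\log|M|,
\]
where the middle inequality holds because $|\mathrm{Tor}_{k}(L,S_{j})|=|\mathbb{F}_{j}|^{\dim}$ with $\mathbb{F}_{j}=\mathrm{End}_{H}(S_{j})$, so that $\log|\mathrm{Tor}|/\log|S_{j}|$ equals the dimension ratio appearing in $\cbf kHL$. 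This is the observation recorded after the definition of $\tau$.

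Combining with Shapiro's isomorphism gives $\log|\mathrm{Tor}^{\rg}_{k}(\mathrm{Ind}^{G}_{H}L,M)|\leqslant\cbf kHL\log|M|$. Converting back via $\log|\cdot|=\dim_{\mathbb{F}}(\cdot)\log|\mathbb{F}|$ yields $\dim_{\mathbb{F}}\mathrm{Tor}^{\rg}_{k}(\mathrm{Ind}^{G}_{H}L,M)\leqslant\cbf kHL\dim_{\mathbb{F}}M$. Taking the supremum over $M$ proves the inequality. The $\mathrm{FP}_{n}$ statement then follows from Theorem \ref{thm:numericalFP}.

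The main obstacle is the Shapiro step, specifically justifying that profinite induction from a closed (not necessarily open) subgroup preserves projectivity and exactness. The rest is routine.
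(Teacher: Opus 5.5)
Your proposal is correct and takes the same route as the paper, which applies Shapiro's lemma $\mathrm{Tor}^{R\llbracket G\rrbracket}_{k}(\mathrm{Ind}^{G}_{H}L,M)\cong\mathrm{Tor}^{R\llbracket H\rrbracket}_{k}(L,\mathrm{Res}^{G}_{H}M)$ and concludes in one line. Your composition-series estimate spells out the step the paper leaves implicit: $\mathrm{Res}^{G}_{H}M$ need not be simple, so one needs the length-induction observation recorded after the definition of $\tau$. Your justification of exactness of induction, via freeness of $R\llbracket G\rrbracket$ over $R\llbracket H\rrbracket$, is also sound.
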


\begin{proof}
For every finite simple $R\llbracket G\rrbracket$-module $M$, we
have by Shapiro's lemma 
\[
\mathrm{Tor}^{R\llbracket G\rrbracket}_{k}(\mathrm{Ind}^{G}_{H}L,M)\cong\mathrm{Tor}^{R\llbracket H\rrbracket}_{k}(L,\mathrm{Res}^{G}_{H}M),
\]
so 
\[
\cbf kG{\mathrm{Ind}^{G}_{H}L}\leqslant\cbf kHL.\qedhere
\]

\end{proof}

\begin{lem}
Assume $R$ is a finite field, let $G_{1},G_{2}$ be profinite groups,
and let $K_{i}$ be a nonzero profinite $R\llbracket G_{i}\rrbracket$-module
for $i=1,2$. Consider $K_{1}\otimes_{R}K_{2}$ as an $R\llbracket G_{1}\times G_{2}\rrbracket$-module
via the diagonal action. Then $K_{1}\otimes_{R}K_{2}$ is an $\mathrm{FP}_{n}$
$R\llbracket G_{1}\times G_{2}\rrbracket$-module if and only if $K_{1}$
is an $\mathrm{FP}_{n}$ $R\llbracket G_{1}\rrbracket$-module and
$K_{2}$ is an $\mathrm{FP}_{n}$ $R\llbracket G_{2}\rrbracket$-module.
Numerically, 
\[
\cbf k{G_{1}\times G_{2}}{K_{1}\t K_{2}}\leqslant\sum_{p+q=k}\left(\cbf p{G_{1}}{K_{1}}\cdot\cbf q{G_{2}}{K_{2}}\right)
\]
and 
\[
\cbf k{G_{i}}{K_{i}}\cdot\cbf 0{G_{3-i}}{K_{3-i}}\leqslant\cbf k{G_{1}\times G_{2}}{K_{1}\t K_{2}}
\]
for $i=1,2$. 
\end{lem}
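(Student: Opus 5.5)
The upper bound will come from Lemma \ref{lem:FPExt}, the lower bound from a Künneth decomposition, and the equivalence with $\mathrm{FP}_{n}$ from Theorem \ref{thm:numericalFP}.

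\emph{Upper bound.} Apply Lemma \ref{lem:FPExt} to $G=G_{1}\times G_{2}$ with normal subgroup $N=G_{1}$ and quotient $G/N\cong G_{2}$, taking $K_{1}$ as the module for $N$ and $K_{2}$ as the $R\llbracket G/N\rrbracket$-module. Since $R$ is a field, $K_{2}$ is automatically $R$-flat, so the lemma gives
\[
\cbf k{G_{1}\times G_{2}}{K_{1}\t K_{2}}\leqslant\sum_{p+q=k}\cbf p{G_{1}}{K_{1}}\,\cbf q{G_{2}}{K_{2}}.
\]
One point needs checking: Lemma \ref{lem:FPExt} uses $\cbf pN{K_{1}}$ with $K_{1}$ restricted to $N$. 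Here $K_{1}$ is only a $G_{1}$-module, inflated to $G_{1}\times G_{2}$, and its restriction to $N=G_{1}$ is exactly $K_{1}$, so this is fine.

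\emph{Lower bound.} Fix a simple $R\llbracket G_{i}\rrbracket$-module $M_{i}$ and a simple $R\llbracket G_{3-i}\rrbracket$-module $M_{3-i}$, with endomorphism fields $\mathbb{F}_{i},\mathbb{F}_{3-i}$. The plan is a Künneth formula over the field $R$:
\[
\mathrm{Tor}^{R\llbracket G_{1}\times G_{2}\rrbracket}_{k}(K_{1}\t K_{2},M_{1}\t M_{2})\cong\bigoplus_{p+q=k}\mathrm{Tor}^{R\llbracket G_{1}\rrbracket}_{p}(K_{1},M_{1})\t\mathrm{Tor}^{R\llbracket G_{2}\rrbracket}_{q}(K_{2},M_{2}).
\]
To get it, tensor projective resolutions $P_{\bullet}\to K_{1}$ and $Q_{\bullet}\to K_{2}$ over $R$. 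The tensor product $P_{\bullet}\t Q_{\bullet}$ is a projective resolution of $K_{1}\t K_{2}$, because
\[
R\llbracket G_{1}\rrbracket\t R\llbracket G_{2}\rrbracket\cong R\llbracket G_{1}\times G_{2}\rrbracket,
\]
and it is exact by the algebraic Künneth theorem over a field. Applying $-\otimes M_{1}\t M_{2}$ then yields the tensor product of the complexes $P_{\bullet}\otimes_{R\llbracket G_{1}\rrbracket}M_{1}$ and $Q_{\bullet}\otimes_{R\llbracket G_{2}\rrbracket}M_{2}$, and Künneth over $R$ gives the decomposition.

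Keeping only the summand $(p,q)=(k,0)$ for $i=1$, we get
\[
\dim_{R}\mathrm{Tor}_{k}\geqslant\dim_{R}\mathrm{Tor}^{R\llbracket G_{1}\rrbracket}_{k}(K_{1},M_{1})\cdot\dim_{R}(K_{2}\otimes_{R\llbracket G_{2}\rrbracket}M_{2}).
\]

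\emph{Main obstacle.} The difficulty is converting this into the ratio defining $\tau$. The module $M_{1}\t M_{2}$ need not be simple: it is semisimple, a direct sum of simples indexed by $\mathbb{F}_{1}\otimes_{R}\mathbb{F}_{2}$, i.e.\ $\gcd$ many simples, each of $R$-dimension $\dim_{R}M_{1}\dim_{R}M_{2}/\gcd$. The Tor-ratio for a direct sum is bounded by the maximum over its simple summands. Also, $\tau$ is defined with $\dim_{\mathrm{End}(M)}$ in both numerator and denominator, so the ratio is unchanged if both dimensions are taken over $R$. Together these give
\[
\cbf k{G_{1}\times G_{2}}{K_{1}\t K_{2}}\geqslant\frac{\dim_{R}\mathrm{Tor}_{k}(K_{1},M_{1})}{\dim_{R}M_{1}}\cdot\frac{\dim_{R}(K_{2}\otimes M_{2})}{\dim_{R}M_{2}}.
\]
Taking suprema over $M_{1}$ and over $M_{2}$ separately gives $\cbf k{G_{1}}{K_{1}}\cdot\cbf 0{G_{2}}{K_{2}}$, and the case $i=2$ is symmetric.

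\emph{The equivalence.} Since $K_{3-i}\neq0$, some simple module is a quotient of $K_{3-i}$, so $\cbf 0{G_{3-i}}{K_{3-i}}>0$. Hence the lower bound together with Theorem \ref{thm:numericalFP} gives the ``only if'' direction. The upper bound together with Theorem \ref{thm:numericalFP} gives the ``if'' direction.
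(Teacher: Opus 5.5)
Your proposal is correct and follows essentially the paper's route: Lemma \ref{lem:FPExt} with $N=G_{1}$ gives the upper bound, and a K\"unneth decomposition in which you keep only the summand involving $\mathrm{Tor}_{0}$ of the other factor gives the lower bound; Theorem \ref{thm:numericalFP} together with $\tau_{0}>0$ for nonzero modules then gives the equivalence. The only difference is cosmetic. You decompose the non-simple coefficient module $M_{1}\otimes_{R}M_{2}$ explicitly into simples, whereas the paper lets $S\otimes_{R}M$ be an arbitrary finite module and uses the bound $\dim_{R}\mathrm{Tor}_{k}(K,M)\leqslant\tau_{k}\cdot\dim_{R}M$, valid for every finite $M$ by induction on length, so the semisimple decomposition is never needed.
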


\begin{proof}
If $K_{1}$ and $K_{2}$ are $\mathrm{FP}_{n}$, then $K_{1}\otimes_{R}K_{2}$
is $\mathrm{FP}_{n}$ by Lemma \ref{lem:FPExt} and 
\[
\cbf k{G_{1}\times G_{2}}{K_{1}\t K_{2}}\leqslant\sum_{p+q=k}\left(\cbf p{G_{1}}{K_{1}}\cdot\cbf q{G_{2}}{K_{2}}\right).
\]

On the other hand, assume $K_{1}$ is nonzero and that $K_{1}\otimes_{R}K_{2}$
is $\mathrm{FP}_{n}$, and let us prove that $K_{2}$ is $\mathrm{FP}_{n}$.
Let $M$ be a simple $R\llbracket G_{2}\rrbracket$-module. Let $S$
be any finite $R\llbracket G_{1}\rrbracket$-module such that 
\[
H_{0}(G_{1},K_{1}\otimes_{R}S)\neq0
\]
(for instance, take $S=T^{*}$ where $T$ is a simple quotient of
$K_{1}$). By the Kunneth formula, 
\[
H_{k}(G_{1}\times G_{2},K_{1}\otimes_{R}S\otimes_{R}K_{2}\otimes_{R}M)\cong\bigoplus_{p+q=k}H_{p}(G_{1},K_{1}\otimes_{R}S)\otimes_{R}H_{q}(G_{2},K_{2}\otimes_{R}M).
\]
In particular, 
\begin{align*}
\dim_{R}H_{k}(G_{2},K_{2}\otimes_{R}M)\cdot\dim_{R}\left(H_{0}(G_{1},K_{1}\t S)\right) & \leqslant\dim_{R}\bigl(H_{0}(G_{1},K_{1}\otimes_{R}S)\otimes_{R}H_{k}(G_{2},K_{2}\otimes_{R}M)\bigr)\\
 & \leqslant\dim_{R}H_{k}(G_{1}\times G_{2},K_{1}\otimes_{R}S\otimes_{R}K_{2}\otimes_{R}M)\\
 & \leqslant\dim_{R}S\cdot\cbf k{G_{1}\times G_{2}}{K_{1}\t K_{2}}\cdot\dim_{R}M.
\end{align*}
Dividing by $\dim_{R}S$ and taking the supremum over all such $S$,
we get 
\[
\dim_{R}H_{k}(G_{2},K_{2}\otimes_{R}M)\cdot\cbf 0{G_{1}}{K_{1}}\leqslant\cbf k{G_{1}\times G_{2}}{K_{1}\t K_{2}}\cdot\dim_{R}M.
\]
Since $M$ was arbitrary, we get 
\[
\cbf k{G_{2}}{K_{2}}\cdot\cbf 0{G_{1}}{K_{1}}\leqslant\cbf k{G_{1}\times G_{2}}{K_{1}\t K_{2}}.
\]
The same argument, interchanging the two factors, gives the corresponding
bound for $K_{1}$. 
\end{proof}

\section{\label{subsec:Good}Good Groups and Modules}

Throughout this section, we withhold the convention that all modules
are profinite, and consider also modules of discrete groups. We let
$n$ be some nonnegative integer or $\infty$.

Let $\mathcal{C}$ be a pseudovariety of finite groups (i.e., a class
of finite groups that is closed under subgroups, quotients and direct
products), and let $p$ be a prime. For a discrete group $\Gamma$,
we denote 
\[
\hat{\Gamma}^{(\mathcal{C})}=\li_{N\trianglelefteqslant\Gamma,\Gamma/N\in\mathcal{C}}\Gamma/N,
\]
the \emph{pro-$\mathcal{C}$-completion }of $\Gamma$. For an abstract
$\mathbb{F}_{p}[\Gamma]$-module $K$, we denote 
\[
\hat{K}^{(\mathcal{C})}=\li_{L\leqslant_{\mathcal{C}}K}K/L,
\]
the \emph{pro-$\mathcal{C}$-completion }of $K$, where $L\leqslant_{\mathcal{C}}K$
means that $L$ is a finite-index submodule of $K$ such that $\Gamma/\mathrm{Fix}(K/L)\in\mathcal{C}$.
If $\mathcal{C}$ is the collection of all finite groups, we omit
it from the notation, and simple write $\hat{\Gamma}$, $\hat{K}$.
If $K$ is finitely presented, then $\hat{K}$ is naturally isomorphic
to $\mathbb{F}_{p}\llbracket\hat{\Gamma}\rrbracket\otimes_{\mathbb{F}_{p}[\Gamma]}K$
\cite[Lemma 4.3]{GarJai}.

Observe that $\hat{K}^{(\mathcal{C})}$ is a profinite $\mathbb{F}_{p}\llbracket\hat{\Gamma}^{(\mathcal{C})}\rrbracket$-module
\cite[Lemma 5.1.4(a)]{RB10}, and recall that $H_{k}(\hat{\Gamma}^{(\mathcal{C})},\hat{K}^{(\mathcal{C})})=\li_{L\leqslant_{\mathcal{C}}K}H_{k}(\hat{\Gamma}^{(\mathcal{C})},K/L)$
for every nonnegative integer $k$ \cite[Corollary 6.1.10]{RB10}.
Now, fixing some $L\leqslant_{\mathcal{C}}K$, the collection $\mathcal{U}_{L}\coloneqq\left\{ N\trianglelefteqslant\Gamma\middle|\Gamma/N\in\mathcal{C},N\leqslant\mathrm{Fix}(K/L)\right\} $
is cofinal in $\left\{ N\trianglelefteqslant\Gamma\middle|\Gamma/N\in\mathcal{C}\right\} $,
so $\hat{\Gamma}^{(\mathcal{C})}=\li_{N\in\mathcal{U}_{L}}\Gamma/N$,
and $H_{k}(\hat{\Gamma}^{(\mathcal{C})},K/L)\cong\li_{N\in\mathcal{U}_{L}}H_{k}(\Gamma/N,K/L)$
\cite[Proposition 6.5.7]{RB10}, which means 
\[
H_{k}(\hat{\Gamma}^{(\mathcal{C})},\hat{K}^{(\mathcal{C})})=\li_{L\leqslant_{\mathcal{C}}K,N\in\mathcal{U}_{L}}H_{k}(\Gamma/N,K/L).
\]
For every $L\leqslant_{\mathcal{C}}K$ and $N\in\mathcal{U}_{L}$,
the pair $(\Gamma\to\Gamma/N,K\to K/L)$ is compatible, so we get
a map
\[
H_{k}(\Gamma,K)\longrightarrow H_{k}(\Gamma/N,K/L)
\]
(\cite[III.8]{Bro82}). Therefore, by definition of inverse limits,
we have a map 
\[
H_{k}(\Gamma,K)\longrightarrow H_{k}(\hat{\Gamma}^{(\mathcal{C})},\hat{K}^{(\mathcal{C})}).
\]
If $M$ is a finite $\mathbb{F}_{p}\llbracket\hat{\Gamma}^{(\mathcal{C})}\rrbracket$-module
(also considered as a finite $\mathbb{F}_{p}[\Gamma]$-module via
$\mathbb{F}_{p}[\Gamma]\to\mathbb{F}_{p}\llbracket\hat{\Gamma}\rrbracket$),
then $\left\{ L\otimes_{\mathbb{F}_{p}}M\middle|L\leqslant_{\mathcal{C}}K\right\} $
is cofinal in $\left\{ S\middle|S\leqslant_{\mathcal{C}}\left(K\otimes_{\mathbb{F}_{p}}M\right)\right\} $,
and thence 
\[
\widehat{K\otimes_{\mathbb{F}_{p}}M}^{(\mathcal{C})}=\hat{K}^{(\mathcal{C})}\otimes_{\mathbb{F}_{p}}M.
\]
We thus get, for every finite $\mathbb{F}_{p}\llbracket\hat{\Gamma}^{(\mathcal{C})}\rrbracket$-module
$M$, a map 
\[
H_{k}(\Gamma,K\otimes_{\mathbb{F}_{p}}M)\longrightarrow H_{k}(\hat{\Gamma}^{(\mathcal{C})},\hat{K}^{(\mathcal{C})}\otimes_{\mathbb{F}_{p}}M).
\]

\begin{defn}
Let $\mathcal{C}$ be a pseudovariety of finite groups, $p$ a prime,
$\Gamma$ a discrete group. An $\mathbb{F}_{p}[\Gamma]$-module $K$
is \emph{($\mathcal{C}$, $n$)-good} if the map 
\[
H_{k}(\Gamma,K\otimes_{\mathbb{F}_{p}}M)\longrightarrow H_{k}(\hat{\Gamma}^{(\mathcal{C})},\hat{K}^{(\mathcal{C})}\otimes_{\mathbb{F}_{p}}M)
\]
is an isomorphism for every finite $\mathbb{F}_{p}\llbracket\hat{\Gamma}^{(\mathcal{C})}\rrbracket$-module
$M$ and every nonnegative integer $k\leqslant n$. We say $K$ is
\emph{($\mathcal{C}$, $n$)-pretty-good }if this map is surjective
for every finite $\mathbb{F}_{p}\llbracket\hat{\Gamma}^{(\mathcal{C})}\rrbracket$-module
$M$ and every nonnegative integer $k\leqslant n$. We omit $\mathcal{C}$
if it is the class of all finite groups, and omit $n$ if it is $\infty$.
\end{defn}

Observe that a group is good in the sense of Serre if and only if
the trivial module $\mathbb{F}_{p}$ is good for every prime $p$.
\begin{prop}
\label{prop:Good}Let $\mathcal{C}$ be a pseudovariety of finite
groups and $p$ a prime number. Let $\Gamma$ be a discrete group
and let $K$ be a $(\mathcal{C},n)$-good module of $\Gamma$, and
let 
\[
\cdots\to P_{1}\to P_{0}\to K
\]
be a projective resolution of $\mathbb{F}_{p}[\Gamma]$-modules. Then
$\cbf k{\hat{\Gamma}^{(\mathcal{C})}}{\hat{K}^{(\mathcal{C})}}\leqslant\mathrm{d}_{\mathbb{F}_{p}[\Gamma]}P_{k}$
for every nonnegative integer $k\leqslant n$. In particular, the
pro-$\mathcal{C}$-completion completion of a $(\mathcal{C},n)$-pretty-good
$\mathrm{FP}_{n}$ module is $\mathrm{FP}_{n}$ and the pro-$\mathcal{C}$-completion
completion of a $(\mathcal{C},n)$-pretty-good $\mathrm{FP}_{n}$
group is $\mathrm{FP}_{n}$.
\end{prop}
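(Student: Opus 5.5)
We bound $\mathrm{Tor}$ over the completion by its discrete counterpart and then repeat the argument of part~\ref{enu:Standard} of Theorem \ref{thm:numericalFP}. Write $\hat\Gamma=\hat{\Gamma}^{(\mathcal{C})}$ and $\hat K=\hat K^{(\mathcal{C})}$. Fix a simple $\mathbb{F}_{p}\llbracket\hat\Gamma\rrbracket$-module $M$ and put $\mathbb{F}=\mathrm{End}_{\hat\Gamma}(M)$. Since $R=\mathbb{F}_{p}$ is a field, every module is $R$-flat, so Lemma \ref{lem:flat-modules} identifies
\[
\mathrm{Tor}^{\mathbb{F}_{p}\llbracket\hat\Gamma\rrbracket}_{k}(\hat K,M)\cong H_{k}(\hat\Gamma,\hat K\otimes_{\mathbb{F}_{p}}M).
\]
Replacing $M$ by its dual $M^{*}$ if necessary, so that the Tor/homology identification matches the orientation in the goodness map (this only permutes simple modules and preserves dimensions and the endomorphism field), goodness, or merely pretty-goodness, gives a surjection
\[
H_{k}(\Gamma,K\otimes_{\mathbb{F}_{p}}M)\twoheadrightarrow H_{k}(\hat\Gamma,\hat K\otimes_{\mathbb{F}_{p}}M)
\]
for $k\leqslant n$. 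It therefore suffices to show that $\dim_{\mathbb{F}}H_{k}(\Gamma,K\otimes_{\mathbb{F}_{p}}M)\leqslant\mathrm{d}_{\mathbb{F}_{p}[\Gamma]}P_{k}\cdot\dim_{\mathbb{F}}M$. Both sides are $\mathbb{F}$-spaces, since $\mathbb{F}$ acts through $M$ and the comparison map is natural in $M$, so the map is $\mathbb{F}$-linear.

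The discrete bound follows the earlier argument verbatim. For abstract modules we have $H_{k}(\Gamma,K\otimes_{\mathbb{F}_{p}}M)\cong\mathrm{Tor}^{\mathbb{F}_{p}[\Gamma]}_{k}(K,M)$ (taking the appropriate left/right convention, which again amounts to passing to $M^{*}$ if needed). This group is the $k$-th homology of $P_{\bullet}\otimes_{\mathbb{F}_{p}[\Gamma]}M$, so it is a subquotient of $P_{k}\otimes_{\mathbb{F}_{p}[\Gamma]}M$. A surjection $\mathbb{F}_{p}[\Gamma]^{d}\to P_{k}$ with $d=\mathrm{d}_{\mathbb{F}_{p}[\Gamma]}P_{k}$ induces a surjection $M^{d}\to P_{k}\otimes_{\mathbb{F}_{p}[\Gamma]}M$, which gives the bound. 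Dividing by $\dim_{\mathbb{F}}M$ and taking the supremum over $M$ yields $\tau_{k}(\hat\Gamma;\hat K)\leqslant\mathrm{d}_{\mathbb{F}_{p}[\Gamma]}P_{k}$. If $d$ is infinite the inequality is vacuous.

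For the consequences, note first that only surjectivity of the comparison map was used, so the bound holds for $(\mathcal{C},n)$-pretty-good modules. If $K$ is $\mathrm{FP}_{n}$, choose a resolution with $P_{0},\dots,P_{n}$ finitely generated. Then $\tau_{k}(\hat\Gamma;\hat K)<\infty$ for all $k\leqslant n$, and Theorem \ref{thm:numericalFP} shows that $\hat K$ is $\mathrm{FP}_{n}$. For groups, take $K=\mathbb{F}_{p}$; its completion is the trivial module $\mathbb{F}_{p}$. The main obstacle is bookkeeping rather than substance: checking that the comparison map is compatible with the $\mathbb{F}$-structures and with the identification of Tor with homology. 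I expect both to follow from naturality in $M$.
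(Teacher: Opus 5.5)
Your proposal is correct and is essentially the paper's proof: surjectivity of the comparison map from (pretty-)goodness, the identification $H_k(\Gamma,K\otimes_{\mathbb{F}_p}M)\cong\mathrm{Tor}^{\mathbb{F}_p[\Gamma]}_k(K,M)$ as a subquotient of $P_k\otimes_{\mathbb{F}_p[\Gamma]}M$, the bound by $\mathrm{d}_{\mathbb{F}_p[\Gamma]}P_k\cdot\dim M$, and then Theorem \ref{thm:numericalFP}. The bookkeeping you flag is harmless but unnecessary. Pretty-goodness is assumed for every finite $M$, so no dualising is needed. Moreover, both homology groups are $\mathbb{F}$-spaces, so the ratio of their $\mathbb{F}_p$-dimensions to $\dim M$ equals the ratio of $\mathbb{F}$-dimensions, and $\mathbb{F}$-linearity of the comparison map is never used.
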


\begin{proof}
Let $M$ be a finite simple $\mathbb{F}_{p}\llbracket\hat{\Gamma}^{(\mathcal{C})}\rrbracket$-module
and $k\leqslant n$. Since $K$ is $(\mathcal{C},n)$-pretty-good,
the map
\[
H_{k}(\Gamma,K\otimes_{\mathbb{F}_{p}}M)\longrightarrow H_{k}(\hat{\Gamma}^{(\mathcal{C})},\hat{K}^{(\mathcal{C})}\otimes_{\mathbb{F}_{p}}M)
\]
is surjective. Since we are working over a field, 
\[
H_{k}(\Gamma,K\otimes_{\mathbb{F}_{p}}M)\cong\mathrm{Tor}^{\mathbb{F}_{p}[\Gamma]}_{k}(K,M),
\]
which is the $k^{\text{th}}$ homology group of
\[
\cdots\to P_{1}\otimes_{\mathbb{F}_{p}[\Gamma]}M\to P_{0}\otimes_{\mathbb{F}_{p}[\Gamma]}M\to0,
\]
so that 
\[
\dim_{\mathbb{F}_{p}}H_{k}(\Gamma,K\otimes_{\mathbb{F}_{p}}M)\leqslant\dim_{\mathbb{F}_{p}}\left(P_{k}\otimes_{\mathbb{F}_{p}[\Gamma]}M\right)\leqslant\d_{\mathbb{F}_{p}[\Gamma]}P_{k}\cdot\dim_{\mathbb{F}_{p}}M.\qedhere
\]
\end{proof}

For the case of a trivial module, see \cite[Proposition 3.1]{Jai20}.
\begin{cor}
Let $\Gamma$ be a discrete group and let $K$ be an $\mathbb{F}_{p}[\Gamma]$-module.
If $\mathbb{F}_{p}\llbracket\hat{\Gamma}\rrbracket$ is flat as an
$\mathbb{F}_{p}[\Gamma]$-module and $K$ is $\mathrm{FP}_{n}$, then
$K$ is $n$-good and $\hat{K}$ is $\mathrm{FP}_{n}$.
\end{cor}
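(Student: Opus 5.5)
We prove the two assertions in turn: first that flatness makes $K$ an $n$-good module, and then that $\hat{K}$ is $\mathrm{FP}_{n}$ by appealing to Proposition \ref{prop:Good}.

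Since $K$ is $\mathrm{FP}_{n}$, we fix a projective resolution $\cdots\to P_{1}\to P_{0}\to K\to0$ with $P_{0},\dots,P_{n}$ finitely generated. We may take them finitely generated free, since $\mathrm{FP}_{n}$ and $\mathrm{FL}_{n}$ agree in the discrete setting by the usual Eilenberg trick in low degrees. Write $\Lambda=\mathbb{F}_{p}\llbracket\hat{\Gamma}\rrbracket$ and apply $\Lambda\otimes_{\mathbb{F}_{p}[\Gamma]}-$. Flatness makes
\[
\Lambda\otimes P_{n}\to\cdots\to\Lambda\otimes P_{0}\to\Lambda\otimes_{\mathbb{F}_{p}[\Gamma]}K\to0
\]
exact. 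Each $\Lambda\otimes P_{i}$ is a finitely generated free profinite $\Lambda$-module for $i\leqslant n$. Because $K$ is finitely presented ($n\geqslant1$; the case $n=0$ is treated separately), we have $\Lambda\otimes_{\mathbb{F}_{p}[\Gamma]}K\cong\hat{K}$ by \cite[Lemma 4.3]{GarJai}. Thus $\hat{K}$ has a partial resolution by finitely generated free modules in degrees $\leqslant n$, so $\hat{K}$ is $\mathrm{FP}_{n}$. Numerically, Theorem \ref{thm:numericalFP}(\ref{enu:Standard}) gives $\cbf k{\hat{\Gamma}}{\hat{K}}\leqslant\d P_{k}$.

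For goodness, let $M$ be a finite $\Lambda$-module and extend the free resolution to all degrees. Flatness makes $\Lambda\otimes P_{\bullet}$ exact in all degrees. We have
\[
(\Lambda\otimes_{\mathbb{F}_{p}[\Gamma]}P_{k})\otimes_{\Lambda}M\cong P_{k}\otimes_{\mathbb{F}_{p}[\Gamma]}M .
\]
For $k\leqslant n$, where $P_{k}$ is finitely generated free, this is a finite module and the completed and abstract tensor products coincide. Hence the complexes computing $\mathrm{Tor}^{\mathbb{F}_{p}[\Gamma]}_{k}(K,M)=H_{k}(\Gamma,K\otimes M)$ and $\mathrm{Tor}^{\Lambda}_{k}(\hat{K},M)=H_{k}(\hat{\Gamma},\hat{K}\otimes M)$ agree in degrees $\leqslant n$, and both are computed over $\mathbb{F}_{p}$ via Lemma \ref{lem:flat-modules}. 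Degree $n$ also needs the image of $P_{n+1}$; modulo that point, the homology in degrees $\leqslant n$ agrees.

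The main obstacle is twofold. First, one must check that this identification coincides with the comparison map $H_{k}(\Gamma,K\otimes M)\to H_{k}(\hat{\Gamma},\hat{K}\otimes M)$ from the definition of goodness. I would verify this by naturality: both maps are induced from $P_{\bullet}\to\Lambda\otimes P_{\bullet}$ lifting $K\to\hat{K}$, and the definition's map factors through finite quotients compatibly. Second, degree $n$ when $P_{n+1}$ is infinitely generated. Here I would use that the boundary image in $P_{n}\otimes M$ is the image of $Z_{n+1}\otimes M$, where $Z_{n+1}=\ker(P_{n}\to P_{n-1})$. Then $\Lambda\otimes Z_{n+1}$ is the corresponding kernel by flatness, and the images agree because $P_{n}\otimes M$ is finite, so no completion issues arise.
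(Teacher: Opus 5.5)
\textbf{Verdict: the proposal has a gap, but only for $n=0$.} For $n\geqslant1$ your argument is essentially the paper's. You base-change a resolution with $P_{0},\dots,P_{n}$ finitely generated along the flat map $\mathbb{F}_{p}[\Gamma]\to\Lambda$ and identify $\Lambda\otimes_{\mathbb{F}_{p}[\Gamma]}K$ with $\hat{K}$ via \cite[Lemma 4.3]{GarJai}. You then compare the two complexes after tensoring with a finite $M$, using finiteness of $P_{n}\otimes_{\mathbb{F}_{p}[\Gamma]}M$ to control the boundary image in degree $n$. You handle that last point through $Z=\ker(P_{n}\to P_{n-1})$ and $\Lambda\otimes Z=\ker(\Lambda\otimes P_{n}\to\Lambda\otimes P_{n-1})$. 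This is a slightly cleaner version of the paper's device of replacing $\Lambda\otimes P_{n+1}$ by its completion. You also remark that the resulting isomorphism must be checked to coincide with the comparison map in the definition of goodness; that addresses something the paper leaves implicit.

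\textbf{The missing case.} The statement includes $n=0$. You say this case is ``treated separately'' but never treat it, and your method cannot cover it, because \cite[Lemma 4.3]{GarJai} needs $K$ to be finitely presented. The paper handles it with a direct coinvariants argument, which does not even use flatness:
\begin{itemize}
\item Take a finite $\Lambda$-module $M$ and set $V=K\otimes_{\mathbb{F}_{p}}M$. This module is finitely generated, so $V_{\Gamma}$ is finite.
\item We have $\hat{V}_{\hat{\Gamma}}=\varprojlim_{L}(V/L)_{\Gamma}$, with $L$ running over finite-index submodules of $V$. Every term is a quotient of $V_{\Gamma}$.
\item Once $L\subseteq\ker(V\to V_{\Gamma})$, which is itself an admissible finite-index submodule, the term equals $V_{\Gamma}$. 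So the limit is $V_{\Gamma}$, and the degree-$0$ comparison map is an isomorphism.
\item Finite generation of $\hat{K}$ holds because the image of $K$ is dense in it.
\end{itemize}
Adding this case completes your proof.
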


\begin{proof}
First consider the case $n=0$. Let $M$ be a finite $\mathbb{F}_{p}\llbracket\hat{\Gamma}\rrbracket$-module,
and set $V=K\otimes_{\mathbb{F}_{p}}M$, so $\hat{V}=\widehat{K\otimes_{\mathbb{F}_{p}}M}=\hat{K}\otimes_{\mathbb{F}_{p}}M.$
Since $K$ is finitely generated and $M$ is finite, $V$ is finitely
generated. Therefore, $V_{\Gamma}$ is finite-dimensional over $\mathbb{F}_{p}$.
We have 
\[
\hat{V}_{\hat{\Gamma}}=\li_{L\leqslant V,N\in\mathcal{U}_{L}}\left(V/L\right)_{\Gamma/N}=\li_{L\leqslant K,N\in\mathcal{U}_{L}}\left(V/L\right)_{\Gamma}.
\]
Since $\hat{V}$ is finitely generated as an $\mathbb{F}_{p}\llbracket\hat{\Gamma}\rrbracket$-module,
$\hat{V}_{\hat{\Gamma}}$ is finite-dimensional over $\mathbb{F}_{p}$,
so it is equal to $(V/L)_{\Gamma}$ for every small enough $L\leqslant K$.
Since $V_{\Gamma}$ is finite, we can take $L$ small enough so that
$L\leqslant\ker(V\longrightarrow V_{\Gamma})$, so that $(V/L)_{\Gamma}=V_{\Gamma}$.
We get that $V_{\Gamma}=\hat{V}_{\hat{\Gamma}}$.

We may assume $n\geqslant1$; in particular, $K$ is finitely presented,
so that $\mathbb{F}_{p}\llbracket\hat{\Gamma}\rrbracket\otimes_{\mathbb{F}_{p}[\Gamma]}K\cong\hat{K}$
\cite[Lemma 4.3]{GarJai}.

Let $M$ be a finite $\mathbb{F}_{p}\llbracket\hat{\Gamma}\rrbracket$-module,
and take a projective resolution 
\[
\cdots\longrightarrow P_{1}\longrightarrow P_{0}\longrightarrow K\longrightarrow0
\]
of $K$ $\mathbb{F}_{p}[\Gamma]$-modules, where $P_{k}$ is finitely
generated for $k\leqslant n$. Since $\mathbb{F}_{p}\llbracket\hat{\Gamma}\rrbracket$
is flat over $\mathbb{F}_{p}[\Gamma]$, we get that 
\[
\widehat{\mathbb{F}_{p}\llbracket\hat{\Gamma}\rrbracket\otimes_{\mathbb{F}_{p}[\Gamma]}P_{n+1}}\longrightarrow\mathbb{F}_{p}\llbracket\hat{\Gamma}\rrbracket\otimes_{\mathbb{F}_{p}[\Gamma]}P_{n}\longrightarrow\cdots\longrightarrow\mathbb{F}_{p}\llbracket\hat{\Gamma}\rrbracket\otimes_{\mathbb{F}_{p}[\Gamma]}P_{0}\longrightarrow\mathbb{F}_{p}\llbracket\hat{\Gamma}\rrbracket\otimes_{\mathbb{F}_{p}[\Gamma]}K\longrightarrow0
\]
is a projective resolution of $\mathbb{F}_{p}\llbracket\hat{\Gamma}\rrbracket\otimes_{\mathbb{F}_{p}[\Gamma]}K\cong\hat{K}$
by profinite $\mathbb{F}_{p}\llbracket\hat{\Gamma}\rrbracket$-modules,
where $\mathbb{F}_{p}\llbracket\hat{\Gamma}\rrbracket\otimes_{\mathbb{F}_{p}[\Gamma]}P_{k}$
is a finitely generated for $k\leqslant n$. (Of course, if $n=\infty$,
the term $\widehat{\mathbb{F}_{p}\llbracket\hat{\Gamma}\rrbracket\otimes_{\mathbb{F}_{p}[\Gamma]}P_{n+1}}$
should be ignored). We tensor both resolutions with $M$. Observe
that, if $n<\infty$, then the image of 
\[
\widehat{\mathbb{F}_{p}\llbracket\hat{\Gamma}\rrbracket\otimes_{\mathbb{F}_{p}[\Gamma]}P_{n+1}}\hat{\otimes}_{\mathbb{F}_{p}\llbracket\hat{\Gamma}\rrbracket}M\longrightarrow\mathbb{F}_{p}\llbracket\hat{\Gamma}\rrbracket\otimes_{\mathbb{F}_{p}[\Gamma]}P_{n}\hat{\otimes}_{\mathbb{F}_{p}\llbracket\hat{\Gamma}\rrbracket}M
\]
 is equal to the image of 
\[
\mathbb{F}_{p}\llbracket\hat{\Gamma}\rrbracket\otimes_{\mathbb{F}_{p}[\Gamma]}P_{n+1}\hat{\otimes}_{\mathbb{F}_{p}\llbracket\hat{\Gamma}\rrbracket}M\longrightarrow\mathbb{F}_{p}\llbracket\hat{\Gamma}\rrbracket\otimes_{\mathbb{F}_{p}[\Gamma]}P_{n}\hat{\otimes}_{\mathbb{F}_{p}\llbracket\hat{\Gamma}\rrbracket}M
\]
since $\mathbb{F}_{p}\llbracket\hat{\Gamma}\rrbracket\otimes_{\mathbb{F}_{p}[\Gamma]}P_{n}\hat{\otimes}_{\mathbb{F}_{p}\llbracket\hat{\Gamma}\rrbracket}M$
is a finite module. This means that taking the profinite completion
of the $(n+1)^{\mathrm{th}}$ term does not change the $n^{\mathrm{th}}$
homology group. We therefore get 
\begin{align*}
\mathrm{Tor}^{\mathbb{F}_{p}\llbracket\hat{\Gamma}\rrbracket}_{k}(\hat{K},M) & \cong H_{k}\left(\left(\mathbb{F}_{p}\llbracket\hat{\Gamma}\rrbracket\otimes_{\mathbb{F}_{p}[\Gamma]}P_{\bullet}\right)\otimes_{\mathbb{F}_{p}\llbracket\hat{\Gamma}\rrbracket}M\right)\\
 & \cong H_{k}\left(P_{\bullet}\otimes_{\mathbb{F}_{p}[\Gamma]}\left(\mathbb{F}_{p}\llbracket\hat{\Gamma}\rrbracket\otimes_{\mathbb{F}_{p}\llbracket\hat{\Gamma}\rrbracket}M\right)\right)\\
 & \cong H_{k}\left(P_{\bullet}\otimes_{\mathbb{F}_{p}[\Gamma]}M\right)\cong\mathrm{Tor}^{\mathbb{F}_{p}[\Gamma]}_{k}(K,M).
\end{align*}
By Lemma \ref{lem:flat-modules}, $\mathrm{Tor}^{\mathbb{F}_{p}\llbracket\hat{\Gamma}\rrbracket}_{k}(\hat{K},M)\cong H_{k}(\mathbb{F}_{p}\llbracket\hat{\Gamma}\rrbracket,\hat{K}\otimes_{\mathbb{F}_{p}\llbracket\hat{\Gamma}\rrbracket}M)$
and $\mathrm{Tor}^{\mathbb{F}_{p}[\Gamma]}_{k}(K,M)\cong H_{k}(\mathbb{F}_{p}[\Gamma],K\otimes_{\mathbb{F}_{p}}M)$,
so $K$ is $n$-good.
\end{proof}

\section{Morphisms of Extensions\label{sec:FibreProducts}}
\begin{notation}
In this section we fix a commutative profinite ring $R$, a profinite
group $G$, and a commutative diagram of the form
\[
\xymatrix{1\ar[r] & A\ar@{=}[d]\ar[r] & B_{0}\ar[d]\ar[r] & C_{0}\ar[r]\ar[d] & 1\\
1\ar[r] & A\ar[r] & B_{1}\ar[r] & C_{1}\ar[r] & 1
}
\]
We also let $n$ be some nonnegative integer or $\infty$, and fix
some integer $k\leqslant n$. 
\end{notation}

\begin{rem}
An important example to keep in mind is that of fibre products. If
\begin{align*}
1 & \longrightarrow N_{1}\longrightarrow G_{1}\overset{\varphi_{1}}{\longrightarrow}Q\longrightarrow1,\\
1 & \longrightarrow N_{2}\longrightarrow G_{2}\overset{\varphi_{2}}{\longrightarrow}Q\longrightarrow1,
\end{align*}
are short exact sequences, then one may form the fibre product $G_{1}\times_{Q}G_{2}=\left\{ (g_{1},g_{2})\middle|\varphi_{1}(g_{1})=\varphi_{2}(g_{2})\right\} $,
and get the following two commutative diagrams:
\[
\xymatrix{1\ar[r] & N_{i}\ar@{=}[d]\ar[r] & G_{1}\times_{Q}G_{2}\ar[d]\ar[r] & G_{3-i}\ar[r]\ar[d] & 1\\
1\ar[r] & N_{i}\ar[r] & G_{i}\ar[r] & Q\ar[r] & 1
}
\]
for $i=1,2$. In fact, if $B_{0}\longrightarrow B_{1}$ is surjective
(equivalently, if $C_{0}\longrightarrow C_{1}$ is surjective), then
it is not hard to see $B_{0}\cong C_{0}\times_{C_{1}}B_{1}$, so this
is the only example of a commutative diagram as above with surjective
columns.
\end{rem}

\begin{thm}
\label{thm:Fibre}Assume $B_{0}\longrightarrow B_{1}$ is surjective,
and let $K$ be an $R\llbracket C_{1}\rrbracket$-module. We have
the following recursive formula:
\[
\cbf k{C_{1}}K\leqslant\cbf{k-1}{B_{0}}K+\cbf k{B_{1}}K+\sum_{2\leqslant r\leqslant k-1}\cbf{k-r}{C_{1}}K\cbf{r-1}AR+\sum_{2\leqslant r\leqslant k}\cbf r{C_{0}}K\cbf{k-r}AR
\]
In particular, if $B_{1}$ and $C_{0}$ are $\mathrm{FP}_{n}$, $B_{0}$
is $\mathrm{FP}_{n-1}$ and $A$ is $\mathrm{FP}_{n-2}$ , then $C_{1}$
is $\mathrm{FP}_{n}$.
\end{thm}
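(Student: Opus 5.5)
The plan is to compare the two Lyndon–Hochschild–Serre type spectral sequences of Lemma \ref{lem:another-ss}, one for $A\trianglelefteqslant B_{0}$ and one for $A\trianglelefteqslant B_{1}$, using the map of spectral sequences induced by the vertical maps of the diagram. Fix a simple $R\llbracket C_{1}\rrbracket$-module $M$ with $\mathbb{F}=\mathrm{End}_{C_{1}}(M)$. Inflate $M$ to $B_{1}$ and, via $B_{0}\to B_{1}$, to $B_{0}$. Inflate $K$ to $C_{0}$, $B_{0}$ and $B_{1}$ as well. This gives spectral sequences
\[
{}^{i}E^{2}_{pq}=\mathrm{Tor}^{R\llbracket C_{i}\rrbracket}_{p}\bigl(K,\mathrm{Tor}^{R\llbracket A\rrbracket}_{q}(R,M)\bigr)\Longrightarrow\mathrm{Tor}^{R\llbracket B_{i}\rrbracket}_{p+q}(K,M),\qquad i=0,1.
\]
By functoriality (in the spirit of Lemma \ref{lem:TorFunctoriality}, applied to the Grothendieck construction) there is a morphism ${}^{0}E\to{}^{1}E$ compatible with all differentials. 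The coefficient modules $\mathrm{Tor}^{R\llbracket A\rrbracket}_{q}(R,M)$ are the same in both; they are $C_{1}$-modules, inflated to $C_{0}$. Since $C_{0}\to C_{1}$ is surjective, we have $K\otimes_{R\llbracket C_{0}\rrbracket}X=K\otimes_{R\llbracket C_{1}\rrbracket}X$ for any such $X$. Hence on column $p=0$ the map ${}^{0}E^{2}_{0q}\to{}^{1}E^{2}_{0q}$ is an isomorphism.

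\textbf{Row zero of ${}^{1}E$.} The target is ${}^{1}E^{2}_{k0}=\mathrm{Tor}^{R\llbracket C_{1}\rrbracket}_{k}(K,M)$. Exactly as in the proof of Lemma \ref{lem:FPforQuotients},
\[
\dim_{\mathbb{F}}{}^{1}E^{2}_{k0}\leqslant\dim_{\mathbb{F}}{}^{1}E^{\infty}_{k0}+\sum_{r=2}^{k-1}\dim_{\mathbb{F}}{}^{1}E^{2}_{k-r,r-1}+\dim_{\mathbb{F}}{}^{1}E^{k}_{0,k-1}.
\]
The first term is at most $\tau_{k}(B_{1};K)\dim M$. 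For the middle terms I use that $\mathrm{Tor}^{R\llbracket A\rrbracket}_{r-1}(R,M)$ has size at most $|M|^{\tau_{r-1}(A)}$. I then bound $\mathrm{Tor}^{R\llbracket C_{1}\rrbracket}_{k-r}(K,-)$ by d\'evissage along a composition series, using half-exactness. This gives the bound $\tau_{k-r}(C_{1};K)\tau_{r-1}(A)$ in the logarithmic sense, via the observation after the definition of $\tau$. Because of differing endomorphism fields, I will phrase all of these estimates with $\log|\cdot|/\log|M|$ rather than $\dim_{\mathbb{F}}$.

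\textbf{The corner term ${}^{1}E^{k}_{0,k-1}$ (main step).} Column $0$ only receives differentials, so ${}^{i}E^{r+1}_{0,q}$ is the quotient of ${}^{i}E^{r}_{0,q}$ by the image of $d_{r}$. Images of differentials in ${}^{0}E$ map into images of differentials in ${}^{1}E$. Starting from the isomorphism at $E^{2}$, induction on $r$ shows that ${}^{0}E^{r}_{0,k-1}\to{}^{1}E^{r}_{0,k-1}$ is surjective for every $r$. Therefore
\[
|{}^{1}E^{k}_{0,k-1}|\leqslant|{}^{0}E^{k}_{0,k-1}|\leqslant|{}^{0}E^{\infty}_{0,k-1}|\cdot\prod_{r=k}^{k}|{}^{0}E^{r}_{r,k-r}|.
\]
More crudely, one can bound by all incoming differentials ${}^{0}E^{2}_{r,k-r}$ with $2\leqslant r\leqslant k$. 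Now ${}^{0}E^{\infty}_{0,k-1}$ is a subquotient of $\mathrm{Tor}^{R\llbracket B_{0}\rrbracket}_{k-1}(K,M)$, which gives $\tau_{k-1}(B_{0};K)$. The terms ${}^{0}E^{2}_{r,k-r}$ are bounded by the same d\'evissage by $\tau_{r}(C_{0};K)\tau_{k-r}(A)$. Summing everything and taking the supremum over $M$ gives the stated recursive inequality.

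The hardest part is justifying the naturality of the spectral sequence of Lemma \ref{lem:another-ss} under the morphism of extensions, including the identification on column $0$. I would do this by choosing compatible projective resolutions: resolve over $B_{0}$, map to a resolution over $B_{1}$ via the comparison theorem, and check that the Grothendieck factorisations $S,T$ are compatible. The d\'evissage estimate for non-simple coefficients is routine but needs care with endomorphism fields. The final $\mathrm{FP}_{n}$ statement then follows from Theorem \ref{thm:numericalFP} by induction on $k$.
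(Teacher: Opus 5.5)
Your proposal is correct and follows the paper's proof. It uses the same two spectral sequences from Lemma \ref{lem:another-ss}, the same row-zero estimate for $A\trianglelefteqslant B_{1}$, and the same handling of the corner term: you identify the column-$0$ entries, since the coefficients $\mathrm{Tor}^{R\llbracket A\rrbracket}_{k-1}(R,M)$ are inflated from $C_{1}$ (the paper phrases this as $\ker(C_{0}\to C_{1})$ acting trivially), and then bound via the spectral sequence for $A\trianglelefteqslant B_{0}$. The only difference is that you transport the identification to page $k$ through a morphism of spectral sequences. This gives the sharper corner bound $\tau_{k-1}(B_{0};K)+\tau_{k}(C_{0};K)$, i.e.\ only the $r=k$ summand of the last sum. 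The paper instead just uses $\dim E^{k}_{0,k-1}\leqslant\dim E^{2}_{0,k-1}=\dim\tilde{E}^{2}_{0,k-1}$, an abstract isomorphism of $E^{2}$-terms, so the naturality you flagged as the hardest step is not actually needed for the stated inequality.
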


\begin{proof}
Let $M$ be a simple $R\llbracket C_{1}\rrbracket$-module, so that
it is finite-dimensional over $\mathbb{F}\coloneqq\mathrm{End}_{C_{1}}(M)$.
We consider it as a $B_{1}$-module via the quotient map, and we consider
the spectral sequence associated with 
\[
1\to A\to B_{1}\to C_{1}\to1
\]
obtained in Lemma \ref{lem:another-ss}. We have 
\[
E^{2}_{pq}=\operatorname{Tor}^{R\llbracket C_{1}\rrbracket}_{p}\left(K,\operatorname{Tor}^{R\llbracket A\rrbracket}_{q}(R,M)\right)\Longrightarrow\operatorname{Tor}^{R\llbracket B_{1}\rrbracket}_{p+q}(K,M).
\]
Since $A$ acts trivially on $M$, we have $E^{2}_{k0}=\mathrm{Tor}^{R\llbracket C_{1}\rrbracket}_{k}(K,M)$,
whose dimension is what we need to bound with respect to $\dimf M$.
For every $r\geqslant2$, we have the following exact sequence:
\[
E^{r+1}_{k0}\longrightarrow E^{r}_{k0}\longrightarrow E^{r}_{k-r,r-1},
\]
which implies 
\[
\dimf E^{r}_{k0}\leqslant\dimf E^{r+1}_{k0}+\dimf E^{r}_{k-r,r-1},
\]
so that 
\begin{align*}
\dimf E^{2}_{k0} & \leqslant\dimf E^{\infty}_{k0}+\sum_{2\leqslant r\leqslant k}\dimf E^{r}_{k-r,r-1}\\
 & \leqslant\dimf\mathrm{Tor}^{R\llbracket B_{1}\rrbracket}_{k}(K,M)+\sum_{2\leqslant r\leqslant k}\dimf\operatorname{Tor}^{R\llbracket C_{1}\rrbracket}_{k-r}\left(K,\operatorname{Tor}^{R\llbracket A\rrbracket}_{r-1}(R,M)\right)\\
 & \leqslant\left(\cbf k{B_{1}}K+\sum_{2\leqslant r\leqslant k-1}\cbf{k-r}{C_{1}}K\cbf{r-1}AR\right)\dimf M\\
 & +\dimf\operatorname{Tor}^{R\llbracket C_{1}\rrbracket}_{0}\left(K,\operatorname{Tor}^{R\llbracket A\rrbracket}_{k-1}(R,M)\right)
\end{align*}
(where we interpret $\mathrm{Tor}_{k-1}$ as $0$ if $k=0$; we assume
henceforth that $k\geqslant1$). We are left with estimating $E^{2}_{0,k-1}=\operatorname{Tor}^{R\llbracket C_{1}\rrbracket}_{0}\left(K,\operatorname{Tor}^{R\llbracket A\rrbracket}_{k-1}(R,M)\right)$.
Now, consider $M$ as a $B_{0}$-module via the quotient map, and
consider the spectral sequence associated with
\[
1\to A\to B_{0}\to C_{0}\to1
\]
(from Lemma \ref{lem:another-ss}),
\[
\tilde{E}^{2}_{pq}=\operatorname{Tor}^{R\llbracket C_{0}\rrbracket}_{p}\left(K,\operatorname{Tor}^{R\llbracket A\rrbracket}_{q}(R,M)\right)\Longrightarrow\operatorname{Tor}^{R\llbracket B_{0}\rrbracket}_{p+q}(K,M).
\]
Consider $N\coloneqq\ker(C_{0}\to C_{1})$. Since the action on $M$
and $K$ is defined via the quotient maps onto $C_{1}$, we get that
$N$ acts trivially on $K$ and $M$. Moreover, since $A$ acts trivially
on $M$, we also get that $N$ acts trivially on $\mathrm{Tor}^{R\llbracket A\rrbracket}_{\bullet}(R,M)$:
if $x\in N$ and $\tilde{x}\in B_{0}$ is any lift, then the image
of $\tilde{x}$ in $B_{1}$ must be contained in $A$, so that conjugating
$A$ by $\tilde{x}$ is the same thing as conjugating by some element
in $A$ (namely, the image of $\tilde{x}$ in $B_{1}$); since $A$
acts trivially on $\mathrm{Tor}^{R\llbracket A\rrbracket}_{\bullet}(R,M)$,
so does $\tilde{x}$ and hence $x$. Thus, we have an isomorphism
\[
E^{2}_{0,k-1}=\left(K\t\operatorname{Tor}^{R\llbracket A\rrbracket}_{k-1}(R,M)\right)_{C_{1}}\cong\left(K\t\operatorname{Tor}^{R\llbracket A\rrbracket}_{k-1}(R,M)\right)_{C_{0}}=\tilde{E}^{2}_{0,k-1}.
\]
We thus need to bound $\dimf\tilde{E}^{2}_{0,k-1}$. For every $r\geqslant2$,
we have the following exact sequence:
\[
\tilde{E}^{r}_{r,k-r}\to\tilde{E}^{r}_{0,k-1}\to\tilde{E}^{r+1}_{0,k-1},
\]
so 
\[
\dimf\tilde{E}^{r}_{0,k-1}\leqslant\dimf\tilde{E}^{r}_{r,k-r}+\dimf\tilde{E}^{r+1}_{0,k-1},
\]
so 
\begin{align*}
\dimf\tilde{E}^{2}_{0,k-1} & \leqslant\dimf\tilde{E}^{\infty}_{0,k-1}+\sum^{k}_{r=2}\dimf\tilde{E}^{2}_{r,k-r}\\
 & \leqslant\dimf\operatorname{Tor}^{R\llbracket B_{0}\rrbracket}_{k-1}(K,M)+\sum^{k}_{r=2}\dimf\operatorname{Tor}^{R\llbracket C_{0}\rrbracket}_{r}\left(K,\operatorname{Tor}^{R\llbracket A\rrbracket}_{k-r}(R,M)\right)\\
 & \leqslant\left(\cbf{k-1}{B_{0}}K+\sum^{k}_{r=2}\cbf r{C_{0}}K\cbf{k-r}AR\right)\dimf M.
\end{align*}
Since $M$ was arbitrary, putting all this together gives us 
\[
\cbf k{C_{1}}K\leqslant\cbf{k-1}{B_{0}}K+\cbf k{B_{1}}K+\sum^{k-1}_{r=2}\cbf{k-r}{C_{1}}K\cbf{r-1}AR+\sum^{k}_{r=2}\cbf r{C_{0}}K\cbf{k-r}AR
\]
which is finite by our assumptions and the induction hypothesis.
\end{proof}

\begin{rem}
Taking $n=0$ and the trivial module $K=R$, Theorem \ref{thm:Fibre}
becomes a profinite version of \cite[3.28(2)]{Cor06}.
\end{rem}

\begin{thm}
Assume $B_{0}\longrightarrow B_{1}$ is surjective, and let $K$
be an $R\llbracket C_{1}\rrbracket$-module. Then we have the following
recursive formula
\[
\cbf k{C_{0}}K\leqslant\cbf k{B_{0}}K+\cbf{k-1}{B_{1}}K+\sum^{k-1}_{r=2}\cbf{k-r}{C_{0}}K\cbf{r-1}AR+\sum^{k}_{r=2}\cbf r{C_{1}}K\cbf{k-r}AR.
\]
In particular, if $A$ is $\mathrm{FP}_{n-2}$, $B_{0}$ is $\mathrm{FP}_{n}$,
$B_{1}$ is $\mathrm{FP}_{n-1}$, and $\mathrm{C_{1}}$ is $\mathrm{FP}_{n}$,
then $C_{0}$ is $\mathrm{FP}_{n}$.
\end{thm}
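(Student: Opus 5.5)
We mirror the proof of Theorem \ref{thm:Fibre}, swapping the roles of the two rows. Fix a simple $R\llbracket C_{1}\rrbracket$-module $M$ with $\mathbb{F}=\mathrm{End}_{C_{1}}(M)$. We regard $M$ as a $C_{0}$-, $B_{0}$- and $B_{1}$-module through the quotient maps. The supremum defining $\cbf k{C_{0}}K$ runs over simple $R\llbracket C_{0}\rrbracket$-modules, not only those inflated from $C_{1}$. So the first thing to check is that $K$ is regarded here via $C_{0}\to C_{1}$ and that $M$ may be an arbitrary simple $R\llbracket C_{0}\rrbracket$-module. The argument below only needs that $N=\ker(C_{0}\to C_{1})$ acts trivially on $K$. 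Where we need $N$ to act trivially on $M$ as well, we instead use directly the spectral sequence for $A\trianglelefteqslant B_{0}$. We first write the argument for inflated $M$ and then explain the adjustment.

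Consider the spectral sequence of Lemma \ref{lem:another-ss} for $1\to A\to B_{0}\to C_{0}\to1$:
\[
\tilde{E}^{2}_{pq}=\mathrm{Tor}^{R\llbracket C_{0}\rrbracket}_{p}\bigl(K,\mathrm{Tor}^{R\llbracket A\rrbracket}_{q}(R,M)\bigr)\Longrightarrow\mathrm{Tor}^{R\llbracket B_{0}\rrbracket}_{p+q}(K,M).
\]
Since $A$ acts trivially on $M$, we have $\tilde{E}^{2}_{k0}=\mathrm{Tor}^{R\llbracket C_{0}\rrbracket}_{k}(K,M)$. Exactly as in the proofs of Lemma \ref{lem:FPforQuotients} and Theorem \ref{thm:Fibre}, the exact sequences $\tilde E^{r+1}_{k0}\to\tilde E^{r}_{k0}\to\tilde E^{r}_{k-r,r-1}$ give
\[
\dimf\tilde{E}^{2}_{k0}\leqslant\dimf\mathrm{Tor}^{R\llbracket B_{0}\rrbracket}_{k}(K,M)+\sum_{r=2}^{k-1}\dimf\tilde{E}^{2}_{k-r,r-1}+\dimf\tilde{E}^{2}_{0,k-1}.
\]
The first term is at most $\cbf k{B_{0}}K\dimf M$. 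For $2\leqslant r\leqslant k-1$, the term $\tilde E^{2}_{k-r,r-1}$ is bounded by $\cbf{k-r}{C_{0}}K\cbf{r-1}AR\dimf M$. Here we use that $\mathrm{Tor}^{R\llbracket A\rrbracket}_{r-1}(R,M)$ has length at most $\cbf{r-1}AR\dimf M$, together with the remark after the definition of $\tau$ (induction on length over a composition series).

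The remaining term $\tilde{E}^{2}_{0,k-1}=\bigl(K\t\mathrm{Tor}^{R\llbracket A\rrbracket}_{k-1}(R,M)\bigr)_{C_{0}}$ is the main point. By the same argument as in Theorem \ref{thm:Fibre}, $N$ acts trivially on $K$ and on $\mathrm{Tor}^{R\llbracket A\rrbracket}_{\bullet}(R,M)$: a lift of $x\in N$ maps into $A\le B_{1}$, so conjugation by it is conjugation by an element of $A$. Hence this term equals $E^{2}_{0,k-1}$ for the spectral sequence of $1\to A\to B_{1}\to C_{1}\to1$. In that spectral sequence, the sequences $E^{r}_{r,k-r}\to E^{r}_{0,k-1}\to E^{r+1}_{0,k-1}$ give
\[
\dimf E^{2}_{0,k-1}\leqslant\dimf\mathrm{Tor}^{R\llbracket B_{1}\rrbracket}_{k-1}(K,M)+\sum_{r=2}^{k}\dimf E^{2}_{r,k-r},
\]
which is bounded by $\bigl(\cbf{k-1}{B_{1}}K+\sum_{r=2}^{k}\cbf r{C_{1}}K\cbf{k-r}AR\bigr)\dimf M$. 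Summing yields the stated recursion. The FP consequence then follows by induction on $k$ from Theorem \ref{thm:numericalFP}: $\cbf{k-r}{C_{0}}K$ for $r\geq2$ is already known finite, and $A$ is $\mathrm{FP}_{n-2}$ while only $\tau_{\leq k-2}(A)$ appears.

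The main obstacle is the identification of $\tilde E^{2}_{0,k-1}$ with $E^{2}_{0,k-1}$ when $M$ is a general simple $C_{0}$-module on which $N$ need not act trivially. Then $B_{1}$ does not act on $M$. My first attempt would be to restrict to simple modules inflated from $C_{1}$, if the intended meaning of $\cbf k{C_{0}}K$ allows it. Otherwise, I would replace $M$ by $\mathrm{Ind}$-type modules or use the fibre product structure $B_{0}\cong C_{0}\times_{C_{1}}B_{1}$. In that structure, $A$ and $N$ commute and generate $A\times N$, so $\mathrm{Tor}^{R\llbracket A\rrbracket}$ can be computed with $N$ acting. I expect this verification to be where the real care is needed.
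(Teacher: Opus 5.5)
\textbf{There is a genuine gap: simple $C_0$-modules not inflated from $C_1$ are left unhandled.} For modules inflated from $C_{1}$, your argument is exactly the paper's. It uses the same two spectral sequences and the same edge bookkeeping. It also uses the same conjugation-by-$A$ identification of $\tilde E^{2}_{0,k-1}$ with the $E^{2}_{0,k-1}$ term for $B_{1}\to C_{1}$. Your deduction of the $\mathrm{FP}_{n}$ statement is also fine.

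The problem is that $\tau_{k}(C_{0};K)$ is a supremum over \emph{all} simple $R\llbracket C_{0}\rrbracket$-modules, and the numerical criterion needs exactly that. So restricting to inflated modules is not an option. For a simple $M$ on which $N=\ker(C_{0}\to C_{1})$ acts nontrivially, you give no bound on $\tilde E^{2}_{0,k-1}$. Your claim that the argument only needs $N$ to act trivially on $K$ is not accurate. Passing from $C_{0}$-coinvariants to $C_{1}$-coinvariants used that $N$ acts trivially on $\mathrm{Tor}^{R\llbracket A\rrbracket}_{k-1}(R,M)$, which fails once $N$ moves $M$. Staying inside the $B_{0}$ spectral sequence does not help either, since bounding $\tilde E^{2}_{0,k-1}$ there brings back $\tilde E^{2}_{k,0}$, the very term you are estimating. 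The rest of your argument (bounding $\tilde E^{2}_{k0}$ by the $B_{0}$-term plus the $\tilde E^{2}_{k-r,r-1}$) works verbatim for any simple $C_{0}$-module, so this one term is the only issue.

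The missing idea is short and needs neither induced modules nor the fibre-product decomposition: in this case the term vanishes.
\begin{itemize}
\item Since $N$ is normal in $C_{0}$, the span of $\{xm-m : x\in N,\ m\in M\}$ is a $C_{0}$-submodule. By simplicity, $M_{N}=0$ as soon as $N$ acts nontrivially.
\item Set $\mathbb{F}=\mathrm{End}_{C_{0}}(M)$. Since $A$ acts trivially on $M$, you have $\mathrm{Tor}^{R\llbracket A\rrbracket}_{k-1}(R,M)\cong\mathrm{Tor}^{R\llbracket A\rrbracket}_{k-1}(R,\mathbb{F})\otimes_{\mathbb{F}}M$, compatibly with the diagonal $C_{0}$-action.
\item Your own conjugation-by-$A$ argument shows that $N$ acts trivially on the first factor. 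Hence the $N$-coinvariants of $\mathrm{Tor}^{R\llbracket A\rrbracket}_{k-1}(R,M)$ are $\mathrm{Tor}^{R\llbracket A\rrbracket}_{k-1}(R,\mathbb{F})\otimes_{\mathbb{F}}M_{N}=0$.
\item Since $N$ acts trivially on $K$, $\tilde E^{2}_{0,k-1}=(K\otimes_{R}\mathrm{Tor}^{R\llbracket A\rrbracket}_{k-1}(R,M))_{C_{0}}$ is a quotient of $K\otimes_{R}(\mathrm{Tor}^{R\llbracket A\rrbracket}_{k-1}(R,M))_{N}=0$.
\end{itemize}
With this case split the proof is complete and coincides with the paper's: if $N$ acts nontrivially on $M$ the term vanishes, and if $N$ acts trivially your $B_{1}$ argument applies.
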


\begin{proof}
Let $M$ be a simple $R\llbracket C_{0}\rrbracket$-module, and set
$\mathbb{F}=\mathrm{End}_{C_{0}}M$. Consider $M$ and $K$ as $B_{0}$-modules
via $B_{0}\to C_{0}$. The short-exact-sequence $1\to A\to B_{0}\to C_{0}\to1$
supplies us with a spectral sequence
\begin{align*}
E^{2}_{pq} & =\mathrm{Tor}^{R\llbracket C_{0}\rrbracket}_{p}(K,\mathrm{Tor}^{R\llbracket A\rrbracket}_{q}(R,M))\implies\mathrm{Tor}^{R\llbracket B_{0}\rrbracket}_{p+q}(K,M)
\end{align*}
(Lemma \ref{lem:another-ss}). Observe that $E^{2}_{k0}=\mathrm{Tor}^{R\llbracket C_{0}\rrbracket}_{k}(K,M)$
(since $\mathrm{Tor}^{R\llbracket A\rrbracket}_{0}(R,M)=M$ since
$A$ acts trivially on $M$). For $r\geqslant2$, we have an exact
sequence 
\[
E^{r+1}_{k0}\to E^{r}_{k0}\to E^{r}_{k-r.r-1},
\]
so that 
\[
\dimf E^{r}_{k0}\leqslant\dimf E^{r+1}_{k0}+\dimf E^{r}_{k-r,r-1}.
\]
Since $E^{k+1}_{k0}=E^{\infty}_{k0}$ and $\dimf E^{r}_{pq}\leqslant\dimf E^{2}_{pq}$,
we get
\begin{align*}
\dimf E^{2}_{k0} & \leqslant\dimf E^{\infty}_{k0}+\sum^{k}_{r=2}\dimf E^{r}_{k-r,r-1}\\
 & \leqslant\dimf\mathrm{Tor}^{R\llbracket B_{0}\rrbracket}_{k}(K,M)+\sum^{k}_{r=2}\dimf\mathrm{Tor}^{R\llbracket C_{0}\rrbracket}_{k-r}(K,\mathrm{Tor}^{R\llbracket A\rrbracket}_{r-1}(R,M)).
\end{align*}
Observe that this implies $\cbf k{C_{0}}K\leqslant\cbf k{B_{0}}K+\sum^{k}_{r=2}\cbf{k-r}{C_{0}}K\cbf{r-1}AR$,
but that is not quite good enough, since we do not want to assume
$\cbf{k-1}AR$ is finite (otherwise, we could have just invoked Lemma
\ref{lem:FPforQuotients}). Thus, we need to consider $\mathrm{Tor}^{R\llbracket C_{0}\rrbracket}_{0}(K,\mathrm{Tor}^{R\llbracket A\rrbracket}_{k-1}(R,M))$
more carefully.

Consider $N\coloneqq\ker(C_{0}\to C_{1})$. We first consider the
case $N$ acts nontrivially on $M$, so that $M_{N}=0$. Since $A$
acts trivially on $M$, we have 
\[
\mathrm{Tor}^{R\llbracket A\rrbracket}_{k-1}(R,M)\cong\mathrm{Tor}^{R\llbracket A\rrbracket}_{k-1}(R,\mathbb{F})\otimes_{\mathbb{F}}M
\]
(where $R$ acts on $\mathbb{F}$ via $R\longrightarrow\mathbb{F}=\mathrm{End}_{C_{0}}(M)$
and $A$ acts trivially). Observe that $N$ acts trivially on $\mathrm{Tor}^{R\llbracket A\rrbracket}_{k-1}(R,\mathbb{F})$:
if $x\in N$ and $\tilde{x}\in B_{0}$ is any lift, then the image
of $\tilde{x}$ in $B_{1}$ must be contained in $A$, so that conjugating
$A$ by $\tilde{x}$ is the same thing as conjugating by some element
in $A$ (namely, the image of $\tilde{x}$ in $B_{1}$); since $A$
acts trivially on $\mathrm{Tor}^{R\llbracket A\rrbracket}_{k-1}(R,\mathbb{F})$,
so does $\tilde{x}$ and hence $x$. Thus, 
\[
\left(\mathrm{Tor}^{R\llbracket A\rrbracket}_{k-1}(R,M)\right)_{N}\cong\mathrm{Tor}^{R\llbracket A\rrbracket}_{k-1}(R,\mathbb{F})\otimes_{\mathbb{F}}M_{N}=0.
\]
Since $N$ acts trivially on $\mathrm{Tor}^{R\llbracket A\rrbracket}_{k-1}(R,M)$,
it follows $\mathrm{Tor}^{R\llbracket C_{0}\rrbracket}_{0}(K,\mathrm{Tor}^{R\llbracket A\rrbracket}_{k-1}(R,M))=0$. 

We now consider the case $N$ acts trivially on $M$, so that we get
an action of $C_{1}$ on $M$, and hence an action of $B_{1}$ via
$B_{1}\to C_{1}$. Let
\begin{align*}
\tilde{E}^{2}_{pq} & =\mathrm{Tor}^{R\llbracket C_{1}\rrbracket}_{p}(K,\mathrm{Tor}^{R\llbracket A\rrbracket}_{q}(R,M))\implies\mathrm{Tor}^{R\llbracket B_{1}\rrbracket}_{p+q}(K,M)
\end{align*}
be the spectral sequence associated with $1\to A\to B_{1}\to C_{1}\to1$
from Lemma \ref{lem:another-ss}. As above, $N$ acts on $\mathrm{Tor}^{R\llbracket A\rrbracket}_{k-1}(R,M)$
via conjugations by elements from $A$, and hence acts trivially.
We therefore get 
\[
E^{2}_{0,k-1}=\left(K\otimes_{R}\mathrm{Tor}^{R\llbracket A\rrbracket}_{k-1}(R,M)\right)_{C_{0}}=\left(K\otimes_{R}\mathrm{Tor}^{R\llbracket A\rrbracket}_{k-1}(R,M)\right)_{C_{1}}=\tilde{E}^{2}_{0,k-1}.
\]
We now bound $\tilde{E}^{2}_{0,k-1}$. For every $r\geqslant2$, we
have an exact sequence 
\[
\tilde{E}^{r}_{r,k-r}\to\tilde{E}^{r}_{0,k-1}\to\tilde{E}^{r+1}_{0,k-1},
\]
so
\[
\dimf\tilde{E}^{r}_{0,k-1}\leqslant\dimf\tilde{E}^{r}_{r,k-r}+\dimf\tilde{E}^{r+1}_{0,k-1},
\]
(since $\mathbb{F}=\mathrm{End}_{C_{0}}(M)=\mathrm{End}_{C_{1}}(M)$,
since $C_{0}\longrightarrow C_{1}$ is surjective). Thus, 
\begin{align*}
\dimf\tilde{E}^{2}_{0,k-1} & \leqslant\dimf\tilde{E}^{\infty}_{0,k-1}+\sum^{k}_{r=2}\dimf\tilde{E}^{r}_{r,k-r}\\
 & \leqslant\dimf\mathrm{Tor}^{R\llbracket B_{1}\rrbracket}_{k-1}(K,M)+\sum^{k}_{r=2}\dimf\mathrm{Tor}^{R\llbracket C_{1}\rrbracket}_{r}(K,\mathrm{Tor}^{R\llbracket A\rrbracket}_{k-r}(R,M)).\\
 & \leqslant\left(\cbf{k-1}{B_{1}}K+\sum^{k}_{r=2}\cbf r{C_{1}}K\cbf{k-r}AR\right)\dimf M.
\end{align*}
Putting it all together, we get
\[
\cbf k{C_{0}}K\leqslant\cbf k{B_{0}}K+\cbf{k-1}{B_{1}}K+\sum^{k-1}_{r=2}\cbf{k-r}{C_{0}}K\cbf{r-1}AR+\sum^{k}_{r=2}\cbf r{C_{1}}K\cbf{k-r}AR.\qedhere
\]
\end{proof}

The following is a profinite analogue of \cite[Theorem A]{KocLim},
extended to general modules.
\begin{thm}
Let $K$ be an $R\llbracket C_{1}\rrbracket$-module. Then 
\[
\cbf k{B_{1}}K\leqslant\cbf k{B_{0}}K+\sum_{p+q=k,q<k}\left(\cbf p{C_{1}}K\cbf qAR\right).
\]
In particular, if $A$ is $\mathrm{FP}_{n-1}$, $B_{0}$ is $\mathrm{FP}_{n}$
and $C_{1}$ is $\mathrm{FP}_{n}$, then $B_{1}$ is $\mathrm{FP}_{n}$.
\end{thm}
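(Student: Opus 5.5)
We will compare the two Lyndon–Hochschild–Serre type spectral sequences from Lemma \ref{lem:another-ss}. Nothing here assumes $B_{0}\to B_{1}$ is surjective. Fix a simple $R\llbracket B_{1}\rrbracket$-module $M$ and set $\mathbb{F}=\mathrm{End}_{B_{1}}(M)$. View $M$ as a $B_{0}$-module via $B_{0}\to B_{1}$, and $K$ as a $B_{0}$- and $B_{1}$-module via the maps to $C_{1}$. Lemma \ref{lem:another-ss} applied to $1\to A\to B_{1}\to C_{1}\to1$ gives
\[
E^{2}_{pq}=\mathrm{Tor}^{R\llbracket C_{1}\rrbracket}_{p}\bigl(K,\mathrm{Tor}^{R\llbracket A\rrbracket}_{q}(R,M)\bigr)\Longrightarrow\mathrm{Tor}^{R\llbracket B_{1}\rrbracket}_{p+q}(K,M),
\]
and applied to $1\to A\to B_{0}\to C_{0}\to1$ it gives a spectral sequence $\tilde{E}$ converging to $\mathrm{Tor}^{R\llbracket B_{0}\rrbracket}_{\bullet}(K,M)$. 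As usual, $\dimf\mathrm{Tor}^{R\llbracket B_{1}\rrbracket}_{k}(K,M)\leqslant\sum_{p+q=k}\dimf E^{\infty}_{pq}$.

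For $q<k$, bound $\dimf E^{\infty}_{pq}\leqslant\dimf E^{2}_{pq}$. The module $\mathrm{Tor}^{R\llbracket A\rrbracket}_{q}(R,M)$ is a finite $R\llbracket C_{1}\rrbracket$-module. Its dimension over $\mathbb{F}$ is at most $\cbf qAR\cdot\dimf M$; this uses the observation after the definition of $\tau$, the restriction of $M$ to $A$, and finiteness of length. Inducting on a composition series, using the long exact Tor sequence and the log-inequality remark, gives $\dimf E^{2}_{pq}\leqslant\cbf p{C_{1}}K\,\cbf qAR\,\dimf M$. The sizes are measured with $|\cdot|$ and logs, to handle different endomorphism fields. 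This produces exactly the sum $\sum_{p+q=k,\,q<k}$.

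The remaining term is $E^{\infty}_{0k}$, where we must avoid using $\cbf kAR$. It is a quotient of $E^{2}_{0k}=\bigl(K\t\mathrm{Tor}^{R\llbracket A\rrbracket}_{k}(R,M)\bigr)_{C_{1}}$. By construction it is the image of the edge map $\mathrm{Tor}^{R\llbracket A\rrbracket}_{k}(K,M)\to\mathrm{Tor}^{R\llbracket B_{1}\rrbracket}_{k}(K,M)$, where $K$ is restricted to $A$. This edge map factors through $\mathrm{Tor}^{R\llbracket B_{0}\rrbracket}_{k}(K,M)$, because the inclusion $A\hookrightarrow B_{1}$ factors as $A\hookrightarrow B_{0}\to B_{1}$; we use the functoriality in Lemma \ref{lem:TorFunctoriality}, where naturality of the maps gives the factorisation. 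Hence $\dimf E^{\infty}_{0k}\leqslant\dimf\mathrm{Tor}^{R\llbracket B_{0}\rrbracket}_{k}(K,M)\leqslant\cbf k{B_{0}}K\cdot\dim M$. The dimension is measured over $\mathrm{End}_{B_{0}}(M)$ after decomposing $M$ as a $B_{0}$-module, which may not be simple; the log-size remark handles the change of fields.

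The main obstacle is this last step. It requires checking that the edge homomorphism onto $E^{\infty}_{0k}$ in the Grothendieck spectral sequence of Lemma \ref{lem:another-ss} really is the functoriality map from $A$, compatibly with $B_{0}$. I would verify this on the chain level using a resolution of $K$ by free $R\llbracket B_{1}\rrbracket$-modules, which are free over $R\llbracket A\rrbracket$. A second, minor point is passing from dimensions for a non-simple $B_{0}$-module $M$ to $\tau$, again via a composition series. Summing the three contributions and taking the supremum over $M$ gives the inequality, and the $\mathrm{FP}_{n}$ statement follows from Theorem \ref{thm:numericalFP}.
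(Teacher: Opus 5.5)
\paragraph{Verdict.} Your proof is correct. It follows the paper's outline: the same spectral sequence for $1\to A\to B_1\to C_1\to 1$, the same $E^2$-bound on the terms with $q<k$, and the same reduction of $E^\infty_{0k}$ to $\mathrm{Tor}^{R\llbracket B_0\rrbracket}_k(K,M)$. Your composition-series and log-size bookkeeping for the non-simple modules $\mathrm{Tor}^{R\llbracket A\rrbracket}_q(R,M)$ and $M|_{B_0}$ is more careful than the paper, which passes over this point.

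\paragraph{Where the routes differ.} The difference is in the last step. The paper never identifies an edge map. Naturality of the Grothendieck spectral sequence gives a morphism $\tilde E\to E$. On $E^2_{0k}$ this is the quotient map $(K\otimes_R H_k(A,M))_{C_0}\to(K\otimes_R H_k(A,M))_{C_1}$, which is surjective even when $C_0\to C_1$ is not. Surjectivity passes to $E^\infty_{0k}$ because each $E^{r+1}_{0k}$ is a quotient of $E^r_{0k}$. Finally, $\tilde E^\infty_{0k}$ is a submodule of $\mathrm{Tor}^{R\llbracket B_0\rrbracket}_k(K,M)$. Your version uses only one spectral sequence, but pays with the chain-level identification you flag as the main obstacle.

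\paragraph{A correction to that step.} The claim that $E^\infty_{0k}$ \emph{is} the image of $\mathrm{Tor}^{R\llbracket A\rrbracket}_k(K,M)\to\mathrm{Tor}^{R\llbracket B_1\rrbracket}_k(K,M)$ is false when $K$ is not $R$-flat. In fact $E^\infty_{0k}$ is the image of $K\otimes_{R\llbracket C_1\rrbracket}H_k(A,M)\to\mathrm{Tor}^{R\llbracket B_1\rrbracket}_k(K,M)$, $x\otimes[z]\mapsto[x\otimes z]$, and this is in general strictly smaller. For a counterexample, take all groups trivial, $R=\mathbb{Z}_p$, $K=M=\mathbb{F}_p$ and $k=1$: then $E^\infty_{01}=0$, while your map is the identity on $\mathrm{Tor}^{\mathbb{Z}_p}_1(\mathbb{F}_p,\mathbb{F}_p)=\mathbb{F}_p$. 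Only the inclusion holds, but the inclusion is all you use, so nothing breaks.

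\paragraph{How to check the inclusion.} Resolve $M$ rather than $K$, since Lemma \ref{lem:another-ss} derives in $M$. Take a free $R\llbracket B_1\rrbracket$-resolution $P_\bullet\to M$, and a $B_0$-chain map into it from a free $R\llbracket B_0\rrbracket$-resolution $P'_\bullet\to M$. Both are free over $R\llbracket A\rrbracket$, so $(P'_\bullet)_A\to(P_\bullet)_A$ is a homotopy equivalence. The map $x\otimes[z]\mapsto[x\otimes z]$ then visibly factors through $H_k(K\otimes_{R\llbracket B_0\rrbracket}P'_\bullet)=\mathrm{Tor}^{R\llbracket B_0\rrbracket}_k(K,M)$, which gives the bound.
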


\begin{proof}
Let $M$ be a simple $R\llbracket B_{1}\rrbracket$-module, so that
it is a finite-dimensional vector space over the finite field $\mathbb{F}\coloneqq\mathrm{End}_{B_{1}}(M)$.
First, consider the spectral sequence associated with $1\to A\to B_{1}\to C_{1}\to1$,
\[
E^{2}_{p,q}=\operatorname{Tor}^{R\llbracket C_{1}\rrbracket}_{p}\left(K,\operatorname{Tor}^{R\llbracket A\rrbracket}_{q}(R,M)\right)\Longrightarrow\operatorname{Tor}^{R\llbracket B_{1}\rrbracket}_{p+q}(K,M).
\]
so that 
\[
\dimf\mathrm{Tor}^{R\llbracket B_{1}\rrbracket}_{k}(K,M)=\sum_{p+q=k}\dimf E^{\infty}_{pq}\leqslant\dimf E^{\infty}_{0,k}+\sum_{p+q=k,q<k}\dimf E^{2}_{pq}.
\]
For $q<k$, we have
\[
\dimf E^{2}_{pq}=\dimf\mathrm{Tor}^{R\llbracket C_{1}\rrbracket}_{p}(K,\mathrm{Tor}^{R\llbracket A\rrbracket}_{q}(R,M))\leqslant\cbf p{C_{1}}K\cbf qAR\dimf M.
\]
We now turn to $E^{\infty}_{0,k}$. Consider $M$ as a finite $R\llbracket B_{0}\rrbracket$-module,
and let $\tilde{E}^{2}_{pq}$ be the spectral sequence associated
with $1\to A\to B_{0}\to C_{0}\to1$,
\[
\tilde{E}^{2}_{pq}=\mathrm{Tor}^{R\llbracket C_{0}\rrbracket}_{p}(K,\mathrm{Tor}^{R\llbracket A\rrbracket}_{q}(R,M))\implies\mathrm{Tor}^{R\llbracket B_{0}\rrbracket}_{p+q}(K,M).
\]
By naturality of the Grothendieck spectral sequence, the morphism
of extensions from $1\to A\to B_{0}\to C_{0}\to1$ to $1\to A\to B_{1}\to C_{1}\to1$
induces a map of spectral sequences $\varphi^{2}_{pq}:\tilde{E}^{2}_{pq}\longrightarrow E^{2}_{pq}$.
Since $A$ acts trivially on $\mathrm{Tor}^{R\llbracket A\rrbracket}_{k}(R,M)=H_{k}(A,M)$
and the $B_{0}$-action is induced from $B_{1}$, the map $\varphi^{2}_{0,k}:\tilde{E}^{2}_{0,k}\to E^{2}_{0,k}$
is surjective, as it is simply the natural map from $\left(K\t H_{k}(A,M)\right)_{C_{0}}$
onto $\left(K\t H_{k}(A,M)\right)_{C_{1}}$. Since $\varphi$ commutes
with differentials, it follows by induction that we have surjective
maps $\varphi^{r}_{0,k}:\tilde{E}^{r}_{0,k}\to E^{r}_{0,k}$ for every
$r\geqslant2$. In particular, $\dimf E^{\infty}_{0,k}\leqslant\dimf\tilde{E}^{\infty}_{0,k}$.
We have 
\[
\dimf\tilde{E}^{\infty}_{0,k}\leqslant\dimf\mathrm{Tor}^{R\llbracket B_{0}\rrbracket}_{k}(K,M)\leqslant\cbf k{B_{0}}K\dimf M.
\]
Putting this all together, we get 
\[
\dimf\mathrm{Tor}^{R\llbracket B_{1}\rrbracket}_{k}(K,M)\leqslant\left(\cbf k{B_{0}}K+\sum_{p+q=k,q<k}\left(\cbf p{C_{1}}K\cbf qAR\right)\right)\dimf M.
\]
Since $M$ was arbitrary, we are done.
\end{proof}

Specialising to fibre products, and using the $n$-$(n+1)$-$(n+2)$
theorem proved below, we get:
\begin{cor}
Let 
\[
1\to N_{1}\to G_{1}\to Q\to1,\quad1\to N_{2}\to G_{2}\to Q\to1
\]
be short exact sequences of profinite groups, let $G_{1}\times_{Q}G_{2}$
be the associated fibre product.
\begin{enumerate}
\item If $N_{1}$ is $\mathrm{FP}_{n}$, $G_{1}$ and $G_{2}$ are $\mathrm{FP}_{n+2}$,
then $G_{1}\times_{Q}G_{2}$ is $\mathrm{FP}_{n+1}$ if and only if
$Q$ is $\mathrm{FP}_{n+2}$.
\item If $N_{1}$ is $\mathrm{FP}_{n}$, $G_{1}$ is $\mathrm{FP}_{n+1}$,
$G_{1}\times_{Q}G_{2}$ is $\mathrm{FP}_{n+2}$ and $Q$ is $\mathrm{FP}_{n+2}$,
then $G_{2}$ is $\mathrm{FP}_{n+2}$.
\item If $N_{1}$ is $\mathrm{FP}_{n}$, $G_{2}$ is $\mathrm{FP}_{n+2}$,
$G_{1}\times_{Q}G_{2}$ is $\mathrm{FP}_{n+1}$ and $Q$ is $\mathrm{FP}_{n+1}$,
then $G_{1}$ is $\mathrm{FP}_{n+1}$.
\end{enumerate}
\end{cor}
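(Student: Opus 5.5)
The three parts follow from the three theorems of this section, applied to the two morphisms of extensions of the Remark. One goes from $1\to N_1\to G_1\times_Q G_2\to G_2\to1$ to $1\to N_1\to G_1\to Q\to1$, so $A=N_1$, $B_0=G_1\times_Q G_2$, $C_0=G_2$, $B_1=G_1$, $C_1=Q$. The columns are surjective, since $G_1\times_Q G_2\to G_1$ is onto. The other morphism, with the roles of $G_1$ and $G_2$ swapped, is not needed because all hypotheses are phrased in terms of $N_1$. The forward direction of (1) also uses the $n$-$(n+1)$-$(n+2)$ Theorem (Theorem \ref{thm:n-n+1-n+2}), which we assume as stated. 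Throughout, $K=R$ and all conclusions are read off via the numerical criterion, Theorem \ref{thm:numericalFP}.

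For (1), suppose first that $Q$ is $\mathrm{FP}_{n+2}$. Apply Theorem \ref{thm:n-n+1-n+2} with $n+1$ in place of $n$. Its hypotheses are: $N_1$ is $\mathrm{FP}_n$; $G_1,G_2$ are $\mathrm{FP}_{n+1}$, which holds since they are $\mathrm{FP}_{n+2}$; and $Q$ is $\mathrm{FP}_{n+2}$. It gives that $G_1\times_Q G_2$ is $\mathrm{FP}_{n+1}$. Conversely, suppose $G_1\times_Q G_2$ is $\mathrm{FP}_{n+1}$, and apply Theorem \ref{thm:Fibre} with $n+2$ in place of $n$. Its hypotheses are: $B_1=G_1$ and $C_0=G_2$ are $\mathrm{FP}_{n+2}$; $B_0=G_1\times_Q G_2$ is $\mathrm{FP}_{n+1}$; and $A=N_1$ is $\mathrm{FP}_{n}$. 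It gives that $C_1=Q$ is $\mathrm{FP}_{n+2}$.

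For (2), apply the second theorem of this section, the one concluding that $C_0$ is $\mathrm{FP}_n$, with $n+2$ in place of $n$. It needs: $A=N_1$ is $\mathrm{FP}_n$; $B_0=G_1\times_QG_2$ is $\mathrm{FP}_{n+2}$; $B_1=G_1$ is $\mathrm{FP}_{n+1}$; and $C_1=Q$ is $\mathrm{FP}_{n+2}$. These are exactly the hypotheses of (2), and the conclusion is that $G_2=C_0$ is $\mathrm{FP}_{n+2}$.

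For (3), apply the third theorem, the analogue of \cite[Theorem A]{KocLim}, with $n+1$ in place of $n$. It needs: $A=N_1$ is $\mathrm{FP}_n$; $B_0=G_1\times_QG_2$ is $\mathrm{FP}_{n+1}$; and $C_1=Q$ is $\mathrm{FP}_{n+1}$. It gives that $B_1=G_1$ is $\mathrm{FP}_{n+1}$, and the hypothesis on $G_2$ is not used.

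The only real obstacle is bookkeeping. One must make sure the index shifts match each theorem's finiteness requirements; for example, Theorem \ref{thm:Fibre} at level $n+2$ needs $A$ to be $\mathrm{FP}_{n}$, which is precisely the hypothesis. One must also confirm that the theorems quantified over $k\leqslant n$ apply for every such $k$; since each requires only finiteness of finitely many $\tau$'s of lower degree, this follows by induction on $k$ using the recursive bounds. The case $n=\infty$ is handled by the convention that $n\pm1=\infty$.
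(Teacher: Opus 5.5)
Your proposal is correct and follows the paper's intended argument. You specialise the three theorems of Section~\ref{sec:FibreProducts} to the morphism of extensions with $A=N_1$, $B_0=G_1\times_Q G_2$, $C_0=G_2$, $B_1=G_1$, $C_1=Q$ and $K=R$, and you invoke Theorem~\ref{thm:n-n+1-n+2} at level $n+1$ for the forward direction of (1). Your index shifts are correct in all four applications, and you are right that the hypothesis on $G_2$ is not needed in (3).
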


\section{Nilpotent Groups\label{sec:Nilpotent}}

By common convention, we call a group ‘pronilpotent’ if it
is an inverse limit of \emph{finite }nilpotent groups.

\begin{example}
If $G$ is a finite cyclic group, then 
\[
\cbf nG{\mathbb{F}}=\begin{cases}
1 & \mathrm{char}\mathbb{F}\mid\left|G\right|\\
0 & \mathrm{char}\mathbb{F}\nmid\left|G\right|
\end{cases}
\]
 for every positive integer $n$ and every finite field $\mathbb{F}$.
\end{example}

\begin{proof}
Let $M$ be a simple $\mathbb{F}\llbracket G\rrbracket$-module. We
prove $\dim H^{n}(G,M)=0$ for $n\geqslant1$ and $\mathrm{char}\mathbb{F}\nmid\left|G\right|$
using Tate cohomology. Recall that these groups are defined by 
\[
H^{i}_{T}(G,M)=\begin{cases}
H^{i}(G,M) & i\geqslant1,\\
M^{G}/\mathrm{Nm}_{G}(M) & i=0,\\
\ker\mathrm{Nm}_{G}/I_{G}M & i=-1,\\
H_{-i-1}(G,M) & i\leqslant-2,
\end{cases}
\]
where $I_{G}$ is the \emph{augmentation ideal}, $I_{G}=\ker(\mathbb{Z}[G]\longrightarrow\mathbb{Z})$,
and $\mathrm{Nm}_{G}:M\to M$ is defined by $\mathrm{Nm}_{G}(m)=\sum_{g\in G}gm$.
Since $G$ is cyclic, we have $H^{i}_{T}(G,M)\cong H^{i+2}_{T}(G,M)$
for every $i\in\mathbb{Z}$. Moreover, since $M$ is finite, $\dim H^{i}(G,M)=\dim H^{i+1}(G,M)$
for every $i\in\mathbb{Z}$. Thus, it is enough to compute $\dim H^{0}_{T}(G,M)=0$.
If the action is nontrivial, then $H^{0}_{T}(G,M)$ is a quotient
of $M^{G}=\left\{ 0\right\} $ (since $M$ is simple), so $\dim H^{0}_{T}(G,M)/\dim M=0$.
If the action is trivial, then $M=\mathbb{F}$ (since $M$ is simple),
and $\mathrm{Nm}_{G}(m)=\left|G\right|\cdot m$. If $\mathrm{char}\mathbb{F}$
does not divide $\left|G\right|$, then $\mathrm{Nm}_{G}(M)=M$, so
$H^{0}_{T}(G,M)=\left\{ 0\right\} $ once again, and we deduce $\cbf nG{\mathbb{F}}=0$
for every $n\geqslant1$. If $\mathrm{char}\mathbb{F}$ does divide
$\left|G\right|$, then $\mathrm{Nm}_{G}(M)=\left\{ 0\right\} $,
so $H^{0}_{T}(G,M)=M$, and therefore $\dim H^{0}_{T}(G,M)/\dim M=1$,
and hence $\cbf nG{\mathbb{F}}=1$.
\end{proof}

\begin{lem}
Torsion-free procyclic groups are $\mathrm{FP}_{\infty}$.
\end{lem}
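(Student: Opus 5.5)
Let $G$ be a torsion-free procyclic group; then $G\cong\prod_{p\in S}\mathbb{Z}_{p}$ for some set of primes $S$. The plan is to show that $G$ is $\mathrm{FL}_{\infty}$ over any commutative profinite ring $R$ by writing down an explicit free resolution of length one. Since the remark after the definition of $\mathrm{FP}_{n}$ lets us tensor a resolution over $\hat{\mathbb{Z}}$ with $R$, it suffices to treat $R=\hat{\mathbb{Z}}$. Alternatively, one can work over arbitrary $R$ directly and bound $\tau_{k}$ via Theorem \ref{thm:numericalFP}.

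Fix a topological generator $g$ of $G$. The candidate resolution is
\[
0\to R\llbracket G\rrbracket\xrightarrow{\cdot(g-1)}R\llbracket G\rrbracket\to R\to0.
\]
Exactness at $R\llbracket G\rrbracket\to R$ requires that the augmentation ideal be generated by $g-1$. This follows from Lemma \ref{lem:Finitely-generated-is-fp1} and its proof, or directly: write $R\llbracket G\rrbracket=\li R[G/U]$ over open $U$. For each finite cyclic quotient $G/U$, the augmentation ideal of $R[G/U]$ is generated by $\bar g-1$, and an inverse limit of surjections of compact modules is surjective.

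The key step, and the main obstacle, is injectivity of multiplication by $g-1$; this is exactly where torsion-freeness enters. At each finite level $R[G/U]$, the kernel of $\cdot(\bar g-1)$ is $R\cdot\mathrm{Nm}_{G/U}$, which is nonzero. I would show that these kernels form an inverse system whose limit vanishes. The transition map $R[G/V]\to R[G/U]$ for $V\leqslant U$ sends $\mathrm{Nm}_{G/V}$ to $[U:V]\cdot\mathrm{Nm}_{G/U}$. Since $G$ is torsion-free, i.e.\ $G\cong\prod_{p\in S}\mathbb{Z}_{p}$ is infinite in every $p$-component, the indices $[U:V]$ can be made divisible by any integer. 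As $R$ is profinite, $\bigcap_{m}mR$ annihilates every finite quotient of $R$, so an element $(r_{U}\mathrm{Nm}_{G/U})_{U}$ of the limit must have $r_{U}\in\bigcap_{m}mR/\mathrm{Ann}$ at every level. More precisely, its image in each finite quotient $R/I$ is zero, so the limit is $0$. Since inverse limits are exact in profinite modules, the kernel of $\cdot(g-1)$ on $R\llbracket G\rrbracket$ is $\li\ker=0$.

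A cleaner alternative for this obstacle avoids the limit argument. For a simple module $M$ (finite, so $G$ acts through a finite cyclic quotient), compute $\mathrm{Tor}_{k}(R,M)=H_{k}(G,M)$ as a limit over $U$ of $H_{k}(G/U,M)$. Using the cyclic group computation from the Example above, the corestriction maps between levels are multiplication by $[U:V]$ in degree $2$, and these eventually kill everything. This gives $\tau_{k}(G)=0$ for $k\geqslant2$, while $\tau_{0},\tau_{1}\leqslant1$ by Lemma \ref{lem:Finitely-generated-is-fp1}. Theorem \ref{thm:numericalFP} then yields $\mathrm{FP}_{\infty}$. I would present whichever of the two arguments is cleaner, likely the explicit resolution, and conclude that $G$ is $\mathrm{FL}_{\infty}$ and hence $\mathrm{FP}_{\infty}$.
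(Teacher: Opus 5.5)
\textbf{The explicit resolution fails.} Your preferred route breaks at exactly the step you flagged: multiplication by $g-1$ on $R\llbracket G\rrbracket$ need not be injective. The claim that the indices $[U:V]$ ``can be made divisible by any integer'' holds only for $G\cong\hat{\mathbb{Z}}$.

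For $G\cong\prod_{p\in S}\mathbb{Z}_{p}$ the indices are $S$-numbers. For $p\notin S$ the $p$-component of $G$ is trivial, not infinite. Multiplication by an $S$-number is invertible on the $\ell$-primary part of $R$ for every prime $\ell\notin S$. Hence $\ker(\cdot(g-1))\cong\varprojlim(R,\cdot[U:V])$ is nonzero as soon as $R$ has a residue characteristic outside $S$.

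Concretely, take $G=\mathbb{Z}_{p}$ and $R=\mathbb{F}_{q}$ with $q\neq p$. The elements $e_{n}=p^{-n}\mathrm{Nm}_{G/p^{n}G}$ are compatible, because $\mathrm{Nm}_{G/p^{n+1}G}\mapsto p\,\mathrm{Nm}_{G/p^{n}G}$. So $e=(e_{n})_{n}\in\mathbb{F}_{q}\llbracket\mathbb{Z}_{p}\rrbracket$ has augmentation $1$ and satisfies $(g-1)e=0$. The same happens over $\hat{\mathbb{Z}}$, the very ring you reduced to: for $G=\mathbb{Z}_{p}$ the kernel is $\varprojlim(\hat{\mathbb{Z}},\cdot p)\cong\prod_{q\neq p}\mathbb{Z}_{q}$.

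So the proposed length-one free resolution is not exact in general. It can be salvaged by splitting $R$ into its $S$-primary and non-$S$-primary factors. On the latter the trivial module is projective, cut out by the analogous idempotent $(|G/U|^{-1}\mathrm{Nm}_{G/U})_{U}$. The result is then a projective resolution, not a free one.

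\textbf{Your second route works, with two repairs.} The transition maps in $H_{k}(G,M)=\varprojlim_{U}H_{k}(G/U,M)$ are those induced by the quotient maps $G/V\to G/U$, not corestriction. Comparing the periodic resolutions of the two cyclic groups, they act on $H_{k}$, for $k\geqslant2$, as multiplication by $[U:V]^{\lfloor k/2\rfloor}$. This is what covers all $k\geqslant2$, not just $k=2$.

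You must also split according to the characteristic $\ell$ of $M$:
\begin{itemize}
\item If $\ell\in S$, choosing $V\leqslant U$ with $\ell\mid[U:V]$ makes the transition map zero, so the limit vanishes.
\item If $\ell\notin S$, the indices kill nothing. But then every $|G/U|$ is prime to $\ell$, so $H_{k}(G/U,M)=0$ for $k\geqslant1$ already at each finite level (cf.\ the Example on finite cyclic groups).
\end{itemize}
Together with $\tau_{0},\tau_{1}\leqslant1$, Theorem \ref{thm:numericalFP} then gives $\mathrm{FP}_{\infty}$. It also gives $\mathrm{FL}_{\infty}$, but via that theorem, not via an explicit length-one free resolution.

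\textbf{Comparison with the paper.} The paper gets $\tau_{n}(G)=0$ for $n\geqslant2$ in one line: it embeds $G$ in $\hat{\mathbb{Z}}$ and uses $\mathrm{cd}_{p}(G)\leqslant\mathrm{cd}_{p}(\hat{\mathbb{Z}})=1$ for every $p$. Your limit computation is in effect a self-contained proof of that cohomological-dimension bound, at the cost of the case analysis above.
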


\begin{proof}
If $G$ is a torsion-free procyclic group, then $G$ can be embedded
in $\hat{\mathbb{Z}}$ (since its $p$-Sylow subgroups are all either
$\mathbb{Z}_{p}$ or zero), and hence $\mathrm{cd}_{p}(G)\leqslant1$
for every prime $p$ (since $\mathrm{cd}_{p}(\hat{\mathbb{Z}})=1$
for every prime $p$, and the cohomological dimension of a subgroup
cannot be bigger than that of the ambient group; see \cites[7.3.1]{RB10}).
Since $G$ is finitely generated, $\cb 1G\leqslant\mathrm{d}(G)<\infty$.
Since $H^{n}(G,M)=0$ for every $n\geqslant2$ and $G$-module $M$,
$\cb nG=0$ for every $n\geqslant2$. Thus, by Theorem \ref{thm:numericalFP},
$G$ is $\mathrm{FP}_{\infty}$.
\end{proof}

\begin{lem}
\label{lem:Polyprocyclic-groups-are-FP8}Polyprocyclic groups are
$\mathrm{FP}_{\infty}$.
\end{lem}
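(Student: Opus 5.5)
We argue by induction on the length of a polyprocyclic series. By definition, a polyprocyclic group $G$ has a finite chain of closed subgroups
\[
1=G_{0}\trianglelefteqslant G_{1}\trianglelefteqslant\cdots\trianglelefteqslant G_{m}=G
\]
in which each $G_{i}$ is normal in $G_{i+1}$ and each factor $G_{i+1}/G_{i}$ is procyclic. Applying the extension lemma (Lemma \ref{lem:FPExt}, with $K_{1}=K_{2}=R$) to $G_{i}\trianglelefteqslant G_{i+1}$, it suffices to show that every procyclic group is $\mathrm{FP}_{\infty}$. The induction then shows that each $G_{i}$ is $\mathrm{FP}_{\infty}$, and in particular $G$ is.

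The key step is therefore to treat a general procyclic group $C$, which may have torsion. Write $C\cong\prod_{p}C_{p}$ as the product of its Sylow subgroups. Each $C_{p}$ is either $\mathbb{Z}_{p}$ or a finite cyclic $p$-group. Let $S$ be the set of primes for which $C_{p}$ is finite, and put $T=\prod_{p\in S}C_{p}$ and $F=\prod_{p\notin S}C_{p}$. Then $C\cong T\times F$. The factor $F$ embeds in $\hat{\mathbb{Z}}$ and is torsion-free procyclic, so it is $\mathrm{FP}_{\infty}$ by the preceding lemma. By Lemma \ref{lem:FPExt} again, it remains to show that $T$ is $\mathrm{FP}_{\infty}$.

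The torsion part $T$ is the main obstacle, since $S$ may be infinite and $T$ then need not be finite, so Lemma \ref{lem:FPforFinInd} does not apply directly. I would bound $\cb nT$ uniformly and then invoke Theorem \ref{thm:numericalFP}. Write $T=\li_{J}T_{J}$, where $J$ ranges over the finite subsets of $S$ and $T_{J}=\prod_{p\in J}C_{p}$ is finite cyclic. By Lemma \ref{lem:NFPnForInvLim} (with $K=R$, whose coinvariants are $R$), we have $\cb nT\leqslant\sup_{J}\cb n{T_{J}}$, so it suffices to bound $\cb n{T_{J}}$ independently of $J$.

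For a finite cyclic group, the periodic resolution of $\mathbb{Z}$ has one free module in each degree:
\[
\cdots\xrightarrow{N}\mathbb{Z}[T_{J}]\xrightarrow{g-1}\mathbb{Z}[T_{J}]\xrightarrow{N}\mathbb{Z}[T_{J}]\xrightarrow{g-1}\mathbb{Z}[T_{J}]\to\mathbb{Z}\to0.
\]
Tensoring with $R$ gives a resolution by free $R\llbracket T_{J}\rrbracket$-modules of rank $1$. By Theorem \ref{thm:numericalFP}(\ref{enu:Standard}), this gives $\cb n{T_{J}}\leqslant1$ for every $n$ and every $J$, consistent with the Example for cyclic groups. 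Hence $\cb nT\leqslant1$ for all $n$, so $T$ is $\mathrm{FP}_{\infty}$. Consequently every procyclic group is $\mathrm{FP}_{\infty}$, and the induction completes the proof.
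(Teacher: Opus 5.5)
Your proof is correct, but it treats the procyclic case by a different route from the paper.

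The paper makes the same reduction to procyclic groups via Lemma \ref{lem:FPExt}. It then writes an arbitrary procyclic group as $\hat{\mathbb{Z}}/N$, where $N$ is a closed subgroup of $\hat{\mathbb{Z}}$ and hence torsion-free procyclic. Both $\hat{\mathbb{Z}}$ and $N$ are $\mathrm{FP}_{\infty}$ by the preceding lemma. The paper then applies the quotient result, Lemma \ref{lem:FPforQuotients}: if $N$ is $\mathrm{FP}_{n-1}$ and $G$ is $\mathrm{FP}_{n}$, then $G/N$ is $\mathrm{FP}_{n}$.

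You instead split $C\cong T\times F$ into torsion-free and torsion parts. You handle $T$ by passing to finite cyclic quotients with Lemma \ref{lem:NFPnForInvLim} and using the periodic resolution. Tensoring that resolution with $R$ stays exact because it is split exact over $\mathbb{Z}$; it is worth saying so.

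The paper's route is shorter and never looks at finite quotients, but it relies on the spectral-sequence argument behind Lemma \ref{lem:FPforQuotients}. Yours avoids that lemma and gives the explicit uniform bound $\cb nT\leqslant1$ at once.

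In fact your inverse-limit argument needs no splitting at all. Every procyclic group $C$ is the inverse limit of its finite cyclic quotients $C/U$, with $U$ open. Each of these satisfies $\cb n{C/U}\leqslant1$ by the periodic resolution, so Lemma \ref{lem:NFPnForInvLim} gives $\cb nC\leqslant1$ for all $n$. The decomposition $C\cong T\times F$ and the appeal to the torsion-free lemma are therefore superfluous.
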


\begin{proof}
By Lemma \ref{lem:FPExt}, it is enough to prove procyclic groups
are $\mathrm{FP}_{\infty}$. Every procyclic group is a quotient of
$\hat{\mathbb{Z}}$ by a torsion-free procyclic subgroup (\cites[Theorem 2.7.2]{RB10}),
hence $\mathrm{FP}_{\infty}$ by the last lemma and Lemma \ref{lem:FPforQuotients}.\footnote{This is the argument given in the proof of \cite[Proposition 4.2]{Coo16}.}
\end{proof}

\begin{lem}
\label{lem:commutator-still-dense}Let $G$ be a topological group,
$H_{1},H_{2}\leqslant G$ subgroups. Suppose $\Lambda_{i}\leqslant H_{i}$
is a dense subgroup for $i=1,2$. Then $\left[\Lambda_{1},\Lambda_{2}\right]$
is dense in $\left[H_{1},H_{2}\right]$.
\end{lem}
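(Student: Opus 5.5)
We need to show $\overline{[\Lambda_1,\Lambda_2]}=\overline{[H_1,H_2]}$, where $[H_1,H_2]$ means the subgroup generated by commutators. Density in $[H_1,H_2]$ amounts to two containments. First, $[\Lambda_1,\Lambda_2]\subseteq[H_1,H_2]$, which is immediate since $\Lambda_i\subseteq H_i$. Second, every element of $[H_1,H_2]$ lies in the closure $\overline{[\Lambda_1,\Lambda_2]}$ taken in $G$. (If $[H_1,H_2]$ denotes the closed subgroup generated by commutators, as is usual for profinite groups, then the same argument applies with closures taken throughout.)

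For the second containment I use continuity of the commutator map $c\colon G\times G\to G$, $(x,y)\mapsto x^{-1}y^{-1}xy$, which holds because multiplication and inversion are continuous. Since $\Lambda_1\times\Lambda_2$ is dense in $H_1\times H_2$ in the product topology, continuity gives
\[
c(H_1\times H_2)\subseteq\overline{c(\Lambda_1\times\Lambda_2)}\subseteq\overline{[\Lambda_1,\Lambda_2]}.
\]
The closure of a subgroup of a topological group is again a subgroup. Hence $\overline{[\Lambda_1,\Lambda_2]}$ is a subgroup containing every commutator $[h_1,h_2]$ with $h_i\in H_i$, and so it contains the subgroup they generate, namely $[H_1,H_2]$.

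Combining the two containments gives $[\Lambda_1,\Lambda_2]\subseteq[H_1,H_2]\subseteq\overline{[\Lambda_1,\Lambda_2]}$. This is exactly the statement that $[\Lambda_1,\Lambda_2]$ is dense in $[H_1,H_2]$, and taking closures gives equality of the closed commutator subgroups. The one step needing care is the fact that the closure of a subgroup is a subgroup. This follows from continuity of $(x,y)\mapsto xy^{-1}$: the preimage of $\overline{S}$ under this map is closed and contains $S\times S$, so it contains $\overline{S}\times\overline{S}$. No separation axiom is required.
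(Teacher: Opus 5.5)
Your proof is correct and rests on the same idea as the paper's, namely continuity of the group operations together with density of $\Lambda_i$ in $H_i$. The only difference is packaging: the paper approximates an entire product $[x_1,y_1]\cdots[x_n,y_n]$ at once by a neighbourhood argument, whereas you approximate single commutators and then use that $\overline{[\Lambda_1,\Lambda_2]}$ is a subgroup, which also handles inverses of commutators automatically (the paper's write-up passes over these, harmlessly).
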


\begin{proof}
The argument in \cite[Lemma 4.1]{CohVig} works in this generality.
In more detail: suppose $x\in\left[H_{1},H_{2}\right]$ and $V\ni x$
is an open neighbourhood. By definition, there are $x_{1},\dots,x_{n}\in H_{1}$
and $y_{1},\dots,y_{n}\in H_{2}$ such that $x=[x_{1},y_{1}]\cdots[x_{n},y_{n}]$,
so (by the continuity of the group operations) there are open subsets
$U_{i}\ni x_{i}$ and $V_{i}\ni y_{i}$ (for $i=1,\dots,n$) such
that $[U_{1},V_{1}]\cdots[U_{n},V_{n}]\subseteq V$. By assumption,
there are $u_{i}\in U_{i}\cap\Lambda_{1}$ and $v_{i}\in V_{i}\cap\Lambda_{2}$
(for $i=1,\dots,n$), so $[u_{1},v_{1}]\cdots[u_{n},v_{n}]\in[\Lambda_{1},\Lambda_{2}]\cap V$.
\end{proof}

\begin{thm}
\label{thm:FGNilpotentFP}Let $G$ be a finitely generated profinite
virtually nilpotent group. Then $G$ is $\mathrm{FP}_{\infty}$.
\end{thm}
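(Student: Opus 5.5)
First I would reduce to the nilpotent case. Let $H\leqslant G$ be an open nilpotent subgroup; it is closed, so it is profinite and nilpotent, and it is finitely generated because open subgroups of finitely generated profinite groups are finitely generated (Schreier). By Lemma \ref{lem:FPforFinInd}, $G$ is $\mathrm{FP}_{\infty}$ if and only if $H$ is. So it suffices to treat a finitely generated nilpotent profinite group $G$, and I will show such a group is polyprocyclic. Lemma \ref{lem:Polyprocyclic-groups-are-FP8} then finishes the proof. Equivalently, I could argue directly by induction along a central series using Lemma \ref{lem:FPExt}, with finitely generated abelian profinite groups as the base case. Those are finite products of procyclic groups, so they are polyprocyclic and hence $\mathrm{FP}_{\infty}$.

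To get the polyprocyclic structure, take the lower central series $G=\gamma_{1}\geqslant\gamma_{2}\geqslant\cdots\geqslant\gamma_{c+1}=1$, where $\gamma_{i+1}=\overline{[\gamma_{i},G]}$. Choose a finitely generated dense abstract subgroup $\Lambda\leqslant G$. It is nilpotent, being a subgroup of a nilpotent group. Lemma \ref{lem:commutator-still-dense}, applied inductively, shows that $\gamma_{i}(\Lambda)$ is dense in $\gamma_{i}(G)$. Since $\Lambda$ is a finitely generated nilpotent discrete group, each $\gamma_{i}(\Lambda)$ is finitely generated. Therefore each $\gamma_{i}(G)$ is topologically finitely generated, and so each factor $\gamma_{i}/\gamma_{i+1}$ is a finitely generated abelian profinite group. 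Such a group is a quotient of $\hat{\mathbb{Z}}^{d}$, and it has a finite chain of closed subgroups whose successive quotients are procyclic: take the images of the coordinate subgroups. Refining the lower central series by these chains makes $G$ polyprocyclic.

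The main obstacle is the finite generation of the terms $\gamma_{i}(G)$, or more precisely making sure the closures behave. Lemma \ref{lem:commutator-still-dense} was clearly placed just before the theorem for this purpose. The key points are that the closure of $\gamma_{i}(\Lambda)$ contains the abstract commutator subgroup $[\gamma_{i}(G),G]$, and hence its closure $\gamma_{i+1}(G)$, and conversely. The lemma gives exactly this, with $H_{1}=\gamma_{i}(G)$, $\Lambda_{1}=\gamma_{i}(\Lambda)$, $H_{2}=G$ and $\Lambda_{2}=\Lambda$.

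The remaining facts are routine. Quotients of procyclic groups are procyclic. A finitely generated closed subgroup of a finitely generated abelian profinite group sits correctly in the refined chain. In the extension step, the relevant module is the trivial module $R$, which is flat over $R$, so Lemma \ref{lem:FPExt} applies.
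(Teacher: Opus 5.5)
Your proof is correct and follows the paper's argument essentially step for step. You pass to an open finitely generated nilpotent subgroup via Lemma~\ref{lem:FPforFinInd}, use a dense finitely generated abstract subgroup together with Lemma~\ref{lem:commutator-still-dense} to make the lower central terms topologically finitely generated, deduce polyprocyclicity, and finish with Lemma~\ref{lem:Polyprocyclic-groups-are-FP8}; the paper just handles the nilpotent case before the virtual reduction. One small indexing slip in your third paragraph: you want the closure of $\gamma_{i+1}(\Lambda)$, not $\gamma_i(\Lambda)$, to contain $[\gamma_i(G),G]$. Your explicit application of the lemma right after is stated correctly.
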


\begin{proof}
We first show that, if $G$ is actually nilpotent (and not just virtually
nilpotent), then it is polyprocyclic, and hence $\mathrm{FP}_{\infty}$.
Let $G$ be a finitely generated profinite nilpotent group, and let
$\Gamma<G$ be a dense subgroup that is finitely generated as an abstract
group. It is well known that $\Gamma_{n}=[\Gamma,\Gamma_{n-1}]$ is
finitely generated as an abstract group for every $n$. By Lemma \ref{lem:commutator-still-dense},
$\Gamma_{n}$ is dense in $G_{n}=\overline{[G,G_{n-1}]}$, so $G_{n}$
is finitely generated as a profinite group. Therefore, $G_{i}/G_{i+1}$
is finitely generated abelian, hence polyprocyclic (\cite[Proposition 8.2.1]{Wil98}).
Thus $G$ itself is polyprocyclic, since we just showed it admits
a subnormal series with polyprocyclic factors. Therefore, $G$ is
$\mathrm{FP}_{\infty}$ by Lemma \ref{lem:Polyprocyclic-groups-are-FP8}.

Now, if $G$ is a finitely generated profinite virtually nilpotent
group, then it admits an open subgroup $H<G$ that is finitely generated
and nilpotent. By the above, $H$ is $\mathrm{FP}_{\infty}$, so,
by Lemma \ref{lem:FPforFinInd}, $G$ is also $\mathrm{FP}_{\infty}$.
\end{proof}

\begin{lem}
\label{lem:FP1forNilpotent}If $G$ is a pronilpotent group, then
it is $\mathrm{FP}_{1}$ if and only if it is finitely generated.
\end{lem}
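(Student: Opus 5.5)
The forward direction is the substantial one; the converse is already done. If $G$ is finitely generated, then $G$ is $\mathrm{FP}_{1}$ by Lemma \ref{lem:Finitely-generated-is-fp1}. So assume $G$ is pronilpotent and $\mathrm{FP}_{1}$ over $R$. Since $G$ is pronilpotent, it is the direct product $\prod_{p}G_{p}$ of its Sylow pro-$p$ subgroups. The plan is to bound $\d(G)$ by a constant depending only on $\cb 1G$. To avoid worrying about which characteristics $R$ sees, I would first prove the result with $R$ replaced by a suitable residue field, and reduce to that case as follows.

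For each prime $p$, take a simple $R\llbracket G\rrbracket$-module of the form $\mathbb{F}$, the trivial module with $\mathbb{F}$ a finite residue field of $R$ of characteristic $p$. There is a subtlety here: if $R$ has no residue field of characteristic $p$, we get no information about $G_{p}$. I would handle this by reading ``$\mathrm{FP}_{1}$'' as over $\hat{\mathbb{Z}}$, or equivalently restricting to $R$ having all residue characteristics. Otherwise I would note that the statement is really about $R=\hat{\mathbb{Z}}$, where every $\mathbb{F}_{p}$ occurs. I expect this bookkeeping to be the main conceptual obstacle.

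Now fix such an $\mathbb{F}$ of characteristic $p$. By the Remark after Lemma \ref{lem:flat-modules} (or directly, since $\mathbb{F}$ is a simple trivial module),
\[
\dimf H^{1}(G,\mathbb{F})=\dimf H_{1}(G,\mathbb{F})\leqslant\cb 1G.
\]
Moreover $H^{1}(G,\mathbb{F})=\Hom(G,\mathbb{F})$. Every continuous homomorphism $G\to\mathbb{F}$ factors through $G_{p}$, since the other Sylow factors have orders prime to $p$. Hence $\Hom(G_{p},\mathbb{F})=\Hom(G_{p},\mathbb{F}_{p})\otimes\mathbb{F}$, which gives
\[
\dim_{\mathbb{F}_{p}}H^{1}(G_{p},\mathbb{F}_{p})\leqslant\cb 1G.
\]
For a pro-$p$ group, $\d(G_{p})=\dim_{\mathbb{F}_{p}}H^{1}(G_{p},\mathbb{F}_{p})$, via the Frattini quotient and the Burnside basis theorem. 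Therefore $\d(G_{p})\leqslant d\coloneqq\lfloor\cb 1G\rfloor$ for all $p$, with $d$ independent of $p$.

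Finally, a direct product $\prod_{p}G_{p}$ of pro-$p$ groups over distinct primes, each $d$-generated, is $d$-generated. To see this, choose generators $x_{1}^{(p)},\dots,x_{d}^{(p)}$ of $G_{p}$ and set $x_{i}=(x_{i}^{(p)})_{p}$. The closed subgroup $H$ they generate is pronilpotent, so it is the product of its Sylow subgroups. Its image under the projection to $G_{p}$ is all of $G_{p}$, and since $H_{p}$ is the image of $H$ under that projection, $H_{p}=G_{p}$. Hence $H=G$, so $G$ is finitely generated with $\d(G)\leqslant\lfloor\cb 1G\rfloor$.
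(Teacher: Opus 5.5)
Your proof is correct and follows essentially the paper's route: you bound $\mathrm{d}(G_p)$ uniformly in $p$ by $\tau_1$ using trivial $\mathbb{F}_p$-coefficients and the Burnside basis theorem, then assemble generators across the Sylow factors. The only differences are local: you compute $\mathrm{Hom}(G,\mathbb{F})=\mathrm{Hom}(G_p,\mathbb{F})$ directly where the paper invokes Lemma \ref{lem:FP1Quotients}, and you use the Sylow decomposition of $H$ where the paper cites Goursat's lemma. Your caveat about the base ring is also well taken: the paper's proof implicitly works over $R=\hat{\mathbb{Z}}$, and over $R=\mathbb{F}_2$ the statement fails for $G=(\mathbb{Z}/3)^{\mathbb{N}}$, which is $\mathrm{FP}_\infty$ there but not finitely generated.
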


\begin{proof}
If $G$ is a finitely generated profinite group, then it is $\mathrm{FP}_{1}$
by \cite[Proposition 3.4]{Coo16}.

Now, let $G$ be a pronilpotent group of type $\mathrm{FP}_{1}$,
and denote by $G_{p}$ the (unique) $p$-Sylow subgroup of $G$, so
that $G=\prod G_{p}$ (\cites[Proposition 2.3.8]{RB10}). By Lemma
\ref{lem:FP1Quotients} and \cites[Theorem 7.8.1]{RB10}, we have
\[
\d(G_{p})=\cbf 1{G_{p}}{\mathbb{F}_{p}}\leqslant\cbf 1G{\mathbb{F}_{p}}\leqslant\cbf 1G{\hat{\mathbb{Z}}}.
\]
Now, let $S\subseteq G$ be some finite subset that projects to a
generating set of $G_{p}$ for every $p$. Set $H=\overline{\left\langle S\right\rangle }$,
so that $\pi_{p}(H)=G_{p}$ for every $p$ (where $\pi_{p}:G\to G_{p}$
is the projection). Thus $H$ is a subdirect product of $G=\prod_{p}G_{p}$,
and hence $H=G$ by Goursat's Lemma (since $G_{p_{0}}$ and $\prod_{q\neq p_{0}}G_{q}$
have no nontrivial isomorphic quotients for any $p_{0}$).
\end{proof}

\begin{thm}
Let $G$ be a profinite group, and, for every prime $p$, let $G_{p}$
be a $p$-Sylow subgroup. Then $\cbf nG{\hat{\mathbb{Z}}}\leqslant\sup_{p}\cbf n{G_{p}}{\hat{\mathbb{Z}}}$.
\end{thm}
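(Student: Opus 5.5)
The plan is to prove the bound one simple module at a time, using restriction to a Sylow subgroup together with a transfer argument. Since $R=\hat{\mathbb{Z}}$, the trivial module is flat over $R$, so by Lemma \ref{lem:flat-modules}
\[
\cbf nG{\hat{\mathbb{Z}}}=\sup_{M}\frac{\dim_{\mathrm{End}_{G}(M)}H_{n}(G,M)}{\dim_{\mathrm{End}_{G}(M)}M},
\]
where $M$ runs over simple $\hat{\mathbb{Z}}\llbracket G\rrbracket$-modules. Fix such an $M$. It is finite and simple, so $pM=0$ for a unique prime $p$, and $\mathbb{F}\coloneqq\mathrm{End}_{G}(M)$ has characteristic $p$. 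Let $G_{p}$ be a $p$-Sylow subgroup of $G$. The aim is to show that the restriction map
\[
\mathrm{res}:H_{n}(G,M)\to H_{n}(G_{p},M)
\]
(or, dually, corestriction in the other direction) makes $H_{n}(G,M)$ a direct summand of $H_{n}(G_{p},M)$. With coefficients annihilated by $p$, this is the standard profinite statement that corestriction composed with restriction is multiplication by the index $[G:G_{p}]$, a supernatural number prime to $p$.

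To make the index argument rigorous I would reduce to finite quotients. Choose open normal subgroups $U\trianglelefteqslant G$ acting trivially on $M$. Homology commutes with inverse limits (\cite[Proposition 6.5.7]{RB10}, as used in Section \ref{subsec:Good}), so $H_{n}(G,M)=\li_{U}H_{n}(G/U,M)$. Likewise $H_{n}(G_{p},M)=\li_{U}H_{n}(G_{p}U/U,M)$, and $G_{p}U/U$ is a $p$-Sylow subgroup of the finite group $G/U$.

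For finite groups, $\mathrm{cor}\circ\mathrm{res}=[G/U:G_{p}U/U]$ is invertible on $\mathbb{F}_{p}$-modules. Hence restriction is split injective, and the splittings are compatible with the transition maps because $\mathrm{res}$ and $\mathrm{cor}$ are natural. Alternatively, I would simply use injectivity of $H^{n}(G,M^{*})\to H^{n}(G_{p},M^{*})$, which holds for profinite groups directly by \cite[Corollary 7.3.?]{RB10}-type results, and then dualise. Either way this gives
\[
\dim_{\mathbb{F}}H_{n}(G,M)\leqslant\dim_{\mathbb{F}}H_{n}(G_{p},M),
\]
with the linear structure over $\mathbb{F}$ preserved, since $\mathbb{F}$ acts through $G$-endomorphisms.

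It remains to bound $\dim_{\mathbb{F}}H_{n}(G_{p},M)$ by $\cbf n{G_{p}}{\hat{\mathbb{Z}}}\dim_{\mathbb{F}}M$. I expect this to be the main obstacle. The difficulty is that $\mathrm{Res}_{G_{p}}M$ need not be simple, and $\mathbb{F}$ need not be the endomorphism field over $G_{p}$. The fix is to take a composition series of $\mathrm{Res}_{G_{p}}M$ as a $G_{p}$-module. Since $G_{p}$ is pro-$p$ and $pM=0$, the only simple $\hat{\mathbb{Z}}\llbracket G_{p}\rrbracket$-module annihilated by $p$ is $\mathbb{F}_{p}$. So $M$ has a filtration with $\dim_{\mathbb{F}_{p}}M$ trivial factors, and the long exact sequence gives
\[
\dim_{\mathbb{F}_{p}}H_{n}(G_{p},M)\leqslant\dim_{\mathbb{F}_{p}}M\cdot\dim_{\mathbb{F}_{p}}H_{n}(G_{p},\mathbb{F}_{p})\leqslant\cbf n{G_{p}}{\hat{\mathbb{Z}}}\dim_{\mathbb{F}_{p}}M.
\]
This is the same "induction on length" remark made after the definition of $\tau$. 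Dividing by $[\mathbb{F}:\mathbb{F}_{p}]$ converts the $\mathbb{F}_{p}$-dimensions to $\mathbb{F}$-dimensions, yielding $\dim_{\mathbb{F}}H_{n}(G,M)/\dim_{\mathbb{F}}M\leqslant\cbf n{G_{p}}{\hat{\mathbb{Z}}}$. Taking the supremum over $M$, and hence over $p$, completes the proof.
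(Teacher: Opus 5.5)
Your proposal is correct and follows the paper's route. The paper reduces to simple $\mathbb{F}_p\llbracket G\rrbracket$-modules and cites injectivity of restriction $H^n(G,M)\to H^n(G_p,M)$, which is your ``alternative''. It leaves implicit the bound $\dim H_n(G_p,M)\leqslant \tau_{n}(G_{p};\hat{\mathbb{Z}})\dim M$ for the non-simple restricted module, and your composition-series argument supplies exactly that step.

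One small correction to your finite-quotient version: the splittings are not compatible with the transition maps merely ``because res and cor are natural''. The unnormalised transfers $H_n(G/U,M)\to H_n(G_pU/U,M)$ commute with those maps only up to the prime-to-$p$ factor $[G_pU:G_pU']$. You can fix this in either of two ways:
\begin{enumerate}
\item normalise the transfers by $[G:G_pU]^{-1}$, which makes them compatible;
\item more simply, use only the corestrictions, which are genuinely natural and surjective at each finite level, together with the fact that an inverse limit of surjections of finite modules is surjective.
\end{enumerate}
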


\begin{proof}
Since $\cbf nG{\hat{\mathbb{Z}}}=\sup_{p}\cbf nG{\mathbb{F}_{p}}$,
we need to show $\cbf nG{\mathbb{F}_{p}}\leqslant\cbf n{G_{p}}{\mathbb{F}_{p}}$
for every $p$. This follows from the fact the restriction of cohomology
$H^{n}(G,M)\longrightarrow H^{n}(G_{p},M)$ is injective for every
$\mathbb{F}_{p}\llbracket G\rrbracket$-module $M$.
\end{proof}

\section{Irreducible Representations of Direct Products\label{sec:DirectProducts}}

\begin{prop}
\label{prop:MaschkeSurrogate}Let $H$ be a group, let $k$ be a field,
let $U$ be a finite-dimensional representation of $H$ over $k$.
Let $I$ be an index set, and for each $i\in I$ let $W_{i}$ be an
irreducible $H$-subrepresentation of $U$ over $k$. Then there exists
a finite subset $J\subseteq I$ such that 
\[
\sum_{i\in I}W_{i}=\bigoplus_{j\in J}W_{j}
\]
as $H$-subrepresentations of $U$ over $k$ (with the direct sum
being internal).
\end{prop}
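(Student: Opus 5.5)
We choose $J$ greedily and use finite-dimensionality of $U$ to force termination. Put $S=\sum_{i\in I}W_i$, a subrepresentation of $U$ and hence finite-dimensional over $k$. Among all subsets $J\subseteq I$ for which the sum $\sum_{j\in J}W_j$ is direct, take one that is maximal for inclusion. Such a $J$ exists and is finite. Indeed, if the sum over $J$ is direct, then $\dim_k\bigoplus_{j\in J}W_j=\sum_{j\in J}\dim_k W_j\leqslant\dim_k U$. Each irreducible $W_j$ is nonzero, so $|J|\leqslant\dim_k U$. Start from $J=\emptyset$ and add indices one at a time while the sum stays direct; this stops after at most $\dim_k U$ steps. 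No appeal to Zorn's lemma is needed.

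Set $V=\bigoplus_{j\in J}W_j$, an $H$-subrepresentation of $S$. We claim $W_i\subseteq V$ for every $i\in I$. Since $V\cap W_i$ is an $H$-subrepresentation of the irreducible $W_i$, it equals either $0$ or $W_i$. Suppose $V\cap W_i=0$ for some $i$. Then $i\notin J$, because $W_i\neq0$. Moreover, the sum $V+W_i=\bigoplus_{j\in J\cup\{i\}}W_j$ is direct, which contradicts the maximality of $J$. Hence $W_i\subseteq V$ for all $i$.

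It follows that $S\subseteq V$. Since $V\subseteq S$ trivially, we get $S=V=\bigoplus_{j\in J}W_j$, an internal direct sum of $H$-subrepresentations.

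The only delicate point is the finiteness of $J$, and this is exactly where we use that $U$ is finite-dimensional and that irreducible representations are nonzero. Irreducibility also enters in the dichotomy $V\cap W_i\in\{0,W_i\}$. Note that the argument requires no semisimplicity of $U$ and no assumption on $\mathrm{char}\,k$, which is why it can serve as a surrogate for Maschke's theorem.
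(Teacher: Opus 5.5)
Your proof is correct and is essentially the paper's argument. The paper first uses finite-dimensionality to replace $I$ by a finite subfamily with the same sum, then adds the $W_i$ one at a time, keeping $W_n$ exactly when it is not already contained in the current direct sum (via the same dichotomy $W_n\cap V\in\{0,W_n\}$ from irreducibility). Your maximal direct subfamily, with the bound $|J|\leqslant\dim_k U$ guaranteeing finiteness, is just a repackaging of that greedy induction.
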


\begin{proof}
First, observe there is some finite subset $I'\subseteq I$ such that
$\sum_{i\in I'}W_{i}=\sum_{i\in I}W_{i}$, since $\sum_{i\in I}W_{i}$
is finite dimensional. Therefore, we may assume $I=\{1,\dots,n\}$
and prove by induction on $n$ that there is some $m\leqslant n$
and some $1\leqslant i_{1}\leqslant\cdots\leqslant i_{m}\leqslant n$
such that $\sum^{n}_{i=1}W_{i}=\bigoplus^{m}_{j=1}W_{i_{j}}$. In
the base case $n=1$, we take $m=1$ and $i_{1}=1$.

Suppose now that $n>1$, so by the induction hypothesis there exists
a nonnegative integer $\mu\leq n-1$ and integers $1\leq i_{1}<\dots<i_{\mu}\leq n-1$
such that 
\[
\sum^{n-1}_{i=1}W_{i}=\bigoplus^{\mu}_{j=1}W_{i_{j}}.
\]
If $W_{n}$ is contained in the sum above then taking $m=\mu$ we
are done, so we assume the contrary. Our assumption that $W_{n}$
is irreducible then implies that 
\[
W_{n}\cap\left(\bigoplus^{\mu}_{j=1}W_{i_{j}}\right)=\{0\}
\]
so 
\[
\sum^{n}_{i=1}W_{i}=\sum^{n-1}_{i=1}W_{i}+W_{n}=\left(\bigoplus^{\mu}_{j=1}W_{i_{j}}\right)+W_{n}=\left(\bigoplus^{\mu}_{j=1}W_{i_{j}}\right)\oplus W_{n}
\]
and we can take $m=\mu+1$ and $i_{m}=n$.
\end{proof}

\begin{thm}
\label{thm:DirectProduct}Let $G$ and $H$ be groups, let $\mathbb{F}$
be a finite field, and let $U$ be a finite-dimensional irreducible
representation of $G\times H$ over $\mathbb{F}$. Then there exists
a finite field extension $\mathbb{F}_{0}$ of $\mathbb{F}$, a representation
$W$ of $G$ over $\mathbb{F}_{0}$ and a representation $V$ of $H$
over $\mathbb{F}_{0}$ such that $U\cong W\otimes_{\mathbb{F}_{0}}V$
as representations of $G\times H$ over $\mathbb{F}$.
\end{thm}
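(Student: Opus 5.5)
We will prove this by passing to the endomorphism field of $U$ and using Proposition \ref{prop:MaschkeSurrogate} to find a subspace on which $H$ acts irreducibly. Set $D=\mathrm{End}_{G\times H}(U)$. By Schur's lemma $D$ is a finite-dimensional division algebra over $\mathbb{F}$, so by Wedderburn's little theorem it is a finite field. Take $\mathbb{F}_{0}=D$. Then $U$ is an $\mathbb{F}_{0}$-vector space, and the actions of $G$ and $H$ are $\mathbb{F}_{0}$-linear. Moreover $U$ is absolutely irreducible over $\mathbb{F}_{0}$: it is irreducible over $\mathbb{F}_{0}$ and $\mathrm{End}_{\mathbb{F}_{0}[G\times H]}(U)=\mathbb{F}_{0}$. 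From now on we work over $\mathbb{F}_{0}$. It then suffices to find $\mathbb{F}_{0}$-representations $W$ of $G$ and $V$ of $H$ with $U\cong W\otimes_{\mathbb{F}_{0}}V$; such an isomorphism is in particular an isomorphism of $\mathbb{F}$-representations.

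Next, choose an irreducible $\mathbb{F}_{0}[H]$-subrepresentation $V\subseteq U$. For each $g\in G$, the subspace $gV$ is again $H$-stable and irreducible, since $G$ and $H$ commute. By Proposition \ref{prop:MaschkeSurrogate}, $\sum_{g\in G}gV=\bigoplus_{j\in J}g_{j}V$ for some finite $J$. This sum is $G\times H$-stable and nonzero, so by irreducibility it equals $U$. Hence $U|_{H}\cong V^{m}$ is $V$-isotypic, with $m=|J|$.

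Set $E=\mathrm{End}_{H}(V)$, a finite field containing $\mathbb{F}_{0}$, and $W=\mathrm{Hom}_{H}(V,U)$. Then $W$ is a $G$-representation via post-composition and a right $E$-vector space. Evaluation gives a natural isomorphism $W\otimes_{E}V\to U$ of $G\times H$-representations, because $U|_{H}\cong V^{m}$ is isotypic.

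The main obstacle is the step from $\otimes_{E}$ to $\otimes_{\mathbb{F}_{0}}$, where the tensor product is needed; this requires $E=\mathbb{F}_{0}$. We argue as follows. $E$ acts on $W$ commuting with $G$, hence acts on $W\otimes_{E}V\cong U$ by $G\times H$-endomorphisms, giving an embedding $E^{\mathrm{op}}=E\hookrightarrow\mathrm{End}_{G\times H}(U)=\mathbb{F}_{0}$. This embedding is $\mathbb{F}_{0}$-linear, so $E=\mathbb{F}_{0}$. Then $U\cong W\otimes_{\mathbb{F}_{0}}V$, as required.

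An alternative route avoids this step: replace $\mathbb{F}_{0}$ by $E$, i.e.\ take the tensor product over the larger field $E$ and view $W$ and $V$ as $E$-representations. This already yields the statement as formulated, since only some finite extension is required.
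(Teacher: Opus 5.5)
Your argument is correct. Its core is the paper's proof with the roles of $G$ and $H$ interchanged. The paper takes an irreducible $\mathbb{F}[G]$-subrepresentation $W\subseteq U$ and uses Proposition \ref{prop:MaschkeSurrogate} to get $U=\bigoplus_i h_iW\cong W^m$. It then sets $\mathbb{F}_0=\mathrm{End}_G(W)$ and $V=\mathrm{Hom}_G(W,U)$, and shows the evaluation map $W\otimes_{\mathbb{F}_0}V\to U$ is an isomorphism: it is nonzero, hence onto by irreducibility of $U$, and a dimension count finishes. Your ``alternative route'' is exactly this, and it does not need the preliminary base change to $D=\mathrm{End}_{G\times H}(U)$ at all.

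What your main route buys is a canonical field. By showing $\mathrm{End}_H(V)=D$ you obtain the decomposition over $\mathrm{End}_{G\times H}(U)$, where both factors are absolutely irreducible. The paper's $\mathrm{End}_G(W)$ can be strictly smaller: for $G$ trivial it is $\mathbb{F}$, while $\mathrm{End}_H(U)$ may be a proper extension, and then $V$ is irreducible but not absolutely irreducible. Section \ref{sec:nn1n2} only needs some such decomposition with irreducible factors, so the paper's version is the more economical one. Yours costs Wedderburn's theorem and the final embedding step. In that step $\mathbb{F}_0$-linearity is not even needed: a finite field $E\supseteq D$ that embeds into $D$ must equal $D$ by counting.

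One small point: $E$ and $W$ must be read as $\mathrm{End}_{\mathbb{F}_0[H]}(V)$ and $\mathrm{Hom}_{\mathbb{F}_0[H]}(V,U)$, since $V$ need not be irreducible over $\mathbb{F}$. Your convention of working over $\mathbb{F}_0$ covers this.
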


\begin{proof}
Considering $U$ as a representation of $G$ over $\mathbb{F}$, we
pick an irreducible subrepresentation $W$ of $U$, and denote by
$\iota\colon W\to U$ the inclusion map. The subspace $H.W=\sum_{h\in H}hW$
is invariant under $G\times H$, and is nonzero, so it is all of $U$.
By Proposition \ref{prop:MaschkeSurrogate}, there are $h_{1},\dots,h_{m}$
such that
\[
U=\bigoplus^{m}_{i=1}h_{i}W\cong W^{m},
\]
since $W\cong h_{i}W$ as $G$-representations for every $i=1,\dots,m$
(since $G$ and $H$ commute). Recall that $\mathrm{Hom}_{G}(W,W)$
is a finite field extension of $\mathbb{F}$, and denote it by $\mathbb{F}_{0}$.
Consider next the vector space 
\[
V\coloneqq\Hom_{G}(W,U)\cong\Hom(W,W^{m})\cong\mathbb{F}^{m}_{0},
\]
on which $H$ acts by postcomposition and $\mathbb{F}_{0}$ acts by
precomposition. Since both $W$ and $V$ have a natural structure
of vector spaces over $\mathbb{F}_{0}$, we may form the tensor product
$W\otimes_{\mathbb{F}_{0}}V$. The action of $G$ on $W$ commutes
with $\mathbb{F}_{0}=\mathrm{End}_{G}(W)$, and the action of $H$
on $V=\mathrm{Hom}_{G}(W,U)$ commutes with $\mathbb{F}_{0}$, so
we have an action of $G\times H$ on $W\otimes_{\mathbb{F}_{0}}V$.
The evaluation map $W\otimes_{\mathbb{F}}V\to U$ factors through
to an $\mathbb{F}$-linear map 
\[
e:W\otimes_{\mathbb{F}_{0}}V\to U
\]
(considering $W\otimes_{\mathbb{F}_{0}}V$ as a vector space of $\mathbb{F}$).
This map commutes with the $G\times H$-action. It is nonzero, because
$W\neq0$ and, for every nonzero $w\in W$, we have 
\[
e(\iota\otimes w)=\iota(w)=w\neq0.
\]
Our assumption that $U$ is an irreducible representation of $G\times H$
implies that $e$ is surjective. Lastly, $\dim_{\mathbb{F}_{0}}V=m$
and $\dim_{\mathbb{F}_{0}}W=\frac{\dim_{\mathbb{F}}W}{\dim_{\mathbb{F}}\mathbb{F}_{0}}$,
so $\dim_{\mathbb{F}_{0}}(W\otimes_{\mathbb{F}_{0}}V)=\frac{m\dim_{\mathbb{F}}W}{\dim_{\mathbb{F}}\mathbb{F}_{0}}$,
and thence $\dim_{\mathbb{F}}(W\otimes_{\mathbb{F}_{0}}V)=m\dim_{\mathbb{F}}W$.
On the other hand, $\dim_{\mathbb{F}}U=\dim_{\mathbb{F}}W^{m}=m\dim_{\mathbb{F}}W$.
Thus, $e$ is an isomorphism of $G\times H$-representations over
$\mathbb{F}$.
\end{proof}

\section{The $n$-$(n+1)$-$(n+2)$ Theorem\label{sec:nn1n2}}

Let 
\[
1\longrightarrow N_{1}\longrightarrow G_{1}\xrightarrow{\varphi_{1}}Q\longrightarrow1,\qquad1\longrightarrow N_{2}\longrightarrow G_{2}\xrightarrow{\varphi_{2}}Q\longrightarrow1
\]
be two short exact sequences of profinite groups. Recall that the
fibre product of $G_{1}$ and $G_{2}$ over $Q$ (with respect to
$\varphi_{1}$ and $\varphi_{2}$) is the subgroup of $G_{1}\times G_{2}$
given by 
\[
G_{1}\times_{Q}G_{2}=\{(g_{1},g_{2})\in G_{1}\times G_{2}\colon\varphi_{1}(g_{1})=\varphi_{2}(g_{2})\}.
\]
Let $\mathrm{pr}_{i}\colon G_{1}\times_{Q}G_{2}\to G_{i}$ for $i\in\{1,2\}$
be the projection on the $i^{\text{th}}$ coordinate, and note that
\[
\Ker(\mathrm{pr}_{1})=\{1\}\times N_{2},\quad\Ker(\mathrm{pr}_{2})=N_{1}\times\{1\}.
\]
We also have 
\[
(G_{1}\times_{Q}G_{2})/\Ker(\mathrm{pr}_{1})\cong\mathrm{Im}(\mathrm{pr}_{1})=G_{1},\quad(G_{1}\times_{Q}G_{2})/\Ker(\mathrm{pr}_{2})\cong\mathrm{Im}(\mathrm{pr}_{2})=G_{2}.
\]
We thus get the following commutative diagram of profinite groups,\[\begin{tikzcd}
	1 && 1 & 1 & \\
	& {N_1\times N_2} & {N_2} & {N_2} \\
	1 & {N_1} & {G_1\times_Q G_2} & {G_2} & 1 \\
	1 & {N_1} & {G_1} & Q & 1 \\
	&& 1 & 1 & 1
	\arrow[from=1-3, to=2-3]
	\arrow[from=1-1, to=2-2]
	\arrow[from=1-4, to=2-4]
	\arrow[from=2-2, to=3-3]
	\arrow[equal, from=2-3, to=2-4]
	\arrow[from=2-3, to=3-3]
	\arrow[from=2-4, to=3-4]
	\arrow[from=3-1, to=3-2]
	\arrow[from=3-2, to=3-3]
	\arrow[equal, from=3-2, to=4-2]
	\arrow[from=3-3, to=3-4]
	\arrow[from=3-3, to=4-3]
	\arrow[from=3-3, to=4-4]
	\arrow[from=3-4, to=3-5]
	\arrow[from=3-4, to=4-4]
	\arrow[from=4-1, to=4-2]
	\arrow[from=4-2, to=4-3]
	\arrow[from=4-3, to=4-4]
	\arrow[from=4-3, to=5-3]
	\arrow[from=4-4, to=4-5]
	\arrow[from=4-4, to=5-4]
	\arrow[from=4-4, to=5-5]
\end{tikzcd}\]with exact columns, rows and diagonal.

\begin{thm}[The $n-(n+1)-(n+2)$ Theorem]
\label{thm:n-n+1-n+2}Let $n$ be a nonnegative integer or $\infty$
and fix an integer $k\leqslant n$. Let 
\[
1\longrightarrow N_{1}\longrightarrow G_{1}\xrightarrow{\varphi_{1}}Q\longrightarrow1,\qquad1\longrightarrow N_{2}\longrightarrow G_{2}\xrightarrow{\varphi_{2}}Q\longrightarrow1
\]
be two short exact sequences of profinite groups, and let $K$ be
an $R\llbracket Q\rrbracket$-module. Then 
\begin{align*}
\cbf k{G_{1}\times_{Q}G_{2}}K & \leqslant\cbf k{G_{1}}K+\sum^{k+1}_{r=2}\cbf rQK\cbf{k+1-r}{N_{1}}R+\sum_{p+q=k,p>0}\cbf p{G_{2}}K\cbf q{N_{1}}R
\end{align*}
In particular:
\begin{enumerate}
\item If $N_{1}$ is $\mathrm{FP}_{n-1}$, $K$ is $\mathrm{FP}_{n}$ as
an $R\llbracket G_{1}\rrbracket$-module, $K$ is $\mathrm{FP}_{n}$
as an $R\llbracket G_{2}\rrbracket$-module, and $K$ is $\mathrm{FP}_{n+1}$
as an $R\llbracket Q\rrbracket$-module, then $K$ is $\mathrm{FP}_{n}$
as an $R\llbracket G_{1}\times_{Q}G_{2}\rrbracket$-module.
\item If $N_{1}$ is $\mathrm{FP}_{n-1}$, $G_{1}$ and $G_{2}$ are $\mathrm{FP}_{n}$,
and $Q$ is $\mathrm{FP}_{n+1}$, then $G_{1}\times_{Q}G_{2}$ is
$\mathrm{FP}_{n}$.
\end{enumerate}
\end{thm}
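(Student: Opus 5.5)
We will bound $\dim_{\mathbb{F}}\mathrm{Tor}^{R\llbracket P\rrbracket}_{k}(K,M)$ for a simple $R\llbracket P\rrbracket$-module $M$, where $P=G_{1}\times_{Q}G_{2}$ and $\mathbb{F}=\mathrm{End}_{P}(M)$. We use the Lyndon–Hochschild–Serre type spectral sequence of Lemma \ref{lem:TorSpec} attached to $N_{1}\times\{1\}=\ker(\mathrm{pr}_{2})\trianglelefteqslant P$ with quotient $G_{2}$:
\[
E^{2}_{pq}=H_{p}\bigl(G_{2},\mathrm{Tor}^{R\llbracket N_{1}\rrbracket}_{q}(K,M)\bigr)\implies\mathrm{Tor}^{R\llbracket P\rrbracket}_{p+q}(K,M).
\]
Since $K$ is inflated from $Q$, $N_{1}$ acts trivially on $K$. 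Hence $\mathrm{Tor}^{R\llbracket N_{1}\rrbracket}_{q}(K,M)$ is controlled by $\mathrm{Tor}^{R\llbracket N_{1}\rrbracket}_{q}(R,M)$, and we may rewrite the $E^{2}$ terms, as in Lemma \ref{lem:another-ss}, as $\mathrm{Tor}^{R\llbracket G_{2}\rrbracket}_{p}(K,\mathrm{Tor}^{R\llbracket N_{1}\rrbracket}_{q}(R,M))$. We then estimate
\[
\dim_{\mathbb{F}}\mathrm{Tor}_{k}\leqslant\sum_{p+q=k}\dim E^{\infty}_{pq}.
\]
The terms with $p>0$ are bounded crudely by $E^{2}_{pq}$. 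The module $\mathrm{Tor}_{q}^{R\llbracket N_{1}\rrbracket}(R,M)$ is finite, and by the remark after the definition of $\tau$ (induction on length) its contribution is at most $\tau_{p}(G_{2};K)\,\tau_{q}(N_{1};R)\dim M$. This yields the third sum $\sum_{p+q=k,\,p>0}\cbf p{G_{2}}K\cbf q{N_{1}}R$.

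The remaining term is the edge term $E^{\infty}_{0k}$, and this is the main obstacle. It is a quotient of $E^{2}_{0k}=\bigl(K\otimes\mathrm{Tor}^{R\llbracket N_{1}\rrbracket}_{k}(R,M)\bigr)_{G_{2}}$, and we cannot use $\tau_{k}(N_{1})$ since $N_{1}$ is only assumed $\mathrm{FP}_{n-1}$. The plan is to compare with the extension $1\to N_{1}\to G_{1}\to Q\to1$, exactly as in Theorem \ref{thm:Fibre}.

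First we reduce to the case where $N_{2}$ acts trivially on $M$. By Theorem \ref{thm:DirectProduct}, applied to the image of $N_{1}\times N_{2}$, and a Clifford-type argument, either $N_{2}$ acts through a nontrivial irreducible part or it acts trivially. If $N_{2}=\ker(\mathrm{pr}_{1})$ acts nontrivially on $M$, we expect $E^{2}_{0k}=0$. The reason is that $N_{2}$ commutes with $N_{1}$ and acts on $\mathrm{Tor}^{N_{1}}_{k}(R,M)$ through its action on $M$. Writing $M$ as a sum of isotypic pieces under the normal subgroup $N_{2}$, the coinvariants of $N_{2}$ vanish whenever $M_{N_{2}}=0$, as in the second theorem of Section \ref{sec:FibreProducts}. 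This step may require care, since $N_{1}$ need not act trivially on $M$, and this is where we would use Theorem \ref{thm:DirectProduct}.

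If $N_{2}$ acts trivially, then $M$ is a $G_{1}$-module, and we run the spectral sequence for $N_{1}\trianglelefteqslant G_{1}$, with terms $\tilde{E}^{2}_{pq}=\mathrm{Tor}^{R\llbracket Q\rrbracket}_{p}(K,\mathrm{Tor}^{R\llbracket N_{1}\rrbracket}_{q}(R,M))$. Its edge term $\tilde{E}^{2}_{0k}$ coincides with $E^{2}_{0k}$, because $G_{2}$-coinvariants equal $Q$-coinvariants when $N_{2}$ acts trivially. By naturality, the map of extensions $(N_{1}\to P\to G_{2})\to(N_{1}\to G_{1}\to Q)$ identifies the differentials leaving $\tilde{E}^{r}_{0k}$. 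The incoming differentials $\tilde{E}^{r}_{r,k-r+1}\to\tilde{E}^{r}_{0k}$ correspond to the outgoing differentials from $(r,k+1-r)$, taken in the direction of the $E^{\infty}$ edge of $G_{1}$.

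More precisely, we will bound
\[
\dim\tilde{E}^{2}_{0k}\leqslant\dim\mathrm{Tor}^{R\llbracket G_{1}\rrbracket}_{k}(K,M)+\sum_{r=2}^{k+1}\dim\tilde{E}^{2}_{r,k+1-r}.
\]
Since $E^{\infty}_{0k}$ is a quotient of $E^{2}_{0k}=\tilde{E}^{2}_{0k}$, this gives the bound
\[
\Bigl(\cbf k{G_{1}}K+\sum_{r=2}^{k+1}\cbf rQK\cbf{k+1-r}{N_{1}}R\Bigr)\dim M.
\]
Summing the three contributions and taking the supremum over $M$ gives the inequality. The statements (1) and (2) then follow from Theorem \ref{thm:numericalFP}, since only $\tau_{q}(N_{1})$ with $q\leqslant k-1\leqslant n-1$, $\tau_{r}(Q)$ with $r\leqslant k+1$, and $\tau_{\leqslant k}(G_{i})$ occur.
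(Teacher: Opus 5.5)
Your proposal follows the paper's proof essentially step for step. It uses the spectral sequence of Lemma \ref{lem:another-ss} for $N_{1}\trianglelefteqslant G_{1}\times_{Q}G_{2}$ with quotient $G_{2}$ (invoke it directly; ``rewriting'' the $E^{2}$ page of Lemma \ref{lem:TorSpec} into that form is not valid in general unless $K$ is $R$-flat), the crude bound for $p>0$, the Clifford/Theorem \ref{thm:DirectProduct} case split on whether $N_{2}$ acts trivially on $M$, and the bound on the edge term via incoming differentials in the $N_{1}\trianglelefteqslant G_{1}$ spectral sequence (no naturality argument is needed there). The step you flag is closed as you suggest: the paper writes the restriction of $M$ to $N_{1}\times N_{2}$ as $\bigoplus_{i}M_{i,1}\otimes_{\mathbb{F}_{0}}M_{i,2}$ with $G$-conjugate summands, so if $N_{2}$ acts nontrivially on $M$ it acts nontrivially on every $M_{i,2}$, whence $H_{k}(N_{1},M)_{N_{2}}\cong\bigoplus_{i}H_{k}(N_{1},M_{i,1})\otimes(M_{i,2})_{N_{2}}=0$ and $E^{2}_{0k}=0$.
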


\begin{proof}
By Theorem \ref{thm:numericalFP}, the numerical statement implies
the corollary. Set $G=G_{1}\times_{Q}G_{2}$, and let $M$ be a simple
$\rg$-module, so that it is finite-dimensional over $\mathbb{F}\coloneqq\mathrm{End}_{G}(M)$. 

Consider the spectral sequence from Lemma (\ref{lem:another-ss})
associated with the short exact sequence $1\to N_{1}\to G\to G_{2}\to1$,
\[
E^{2}_{pq}=\mathrm{Tor}^{R\llbracket G_{2}\rrbracket}_{p}(K,\mathrm{Tor}^{R\llbracket N_{1}\rrbracket}_{q}(R,M))\implies\mathrm{Tor}^{R\llbracket G\rrbracket}_{p+q}(K,M).
\]
We thus have
\[
\dimf\mathrm{Tor}^{R\llbracket G\rrbracket}_{k}(K,M)\leqslant\sum_{p+q=k}\dimf E^{\infty}_{pq}.
\]
Moreover,
\[
\dimf E^{\infty}_{pq}\leqslant\dimf E^{2}_{pq}=\dimf\mathrm{Tor}^{R\llbracket G_{2}\rrbracket}_{p}(K,\mathrm{Tor}^{R\llbracket N_{1}\rrbracket}_{q}(R,M)).
\]
First assume $p\geqslant1$, which means $q\leqslant k-1\leqslant n$.
We have 
\[
\dimf\mathrm{Tor}^{R\llbracket N_{1}\rrbracket}_{q}(R,M)\leqslant\cbf q{N_{1}}R\dimf M.
\]
Similarly, 
\[
\dimf\mathrm{Tor}^{R\llbracket G_{2}\rrbracket}_{p}(K,\mathrm{Tor}^{R\llbracket N_{1}\rrbracket}_{q}(R,M))\leqslant\cbf p{G_{2}}K\dimf\mathrm{Tor}^{R\llbracket N_{1}\rrbracket}_{q}(R,M)\leqslant\cbf p{G_{2}}K\cbf q{N_{1}}R\dimf M.
\]
Thus, it remains to bound the dimension of 
\[
E^{2}_{0k}=\mathrm{Tor}^{R\llbracket G_{2}\rrbracket}_{0}(K,\mathrm{Tor}^{R\llbracket N_{1}\rrbracket}_{k}(R,M))=\left(K\t H_{k}(N_{1},M)\right)_{G_{2}}.
\]

By Clifford theory (applied to the normal subgroup $N_{1}\times N_{2}$
of $G_{1}\times_{Q}G_{2}$), $M=\bigoplus^{m}_{i=1}M_{i}$, where
each $M_{i}$ is a simple $(N_{1}\times N_{2})$-modules, and all
the $M_{i}$'s are conjugate to one another under the $Q$-action.
By Theorem \ref{thm:DirectProduct}, there is a finite extension
$\mathbb{F}_{0}$ of $\mathbb{F}$, simple $N_{1}$-modules $M_{1,1},\dots,M_{m,1}$
over $\mathbb{F}_{0}$, and simple $N_{2}$-modules $M_{1,2},\dots,M_{m,2}$
over $\mathbb{F}_{0}$ such that $M_{i}\cong M_{i,1}\otimes_{\mathbb{F}_{0}}M_{i,2}$,
and hence
\[
M=\bigoplus^{m}_{i=1}(M_{i,1}\otimes_{\mathbb{F}_{0}}M_{i,2}).
\]
Let $i\in\left\{ 1,\dots,m\right\} $. Since $N_{1}$ acts trivially
on $M_{i,2}$ and $N_{2}$ acts trivially on $H_{k}(N_{1},M_{i,1})$,
we get
\[
H_{k}(N_{1},M_{i,1}\otimes M_{i,2})_{N_{2}}\cong(H_{k}(N_{1},M_{i,1})\otimes M_{i,2})_{N_{2}}\cong H_{k}(N_{1},M_{i,1})\otimes(M_{i,2})_{N_{2}}.
\]
Consider first the case $N_{2}$ acts nontrivially on $M$. This means
that $N_{2}$ acts nontrivially on $M_{i}$ for every $i\in\left\{ 1,\dots,m\right\} $
(since they are all conjugate to one another, and therefore if $N_{2}$
acts trivially on one of them, it acts trivially on all of them, and
thence acts trivially on $M=\bigoplus^{m}_{i=1}M_{i}$). This implies
that $M_{i,2}$ is a nontrivial $N_{2}$-module for every $i$, so
that $(M_{i,2})_{N_{2}}=0$, so 
\[
H_{k}(N_{1},M_{i,1}\otimes M_{i,2})_{N_{2}}=0
\]
for every $i$, which means 
\[
H_{k}(N_{1},M)_{N_{2}}=0.
\]
Since $N_{2}$ acts trivially on $K$, this implies that
\[
E^{2}_{0k}=\left(\left(K\t H_{k}(N_{1},M)\right)_{N_{2}}\right)_{Q}=\left(K\t H_{k}(N_{1},M)_{N_{2}}\right)_{Q}=0,
\]
so we are done. We may thus assume that $N_{2}$ acts trivially on
$M$, so that the action of $G_{1}\times_{Q}G_{2}$ on $M$ is inflated
from $G_{1}$. Thus, $N_{2}$ acts trivially on $K\t H_{k}(N_{1},M)$,
and hence 
\[
E^{2}_{0k}=\left(\left(K\t H_{k}(N_{1},M)\right)_{N_{2}}\right)_{Q}=\left(K\t H_{k}(N_{1},M)\right)_{Q}.
\]

In order to bound this, we apply the spectral sequence from Lemma
\ref{lem:another-ss} associated with the short exact sequence $1\to N_{1}\to G_{1}\to Q\to1$,
\[
\bar{E}^{2}_{pq}=\mathrm{Tor}^{R\llbracket Q\rrbracket}_{p}(K,\mathrm{Tor}^{R\llbracket N_{1}\rrbracket}_{q}(R,M))\implies\mathrm{Tor}^{R\llbracket G_{1}\rrbracket}_{p+q}(K,M).
\]
We have
\[
E^{2}_{0k}=\left(K\t H_{k}(N_{1},M)\right)_{Q}=\mathrm{Tor}^{R\llbracket Q\rrbracket}_{0}(K,\mathrm{Tor}^{R\llbracket N_{1}\rrbracket}_{k}(R,M))=\bar{E}^{2}_{0k}.
\]
We now bound $\dimf\bar{E}^{2}_{0k}$. For $r\geqslant2$, we have
an exact sequence
\[
\bar{E}^{r}_{r,k+1-r}\longrightarrow\bar{E}^{r}_{0k}\longrightarrow\bar{E}^{r+1}_{0k},
\]
so 
\[
\dimf\bar{E}^{r}_{0k}\leqslant\dimf\bar{E}^{r+1}_{0k}+\dimf\bar{E}^{r}_{r,k+1-r}.
\]
It follows that
\[
\dimf\bar{E}^{2}_{0k}\leqslant\dimf\bar{E}^{\infty}_{0k}+\sum^{k+1}_{r=2}\dimf\bar{E}^{2}_{r,k+1-r}.
\]
We have $\di\bar{E}^{\infty}_{0k}\leqslant\di\mathrm{Tor}^{R\llbracket G_{1}\rrbracket}_{k}(K,M)\leqslant\cbf k{G_{1}}K\dimf M$.
Moreover,
\[
\dimf E^{2}_{r,k+1-r}=\dimf\mathrm{Tor}^{R\llbracket Q\rrbracket}_{r}(K,\mathrm{Tor}^{R\llbracket N_{1}\rrbracket}_{k+1-r}(R,M))\leqslant\cbf rQK\cbf{k+1-r}{N_{1}}R\dimf M.
\]
We get 
\[
\dimf\bar{E}^{2}_{0k}\leqslant\left(\cbf k{G_{1}}K+\sum^{k+1}_{r=2}\cbf rQK\cbf{k+1-r}{N_{1}}R\right)\dimf M.
\]
Therefore, 
\[
\dimf\mathrm{Tor}^{R\llbracket G\rrbracket}_{k}(K,M)\leqslant\left(\cbf k{G_{1}}K+\sum^{k+1}_{r=2}\cbf rQK\cbf{k+1-r}{N_{1}}R+\sum_{p+q=k,p>0}\cbf p{G_{2}}K\cbf q{N_{1}}R\right)\dimf M.
\]
Since $M$ was arbitrary, we deduce 
\[
\cbf kGK\leqslant\cbf k{G_{1}}K+\sum^{k+1}_{r=2}\cbf rQK\cbf{k+1-r}{N_{1}}R+\sum_{p+q=k,p>0}\cbf p{G_{2}}K\cbf q{N_{1}}R.\qedhere
\]
\end{proof}

\begin{cor}
Let $G=\prod_{i\in I}G_{i}$ be a product of profinite groups over
some index set $I$, and let $n$ be a nonnegative integer or $\infty$.
Then
\[
\cbf kG{\mathbb{F}}\leqslant\sup_{S\subseteq I\text{ finite}}\cbf k{\prod_{i\in S}G_{i}}{\mathbb{F}}
\]
for every integer $k$ and every finite field $\mathbb{F}$.
\end{cor}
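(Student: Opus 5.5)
The plan is to write $G$ as an inverse limit of finite partial products and apply Lemma \ref{lem:NFPnForInvLim}. For a finite subset $S\subseteq I$, let $N_{S}=\prod_{i\notin S}G_{i}$. This is a closed normal subgroup with $G/N_{S}\cong\prod_{i\in S}G_{i}$. The finite subsets of $I$, directed by inclusion, give a directed family of closed normal subgroups $N_{S}$ with $\bigcap_{S}N_{S}=1$. Hence $G=\li_{S}G/N_{S}$.

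The one thing to check is that this family is cofinal among open normal subgroups, which holds by the definition of the product topology. A basic open neighbourhood of $1$ contains $\prod_{i\in S}U_{i}\times\prod_{i\notin S}G_{i}$ for some finite $S$ and open $U_{i}\leqslant G_{i}$. In particular, every open normal subgroup of $G$ contains some $N_{S}$. This is exactly what the proof of Lemma \ref{lem:NFPnForInvLim} uses, via Lemma \ref{lem:CoinvInvlim}, to identify $K=\li K_{N_{S}}$ and to ensure that each finite module $M$ is inflated from some $G/N_{S_{0}}$.

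Now take $R=K=\mathbb{F}$ with the trivial action. The coinvariants are $\mathbb{F}_{N_{S}}=\mathbb{F}$, again trivial, so Lemma \ref{lem:NFPnForInvLim} gives
\[
\cbf kG{\mathbb{F}}\leqslant\sup_{S}\cbf k{G/N_{S}}{\mathbb{F}}=\sup_{S\subseteq I\text{ finite}}\cbf k{\prod_{i\in S}G_{i}}{\mathbb{F}}.
\]
For negative $k$ both sides are $0$ by convention, so the inequality holds for every integer $k$.

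I expect no real obstacle here. The only point needing care is that Lemma \ref{lem:NFPnForInvLim} is stated for a presentation $G=\li G/N_{i}$, and we must make sure our system has the cofinality property above, which it does. Note that the $n$-$(n+1)$-$(n+2)$ machinery is not needed for this inequality. It would only become relevant when bounding the right-hand side for finite products, for instance by an iterated Künneth-type estimate via Lemma \ref{lem:FPExt}.
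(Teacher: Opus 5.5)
Your proof is correct and is essentially the paper's argument: the paper also obtains the inequality by applying Lemma \ref{lem:NFPnForInvLim} to the inverse system of finite partial products $\prod_{i\in S}G_{i}\cong G/N_{S}$. The paper first treats finite $I$ using the $n$-$(n+1)$-$(n+2)$ Theorem (with $Q=\{1\}$) and induction, but, as you note, that step is not needed for the inequality as stated, because for finite $I$ one can simply take $S=I$.
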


\begin{proof}
The case $I=\left\{ 1,2\right\} $ follows from the $n-(n+1)-(n+2)$
Theorem (taking $Q=\left\{ 1\right\} $ and $N_{i}=G_{i}$). The case
$I$ is finite then follows by induction, and the general case then
follows from Lemma \ref{lem:NFPnForInvLim}.
\end{proof}

\section{Virtual Surjection\label{sec:VirtualSurjection}}

We follow Kuckuck's proof that the $n$-$(n+1)$-$(n+2)$ Conjecture
implies the Virtual Surjection Conjecture.

We fix some notation. Consider a product $\prod^{n}_{i=1}G_{i}$ and
a subgroup $P\leqslant\prod^{n}_{i=1}G_{i}$. Given $J\subseteq\left\{ 1,\dots,n\right\} $,
we denote by $\mathrm{pr}_{J}:\prod^{n}_{i=1}G_{i}\to\prod_{i\in J}G_{i}$
the projection. We often identify $\prod_{i\in J}G_{i}$ with the
subgroup
\[
\left\{ (g_{1},\dots,g_{n})\in\prod^{n}_{i=1}G_{i}\middle|g_{i}=1\ \forall i\notin J\right\} 
\]
of $\prod^{n}_{i=1}G_{i}$.
\begin{lem}[{\cites[Lemma 3.3]{Kuc14}}]
\label{lem:ProjIsVirNil}Let $n\geqslant k\geqslant2$ be integers,
let $\Gamma_{1},\dots,\Gamma_{n}$ be groups and let $P\leqslant\Gamma_{1}\times\cdots\times\Gamma_{n}$
be a subgroup that projects onto $\Gamma_{i}$ for every $i$, and
projects onto a finite-index subgroup of $\Gamma_{i_{1}}\times\cdots\times\Gamma_{i_{k}}$
for any $1\leqslant i_{1}<\cdots<i_{k}\leqslant n$. Then for every
index $i$ the group $\Gamma_{i}/(P\cap\Gamma_{i})$ is virtually
nilpotent. 
\end{lem}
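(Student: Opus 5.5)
The plan is to prove that $L_i\coloneqq P\cap\Gamma_i$ contains a finite-index subgroup of an iterated commutator subgroup of $\Gamma_i$ of length $k$, exactly as in Kuckuck's argument; this yields virtual nilpotency of $\Gamma_i/L_i$. Fix $i$; after relabelling we may take $i=1$. First observe that $L_1$ is normal in $\Gamma_1$. Indeed, for $g\in\Gamma_1$ choose $p\in P$ with $\mathrm{pr}_1(p)=g$, using surjectivity of $P\to\Gamma_1$. Since $\Gamma_1$ commutes with the other factors, conjugation by $p$ acts on $\Gamma_1$ as conjugation by $g$, and it preserves $P$. Hence $gL_1g^{-1}=L_1$.

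The key step is the following claim: for every choice of indices $j_2,\dots,j_k$, distinct and different from $1$, and for elements $x_1,\dots,x_k\in\Gamma_1$ lying in suitable finite-index subgroups, the commutator $[x_1,[x_2,\dots,x_k]]$ lies in $L_1$. To set this up, choose $k-1$ further coordinates $j_2,\dots,j_k$. This requires $n\geqslant k$, and here we use $k\geqslant2$. The projection of $P$ onto $\Gamma_1\times\Gamma_{j_2}\times\cdots\times\Gamma_{j_k}$ has finite index. Intersecting over the finitely many relevant projections, we obtain a finite-index subgroup $\Delta\leqslant\Gamma_1$ with the following property. For each $x\in\Delta$ and each $s\in\{2,\dots,k\}$ there is $p_s\in P$ whose $\Gamma_1$-coordinate is $x$ and whose $\Gamma_{j_s}$-coordinate is $1$. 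Here we use the finite-index projection onto $\Gamma_1\times\Gamma_{j_s}$, together with surjectivity onto $\Gamma_{j_s}$, to adjust; concretely, $\Delta\times1$ is contained in the projection up to finite index. For the first element, take $p_1\in P$ with $\mathrm{pr}_1(p_1)=x_1$. Now form the iterated commutator $c=[p_1,[p_2,[\dots,p_k]]]\in P$. For each coordinate $j\neq1$, some $p_s$ with $s\geqslant2$ has trivial $j$-coordinate, provided the choices cover all other indices. This forces us to group the coordinates $2,\dots,n$ into $k-1$ blocks and use the finite-index projection onto $\Gamma_1\times\prod_{j\in\text{block}_s}\Gamma_j$ restricted appropriately. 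Kuckuck handles this by choosing $p_s$ trivial on a whole block, via surjection onto $k$-tuples. With that done, the iterated commutator has trivial $j$-coordinate for all $j\neq1$, so $c\in P\cap\Gamma_1=L_1$, and its first coordinate is $[x_1,[x_2,\dots,x_k]]$.

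It follows that $\gamma_k(\Delta)\leqslant L_1$, using normality of $L_1$ to pass from generators to the whole subgroup. Hence $\Delta L_1/L_1$ is a nilpotent subgroup of $\Gamma_1/L_1$, of class less than $k$, and it has finite index. So $\Gamma_1/L_1$ is virtually nilpotent.

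The main obstacle is the covering step. With only $k-1$ elements $p_2,\dots,p_k$, each of them may be made trivial only on the coordinates of a $(k-1)$-subset, whereas $n-1$ may exceed $k-1$. I would handle this by induction on $n$. Replace $P$ by its image in $\Gamma_1\times\cdots\times\Gamma_{n-1}$, and let $L_1'$ denote the corresponding intersection with $\Gamma_1$. By the inductive hypothesis, $\Gamma_1/L_1'$ is virtually nilpotent. The remaining point is then to compare $L_1'$ with $L_1$ using the single extra coordinate and one further commutator; this may raise the nilpotency class, which is harmless.
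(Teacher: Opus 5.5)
Your proposal has a genuine gap: its central step fails, and the conclusion drawn from it is false. The key claim is that weight-$k$ commutators $[x_1,[x_2,\dots,x_k]]$ of elements of a finite-index $\Delta\leqslant\Gamma_1$ lie in $L_1$, so that $\gamma_k(\Delta)\leqslant L_1$. The hypothesis only lets you make an element of $P$ trivial on at most $k-1$ coordinates other than the first. A block of more than $k-1$ coordinates would require control of a projection onto more than $k$ factors, which you do not have. So partitioning $\{2,\dots,n\}$ into $k-1$ blocks, with each $p_s$ trivial on a whole block, is not available. Consequently $k-1$ entries $p_2,\dots,p_k$ cannot kill all $n-1$ other coordinates once $n-1>(k-1)^2$.

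Worse, the resulting bound ``class $<k$'' is false. Take $k=2$, $n=4$, and let $H$ be the integral Heisenberg group of triples $(a,b,c)$ with product $(a,b,c)(a',b',c')=(a+a',b+b',c+c'+ab')$. Let $\widetilde H$ be the group of triples $(f,g,h)$ of integer polynomials with $\deg f,\deg g\leqslant 1$ and $\deg h\leqslant 2$, under the same product rule. Let $P\leqslant H^4$ be the image of $\widetilde H$ under evaluation at four distinct integers. Then $P$ is subdirect and virtually surjects onto pairs. But $P\cap H_i=1$, because a polynomial of degree $\leqslant 2$ vanishing at three points is zero. So $\Gamma_i/(P\cap\Gamma_i)\cong H$, which is not virtually abelian.

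The missing idea is to let the length of the commutator grow with $n$, as in the Bridson--Howie--Miller--Short argument that Kuckuck adapts.
\begin{itemize}
\item For $j\neq 1$, the pair $\{1,j\}$ lies in some $k$-subset, so $\mathrm{pr}_{\{1,j\}}(P)$ has finite index in $\Gamma_1\times\Gamma_j$.
\item Hence $\mathrm{pr}_{\{1,j\}}(P)\cap(\Gamma_1\times 1)=\Delta_j\times 1$ with $[\Gamma_1:\Delta_j]<\infty$. Put $\Delta=\bigcap_{j\geqslant 2}\Delta_j$.
\item Given $x_2,\dots,x_n\in\Delta$, choose $p_j\in P$ with $\mathrm{pr}_1(p_j)=x_j$ and $\mathrm{pr}_j(p_j)=1$.
\item Then $[p_2,\dots,p_n]$ lies in $P\cap\Gamma_1$ and has first coordinate $[x_2,\dots,x_n]$. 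Hence $\gamma_{n-1}(\Delta)\leqslant L_1$.
\end{itemize}
Using blocks of size $k-1$ shortens $n-1$ to $\lceil (n-1)/(k-1)\rceil$.

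Your inductive fallback can also be completed, but you omit its only substantive step. If $p=(x,1,\dots,1,y)\in P$ with $x\in L_1'$, and $q\in P$ with $\mathrm{pr}_n(q)=1$ and $\mathrm{pr}_1(q)=z\in\Delta_n$, then $[p,q]=([x,z],1,\dots,1)\in L_1$. Hence $[L_1',\Delta_n]\leqslant L_1$. So if $\gamma_{c+1}(E)\leqslant L_1'$ for some finite-index $E\leqslant\Gamma_1$, then $\gamma_{c+2}(E\cap\Delta_n)\leqslant L_1$. You also need the base case $n=k$. There $P$ has finite index in the product, so $L_1$ has finite index in $\Gamma_1$.
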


\begin{lem}[{Goursat's Lemma \cite[Lemma 2.3]{Kuc14}}]
\label{lem:SubDirectAreFibre}Let $P\leqslant G_{1}\times G_{2}$
be a subgroup that projects onto $G_{1}$ and $G_{2}$. Then $P\cap G_{i}$
is normal in $G_{i}$ for $i=1,2$, $G_{1}/(P\cap G_{1})$ is isomorphic
to $G_{2}/(P\cap G_{2})$, and $P$ is the fibre product associated
with the short exact sequences
\begin{align*}
1 & \to P\cap G_{1}\to G_{1}\to G_{1}/(P\cap G_{1})\to1,\\
1 & \to P\cap G_{2}\to G_{2}\to G_{2}/(P\cap G_{2})\to1.
\end{align*}
\end{lem}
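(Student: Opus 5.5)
The plan is to verify the three assertions of Goursat's Lemma directly, without earlier machinery. Throughout, $G_{1}$ is identified with $G_{1}\times\{1\}$ and $G_{2}$ with $\{1\}\times G_{2}$. Set $N_{i}=P\cap G_{i}$.

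\emph{Normality.} Let $(x,1)\in N_{1}$ and $g\in G_{1}$. Since $P$ projects onto $G_{1}$, there is some $h\in G_{2}$ with $(g,h)\in P$. Then
\[
(g,h)(x,1)(g,h)^{-1}=(gxg^{-1},1)
\]
lies in $P$, and it also lies in $G_{1}$, so it lies in $N_{1}$. Hence $N_{1}\trianglelefteqslant G_{1}$, and the same argument gives $N_{2}\trianglelefteqslant G_{2}$.

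\emph{The isomorphism.} Define $\theta\colon G_{1}\to G_{2}/N_{2}$ by $\theta(g)=hN_{2}$, where $(g,h)\in P$ is any element with first coordinate $g$.
\begin{itemize}
\item Well-definedness: if $(g,h),(g,h')\in P$, then $(1,h^{-1}h')\in P\cap G_{2}=N_{2}$, so $hN_{2}=h'N_{2}$.
\item $\theta$ is a homomorphism because $P$ is a subgroup.
\item $\theta$ is surjective because $P$ projects onto $G_{2}$.
\item Its kernel is exactly the set of $g$ with $(g,h)\in P$ for some $h\in N_{2}$. Multiplying by $(1,h^{-1})\in P$, this is the set of $g$ with $(g,1)\in P$, namely $N_{1}$.
\end{itemize}
So $\theta$ induces an isomorphism $Q\coloneqq G_{1}/N_{1}\cong G_{2}/N_{2}$. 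In the profinite setting we also need this to be topological. Assuming $P$ is closed, $N_{i}$ is closed, and the graph of $\theta$ is the image of $P$ under $G_{1}\times G_{2}\to G_{1}\times G_{2}/N_{2}$, which is compact. A homomorphism between compact Hausdorff groups with closed graph is continuous, and a continuous bijection of compact Hausdorff spaces is a homeomorphism.

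\emph{Fibre product.} Take $\varphi_{1}\colon G_{1}\to Q$ to be the quotient map and $\varphi_{2}\colon G_{2}\to Q$ the quotient map composed with the inverse of the induced isomorphism. By construction, $(g,h)\in P$ implies $\varphi_{1}(g)=\varphi_{2}(h)$, so $P\subseteq G_{1}\times_{Q}G_{2}$.

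For the reverse inclusion, let $\varphi_{1}(g)=\varphi_{2}(h)$. Choose $h_{0}$ with $(g,h_{0})\in P$. Then $hN_{2}=\theta(g)=h_{0}N_{2}$, so $h=h_{0}n$ for some $n\in N_{2}$. Therefore $(g,h)=(g,h_{0})(1,n)\in P$.

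None of these steps is a real obstacle. The only point needing care is the topological one, namely continuity of the isomorphism in the profinite case, which is handled by the compactness argument above.
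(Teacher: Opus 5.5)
Your proof is correct and is the standard Goursat argument; the paper gives no proof of its own and simply cites Kuckuck's Lemma 2.3, which is stated for abstract groups. Your extra step is exactly what that citation leaves implicit: $N_i$ is closed, and the isomorphism $G_{1}/N_{1}\cong G_{2}/N_{2}$ is continuous because its graph is compact. This is what is needed for $Q$ to be a profinite group and for the $\varphi_i$ to be continuous when the lemma is applied to closed subgroups in Lemma \ref{lem:VirSur}.
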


\begin{lem}
\label{lem:VirSur}Let $n\geqslant k\geqslant1$ be integers, let
$G_{1},\dots,G_{n}$ be profinite groups of type $\mathrm{FP}_{k}$,
and let $P\leqslant G_{1}\times\cdots\times G_{n}$ be a closed subgroup
such that:
\begin{enumerate}
\item For every $1\leqslant i_{1}<\cdots<i_{k}\leqslant n$, the projection
of $P$ to $G_{i_{1}}\times\cdots\times G_{i_{k}}$ is open in $G_{i_{1}}\times\cdots\times G_{i_{k}}$
(‘$P$ virtually surjects onto $k$-tuples’), and
\item For every $1\leqslant i\leqslant n$, $\mathrm{pr}_{i}(P)/(P\cap G_{i})$
is virtually nilpotent.
\end{enumerate}
Then $P$ is $\mathrm{FP}_{k}$.
\end{lem}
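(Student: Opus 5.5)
We follow Kuckuck's strategy: induct on $n$, and realise $P$ as a fibre product so that the $n$-$(n+1)$-$(n+2)$ Theorem (Theorem \ref{thm:n-n+1-n+2}) applies.

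\emph{Reductions.} Set $L_i=P\cap G_i$. Replacing $G_i$ by the open subgroup $\mathrm{pr}_i(P)$ is harmless, since $\mathrm{FP}_k$ passes to open subgroups by Lemma \ref{lem:FPforFinInd}; the hypotheses remain true. So we may assume $P$ is subdirect. Then $L_i\trianglelefteqslant G_i$ and $G_i/L_i$ is virtually nilpotent.

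\emph{Base cases.}
\begin{itemize}
\item If $k=n$, then $P$ is open in $\prod G_i$. The product is $\mathrm{FP}_k$ by Lemma \ref{lem:FPExt}, so $P$ is $\mathrm{FP}_k$ by Lemma \ref{lem:FPforFinInd}.
\item If $k=1$, then $P$ surjects onto each $G_i$. The group $\prod_i L_i\leqslant P$ is normal in $\prod_i G_i$. Its quotient embeds in $\prod_i G_i/L_i$, which is finitely generated and virtually nilpotent.
\item The case $k=1$ reduces to $P/\prod L_i$ being finitely generated virtually nilpotent and $\prod L_i$ being nice enough, which we cannot ensure. So it is safer to treat $k=1$ by the general step below, with $N_1$ needing only $\mathrm{FP}_0$, which is vacuous.
\end{itemize}

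\emph{Inductive step ($n>k$).} Write $P\leqslant A\times G_n$ with $A=\prod_{i<n}G_i$. Let $P_0=\mathrm{pr}_{\{1,\dots,n-1\}}(P)$. By Goursat's Lemma \ref{lem:SubDirectAreFibre}, $P$ is the fibre product $P_0\times_Q G_n$, where $Q=G_n/L_n\cong P_0/(P\cap A)$. We apply Theorem \ref{thm:n-n+1-n+2} with $G_1:=G_n$, $N_1:=L_n$, $G_2:=P_0$ and $N_2:=P\cap A$. This requires four inputs.
\begin{itemize}
\item $G_n$ is $\mathrm{FP}_k$ by hypothesis.
\item $P_0\leqslant\prod_{i<n}G_i$ still virtually surjects onto $k$-tuples, and its coordinate quotients $\mathrm{pr}_i(P_0)/(P_0\cap G_i)$ are quotients of $\mathrm{pr}_i(P)/L_i$, hence virtually nilpotent. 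So $P_0$ is $\mathrm{FP}_k$ by induction on $n$.
\item $Q=G_n/L_n$ is a quotient of a finitely generated group, hence finitely generated and virtually nilpotent. So it is $\mathrm{FP}_\infty$ by Theorem \ref{thm:FGNilpotentFP}, in particular $\mathrm{FP}_{k+1}$.
\item $N_1=L_n$ must be $\mathrm{FP}_{k-1}$. This is the main obstacle, handled in the next paragraph.
\end{itemize}

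\emph{The kernel $L_n$.} We identify $L_n$ with the kernel of the projection of $P$ onto its first $n-1$ coordinates. Consider $P'=\mathrm{pr}_{\{1,\dots,n-1\}}$ of the relevant data: Kuckuck's trick is to express $L_n$ as a subgroup of $\prod_{i<n}G_i$ projected appropriately, or to show that $L_n$ is itself open in a group satisfying the hypotheses with $k-1$. Concretely, $L_n=P\cap G_n$, and we pass to $P^{(n)}=\mathrm{pr}_{\{1,\dots,n-1\}}(P\cap(\prod_{i<n}G_i\times L_n))$-type constructions. I would instead apply the hypotheses directly, with the following steps.
\begin{itemize}
\item Virtual surjection onto $k$-tuples implies that $P\cap(G_{i_1}\times\cdots\times G_{i_{k-1}}\times G_n)$ projects virtually onto $G_{i_1}\times\cdots\times G_{i_{k-1}}$ modulo $L_n$-data.
\item It follows that $L_n$ is an open subgroup of $G_n$ up to a virtually nilpotent quotient.
\item Hence $L_n$ is $\mathrm{FP}_{k-1}$: the exact sequence $1\to L_n\to G_n\to Q\to1$ with $G_n$ of type $\mathrm{FP}_k$ and $Q$ of type $\mathrm{FP}_\infty$ should give this via a kernel version of Lemma \ref{lem:FPforQuotients}, using the spectral sequence of Lemma \ref{lem:another-ss} in reverse, i.e. Lemma \ref{lem:FPforFinInd}-style bounds on $\tau_q(L_n)$ by induction on $q$.
\end{itemize}
That induction on $q$ is the genuinely hard estimate. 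If it fails, I would fall back on Kuckuck's original route of inducting on $k$ using the groups $P\cap(\prod_{i\ne j}G_i)$.

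Granting the four inputs, Theorem \ref{thm:n-n+1-n+2}(2) with $n=k$ gives that $P$ is $\mathrm{FP}_k$.
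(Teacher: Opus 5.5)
There is a genuine gap in the inductive step for $k\geqslant 2$. It comes from how you assign roles in Theorem \ref{thm:n-n+1-n+2}. That theorem is asymmetric: only the kernel $N_1$ of $G_1\to Q$ has to be $\mathrm{FP}_{k-1}$. By choosing $G_1:=G_n$, you force yourself to prove that $L_n=P\cap G_n$ is $\mathrm{FP}_{k-1}$, and this is false in general.

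Here is a counterexample. Let $F$ be a free pro-$p$ group of rank $2$ and $\phi\colon F\to\mathbb{Z}_p^2$ its abelianisation. Set $G_1=G_2=G_3=F$ and $P=\{(g_1,g_2,g_3): \phi(g_1)+\phi(g_2)+\phi(g_3)=0\}$. Then $P$ is closed and surjects onto every pair of factors. Also $G_i/(P\cap G_i)\cong\mathbb{Z}_p^2$, and the $G_i$ are even $\mathrm{FP}_\infty$, so all hypotheses hold with $n=3$, $k=2$. But $L_3\cong\overline{[F,F]}$ is a nontrivial closed normal subgroup of infinite index in a finitely generated free pro-$p$ group. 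Hence it is free pro-$p$ of infinite rank, so $\tau_1(L_3;\mathbb{F}_p)=\infty$ and $L_3$ is not $\mathrm{FP}_1$ (over $\mathbb{F}_p$, hence also over $\hat{\mathbb{Z}}$).

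The same example shows that the ``kernel version of Lemma \ref{lem:FPforQuotients}'' you hope for does not exist. A closed normal subgroup of an $\mathrm{FP}_k$ group with $\mathrm{FP}_\infty$ (even abelian) quotient need not be $\mathrm{FP}_{k-1}$. So the induction on $q$ you sketch cannot succeed.

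The missing idea is to swap the roles. Take $G_1:=P_0$, so that $N_1=P\cap\prod_{i<n}G_i$ is the kernel of $P_0\to Q$, and $G_2:=G_n$. Then nothing at all is required of $L_n$. That $N_1$ is $\mathrm{FP}_{k-1}$ follows from an outer induction on $k$:
\begin{itemize}
\item By Lemma \ref{lem:KernelOfSubdirect}, $N_1\leqslant\prod_{i<n}G_i$ virtually surjects onto $(k-1)$-tuples.
\item Since $N_1\cap G_i=P\cap G_i$ for $i<n$, the group $\mathrm{pr}_i(N_1)/(N_1\cap G_i)$ embeds in $G_i/(P\cap G_i)$, so it is virtually nilpotent.
\item Hence the lemma for $k-1$ applies.
\end{itemize}
This double induction (on $k$, and for fixed $k$ on $n$) is the paper's proof; you name it only as an unexecuted fallback. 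Your treatment of $k=1$ is fine, since there $N_1$ only needs to be $\mathrm{FP}_0$, which is automatic. In the example above, $P\cap(G_1\times G_2)\cong F\times_{\mathbb{Z}_p^2}F$ is $\mathrm{FP}_1$ by exactly that case.

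A smaller point: you justify finite generation of $Q=G_n/L_n$ by saying $G_n$ is finitely generated. But a profinite $\mathrm{FP}_k$ group need not be finitely generated. Argue instead as follows. $Q$ is $\mathrm{FP}_1$ by Lemma \ref{lem:FP1Quotients}. An open nilpotent subgroup of $Q$ is then $\mathrm{FP}_1$ by Lemma \ref{lem:FPforFinInd}, hence finitely generated by Lemma \ref{lem:FP1forNilpotent}. So $Q$ is finitely generated, and Theorem \ref{thm:FGNilpotentFP} applies.
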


\begin{proof}
A direct computation shows that $P\cap G_{i}$ is always normal in
$\mathrm{pr}_{i}(P)$, so $\mathrm{pr}_{i}(P)/(P\cap G_{i})$ is always
a group. Note that $\mathrm{pr}_{i}(P)$ is open in $G_{i}$, hence
it is $\mathrm{FP}_{k}$ (by Lemma \ref{lem:FPforFinInd}); moreover,
$P\cap G_{i}=P\cap\mathrm{pr}_{i}(P)$, so that $\mathrm{pr}_{i}(P)/(P\cap\mathrm{pr}_{i}(P))=\mathrm{pr}_{i}(P)/(P\cap G_{i})$
is virtually nilpotent. Thus, we may assume without loss of generality
that $\mathrm{pr}_{i}(P)=G_{i}$.

We prove the claim by induction on $k$.

\paragraph{The case $k=1$.}

This we prove by induction on $n$. The case $n=1$ follows from Lemma
\ref{lem:FPforFinInd}. Assume the claim is true for $n-1$ for $n\geqslant2$,
and let us prove it for $n$. Set $T=\mathrm{pr}_{\left\{ 1,\dots,n-1\right\} }(P)\leqslant\prod^{n-1}_{i=1}G_{i}$.
Then $P$ is a subgroup of $T\times G_{n}$ that projects onto each
factor, and hence (by Lemma \ref{lem:SubDirectAreFibre}) it is the
fibre product associated with the sequences
\begin{align*}
1 & \to P\cap T\to T\to T/(P\cap T)\to1,\\
1 & \to P\cap G_{n}\to G_{n}\to G_{n}/(P\cap G_{n})\to1.
\end{align*}
Observe that $T$ is a subgroup of $\prod^{n-1}_{i=1}G_{i}$ that
projects onto every $G_{i}$. Moreover, $T\cap G_{i}$ contains $P\cap G_{i}$
for every $i=1,\dots,n-1$, so $G_{i}/(T\cap G_{i})$ is virtually
nilpotent for every $i=1,\dots,n-1$. Thus, by the induction hypothesis,
$T$ is $\mathrm{FP}_{1}$. Set 
\[
Q=G_{n}/(P\cap G_{n})\cong T/(P\cap T).
\]
Then $Q$ is virtually nilpotent by assumption, $\mathrm{FP}_{1}$
by Lemma \ref{lem:FP1Quotients} (as it is a quotient of $G_{n}$),
hence finitely generated by Lemma \ref{lem:FP1forNilpotent}, hence
it is $\mathrm{FP}_{\infty}$ by Theorem \ref{thm:FGNilpotentFP}.
Now, $P$ is the fibre product of $T\times_{Q}G_{n}$, so by the 0-1-2
Lemma (or our Theorem \ref{thm:n-n+1-n+2}), we get that $P$ is $\mathrm{FP}_{1}$.

\subparagraph{The inductive step.}

Now, assume the claim is true for $k-1$ for $k\geqslant2$, and let
us prove it for $k$. This too we prove by induction on $n$. The
base case $n=k$ again follows from Lemma \ref{lem:FPforFinInd}.
Now, assume the claim is true for $n-1$, where $n\geqslant k+1$,
and let us prove it for $n$.

Once again, set $T=\mathrm{pr}_{\left\{ 1,\dots,n-1\right\} }(P)\leqslant\prod^{n-1}_{i=1}G_{i}$.
First, observe that $\mathrm{pr}_{J}(T)=\mathrm{pr}_{J}(P)$ for every
subset $J\subseteq\left\{ 1,\dots,n-1\right\} $ of size $k$, so
that, by the induction hypothesis on $n$, we get that $T$ is $\mathrm{FP}_{k}$.
Now, as before, $P$ is a subgroup of $T\times G_{n}$ that projects
onto each factor, and hence (by Lemma \ref{lem:SubDirectAreFibre}),
it is the fibre product associated with the sequences
\begin{align*}
1 & \to P\cap T\to T\to T/(P\cap T)\to1,\\
1 & \to P\cap G_{n}\to G_{n}\to G_{n}/(P\cap G_{n})\to1.
\end{align*}
We want to use the profinite $n$-$(n+1)$-$(n+2)$ Theorem. For that,
we need to show $N\coloneqq P\cap\prod^{n-1}_{i=1}G_{i}=P\cap T$
is $\mathrm{FP}_{k-1}$. It is easy to see (see Lemma \ref{lem:KernelOfSubdirect}
below) that $\mathrm{pr}_{J}(N)$ is open in $\prod_{i\in J}G_{i}$
for every $J\subseteq\left\{ 1,\dots,n-1\right\} $ of size $k-1$.
Moreover, 
\[
G_{i}/(N\cap G_{i})=G_{i}/(P\cap G_{i})
\]
is virtually nilpotent for every $i\in\left\{ 1,\dots,n-1\right\} $.
Therefore, by the induction hypothesis on $k$, $N$ is $\mathrm{FP}_{k-1}$.
At last, set 
\[
Q=G_{n}/(P\cap G_{n})\cong T/(P\cap T).
\]
By the Profinite $n$-$(n+1)$-$(n+2)$ Theorem (Theorem \ref{thm:n-n+1-n+2}),
we get that $P=T\times_{Q}G_{n}$ is $\mathrm{FP}_{k}$.
\end{proof}

\begin{thm}[Virtual Surjection Theorem for Profinite Groups]
\label{thm:VirtualSurjection}Let $n\geqslant k\geqslant2$ be integers,
let $G_{1},\dots,G_{n}$ be profinite groups of type $\mathrm{FP}_{k}$,
and let $P\leqslant G_{1}\times\cdots\times G_{n}$ be a closed subgroup
such that:
\begin{itemize}
\item For every $1\leqslant i_{1}<\cdots<i_{k}\leqslant n$, the projection
of $P$ to $G_{i_{1}}\times\cdots\times G_{i_{k}}$ is open in $G_{i_{1}}\times\cdots\times G_{i_{k}}$
(‘$P$ virtually surjects onto $k$-tuples’).
\end{itemize}
Then $P$ is $\mathrm{FP}_{k}$.
\end{thm}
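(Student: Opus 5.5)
The plan is to reduce to Lemma \ref{lem:VirSur}. The only hypothesis of that lemma not already given is condition (2): for every $i$, the group $\mathrm{pr}_{i}(P)/(P\cap G_{i})$ must be virtually nilpotent. Lemma \ref{lem:ProjIsVirNil} supplies this, but it is stated for abstract groups. So the proof should first normalise $P$ so that lemma applies, and then upgrade its abstract conclusion to the profinite one.

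\textbf{Reduction to projecting onto each factor.} Replace each $G_{i}$ by $G_{i}'\coloneqq\mathrm{pr}_{i}(P)$. Since $P$ virtually surjects onto $k$-tuples with $k\geqslant1$, each projection $\mathrm{pr}_{i}(P)$ is open in $G_{i}$. Hence $G_{i}'$ is $\mathrm{FP}_{k}$ by Lemma \ref{lem:FPforFinInd}. Moreover, $\mathrm{pr}_{J}(P)$ is open in $\prod_{i\in J}G_{i}$ and is contained in $\prod_{i\in J}G_{i}'$, so it is also open there. Also $P\cap G_{i}'=P\cap G_{i}$. Thus we may assume $\mathrm{pr}_{i}(P)=G_{i}$ for all $i$.

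\textbf{Virtual nilpotence.} Apply Lemma \ref{lem:ProjIsVirNil} to the underlying abstract groups $\Gamma_{i}=G_{i}$. Open subgroups of profinite groups have finite index, so the hypotheses hold, and we conclude that $G_{i}/(P\cap G_{i})$ is virtually nilpotent as an abstract group. We need this in the profinite sense, namely that some open subgroup is nilpotent. The quotient $Q_{i}=G_{i}/(P\cap G_{i})$ is a profinite group, since $P\cap G_{i}$ is closed and normal. An abstract finite-index subgroup $H$ of $Q_{i}$ that is nilpotent has nilpotent closure $\overline{H}$, because the iterated commutator identities are closed conditions. Then $\overline{H}$ is a closed subgroup of finite index, hence open. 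So $Q_{i}$ is virtually nilpotent as a profinite group, which is what Lemma \ref{lem:VirSur} uses (via Theorem \ref{thm:FGNilpotentFP} and Lemma \ref{lem:FP1forNilpotent}).

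\textbf{Conclusion and main obstacle.} Both conditions of Lemma \ref{lem:VirSur} now hold, so $P$ is $\mathrm{FP}_{k}$. The main point to check is this abstract-to-profinite passage, together with the fact that Kuckuck's lemma genuinely needs only finite index, not openness. The hypothesis $k\geqslant2$ is where Lemma \ref{lem:ProjIsVirNil} is used. If Kuckuck's argument relied on something like finite generation, I would instead reprove it directly: for $k\geqslant2$, surjection onto pairs forces commutators $[x,y]$ with $x\in G_{i}$ and $y$ lying in a finite-index subgroup to lie in $P\cap G_{i}$. This is the mechanism giving virtual nilpotence (of class at most $k-1$).
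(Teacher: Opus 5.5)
Your proof is correct and follows the paper's argument: Lemma \ref{lem:ProjIsVirNil} supplies condition (2) of Lemma \ref{lem:VirSur}. You also fill in two details the paper leaves implicit: replacing $G_i$ by $\mathrm{pr}_i(P)$, which Kuckuck's lemma needs since it assumes $P$ projects onto each factor, and upgrading abstract virtual nilpotence to the profinite notion via the closure of a finite-index nilpotent subgroup. One aside: the class bound $k-1$ in your fallback remark is wrong in general, since the commutator argument only gives a bound depending on $n$ (e.g.\ $n-2$ for pairs), but nothing in your proof uses it.
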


\begin{proof}
This follows from Lemma \ref{lem:VirSur}, since, by Lemma \ref{lem:ProjIsVirNil},
the extra assumption always holds when $k$ is at least $2$.
\end{proof}

The following lemma is a profinite analogue of \cites(Lemma 3.11){Kuc14}.
\begin{lem}
\label{lem:KernelOfSubdirect}Let $k\geqslant2$ and $n\geqslant k$
be integers. Let $G_{1},\dots,G_{n}$ be profinite groups, $P\leqslant G_{1}\times\cdots\times G_{n}$
a subgroup such that $\mathrm{pr}_{J}(P)$ is open in $\prod_{i\in J}G_{i}$
for every $J\subseteq\left\{ 1,\dots,n\right\} $ of size $k$. Set
$N=P\cap\prod^{n-1}_{i=1}G_{i}$. Then $\mathrm{pr}_{J}(N)$ is open
in $\prod_{i\in J}G_{i}$ for every $J\subseteq\left\{ 1,\dots,n-1\right\} $
of size $k-1$.
\end{lem}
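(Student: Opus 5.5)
Fix $J\subseteq\{1,\dots,n-1\}$ with $|J|=k-1$, and put $J'=J\cup\{n\}$, which has size $k$. The plan is to find an open subgroup of $\prod_{i\in J}G_{i}$ inside $\mathrm{pr}_{J}(N)$, using openness of $\mathrm{pr}_{J'}(P)$ and of $\mathrm{pr}_{\{n\}}(P)$.

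Write $U=\mathrm{pr}_{J'}(P)$, which is open in $\prod_{i\in J}G_{i}\times G_{n}$. Openness in a product gives open subgroups $V\leqslant\prod_{i\in J}G_{i}$ and $W\leqslant G_{n}$ with $V\times W\leqslant U$; we may take $V$ normal. Since $\{(g,1):g\in V\}\subseteq U$, every $g\in V$ lifts to some $p\in P$ with $\mathrm{pr}_{J}(p)=g$ and $\mathrm{pr}_{n}(p)=1$. Such a $p$ lies in $P\cap\prod_{i=1}^{n-1}G_{i}=N$. Hence $V\subseteq\mathrm{pr}_{J}(N)$. Because $\mathrm{pr}_{J}(N)$ is a subgroup containing an open subgroup, it is open.

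This is essentially immediate; the hypothesis $k\geqslant2$ is used only to ensure $|J|=k-1\geqslant1$, so that $J'$ is a $k$-subset to which the hypothesis applies. The case $k-1=0$ would be vacuous in any event.

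The only point needing care is the claim that an open subgroup of a finite product of profinite groups contains a product of open subgroups. I would justify it by noting that product-of-open-neighbourhoods sets form a base at the identity, and that a profinite group has a base of open subgroups. Closedness of $N$ is not needed, and no compactness argument is required, since the lift is exact. I do not expect a real obstacle here.
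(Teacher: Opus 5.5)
Your proof is correct and is essentially the paper's argument: the paper shows $\mathrm{pr}_{J}(N)\supseteq\mathrm{pr}_{J'}(P)\cap\prod_{i\in J}G_{i}$ by the same lifting step (a preimage in $P$ with trivial $n$-th coordinate lies in $N$), and then concludes because this intersection is open in $\prod_{i\in J}G_{i}$. Your detour through a product $V\times W$ is harmless but unnecessary, as is taking $V$ normal, and you never actually use openness of $\mathrm{pr}_{\{n\}}(P)$: intersecting the open subgroup $\mathrm{pr}_{J'}(P)$ with $\prod_{i\in J}G_{i}\times\{1\}$ already gives an open subgroup of $\prod_{i\in J}G_{i}$.
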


\begin{proof}
Set $J'=J\cup\left\{ n\right\} $, which is a subset of $\left\{ 1,\dots,n\right\} $
of size $k$. First, we show $\mathrm{pr}_{J}(N)$ contains $\mathrm{pr}_{J'}(P)\cap\prod_{i\in J}G_{i}$.\footnote{In fact, equality holds (as is easy to see)}
Let $g\in\mathrm{pr}_{J'}(P)\cap\prod_{i\in J}G_{i}$. Take $p\in P$
such that $g=\mathrm{pr}_{J'}(p)$. Since $\mathrm{pr}_{J'}(p)=g$
and $g\in\prod_{i\in J}G_{i}$, we get that the $n^{\text{th}}$ coordinate
of $p$ is $1$. Thus, $p\in\prod^{n-1}_{i=1}G_{i}$. By assumption,
$p\in P$, so $p\in P\cap\prod^{n-1}_{i=1}G_{i}=N$. This also means
$\mathrm{pr}_{J'}(p)=\mathrm{pr}_{J}(p)$, modulo the identification
of $\prod_{i\in J}G_{i}$ as a subgroup of $\prod_{i\in J'}G_{i}$.
Thus, $g=\mathrm{pr}_{J'}(p)=\mathrm{pr}_{J}(p)\in\mathrm{pr}_{J}(N)$,
as needed.

Now, $\mathrm{pr}_{J'}(P)$ is open in $\prod_{i\in J'}G_{i}$, so
$\mathrm{pr}_{J'}(P)\cap G_{J}$ is open in $G_{J}$. Since $\mathrm{pr}_{J}(N)$
contains it, it too is open in $\prod_{i\in J}G_{i}$, as needed.
\end{proof}

\section{Acknowledgements }

We thank Alexander Lubotzky and Guy Kapon for insightful discussions.
Both authors are co-funded by the European Union (ERC, Function Fields,
101161909). Views and opinions expressed are however those of the
authors only and do not necessarily reflect those of the European
Union or the European Research Council. Neither the European Union
nor the granting authority can be held responsible for them. The second
author is The Dr.~A.~Edward Friedmann Career Development Chair in
Mathematics.

\printbibliography

\vspace{0.5cm}

\noindent{\textsc{Department of Mathematics, Weizmann Institute of Science, 234 Herzl Street, Rehovot 7610001, Israel}}

\vspace{0.5cm}

\noindent{\textit{Email address:} \texttt{tal.cohen@weizmann.ac.il}}

\noindent{\textit{Email address:} \texttt{mark.shusterman@weizmann.ac.il}}

\end{document}